\title{Milnor operations and the generalized Chern character}
\author{Takeshi Torii}
\address{Department of Applied Mathematics\\
Fukuoka University\\\newline
Fukuoka 814-0180\\
Japan}
\email{torii@math.sci.fukuoka-u.ac.jp}
\urladdr{}
\dedicatory{Dedicated to Professor Nishida on the occasion of his 60th birthday}
\def\cnewtheorem#1[#2]#3{\newtheorem{#1}{#3}[section]
\expandafter\let\csname c@#1\endcsname\c@theorem}
\let\xysavmatrix\xymatrix
\def\xymatrix{\disablesubscriptcorrection\xysavmatrix}
\def\tmf{tm\!f\!}
\newtheorem{theoremA}{Theorem}
\newtheorem{theorem}{Theorem}[section]
\theoremstyle{definition}
\def\subrel#1#2{\mathrel{\mathop{#2}\limits_{#1}}}
\def\fp#1{{\mathbf F}_{\! p^{#1}}}
\def\zp#1{{\mathbf Z}/{p^{#1}}}
\def\z2{{\mathbf Z}/2}
\def\cp{{\mathbf C}P^{\infty}}
\def\hz2{H\z2}
\def\pn{P(n)}
\def\power#1{[\![#1]\!]}
\def\hom#1{\mbox{\rm Hom}_{#1}}
\def\gal#1{\mbox{\rm Gal}(#1)}
\def\inverselimit#1{\subrel{\subrel{#1}{\longleftarrow}}{\lim}}
\def\directlimit#1{\subrel{\subrel{#1}{\longrightarrow}}{\lim}}
\def\pt{\mbox{\rm pt}}
\def\pn{P(n)}
\def\ee{E^{\vee}_*(E)}
\def\kk{K_*(K)}
\def\fmod#1{\mbox{\rm FMod}_{#1}}
\def\fmodc#1{\mbox{\rm FMod}_{#1}^c}
\def\profg#1{\mbox{\rm ProFG}_{#1}}
\def\falgc#1{{\mbox{\rm FAlg}^c_{#1}}}
\def\wtens{\widehat{\otimes}}
\def\acha{A^{\vee}_*(A)}
\def\F{\mathbf F}
\def\Gal{\mbox{\rm Gal}}
\def\M{M^{\mbox{\scriptsize $\gal{\F/\fp{n}}$}}}
\begin{document}

\begin{htmlabstract}
We have shown that the n-th Morava K&ndash;theory K<sup>*</sup>(X) for a
CW&ndash;spectrum X with action of Morava stabilizer group G<sub>n</sub>
can be recovered from the system of some height (n+1) cohomology groups
E<sup>*</sup>(Z) with G<sub>n+1</sub>&ndash;action indexed by finite
subspectra Z.  In this note we reformulate and extend the above result.
We construct a symmetric monoidal functor F from the category of
E<sup>&or;</sup><sub>*</sub>(E)&ndash;precomodules to the category of
K<sub>*</sub>(K)&ndash;comodules.  Then we show that K<sup>*</sup>(X)
is naturally isomorphic to the inverse limit of F(E<sup>*</sup>(Z))
as a K<sub>*</sub>(K)&ndash;comodule.
\end{htmlabstract}

\begin{abstract}
We have shown that the $n$--th Morava $K$--theory $K^*(X)$ for a
CW--spectrum $X$ with action of Morava stabilizer group $G_n$ can be
recovered from the system of some height--$(n+1)$ cohomology groups
$E^*(Z)$ with $G_{n+1}$--action indexed by finite subspectra $Z$.
In this note we reformulate and extend the above result.  We construct
a symmetric monoidal functor ${\mathcal F}$ from the category of
$E^{\vee}_*(E)$--precomodules to the category of $K_{*}(K)$--comodules.
Then we show that $K^*(X)$ is naturally isomorphic to the inverse limit
of ${\mathcal F}(E^*(Z))$ as a $K_{*}(K)$--comodule.
\end{abstract}

\maketitle

\section{Introduction}
\label{section:introduction}
%\input{intro}
%\section{Introduction}
%\label{section: introduction}

%\input{include}
From the chromatic point of view,
the complex $K$--theory is a height--$1$ cohomology theory
and the ordinary rational cohomology is a height--$0$ cohomology theory.
As geometric aspects,
the rational cohomology is defined by means of differential forms,
and the $K$--theory is defined by means of vector bundles.
The classical Chern character associates 
to a complex vector bundle 
the sum of exponentials of 
formal roots of the total Chern polynomial.
It may be regarded as a multiplicative
natural transformation from the $K$--theory
to the rational cohomology,
that is to say,
from a height--$1$ cohomology to a height--$0$ cohomology.
There is a height--$2$ cohomology theory, which is called
the elliptic cohomology.
Conjecturally,
the elliptic cohomology may also have a geometric interpretation
analogous to the rational cohomology and the $K$--theory.
A generalization of Chern character to 
the elliptic cohomology has been considered by Miller~\cite{Miller}. 
The idea is that 
the formal group law on the moduli stack of elliptic curves
is degenerate to the multiplicative formal group law
when it is restricted around a cusp. 
Miller's elliptic character is a multiplicative
natural transformation from the elliptic cohomology
to the $K$--theory with coefficients in the formal
Laurent series ring,
hence from a height--$2$ theory to a height--$1$ theory.

The elliptic character may be regarded as 
the $q$--expansion map of modular forms
parametrized by spaces. 
The $q$--expansion is the Fourier
expansion of modular forms at a cusp,
which associates a formal Laurent series
with variable $q=\exp (2\pi \sqrt{-1}\tau)$.
The $q$--expansion map has been extended at more general 
algebraic setting,
and it has been shown that it has a very good property,
which is called $q$--expansion principle
(cf Deligne--Rapoport~\cite{D-R} and Katz~\cite{Katz}).
In particular, the $q$--expansion map is injective,
and hence the modular forms are controlled
by their $q$--expansion.
The analogous property on the elliptic character has been studied 
at odd primes by Laures~\cite{Laures-1}.
At the prime $2$,
there is a more elaborate cohomology theory
related to elliptic curves and modular forms.
It is defined by the spectrum $tm\!f\!$ of topological modular forms,
which is introduced by Mike Hopkins.  
The $q$--expansion map (evaluation at the Tate curve)
also induces a ring spectrum map from $\tmf$ to $K\power{q}$.
In \cite{Laures-2} Laures
studied the $K(1)$--local topological modular forms
at the prime $2$,
and discussed the relationship   
between the $q$--expansion map, Witten genus and 
$MO\langle 8\rangle$--orientation of $\tmf$. 

A generalization of Chern character 
to higher chromatic level
has been considered by Ando, Morava and Sadofsky~\cite{AMS}
under geometric background.
Their generalized Chern character 
is a multiplicative natural transformation
from $(n+1)$--th Morava $E$--theory $E_{n+1}$
to the $n$--th Morava $E$--theory with coefficients
in some big Cohen ring.
In \cite{Torii-5} we studied the degeneration
of formal group law,
which is used to construct their Chern character.
By using the results in \cite{Torii-5}
we refined their generalized Chern character in \cite{Torii-9}. 
Then we were able to control it algebraically.
In this note we reformulate and extend some results in \cite{Torii-9}.

Let ${\mathcal S}$ be the stable homotopy category 
of $p$--local spectra for some prime $p$.
It is known that there is a filtration of full subcategories
of ${\mathcal S}$, which corresponds to the height filtration
of the moduli space of one-dimensional commutative formal group laws; 
see Devinatz, Hopkins and Smith \cite{DHS}, Hopkins and Smith \cite{Hopkins-Smith},
Morava \cite{Morava} and Ravenel \cite{Ravenel-2}.
The layers of this filtration are equivalent to 
the $K(n)$--local categories, where $K(n)$ is the $n$--th Morava $K$--theory.
Hence it is considered that
the stable homotopy category ${\mathcal S}$ is built from
$K(n)$--local categories.
In fact, the chromatic convergence theorem (cf Ravenel \cite{Ravenel-2})
says that for a $p$--local finite spectrum $X$,
the natural tower 
$\cdots \to L_{n+1}X\to L_nX\to \cdots \to L_0X$ recovers
$X$, that is, 
the homotopy inverse limit of the tower is homotopy equivalent to $X$.
Furthermore,
the chromatic splitting conjecture (cf Hovey \cite{Hovey-1})
implies that the $p$--completion of a finite spectrum $X$
is a direct summand of the product $\prod_n L_{K(n)}X$.
This means that it is not necessarily to reconstruct 
the tower but it is sufficient to
know all $L_{K(n)}X$ to obtain some information about $X$.  
In \cite{Hovey-1} Hovey observed that
the weak form of the chromatic splitting conjecture should imply
many interesting results.
The weak form means that the canonical map
$L_n(S^0)_p^{\wedge}\to L_nL_{K(n+1)}S^0$ is a split monomorphism.
In \cite[Remark~3.1(i)]{Minami-1}
Minami indicated that the weak form implies that
there is a natural map $\rho_X$ for a finite spectrum $X$
from the $K(n+1)$--localization $L_{K(n+1)}X$ to the 
$K(n)$--localization $L_{K(n)}X$ such that 
the following diagram commutes:
\begin{equation}\label{topological-diagram}\begin{aligned}
\xymatrix{
 &  X \ar[dr]^{\eta_{K(n)}} \ar[dl]_{\eta_{K(n+1)}} &  \\
L_{K(n+1)}X \ar[rr]^{\rho} &&  L_{K(n)}X
}\end{aligned}
\end{equation}
%\begin{equation}\label{topological-diagram}
%\begin{array}{rcl}
%      &  X    &  \\
%      & {\begin{minipage}{1.2cm}
%        \begin{picture}(30,20) 
%           \put(-38,15){${\scriptstyle{\eta_{K(n+1)}}}$}
%           \put(28,23){\vector(1,-1){20}}
%           \put(43,15){${\scriptstyle{\eta_{K(n)}}}$}
%           \put(5,23){\vector(-1,-1){20}} 
%        \end{picture}
%        \end{minipage}} &  \\
%    L_{K(n+1)}X& 
%    \stackrel{\rho}{\hbox to 2cm{\rightarrowfill}} & 
%    L_{K(n)}X,
%       \end{array}   \end{equation}  
where $\eta_K(n)\co X\to L_{K(n)}X$ and 
$\eta_K(n+1)\co X\to L_{K(n+1)}X$ are the localization maps.

In \cite{Torii-9} we considered the modulo $I_n$ version of the
algebraic analogue of the diagram~\eqref{topological-diagram}. 
The Morava $E$--theory $E_n$ defines a functor from the 
$K(n)$--local category to the category of twisted 
$E_{n*}$--$G_n$--modules, where $G_n$ is the $n$--th extended
Morava stabilizer group.
The Adams--Novikov spectral sequence based on $E_n$--theory
has its $E_2$--term $H^{**}_c(G_n; E_{n*}(X))$ and
converges to $\pi_*(L_{K(n)}X)$ strongly if $X$ is finite,
where $H^{**}_c(G_n;-)$ is the continuous cohomology group of $G_n$. 
Hence 
the category of twisted $E_{n*}$--$G_n$--modules
can be considered as an algebraic approximation of the $K(n)$--local category.

Let $BP_*$ be the Brown--Peterson spectrum at an odd prime $p$  
and $I_n$ the invariant prime ideal generated by
$p,v_1,\ldots,v_{n-1}$.
There is a commutative ring spectrum $E=E_{n+1}/I_n$,  
which is a complex oriented cohomology theory with 
coefficient ring $E_*=E_{n+1,*}/I_n=\F\power{u_n}[u^{\pm 1}]$.
We denote by $K^*(-)$ a variant of $n$--th Morava $K$--theory with
coefficient $\F[w^{\pm 1}]$.  
The modulo $I_n$--version of the algebraic analogue of 
the diagram~\eqref{topological-diagram} is 
\begin{equation}\label{algebraic-diagram-mod-In}\begin{aligned}
\xymatrix{
 &  {\mathcal S}^f \ar[dr]^{K} \ar[dl]_{E} &  \\
{\mathcal M}_{n+1}^f \ar[rr]^{\mu} && {\mathcal M}_n^f
}\end{aligned}
%\begin{array}{rcl}
%      & {\mathcal S}^f    &  \\
%      & {\begin{minipage}{1.2cm}
%        \begin{picture}(30,20) 
%           \put(-17,13){${\scriptstyle{E}}$}
%           \put(28,23){\vector(1,-1){20}}
%           \put(43,13){${\scriptstyle{K}}$}
%           \put(5,23){\vector(-1,-1){20}} 
%        \end{picture}
%        \end{minipage}} &  \\
%    {\mathcal M}_{n+1}^f & 
%    \stackrel{\mu}{\hbox to 2cm{\rightarrowfill}} & 
%    {\mathcal M}_n^f,
%       \end{array}   
\end{equation}  
where $\mathcal{S}^f$ is the stable homotopy category of finite spectra,
${\mathcal M}_{n+1}^f$ is the category of 
finitely generated twisted $E_*$--$G_{n+1}$--modules 
and ${\mathcal M}_n$ is the category 
of finitely generated twisted $K_*$--$G_n$--modules. 
So the question is:
Does there exist a functor $\mu$ from ${\mathcal M}_{n+1}^f$ to
${\mathcal M}_n^f$ 
which makes the diagram~\eqref{algebraic-diagram-mod-In} commute?

In \cite{Torii-5} we constructed a Galois extension $L$
of the quotient field of $\F\power{u_n}$,
over which the formal group $F_{n+1}$ associated with
$E$ is nicely isomorphic to the Honda group law $H_n$.
By using this result,
we constructed a natural transformation
$\Theta$ from $E$--cohomology to 
$K$--cohomology with coefficients in $L$
in \cite{Torii-9}:
\[ \Theta\co E^*(X)\longrightarrow 
   L^*(X). \] 
This is regarded as a generalized Chern character
since it is a multiplicative natural transformation
from the height--$(n+1)$ cohomology $E$ to
the height--$n$ cohomology $K$ with coefficients in $L$. 
Then it is shown that $\Theta$ is equivariant with respect to
the action of %the $(n+1)$th extended Morava stabilizer group
$G_{n+1}$,
and $\Theta$ induces $L_* \otimes_{E_*} E^*(Z)\cong L_*\otimes_{K_*}K^*(Z)$,
a natural isomorphism of 
${\mathcal G}$--modules
for any finite spectrum $Z$,
where ${\mathcal G}=\Gamma\ltimes (S_n\times S_{n+1})$
and $L_*=L[u^{\pm 1}]$.
By these results,
we have shown that there is a natural isomorphism
of twisted $K_*$--$G_n$--modules:
\[ K^*(X)\cong \inverselimit{Z} H^0(S_{n+1}; L_* \otimes_{E_*} E^*(Z)) \]
for any CW-spectrum $X$ where
$Z$ ranges over finite subspectra of $X$
\cite[Corollary~4.3]{Torii-9}.
Hence if we set the functor $\mu(-)=H^0(S_{n+1}; L_*\otimes_{E_*}-)$, 
it makes the diagram~\eqref{algebraic-diagram-mod-In} commute.

Essentially, the twisted $K_*$--$G_n$--module structure
gives $K^*(X)$
the stable cohomology operations except for Milnor operations.
So in this note we would like to extend the above result 
in the form which includes the action of Milnor operations.  
Note that the twisted $K_*$--$G_n$--module structure on $K^*(X)$ 
with Milnor operations is equivalent to 
the $\kk$--comodule structure.
In this note we construct a symmetric monoidal functor
${\mathcal F}$ from the category of profinite
$\ee$--precomodules to the category of profinite $\kk$--comodules.
Roughly speaking,
a profinite $\ee$--precomodule is a filtered inverse limit
of finitely generated $E_*$--module 
$$M=\smash{\subrel{\longleftarrow}{\lim}} M/M_{\lambda}$$ such that
$M^c$ has a complete $\ee$--comodule structure,
where $$M^c=\subrel{\longleftarrow}{\lim}M/M_{\lambda}+{\mathfrak m}^iM.$$
For a profinite $\ee$--precomodule $M$,
there is a natural twisted $E_*$--$G_{n+1}$--module structure on $M$.
Hence $M\wtens_{E_*}L_*$ is a twisted $L_*$--$G_{n+1}$--module.
%where $L_*=L[u^{\pm 1}]$.
We set 
\[ {\mathcal F}(M)=H^0(S_{n+1};M\wtens_{E_*}L_*). \]
%In \cite{Torii-9}
%we have shown that ${\mathcal F}(M)$ is a twisted $K_*$-$G_n$--module. 
%The following theorem is a reformulation of result in \cite{Torii-9}.

\begin{theoremA}[\fullref{cor:comodule2comodule}]
The functor ${\mathcal F}$ extends
to a symmetric monoidal functor from
the category of profinite $\ee$--precomodules
to the category of profinite $\kk$--comodules.  
\end{theoremA}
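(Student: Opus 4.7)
The plan is to build the $\kk$-comodule structure on $\mathcal{F}(M)$ in two stages: first, produce the twisted $K_*$-$G_n$-module part by taking $S_{n+1}$-fixed points after base-changing to $L_*$, and then install the Milnor-operation coaction by transporting the $\ee$-precomodule structure along the generalized Chern character $\Theta$. For the first stage the $\ee$-precomodule structure on $M$ already gives a twisted $E_*$-$G_{n+1}$-module structure, so $M\wtens_{E_*}L_*$ becomes a twisted $L_*$-$G_{n+1}$-module. The key input from \cite{Torii-9} is the $\mathcal{G}$-equivariant isomorphism $L_*\otimes_{E_*}E^*(Z)\cong L_*\otimes_{K_*}K^*(Z)$, whose underlying mechanism extends to every profinite precomodule $M$, producing a $\mathcal{G}=\Gamma\ltimes(S_n\times S_{n+1})$-action on $M\wtens_{E_*}L_*$. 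Taking $S_{n+1}$-invariants leaves a residual $G_n=\Gamma\ltimes S_n$-action, that is, the desired twisted $K_*$-$G_n$-module structure on $\mathcal{F}(M)$.

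For the second and more delicate stage, I would start from the complete coaction $\psi\co M^c\to\ee\wtens_{E_*}M^c$, apply $-\wtens_{E_*}L_*$, and compose with a comparison map between the Hopf algebroids $(E_*,\ee)$ and $(K_*,\kk)$ after $L_*$-base-change that is built from $\Theta$ applied to the $E$-factor of $\ee$. This should yield an $L_*$-linear coaction $M\wtens_{E_*}L_*\to\kk\wtens_{K_*}(M\wtens_{E_*}L_*)$. Since $S_{n+1}$ acts only on the $L_*$-factors while $\kk$ sits on the $K$-side, the coaction is $S_{n+1}$-equivariant, and passing to $H^0(S_{n+1};-)$ delivers the desired $\kk$-coaction on $\mathcal{F}(M)$; compatibility with the $G_n$-structure from the first stage is then formal from $\mathcal{G}$-equivariance. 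The hardest part of this step will be showing that the comparison map genuinely assembles into a morphism of Hopf algebroids after base change, which is precisely where the degeneration results of \cite{Torii-5} and the multiplicativity of $\Theta$ must enter.

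Functoriality of $\mathcal{F}$ is automatic from naturality of the completed base change, of $\Theta$, and of group cohomology. For the symmetric monoidal structure the required natural isomorphism $\mathcal{F}(M\wtens_{E_*}N)\cong\mathcal{F}(M)\wtens_{K_*}\mathcal{F}(N)$ factors through the $L_*$-base-change identity $(M\wtens_{E_*}L_*)\wtens_{L_*}(N\wtens_{E_*}L_*)\cong(M\wtens_{E_*}N)\wtens_{E_*}L_*$ followed by a K\"unneth-type identification for $H^0(S_{n+1};-)$ on profinite $L_*$-modules of this form. Checking the K\"unneth step and verifying that the two resulting $\kk$-coactions on the product agree should reduce to the multiplicativity of $\Theta$ together with the compatibility of profinite filtrations with $\wtens$; the unit and symmetry constraints are then formal.
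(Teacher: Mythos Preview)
Your two-stage plan is the right shape, and Stage~1 (the twisted $K_*$--$G_n$--module structure on $\mathcal{F}(M)$) matches what the paper already established in \fullref{prop:proftoprof}. The divergence, and the gap, is entirely in Stage~2.

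You propose to obtain the Milnor part of the $\kk$--coaction by base-changing the full $\ee$--coaction to $L_*$ and then composing with a ``comparison map between the Hopf algebroids $(E_*,\ee)$ and $(K_*,\kk)$ after $L_*$--base-change built from $\Theta$.'' You acknowledge that constructing this comparison map is the hardest step, but you give no mechanism for it. The difficulty is real: $\Theta$ is encoded by a single \emph{point} $\theta\co P(n)_*(P(n))\to L_*$ of the groupoid (the triple $(F_{n+1},\Phi,H_n)$), not by a map of Hopf algebroids, and the two base-changes $\ee\wtens_{E_*^c}L_*$ and $\kk\wtens_{K_*}L_*$ use different $P(n)_*$--algebra structures on $L_*$. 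Turning $\Phi$ into a genuine identification of these two objects compatible with comultiplication, conjugation, and the $S_{n+1}$--action is exactly the content you have not supplied.

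The paper avoids this entirely by never attempting a global Hopf-algebroid comparison. Its route is: (i) split $\ee\cong C_{E_*}\wtens\Lambda_{E_*^c}$ and prove that a complete $\ee$--comodule is the same as a $C_{E_*}$--comodule together with a $\Lambda_{E_*}$--comodule structure whose coaction is a $C_{E_*}$--comodule map (\fullref{thm:eq_cat_rchr_stab_lambda}); (ii) compute $\mathcal{F}(\Lambda_{E_*})\cong\Lambda_{K_*}$ explicitly via the elements $\widehat{b}_{(i)}$ defined by $\widehat{b}(X)\equiv b_E(\Phi^{-1}(X))\bmod(X^{p^n})$ (\fullref{thm:iso_bet_lambdas}); (iii) apply the already-existing symmetric monoidal functor $\mathcal{F}$ on $C_{E_*}$--precomodules to the $\Lambda_{E_*}$--coaction map $\rho_M\co M\to\Lambda_{E_*}\wtens_{E_*}M$, and use (ii) together with \fullref{lemma:def_comodule_str_map} to identify the target with $\Lambda_{K_*}\wtens_{K_*}\mathcal{F}(M)$. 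The resulting $\Lambda_{K_*}$--coaction is automatically a $C_{K_*}$--comodule map because $\mathcal{F}$ is a functor, and then (i) for $A=K$ reassembles the $\kk$--comodule structure. The explicit computation in (ii) is what replaces your unconstructed comparison map; without something playing that role your Stage~2 does not go through.

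A smaller point: your K\"unneth claim $\mathcal{F}(M\wtens_{E_*}N)\cong\mathcal{F}(M)\wtens_{K_*}\mathcal{F}(N)$ for arbitrary profinite precomodules is not a formality of $H^0(S_{n+1};-)$; it is precisely the monoidality of $\mathcal{F}$ already asserted in \fullref{prop:proftoprof}, and the only case actually needed here (tensoring with $\Lambda_{E_*}$) is proved separately in \fullref{lemma:def_comodule_str_map} using the identification $\Lambda_{E_*}\wtens_{E_*}L_*\cong\Lambda_{K_*}\wtens_{K_*}L_*$.
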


For a spectrum $X$ 
we denote by $\Lambda(X)$ the category
whose objects are maps $Z\stackrel{u}{\to}X$ with $Z$ finite.
We associate to $X$ a cofiltered system
$\mathbb{E}^*(X)=\{E^*(Z)\}$ and 
$\mathbb{K}^*(X)=\{K^*(Z)\}$ indexed by $\Lambda(X)$.
Then 
$E^*(X)=\ \inverselimit{}\mathbb{E}^*(X)$
and $K^*(X)=\ \inverselimit{}\mathbb{K}^*(X)$.
%We show that for any spectrum $X$ the $E$--cohomology with profinite topology
%is naturally an $\ee$--precomodule 
%(\fullref{prop:precomdule_cohomology}). 
The following is the main theorem of this note.

\begin{theoremA}[\fullref{thm:Main_Theorem}]
For any spectrum $X$,
the generalized Chern character $\Theta$
induces a natural isomorphism of cofiltered
system of $\kk$--comodules:
\[ {\mathcal F}(\mathbb{E}^*(X))\cong \mathbb{K}^*(X). \]
If $X$ is a space, then this is an isomorphism 
of systems of $\kk$--comodule algebras. 
\end{theoremA}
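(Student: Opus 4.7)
The plan is to combine the isomorphism of twisted $K_*$--$G_n$--modules from \cite{Torii-9} with the $\kk$--coaction supplied by Theorem~A, and verify that the Milnor operations on $K^*(Z)$ correspond under the generalized Chern character to those produced inside $\mathcal{F}$. First I would reduce to finite spectra. Both $\mathcal{F}(\mathbb{E}^*(X))$ and $\mathbb{K}^*(X)$ are cofiltered systems indexed by $\Lambda(X)$, sending $u\co Z\to X$ to $\mathcal{F}(E^*(Z))$ and $K^*(Z)$ respectively. Since $\mathcal{F}$ is functorial and valued in profinite $\kk$--comodules, it is enough to exhibit, for each finite $Z$, a natural isomorphism $\mathcal{F}(E^*(Z))\cong K^*(Z)$ of $\kk$--comodules and then check naturality in $u$.

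For finite $Z$, the $E_*$--module $E^*(Z)$ is finitely generated and so a profinite $\ee$--precomodule with trivial filtration. By \cite[Corollary~4.3]{Torii-9}, $\Theta$ already induces a natural isomorphism of twisted $K_*$--$G_n$--modules
\[
K^*(Z)\;\cong\;H^0(S_{n+1};\,L_*\wtens_{E_*}E^*(Z))\;=\;\mathcal{F}(E^*(Z)).
\]
What remains is to check compatibility of the $\kk$--coactions. The right-hand side carries the coaction built inside the construction of $\mathcal{F}$ in Theorem~A, while the left-hand side carries the geometric coaction induced by $K\wedge Z\to K\wedge K\wedge Z$. Because a $\kk$--comodule structure is equivalent to a twisted $K_*$--$G_n$--module structure together with an action of the Milnor operations, and the Galois/module part has already been matched, the check reduces to showing that the Milnor operations on $K^*(Z)$ correspond, under $\Theta$, to the Milnor operations on $E^*(Z)$ as transported through $\mathcal{F}$. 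Since $\Theta$ is constructed from the degeneration of $F_{n+1}$ to the Honda group law $H_n$ over the Galois extension $L$ of \cite{Torii-5}, and Milnor operations are detected by their effect on formal group coordinates, this comparison reduces to a universal computation on $\cp$, where both sides are determined by their action on the first Chern class.

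Naturality in $u\co Z\to X$ is then automatic: both systems are functors out of $\Lambda(X)$ and $\Theta$ is a natural transformation of cohomology theories on $\mathcal{S}$. Taking the cofiltered limit over $\Lambda(X)$ yields the asserted isomorphism of systems, recovering in particular $K^*(X)\cong\inverselimit{}\mathcal{F}(\mathbb{E}^*(X))$ as a $\kk$--comodule, consistent with the previous theorem of \cite{Torii-9}. For the multiplicative statement when $X$ is a space, the diagonal $X\to X\times X$ promotes each $E^*(Z)$ and $K^*(Z)$ (after passing to an appropriate cofinal subsystem of $\Lambda(X)$ closed under products) to a commutative $\ee$--precomodule algebra and $\kk$--comodule algebra respectively. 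Because $\mathcal{F}$ is symmetric monoidal by Theorem~A and $\Theta$ is multiplicative, the established isomorphism is automatically an isomorphism of algebra objects.

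The main obstacle is the compatibility of the two $\kk$--coactions, i.e.\ the claim that the Milnor operations in $K^*$--theory match those coming from $E^*$--theory through $\mathcal{F}$. Unpacking this requires an explicit description of how the $\kk$--coaction is encoded in the symmetric monoidal construction of $\mathcal{F}$ and a careful use of the degeneration data of \cite{Torii-5}; the reduction to $\cp$ is what makes it tractable, but laying out the bookkeeping for the complete $\widehat{\otimes}$ and the profinite filtrations is the nontrivial part of the argument.
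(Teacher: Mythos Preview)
Your overall architecture matches the paper's: reduce to finite $Z$, use \cite[Corollary~4.3]{Torii-9} for the $C_{K_*}$--comodule (twisted $K_*$--$G_n$--module) part, then check the Milnor operations separately, and invoke the equivalence between $\kk$--comodules and $C_{K_*}$--$\Lambda_{K_*}$--comodules. The multiplicative argument is also fine.

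However, there is a concrete gap in the crucial step. You propose to detect the Milnor operations by a computation on $\cp$, but the Milnor operations $Q_i^K$ are odd--degree derivations, and $K^*(\cp)=K_*\power{x_K}$ is concentrated in even degrees, so every $Q_i^K$ acts as zero there. The orientation class $x_K$ therefore carries no information about the $\Lambda_{K_*}$--coaction; it only pins down the $C_{K_*}$--part, which is already handled. The correct test space is the lens space $\mathcal{Y}=S^{2p^n-1}/(\zp{})$: one has $K^*(\mathcal{Y})\cong \Lambda(y_K)\otimes K_*[x_K]/(x_K^{p^n})$ with $y_K$ in degree~$1$, and an odd--degree derivation among stable $K$--operations is determined by its effect on $y_K$. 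The paper uses exactly this: it first checks that the operations $\widehat{Q}_i$ produced by $\mathcal{F}$ are stable and derivations, then computes $\widehat{Q}_i(y_K)=x_K^{p^i}$ via $\Theta(y_E)=1\otimes y_K$ together with the formula $\rho(y_E)=1\otimes y_E + b_E(x_E)$ and the identity $b_E(X)\equiv \widehat{b}(\Phi(X)) \bmod (X^{p^n})$, concluding $\widehat{Q}_i=Q_i^K$. Replace your $\cp$ reduction by this lens--space computation and the argument goes through.
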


The organization of this note is as follows.
In \fullref{subsection:Landweber_exact_over_pn}
we summarize well-known results on generalized cohomology theories 
which are Landweber exact over $\pn$.
In \fullref{subsection:review_degeneration}
we review our main result in \cite{Torii-5}
on degeneration of formal group laws.
In \fullref{subsection:GCh} we review on the construction of the generalized
Chern character.

In \fullref{subsection:complete-module}
we study the category of complete Hausdorff
filtered modules and the action of a profinite group on a complete module.
In \fullref{subsection:Complete_Hopf_algebroids}
we recall complete Hopf algebroids and their comodules.
%We show that the $E$--cohomology $E^*(X)$ for any spectrum
%has a natural $\ee$--precomodule structure.

In \fullref{subsection:complete_hopf_algebroid_C}
we describe the structure of complete Hopf algebroid $C(G,R^c)$,
where $C(G,R^c)$ is the ring of all continuous functions from 
a profinite group $G$ to an even-periodic complete local ring $R^c$.
In \fullref{subsection:twisted_modules} 
we show the well-known fact that the category of 
complete $C(G,R^c)$--comodules
is equivalent to the category of complete twisted $R^c$--$G$--modules.
Usually, we use and study the  category of complete 
twisted $\fp{n}$--$G_n$--modules.
In \fullref{subsection:Lemma on twisted modules}
we show that there is no essential difference between the 
category of complete twisted $\F$--$G_n$--modules and the category
of complete twisted $\fp{n}$--$G_n$--modules.
In \fullref{subsection:reformulation}
we reformulate a result of \cite{Torii-9}.
We construct a symmetric monoidal functor ${\mathcal F}$ 
from the category of profinite $C(G_{n+1},E^c_*)$--precomodules
to the category of profinite $C(G_n,K_*)$--comodules,
and show that there is a natural isomorphism between
${\mathcal F}(\mathbb{E}^*(X))$ and 
$\mathbb{K}^*(X)$ as systems of $C(G_n,K_*)$--comodules.

In \fullref{subsection:Co-operation ring} 
we define a complete co-operation ring $\acha$ for $A=E_{n+k}/I_n$,
and study a $\acha$-(pre)comodule algebra structure on 
the $A$--cohomology of the projective space $\cp$ and
the lens space $S^{2p^n-1}/(\zp{})$.
In \fullref{subsection:Lambda_E_Lambda_K}
we study a twisted $E_*$--$G_{n+1}$--module structure on 
the exterior algebra $\Lambda_{E_*}$ and show that
${\mathcal F}(\Lambda_{E_*})\cong \Lambda_{K_*}$ 
as twisted $K_*$--$G_n$--modules.
In \fullref{subsection:milnot_op}
we define Milnor operations $Q^A_i$ for a $\Lambda_{A_*}$--comodule $M$.
In \fullref{subsection:compatibility}
we study $\acha$--comodule structures in terms 
of $C_{A_*}$--comodule structures and $\Lambda_{A_*}$--comodule structures.
We show that an $\acha$--comodule structure is equivalent 
to a $C_{A_*}$--comodule structure and a $\Lambda_{A_*}$--comodule structure
which satisfy some compatibility condition.

In \fullref{subsection:symmetric_monoidal_dunctor}
we extend the symmetric monoidal functor ${\mathcal F}$
from the category of profinite $\ee$--precomodules to
the category of profinite $\kk$--comodules.
In \fullref{subsection:Main_Theorem}
we prove the main theorem.

%%%% Notation

In this note $p$ shall be an odd prime except for 
\fullref{subsection:review_degeneration},
$\F$ a finite field containing $\fp{n}$ and $\fp{n+1}$,
and $\Gal$ the Galois group $\gal{\F/\fp{}}$. 
We think a group $G$ acts on a ring $R$ from the right and 
denote by $r^g$ the right action of $g\in G$ on $r\in R$.
For a power series $\alpha(X)=\sum \alpha_i X^i\in R\power{X}$,
we set $\alpha^g(X)=\sum \alpha_i^g X^i$ for $g\in G$.
An $R$--module means a left $R$--module if nothing else is indicated. 

%% O.K. May 6, 2004
\section{The generalized Chern character}
\label{section:review_degeneration}

\subsection[Landweber exact theories over P(n)]{Landweber exact theories over $\pn$}
\label{subsection:Landweber_exact_over_pn}

\begin{definition}\rm
%A graded ring $R_*$ is said to be evenly graded 
%if $R_*$ is concentrated in even degrees.
A graded commutative ring $R_*$ is said to be even-periodic 
if $R_*$ is concentrated in even degrees and 
$R_2$ contains a unit in $R_*$.
A multiplicative generalized cohomology theory
$h^*(-)$ is said to be even-periodic if 
the coefficient ring $h_*=h^*(\pt)$ is even-periodic.
\end{definition}

For a spectrum $X$,
we denote by $\Lambda(X)$ the category
whose objects are maps $Z\stackrel{u}{\to}X$ 
such that $Z$ is finite,
and whose morphisms are maps $\smash{Z\stackrel{v}{\smash{\to}\vphantom{-}}Z'}$
such that $u'v=u$.
Then $\Lambda(X)$ is an essentially small filtered category.

\begin{definition}\rm
Let $h^*(-)$ be a generalized cohomology theory.
For a spectrum X, we define a filtration on $h^*(X)$ 
indexed by $\Lambda(X)$ as 
\[ F^Z h^*(X)=\mbox{\rm Ker}(h^*(X)\longrightarrow h^*(Z)) \]
for $Z\in \Lambda(X)$.
We call this filtration the profinite filtration
and the resulting topology the profinite topology.
\end{definition}

\begin{remark}\label{remark:profinite_topology}\rm
If $h^*(-)$ is even-periodic and the degree--$0$ 
coefficient ring $h_0$ is a complete
Noetherian local ring, 
then $h^*(Z)$ is a finitely generated $h_*$--module for all 
$Z\in \Lambda(X)$, 
and the canonical homomorphism
\[ h^*(X)\longrightarrow
   \inverselimit{Z\in \Lambda(X)}h^*(Z)\]
is an isomorphism.
This implies that 
$h^*(X)$ is complete Hausdorff with respect to the profinite topology.
\end{remark}

Let $BP$ be the Brown--Peterson spectrum at an odd prime $p$,
whose coefficient ring is given by 
$BP_*={\mathbf Z}_{(p)}[v_1,v_2,\ldots]$ with $|v_i|=2(p^i-1)$.
Let $I_n$ be the invariant prime ideal generated by
$p,v_1,\ldots,v_{n-1}$.
There is a commutative $BP$--algebra spectrum $\pn$,
whose coefficient ring is
$\pn_*=BP_*/I_n$.
In particular, $P(0)=BP$.
Let ${\mathcal X}={\mathbf C}P^{\infty}$ the infinite dimensional
complex projective space,
and ${\mathcal Y}= S^{2p^n-1}/({\mathbf Z}/p)$ 
the lens space of dimension $2p^n-1$,
where ${\mathbf Z}/p$ is the cyclic group of order $p$ acting 
on the unit sphere $S^{2p^n-1}$ in ${\mathbf C}^{p^n}$ by standard way.
These spaces are important test spaces 
to stable cohomology operations of 
complex oriented cohomology theories (cf \cite[Section 14]{Boardman}).
The $P(n)$--cohomology of ${\mathcal X}$ and ${\mathcal Y}$ are
given as follows:
\[ \begin{array}{rcl}
    P(n)^*({\mathcal X})&=&P(n)_*\power{x},\\[1mm]
    P(n)^*({\mathcal Y})&=&\Lambda(y)\otimes P(n)_*[x]/(x^{p^n}),\\   
   \end{array}\]
where $x\in P(n)^2({\mathcal X})$ is the orientation class
and $y\in P(n)^1({\mathcal Y})$.

Let $F$ be a $p$--typical formal group law over a commutative ring $R$.
By universality of the $p$--typical formal group law $F_{BP}$
associated to $BP$,
there is a unique ring homomorphism $f\co BP_*\to R$
such that $F$ is the base change of $F_{BP}$ by $f$.
If $f(v_i)=0$ for $0\le i<n$,
then $f$ induces a ring homomorphism 
$\overline{f}\co\pn_*\to R$.
In this case we say that a $p$--typical formal group law $F$
is of strict height at least $n$.
Hence $\pn_*$ is the universal ring of $p$--typical
formal group law of strict height at least $n$.
We say that a ring homomorphism $\pn_*\to R$
is Landweber exact over $\pn$,
if the sequence $v_n,v_{n+1},\ldots$ is regular in $R$.
In this case, the functor $R_*\otimes_{\pn_*}\pn_*(-)$
is a generalized homology theory by
Landweber--Yagita exact functor theorem \cite{Landweber,Yagita},
where $R_*$ is the even-periodic commutative ring
$R[u^{\pm 1}]$ with $|u|=-2$.
Furthermore, if $R$ is a complete Noetherian local ring,
then 
\[ R^*(X)=\inverselimit{\Lambda(X)} (R_*\otimes_{\pn_*}\pn^*(Z))\]
is a generalized cohomology theory.

Let $\F$ be a finite field 
which contains the finite fields $\fp{n}$ and $\fp{n+1}$.
Let $E_n^*(-)$ be a variant of Morava $E$--theory whose coefficient ring
is given by
\[ E_{n*}=W(\F)\power{u_1,\ldots,u_{n-1}}[u^{\pm 1}],\]
where $W(\F)$ is the ring of Witt vectors with coefficients in $\F$.
The grading is given by $u_i=0$ for $1\le i<n$ and $|u|=-2$.
Then the degree--$0$ formal group law $F_n$ associated to $E_n$ 
is a universal deformation of the Honda group law $H_n$
of height $n$ over $\F$.
For $0\le k\le n$,
there is a commutative multiplicative cohomology theory
$(E_n/I_k)^*(-)$ whose coefficient ring is just $E_{n*}/I_k$,
where $I_k$ is the invariant prime ideal $(p,u_1,\ldots,u_{k-1})$.

We define even-periodic graded commutative rings $E_*$ and $K_*$
as follows:
\[ \begin{array}{rcl}
      E_* & = & \F\power{u_n}[u^{\pm 1}],\\[1mm]
      K_* & = & \F[w^{\pm 1}],\\ 
   \end{array}\]
where the gradings are given by $|u_n|=0, |w|=|u|=-2$.
The ring homomorphisms $P(n)_*\to E_*$ 
given by $\smash{v_n\mapsto u_n u^{-(p^n-1)}, 
v_{n+1}\mapsto u^{-(p^{n+1}-1)}, v_i\mapsto 0}$ for $i>n+1$,
and $P(n)_*\to K_*$
given by
$\smash{v_n\mapsto w^{-(p^n-1)}, v_i\mapsto 0}$ for $i>n$,
make $E_*$ and $K_*$ Landweber exact $P(n)_*$--algebras,
respectively.
Hence 
\[ \begin{array}{rcl}
    E^*(X)&=& \inverselimit{} 
    (E_*{\otimes}_{P(n)_*}P(n)^*(Z)),\\[1mm]
    K^*(X)&=& \inverselimit{}
    (K_*{\otimes}_{P(n)_*}P(n)^*(Z)),\\
   \end{array}\]
define generalized cohomology theories. 
Note that there are no limit one problems since
the degree--$0$ subrings are complete Noetherian local rings, respectively.
The cohomology theory $K^*(-)$ is a variant of Morava $K$--theory
and the associated degree--$0$ formal group law
is the Honda group law $H_n$ of height $n$ over $\F$.
Since the cohomology theory $E^*(-)$ is $(E_{n+1}/I_n)^*(-)$,
the associated degree--$0$ formal group law
is the base change of $F_{n+1}$ to $\F\power{u_n}$. 
\begin{align*}
     x_E&= 1\otimes x \in  E^0({\mathcal X}),\tag*{\hbox{We set}}\\
     y_E&= 1\otimes y \in  E^1({\mathcal Y}),\\ 
     x_K&= 1\otimes x \in  K^0({\mathcal X}),\\
     y_K&= 1\otimes y \in  K^1({\mathcal Y}). 
\end{align*}
\begin{align*}
    E^*({\mathcal X})& \cong  E_*\power{x_E},\tag*{\hbox{Then we have}}\\
    E^*({\mathcal Y})& \cong  
    \Lambda(y_E)\otimes E_*[x_E]/(x_E^{p^n}),\\
    K^*({\mathcal X})& \cong  K_*\power{x_K},\\
    K^*({\mathcal Y})& \cong  \Lambda(y_K)\otimes K_*[x_K]/(x_K^{p^n}).
\end{align*}

\subsection{Degeneration of formal groups}
\label{subsection:review_degeneration}

In this subsection we review some results in \cite{Torii-5}.
In this subsection $p$ is any prime number.
%$\F$ an algebraic extension
%of the prime field $\fp{}$ which contains the finite
%fields $\fp{n}$ and $\fp{n+1}$.  
Let $E_{n+1,0}$ be the degree--$0$ coefficient ring of 
the variant of Morava $E$--theory $E_{n+1}$:
\[ E_{n+1,0}= W(\F)\power{u_1,\ldots,u_n}.\]
%where $W(\F)$ is the ring of Witt vectors with 
%coefficients in $\F$.
The associated degree--$0$ formal group law $F_{n+1}$ %to $E_{n+1}$
is a universal deformation of the Honda group laws $H_{n+1}$ of height $n+1$
over $\F$.
The extended Morava stabilizer group $G_{n+1}=\Gal\ltimes S_{n+1}$
is the automorphism group of $F_{n+1}$ in some generalized sense
(cf Strickland \cite{Strickland} and Torii \cite{Torii-5}),
where $\Gal$ is the Galois group $\gal{\F/\fp{}}$
and $S_{n+1}$ is the $n$--th Morava stabilizer group.
Note that $S_{n+1}$ is the automorphism group of $H_{n+1}$
in the usual sense.
The extended Morava stabilizer group
$G_{n+1}$ is a profinite group and acts on $E_{n+1,0}$ continuously,
where the topology of $E_{n+1,0}$ is given by the adic-topology.
Since the ideal $I_n=(p,u_1,\ldots,u_{n-1})$ of $E_{n+1,0}$
is stable under the action,
$G_{n+1}$ also acts on the quotient ring $E_{n+1,0}/I_n=\F\power{u_n}$
continuously.

We regard the formal group law $F_{n+1}$ as being defined
over $\F\power{u_n}$ by the obvious base change.
This situation is a kind of degeneration and 
a fundamental technique to study a degeneration
is to investigate the monodromy representation.
Let $M=\F((u_n))$ be the quotient field of $\F\power{u_n}$
and $M^{\rm sep}$ its separable closure.
Then the height of $F_{n+1}$ on $M$ is $n$.
Hence the fibre of $F_{n+1}$ over $M^{\rm sep}$ is isomorphic
to $H_n$ since the isomorphism classes of 
formal group laws over a separably closed field 
are classified by their height.   
The monodromy representation of $F_{n+1}$ around the closed point
gives the following homomorphism:
\[ \gal{M^{\rm sep}/M}=\pi_1(M)\longrightarrow \mbox{\rm Aut}(H_n)=S_n. \]
This homomorphism was studied by Gross in \cite{Gross}.

Let $\Phi$ be an isomorphism over $M^{\rm sep}$
between $F_{n+1}$ and $H_n$:
\[ \Phi(F_{n+1}(X,Y))=H_n(\Phi(X),\Phi(Y)).\]
Let $L$ be a separable algebraic extension of $M$
obtained by adjoining all the coefficients of $\Phi(X)$.
Then the above homomorphism 
$\gal{M^{\rm sep}/M}\to S_n$ induces an isomorphism
$\smash{\gal{L/M}\stackrel{\cong}{\to}S_n}$,
and this extends to an isomorphism 
$\gal{L/\fp{}((u_n))}\cong G_n$.
Let ${\mathcal G}$ be the semidirect product
$\Gal\ltimes (S_n\times S_{n+1})$.
Then ${\mathcal G}$ is a profinite group,
and contains $G_n$ and $G_{n+1}$ as closed subgroups.

The following theorem is a main point of \cite{Torii-5}.

\begin{theorem}{\rm \cite[Section 2.4]{Torii-5}}\qua
\label{thm:degeneration_main}
The profinite group ${\mathcal G}$ acts on 
the formal group law $(F_{n+1},L)$ in generalized sense.
The action of the subgroup $G_{n+1}$ is an extension 
of the action on $(F_{n+1},\F\power{u_n})$.
The action of the subgroup $G_n$ on $(F_{n+1},L)$ 
is the action of Galois group on $L$ and the trivial action on $F_{n+1}$.
Under the isomorphism $\smash{\Phi\co F_{n+1}\stackrel{\cong}{\to}H_n}$,
the induced action of ${\mathcal G}$ on $(H_n,L)$ is encoded
as the following two commutative diagrams. 
For $g\in G_{n+1}$, there is a commutative diagram
\begin{equation}\label{eq:commutative_diagram_g_n+1_degeneration}
\begin{aligned}
\xymatrix{ F_{n+1} \ar[r]^{t_E(g)}\ar[d]_{\Phi} & F_{n+1}{}^g \ar[d]^{\Phi^g} \\
H_n \ar[r]^{=} & H_n^g,}
\end{aligned}
% \begin{array}{ccc}
%     F_{n+1} & \stackrel{t_E(g)}{\hbox to 15mm{\rightarrowfill}} & 
%     F_{n+1}{}^g\\
%     {\scriptstyle \Phi} \bigg\downarrow \hspace{3mm}& &
%      \hspace{3mm}\bigg\downarrow {\scriptstyle \Phi^g}\\
%     H_n & \stackrel{=}{\hbox to 15mm{\rightarrowfill}}& H_n{}^g,\\ 
%   \end{array}
\end{equation}
where $t_E(g)(X)$ is an isomorphism from $F_{n+1}$ to $F_{n+1}{}^g$
corresponding to $g$.
For $h\in G_n$, there is a commutative diagram
\begin{equation}\label{eq:commutative_diagram_g_n_degeneration}
\begin{aligned}
\xymatrix{ F_{n+1} \ar[r]^{=}\ar[d]_{\Phi} & F_{n+1}{}^h \ar[d]^{\Phi^h} \\
H_n \ar[r]^{t_K(h)} & H_n^h,}
\end{aligned}
% \begin{array}{ccc}
%     F_{n+1} & \stackrel{=}{\hbox to 15mm{\rightarrowfill}} & 
%     F_{n+1}{}^h \\
%     {\scriptstyle \Phi} \bigg\downarrow \hspace{3mm}& &
%      \hspace{3mm}\bigg\downarrow {\scriptstyle \Phi^h}\\
%     H_n & \stackrel{t_K(h)}{\hbox to 15mm{\rightarrowfill}}& H_n{}^h,\\ 
%   \end{array}
\end{equation}
where $t_K(h)(X)$ is the automorphism of $H_n$
corresponding to $h$.
\end{theorem}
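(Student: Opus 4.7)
The plan is to reconstruct the proof from \cite{Torii-5} by assembling four ingredients: the comparison isomorphism $\Phi$, the Galois theoretic identification of $L$, the subgroup actions of $G_{n+1}$ and $G_n$, and their compatibility. Because over the separably closed field $M^{\rm sep}$ all formal group laws of a given finite height are isomorphic, one first fixes an isomorphism $\Phi\co F_{n+1}\stackrel{\cong}{\to}H_n$ over $M^{\rm sep}$ and defines $L\subset M^{\rm sep}$ to be the subfield generated over $M=\F((u_n))$ by the coefficients of $\Phi(X)$.

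For the Galois theoretic step, I would observe that for any $\sigma\in\gal{M^{\rm sep}/M}$ the composition $\Phi^{\sigma}\circ\Phi^{-1}$ is an automorphism of $H_n$ over $M^{\rm sep}$, hence belongs to $S_n$, and the resulting cocycle factors through $\gal{L/M}$. This yields an injection $\gal{L/M}\hookrightarrow S_n$; its surjectivity is precisely the monodromy computation of Gross~\cite{Gross}. Combining with the outer action of $\Gal=\gal{\F/\fp{}}$ on $M$ gives the identification $\gal{L/\fp{}((u_n))}\cong \Gal\ltimes S_n=G_n$.

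Next I would install the two subgroup actions. For $h\in G_n$, let $h$ act on $L$ through the Galois identification just established; the action on $F_{n+1}$ is declared trivial, which is consistent because $F_{n+1}$ descends (after this identification) to a formal group law whose coefficients are fixed. Diagram~\eqref{eq:commutative_diagram_g_n_degeneration} then becomes the definition of the monodromy element $t_K(h)\in S_n$ via $\Phi^h=t_K(h)\circ\Phi$. For $g\in G_{n+1}$, one extends the given $G_{n+1}$--action on $\F\power{u_n}$ through $M$ to $L$, and takes the action on $F_{n+1}$ to be the universal twist $t_E(g)\co F_{n+1}\to F_{n+1}^g$ supplied by the deformation structure. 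Diagram~\eqref{eq:commutative_diagram_g_n+1_degeneration} commutes because $H_n$ is defined over the prime field $\fp{n+1}$, so the action of $G_{n+1}$ on $H_n$ transported through $\Phi$ is the identity.

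The main obstacle is to lift the $G_{n+1}$--action from $\F\power{u_n}$ to the Galois cover $L$ in a canonical fashion, since such a lift is a priori well-defined only up to $\gal{L/M}=S_n$, and similarly the isomorphism $\Phi$ is determined only up to precomposition by an element of $S_n$. Resolving this amounts to organizing the simultaneous choices so that the $G_{n+1}$-- and $G_n$--actions commute inside $\mbox{\rm Aut}(L)$ and assemble with the outer $\Gal$ into the asserted semidirect product $\mathcal{G}=\Gal\ltimes(S_n\times S_{n+1})$. This compatibility, which rests on the equivariance properties of the monodromy representation worked out by Gross, is the technical heart of \cite[Section 2.4]{Torii-5}, and once verified it yields both diagrams simultaneously.
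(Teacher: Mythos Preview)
The paper does not contain its own proof of this statement: \fullref{thm:degeneration_main} is stated as a citation of \cite[Section~2.4]{Torii-5} and is used as input for the constructions in later sections, so there is nothing in the present paper to compare your proposal against. What you have written is a plausible outline of how the argument in \cite{Torii-5} proceeds, and it matches the narrative the paper gives in \fullref{subsection:review_degeneration}: one fixes $\Phi$ over $M^{\rm sep}$, defines $L$ by adjunction of its coefficients, identifies $\gal{L/M}\cong S_n$ via the monodromy cocycle (appealing to Gross for surjectivity), and then reads off the two diagrams as the defining relations for $t_K(h)$ and $t_E(g)$.

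Two small points. First, your justification of diagram~\eqref{eq:commutative_diagram_g_n+1_degeneration} says ``$H_n$ is defined over the prime field $\fp{n+1}$''; the Honda law $H_n$ of height $n$ is defined over $\fp{}$ (its $[p]$--series is $X^{p^n}$), and that is what forces $H_n^g=H_n$ for $g\in G_{n+1}$. Second, the genuine content you flag as the ``main obstacle'' --- producing a canonical lift of the $G_{n+1}$--action to $L$ and checking that it commutes with the $S_n$--Galois action so that everything assembles into $\mathcal{G}=\Gal\ltimes(S_n\times S_{n+1})$ --- is exactly the part the paper defers to \cite{Torii-5}; your sketch correctly identifies this as the crux but does not resolve it, and neither does the present paper.
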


\subsection{The generalized Chern character}
\label{subsection:GCh}

In this subsection we review the generalized Chern character
$\Theta$ constructed in \cite{Torii-9}. %(see also \cite{AMS}).

%Let $P(n)^*(-)$ be the complex oriented multiplicative cohomology theory
%with coefficient ring $P(n)_*=\fp{}[v_n,v_{n+1},\ldots]$.
The co-operation ring $P(n)_*(P(n))$ is isomorphic to
\[ P(n)_*[t_1,t_2,\ldots]\otimes\Lambda(a_{(0)},\ldots,a_{(n-1)}) \]
as a left $P(n)_*$--algebra,
where $|t_i|=2(p^i-1)$ and $|a_{(i)}|=2p^i-1$.
In particular, $P(n)_*(P(n))$ is a free $P(n)_*$--module.
Hence $(P(n)_*,P(n)_*(P(n)))$ is a Hopf algebroid over $\fp{}$.
By formalism of Boardman \cite{Boardman},
there is a natural $P(n)_*(P(n))$--comodule structure on 
the completion $P(n)^*(X)^{\wedge}$ with respect to the
profinite topology:
\[ \rho\co P(n)^*(X)^{\wedge}
   \longrightarrow P(n)_*(P(n))\widehat{\otimes}_{P(n)_*}
         P(n)^*(X)^{\wedge}.\]
%For an even-periodic $\fp{}$-algebra $R_*$,
The set of $\fp{}$--algebra homomorphisms from
$P(n)_*(P(n))/(a_{(0)},\ldots,a_{(n-1)})$ to 
an even-periodic $\fp{}$--algebra $R_*$
is naturally identified with the set of triples $(F,f,G)$,
where $F$ and $G$ are $p$--typical formal group laws over $R_0$
with strict height at least $n$,
and $f$ is an isomorphism between them.
Let $L_*$ be an even-periodic $E_*$--algebra $L[u^{\pm 1}]$.
By \fullref{thm:degeneration_main} 
and the above moduli interpretation of the ring 
$P(n)_*(P(n))/(a_{(0)},\ldots,a_{(n-1)})$,
there is a ring homomorphism
$\theta\co P(n)_*(P(n))\to P(n)_*(P(n))/(a_{(0)},\ldots,a_{(n-1)})\to 
L_*$
such that the following diagram commutes:
\[
\xymatrix@C=30pt{
        P(n)_* \ar[r]^{H_n}\ar[d]_{\eta_R}  & K_* \ar[d]\\
    P(n)_*(P(n)) \ar[r]^-{\theta} & L_*\\
    P(n)_* \ar[r]^{F_{n+1}} \ar[u]_{\eta_L} & E_*. \ar[u]
    }
\]
%\[ \begin{array}{ccc}
%        P(n)_*   & \stackrel{H_n}{\hbox to 15mm{\rightarrowfill}} 
%                 & K_*\\[1mm]
%    {\scriptstyle \eta_R}\bigg\downarrow\hspace{3mm} & & 
%    \bigg\downarrow \\[3mm] 
%    P(n)_*(P(n)) & \stackrel{\theta}{\hbox to 15mm{\rightarrowfill}} 
%                 & L_*\\[1mm]
%    {\scriptstyle \eta_L}\bigg\uparrow\hspace{3mm} & & \bigg\uparrow \\[3mm]
%    P(n)_* & \stackrel{F_{n+1}}{\hbox to 15mm{\rightarrowfill}} & E_*. \\
%   \end{array}\]
That is, $\theta$ corresponds to the triple $(F_{n+1},\Phi,H_n)$ over $L$.
For $Z\in \Lambda(X)$,
by extending the natural ring homomorphism
\[ \begin{array}{rcl}
   P(n)^*(Z)&\stackrel{\rho}{\longrightarrow}&
   P(n)_*(P(n)){\otimes}_{P(n)_*}P(n)^*(Z)\\
   &\stackrel{\theta\otimes 1}{\longrightarrow}&
   L_*{\otimes}_{P(n)_*}P(n)^*(Z)\\[1mm]
   &\cong&   L^*(Z) 
   \end{array}\]
to $E^*(Z)=E_*\otimes_{P(n)_*}P(n)^*(Z)\to L^*(Z)$,
we obtain a multiplicative natural transformation
\begin{equation}\label{eq:generalized_Chern_character}
\Theta: E^*(X)\longrightarrow L^*(X),
\end{equation}
which we call the generalized Chern character.

The following lemma is easily checked.

\begin{lemma}\label{lemma:relations_E_K_fundamental_elements}
$\Theta(x_E)=\Phi(x_K)$ and $\Theta(y_E)=1\otimes y_K$.
\end{lemma}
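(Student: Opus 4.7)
The plan is to unwind the definition of the generalized Chern character.  By construction, on each finite $Z\in\Lambda(X)$, $\Theta$ is the $E_*$--linear extension of the composite
\[
P(n)^*(Z)\stackrel{\rho}{\longrightarrow} P(n)_*(P(n))\widehat{\otimes}_{P(n)_*}P(n)^*(Z)
\stackrel{\theta\otimes 1}{\longrightarrow} L^*(Z),
\]
so it suffices to compute $(\theta\otimes 1)(\rho(x))$ on $\mathcal{X}$ and $(\theta\otimes 1)(\rho(y))$ on $\mathcal{Y}$, and then to identify the results via the equalities $1\otimes x=x_K$ and $1\otimes y=y_K$ induced by the comparison $K^*(-)\to L^*(-)$.

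For the complex orientation $x$, Boardman's coaction formula reads $\rho(x)=\sum_{i\geq 0}t_i\otimes x^{p^i}$ with $t_0=1$; no exterior generator $a_{(j)}$ enters because $P(n)^*(\mathcal{X})=P(n)_*\power{x}$ is concentrated in even degrees while each $a_{(j)}$ has odd cohomological degree.  By the moduli interpretation of $P(n)_*(P(n))/(a_{(0)},\ldots,a_{(n-1)})$, the homomorphism $\theta$ sends the universal strict isomorphism to $\Phi$, so $\sum_i\theta(t_i)X^{p^i}=\Phi(X)$ as formal power series over $L$.  Consequently $\Theta(x_E)=\sum_i\theta(t_i)\,x_K^{p^i}=\Phi(x_K)$.

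For $y\in P(n)^1(\mathcal{Y})$ the key observation is that
\[
\rho(y)\ \equiv\ 1\otimes y\pmod{(a_{(0)},\ldots,a_{(n-1)})}.
\]
Geometrically, $y$ is a Bockstein-type class attached to the $1$--cell of $\mathcal{Y}$ and detects the killing of $v_0,\ldots,v_{n-1}$ in passing from $BP_*$ to $P(n)_*$; its non-primitivity under $\rho$ is carried entirely by the Milnor-primitive generators $a_{(j)}$ rather than by the orientation-changing polynomial generators $t_i$.  Since $\theta$ factors by construction through the quotient $P(n)_*(P(n))/(a_{(0)},\ldots,a_{(n-1)})$, every correction term is annihilated by $\theta\otimes 1$, which yields $\Theta(y_E)=1\otimes y=1\otimes y_K$.

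The only substantive input is the assertion about $\rho(y)$ modulo the ideal $(a_{(0)},\ldots,a_{(n-1)})$; this is a standard Hopf-algebroid computation in Boardman's framework, after which both identities follow by direct substitution.
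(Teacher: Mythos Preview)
The paper offers no proof beyond ``easily checked,'' and your approach---unwinding the definition of $\Theta$ via the coaction $\rho$ and the map $\theta$---is exactly the intended one.  Your treatment of $y_E$ is correct: the coaction on $y$ has the form $\rho(y)=1\otimes y+\sum_i a_{(i)}\otimes x^{p^i}$ (compare the paper's \fullref{lemma:action_on_yA}), so the congruence modulo $(a_{(0)},\ldots,a_{(n-1)})$ is immediate and $\theta$ kills every correction term.

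There is, however, a genuine error in your computation for $x_E$.  The coaction on the orientation class is \emph{not} the ordinary sum $\rho(x)=\sum_i t_i\otimes x^{p^i}$ but the formal sum
\[
\rho(x)=\sum_{i\ge 0}{}^{\eta_{L*}F}\ t_i\otimes x^{p^i}
\]
taken with respect to the $p$--typical formal group law $F$ (see the paper's \fullref{lemma:action_xA} for the analogous formula over $A$).  Correspondingly, the universal strict isomorphism carried by the $t_i$ is this formal sum, and applying $\theta$ yields
\[
\Phi(X)=\sum_{i\ge 0}{}^{F_{n+1}}\ \theta(t_i)\,X^{p^i},
\]
not the ordinary power series $\sum_i\theta(t_i)X^{p^i}$.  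Your version would force $\Phi$ to have only $p^i$--th power monomials, which a general isomorphism of formal group laws does not satisfy.  With the formal sum in place your conclusion is correct: $(\theta\otimes 1)(\rho(x))$ becomes the $F_{n+1}$--formal sum evaluated at $x_K$, which is precisely $\Phi(x_K)$.
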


%% O. K. May 8, 2004
\section{Complete Hopf algebroids}
\label{section:Complete_modules}

\subsection{Complete modules}\label{subsection:complete-module}

Let $k$ be a commutative ring.
We say that $(M,\{F^{\lambda}M\}_{{\lambda}\in \Lambda})$ 
is a filtered $k$--module 
if $M$ is an $k$--module and 
$\{F^{\lambda}M\}_{\lambda\in \Lambda}$ is a family of $k$--submodules
indexed by %a directed set $A$.
a (small) filtered category $\Lambda$.
Then $M$ can be given a linear topology.
We denote by $\fmod{k}$ the category of filtered $k$--modules
and continuous homomorphisms. 
A filtered $k$--module 
$(M,\{F^{\lambda}M\}_{{\lambda}\in \Lambda})$ is said to be
complete Hausdorff if
the canonical homomorphism $$M\to \subrel{\longleftarrow}{\lim}_{\Lambda} 
M/F^{\lambda}M$$
is an isomorphism.
We denote by $\fmodc{k}$ the full subcategory of $\fmod{k}$
whose objects are complete Hausdorff.
We say that $(M,\{F^{\lambda}M\}_{{\lambda}\in \Lambda})\in\fmodc{k}$ is 
a profinite $k$--module if 
$M/F^{\lambda}M$ is a finitely generated $k$--module for all 
$\lambda\in \Lambda$.
We denote by $\profg{k}$ the full subcategory
of $\fmodc{k}$ whose objects are profinite.

Since $\fmodc{k}$ is a symmetric monoidal category with
tensor product $\wtens_k$ and unit object $k$,
we can define commutative monoid objects in $\fmodc{k}$,
that is, complete commutative $k$--algebras.
We denote by $\falgc{k}$ the category of complete commutative $k$--algebras.
For $R\in \falgc{k}$,
we can define an $R$--module in $\fmodc{k}$,
and we denote by $\fmodc{R}$ the category of $R$--modules.
For $R_1\to R_2\in\falgc{k}$,
there is a base change functor 
\[ (-)\widehat{\otimes}_{R_1} R_2\co
   \fmodc{R_1}\ {\hbox to 10mm{\rightarrowfill}}\ \fmodc{R_2}.\]
If $R$ is a complete Noetherian local $k$--algebra
with maximal ideal ${\mathfrak m}$,
then $R$ with ${\mathfrak m}$--adic filtration
can be regarded as an object in $\falgc{k}$.  
We denote by $R^c$ the $k$--module $R$ with 
${\mathfrak m}$--adic filtration,
and simply by $R$ the $k$--module $R$ with trivial filtration $\{0\}$. 
Note that the base change $M\widehat{\otimes}_R R^c$ 
for $M\in \fmodc{R}$ is given by
$$\smash{\subrel{\longleftarrow}{\lim}} M/(F^{\lambda}M+{\mathfrak m}^iM)$$
with inverse limit topology.
Since $M$ is isomorphic to the inverse limit of $M/{\mathfrak m}^iM$
for a finitely generated $R$--module $M$,
we see that $M\widehat{\otimes}_R R^c\cong M$ as 
(abstract) $R$--modules for $M\in \profg{R}$.

\begin{example}
Let $h^*(-)$ be a generalized cohomology theory and $X$ a spectrum. 
We defined the profinite filtration on $h^*(X)$
in \fullref{remark:profinite_topology}. 
%(cf \cite{Boardman,Torii-10}).
If $h^*(-)$ is even-periodic and the degree--$0$ coefficient ring
$h_0$ is a complete Noetherian local ring,
then $h^*(X)$ is a complete Hausdorff profinite $h_*$--module.
Hence the cohomology theory $h^*(-)$
gives a functor from the stable homotopy category
to $\profg{h_*}$.
\end{example}

\begin{lemma}
Let $M\in \fmodc{k}$ and $\underline{M}$ the underlying $k$--module.
If $M\in \fmodc{R}$,
then $\underline{M}$ is an $R$--module in the usual sense.
\end{lemma}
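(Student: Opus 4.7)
The approach is to unpack what an $R$--module structure in $\fmodc{R}$ means as a piece of data in $\fmodc{k}$, and then extract an ordinary action by precomposing with the natural comparison map $R\otimes_k M\to R\wtens_k M$. Recall that $\wtens_k$ is the tensor product of the monoidal category $\fmodc{k}$, so $R\in\falgc{k}$ is a commutative monoid object with multiplication $\mu_R\co R\wtens_k R\to R$ and unit $\eta_R\co k\to R$, and an $R$--module in $\fmodc{R}$ is $M\in\fmodc{k}$ equipped with a morphism $\mu\co R\wtens_k M\to M$ in $\fmodc{k}$ satisfying the associativity and unit axioms.

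The universal property of the completed tensor product gives a natural $k$--linear map $\iota_{A,B}\co A\otimes_k B\to A\wtens_k B$ for every pair of objects of $\fmodc{k}$; this $\iota$ is lax monoidal with respect to the ordinary tensor product on the category of $k$--modules and $\wtens_k$ on $\fmodc{k}$. The first step is to define the candidate action on $\underline{M}$ as the composite
\[
\overline{\mu}\co R\otimes_k M\xrightarrow{\iota_{R,M}} R\wtens_k M\xrightarrow{\mu} M.
\]
This is manifestly $k$--bilinear and so determines a $k$--linear map $R\otimes_k M\to M$ in the usual sense.

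The second step is to verify the two axioms for an ordinary $R$--action. For associativity, assemble the commutative diagram
\[
\xymatrix{
R\otimes_k R\otimes_k M\ar[r]^-{\iota}\ar[d]_{\mu_R\otimes 1}\ar@{}[dr]|{(\ast)} & R\wtens_k R\wtens_k M\ar[r]^-{1\wtens\mu}\ar[d]^{\mu_R\wtens 1} & R\wtens_k M\ar[d]^{\mu} \\
R\otimes_k M\ar[r]^-{\iota} & R\wtens_k M\ar[r]^-{\mu} & M
}
\]
where the right square is the associativity axiom for $\mu$ in $\fmodc{R}$ and the left square $(\ast)$ is naturality of $\iota$. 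The outer composite along the top reads $\overline{\mu}\circ(1\otimes\overline{\mu})$ after another application of naturality, and along the bottom is $\overline{\mu}\circ(\mu_R\otimes 1)$, giving $\overline{\mu}(r,\overline{\mu}(s,m))=\overline{\mu}(rs,m)$. The unit axiom is analogous but simpler: it follows from the unit axiom for $\mu$ together with the evident commutativity of
\[
\xymatrix{
k\otimes_k M\ar[r]^-{\iota}\ar[dr]_{\cong} & k\wtens_k M\ar[d]^{\cong} \\
 & M.
}
\]

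No step presents a real obstacle; the entire content of the lemma is that the forgetful functor $\fmodc{k}\to\module{k}$ is lax symmetric monoidal via the maps $\iota$, and hence carries module objects to module objects. The only thing one has to be a little careful about is distinguishing $R\otimes_k M$ from $R\wtens_k M$: the $R$--action on $\underline{M}$ exists because we \emph{pre}compose with $\iota_{R,M}$, not because $\mu$ itself would factor through the uncompleted tensor product (it does not, in general).
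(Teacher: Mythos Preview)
Your proof is correct and follows exactly the same idea as the paper's: the paper's proof is the single sentence ``The map $R\otimes M\to R\wtens M\to M$ gives an $R$--module structure on $\underline{M}$,'' which is precisely your composite $\overline{\mu}=\mu\circ\iota_{R,M}$ with the verification of the module axioms left to the reader. Your additional diagram-chasing for associativity and unitality is a welcome elaboration of what the paper takes as evident.
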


\begin{proof}
The map $R\otimes M\to R\wtens M\to M$ gives
an $R$--module structure on $\underline{M}$.
\end{proof}

\begin{lemma}
If $M\in\fmodc{R}$,
then for any open $k$--submodule $M_{\lambda}$
there is an open $R$--submodule $N$
such that $N\subset M_{\lambda}$.
\end{lemma}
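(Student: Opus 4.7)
The plan is to extract the desired $R$--submodule directly from the continuity of the module structure map. Since $M\in\fmodc{R}$, the action $a\co R\wtens_k M\to M$ is by definition a morphism in $\fmodc{k}$, hence continuous with respect to the underlying linear topologies. The topology on $R\wtens_k M$ has as a fundamental system of open neighbourhoods of zero the (closures of the images of) submodules $F^{\alpha}R\otimes_k M + R\otimes_k F^{\beta}M$, as $F^{\alpha}R$ and $F^{\beta}M$ range over the open $k$--submodules of $R$ and $M$ respectively.

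Given the open $k$--submodule $M_{\lambda}$, continuity of $a$ furnishes indices $\alpha,\beta$ such that this basic neighbourhood is carried into $M_{\lambda}$. Evaluating on elementary tensors, this says simultaneously that $F^{\alpha}R\cdot M\subset M_{\lambda}$ and $R\cdot F^{\beta}M\subset M_{\lambda}$. I would then take $N:=R\cdot F^{\beta}M$, the $R$--submodule of $M$ generated by $F^{\beta}M$. By the choice of $\beta$ we have $N\subset M_{\lambda}$, and since $1\in R$ gives $F^{\beta}M\subset N$, the submodule $N$ contains an open $k$--submodule and is therefore itself open; it is an $R$--submodule by construction, so $N$ is the desired open $R$--submodule of $M_{\lambda}$.

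The only step requiring some care is the description of the topology on the completed tensor product $R\wtens_k M$ used above, so that continuity of the action really does translate into the two inclusions $F^{\alpha}R\cdot M + R\cdot F^{\beta}M\subset M_{\lambda}$. This is just the definition of the symmetric monoidal structure on $\fmodc{k}$ recalled at the beginning of the subsection, so once it is unwound the lemma is essentially formal and I do not anticipate any real obstacle.
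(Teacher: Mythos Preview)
Your argument is correct and is essentially identical to the paper's: both use continuity of the structure map $R\wtens_k M\to M$ to find an open $k$--submodule $M_\mu$ (your $F^\beta M$) with $R\cdot M_\mu\subset M_\lambda$, and then take $N=R\cdot M_\mu$, noting that $M_\mu\subset N$ makes $N$ open. The paper phrases the continuity step as ``the map $R\wtens M\to M\to M/M_\lambda$ factors through $R\otimes M/M_\mu$'' rather than unwinding the basic neighbourhood explicitly, and does not bother to record the extra inclusion $F^\alpha R\cdot M\subset M_\lambda$, but the substance is the same.
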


\begin{proof}
The fact that $M\in \fmodc{R}$
implies that 
the map $R\wtens M\to M\to M/M_{\lambda}$ 
factors through $R\otimes M/M_{\mu}$ for some open $k$--submodule $M_{\mu}$.
Hence $R\cdot M_{\mu}\subset M_{\lambda}$.
We take $N$ as $R\cdot M_{\mu}$.
Since $M_{\mu}\subset N$, $N$ is an open $R$--submodule.
This completes the proof.    
\end{proof}

\begin{corollary}
Let $M\in \fmodc{R}$.
There is a fundamental (open) neighborhood system 
at $0$ consisting of $R$--submodules.
\end{corollary}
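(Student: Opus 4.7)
The plan is to apply the preceding lemma pointwise to an already-known fundamental neighborhood system. Since $M \in \fmodc{R} \subset \fmodc{k}$, the filtration $\{F^{\lambda}M\}_{\lambda \in \Lambda}$ endows $M$ with its linear topology and therefore furnishes a fundamental neighborhood system at $0$ consisting of open $k$--submodules. The idea is to refine this system, one entry at a time, into one made of $R$--submodules.

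Concretely, for each $\lambda \in \Lambda$ I would apply the preceding lemma to the open $k$--submodule $F^{\lambda}M$ and obtain an open $R$--submodule $N_{\lambda}$ with $N_{\lambda} \subset F^{\lambda}M$. The family $\{N_{\lambda}\}_{\lambda \in \Lambda}$ consists of open $R$--submodules by construction, so it only remains to check that it is fundamental at $0$.

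For this, let $U$ be any open neighborhood of $0$ in $M$. Since $\{F^{\lambda}M\}_{\lambda \in \Lambda}$ is fundamental, there exists $\lambda \in \Lambda$ with $F^{\lambda}M \subset U$, and then $N_{\lambda} \subset F^{\lambda}M \subset U$. Thus every neighborhood of $0$ contains some $N_{\lambda}$, proving that $\{N_{\lambda}\}$ is itself a fundamental neighborhood system at $0$.

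There is essentially no obstacle beyond correctly invoking the previous lemma; the argument is a one-line refinement of the topology. The only conceptual point worth flagging is that the linear topology on a filtered module is by definition the one whose fundamental neighborhoods of $0$ are the submodules of the filtration, so no auxiliary construction of a neighborhood basis is needed before the lemma is applied.
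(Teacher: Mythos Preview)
Your proposal is correct and is precisely the intended argument: the paper states this corollary without proof, leaving it as an immediate consequence of the preceding lemma, and your refinement of the filtration $\{F^{\lambda}M\}$ to $\{N_{\lambda}\}$ is exactly how one unpacks that implication.
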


\begin{corollary}\label{cor:description_tensor}
Let $M$ and $N$ be objects in $\fmodc{R}$.
Then 
\[ M\subrel{R}{\wtens}N\cong\ \inverselimit{\lambda,\mu}
   (M/F^{\lambda}M)\otimes_R (N/F^{\mu}N),\]
where $\{F^{\lambda}M\}_{\lambda}$ and $\{F^{\mu}N\}_{\mu}$
are families of all open $R$--submodules
of $M$ and $N$, respectively.
\end{corollary}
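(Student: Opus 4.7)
The plan is to unwind the definition of the $R$-linear complete tensor product $\wtens_R$ in the category $\fmodc{R}$ and combine it with the preceding corollary to produce the stated inverse-limit formula.

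First I would invoke the preceding corollary: every $M\in \fmodc{R}$ admits a fundamental system of open neighborhoods of $0$ consisting of $R$-submodules, which we may take as the family $\{F^\lambda M\}_\lambda$ appearing in the statement (and similarly for $N$). Since $M$ is complete Hausdorff for this (equivalent) topology, $M\cong \inverselimit{\lambda}M/F^\lambda M$, and likewise for $N$. The essential point is that each quotient $M/F^\lambda M$ is genuinely an $R$-module, so the ordinary $R$-linear tensor product $(M/F^\lambda M)\otimes_R (N/F^\mu N)$ is well-defined as an $R$-module at every finite stage.

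Next I would spell out the construction of $M\wtens_R N$. By the standard construction of the tensor product in a category of complete linearly topologized modules, $M\subrel{R}{\wtens}N$ is the completion of the algebraic tensor product $M\otimes_R N$ with respect to the linear topology whose fundamental open submodules are the images of $F^\lambda M\otimes_R N + M\otimes_R F^\mu N$ as $F^\lambda M$ and $F^\mu N$ range over open $R$-submodules. The quotient of $M\otimes_R N$ by this submodule is canonically $(M/F^\lambda M)\otimes_R (N/F^\mu N)$, so passing to the completion yields
\[ M\subrel{R}{\wtens}N \;\cong\; \inverselimit{\lambda,\mu}(M/F^\lambda M)\otimes_R (N/F^\mu N), \]
which is precisely the claim.

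The only point needing an extra word is that the inverse limit does not depend on the particular fundamental system of open $R$-submodules used; this is a routine cofinality argument, because any two such systems refine one another and so produce isomorphic cofiltered inverse limits. I do not anticipate a real obstacle: the corollary is really a bookkeeping statement recording how the symmetric monoidal structure on $\fmodc{R}$ is computed in concrete terms, and the preceding lemmas have been arranged precisely so that the $R$-linear tensor products of the finite quotients make sense.
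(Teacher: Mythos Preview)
Your proposal is correct. The paper states this corollary without proof, treating it as an immediate consequence of the preceding corollary (that every $M\in\fmodc{R}$ has a fundamental neighborhood system of open $R$--submodules); your argument is precisely the routine unwinding of the definition of $\wtens_R$ that this implicit justification amounts to.
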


%\begin{lemma}
%Let $M\in \fmodc{k}$ be an $R^c$-module.
%Then the continuous ring homomorphism $R^d\to R^c$
%gives $M$ an $R^d$-module structure.
%\end{lemma}

\begin{lemma}\label{lemma:compatibility_d_c}
Let $\smash{M\in\fmodc{R}}$.
%and
%$\{F^{\lambda}M\}$ a fundamental (open) neighborhood system at $0$
%consisting of $R$-submodules.
Then $M$ is an $R^c$--module compatible with given $R$--module structure
if and only if
for any open $R$--submodule $N$ 
there is a nonnegative integer
$i$ such that ${\mathfrak m}^iM\subset N$.
\end{lemma}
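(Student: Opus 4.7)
The plan is to prove the two directions separately, using the explicit description of the topology on $R^c\wtens M$ given by the preceding corollary, namely
\[ R^c\wtens M\cong \inverselimit{i,\mu}(R/{\mathfrak m}^i)\otimes_R(M/F^{\mu}M), \]
where $\{F^{\mu}M\}$ ranges over the open $R$--submodules of $M$.

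For the forward direction, suppose $M$ is a compatible $R^c$--module, so that the structure map $\phi\co R^c\wtens M\to M$ is continuous. Given an open $R$--submodule $N\subset M$, the quotient $M/N$ is discrete, hence the composite $\pi\circ\phi\co R^c\wtens M\to M/N$ is a continuous map into a discrete target; its kernel is therefore an open $R^c$--submodule of $R^c\wtens M$. By the description above, any such open submodule contains the kernel of the projection to some finite level $(R/{\mathfrak m}^i)\otimes_R(M/F^{\mu}M)$. In particular, for every $r\in{\mathfrak m}^i$ and $m\in M$, the element $r\otimes m$ lies in this kernel, so $\pi(rm)=0$, giving ${\mathfrak m}^iM\subset N$.

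For the converse, assume that every open $R$--submodule $N$ contains some ${\mathfrak m}^iM$. I extend the $R$--action to an $R^c$--action as follows: for $r\in R^c$, choose a sequence $r_n\in R$ converging to $r$ in the ${\mathfrak m}$--adic topology, and for $m\in M$ define $r\cdot m:=\lim_n r_n m$. The sequence $(r_n m)$ is Cauchy in $M$, because for any open $N\subset M$ the hypothesis gives an $i$ with ${\mathfrak m}^iM\subset N$, and for $n,n'$ large enough $r_n-r_{n'}\in{\mathfrak m}^i$, so $(r_n-r_{n'})m\in N$. Since $M$ is complete Hausdorff, the limit exists and is independent of the choice of $r_n$, yielding a well-defined $R^c$--module structure extending the $R$--action. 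The continuity of the resulting map $R^c\wtens M\to M$ follows by the same argument: the preimage of a given open $N\supset{\mathfrak m}^iM$ contains the open neighborhood of $0$ obtained as the kernel of the projection onto $(R/{\mathfrak m}^i)\otimes_R(M/N)$.

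The only real subtlety will be the bookkeeping in the forward direction, where one needs to be precise about the fact that a continuous homomorphism from the inverse limit $R^c\wtens M$ into a discrete module factors through one of the defining quotients; everything else is a direct unwinding of definitions using that $M$ is complete Hausdorff.
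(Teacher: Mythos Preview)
Your forward direction is essentially the paper's: both use that the continuous composite $R^c\wtens_R M\to M\to M/N$ into a discrete target must factor through some finite stage $(R/\mathfrak{m}^i)\otimes_R(M/N')$ of the inverse system, which forces $\mathfrak{m}^iM\subset N$.

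In the converse direction you take a different route, and there is a harmless misconception worth pointing out. In this paper $R$ is already a complete Noetherian local ring; $R^c$ denotes the \emph{same} ring equipped with the $\mathfrak{m}$--adic topology instead of the discrete one. So the underlying $R^c$--action on $M$ is literally the given $R$--action, and your Cauchy-sequence construction, while not wrong, is vacuous: one may take $r_n=r$ throughout. The only genuine content in this direction is the continuity of the structure map $R^c\wtens_R M\to M$, which is exactly your final sentence. The paper handles this more directly: for each open $R$--submodule $N$ it chooses $i$ with $\mathfrak{m}^iM\subset N$, notes that the $R$--action on $M/N$ then factors through $R/\mathfrak{m}^i$, and assembles the resulting continuous maps
\[
R^c\wtens_R M\longrightarrow (R/\mathfrak{m}^i)\otimes_R(M/N)\longrightarrow M/N
\]
into a continuous map $R^c\wtens_R M\to M$ via the universal property of $M\cong\inverselimit{}M/N$. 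Your continuity check is the same computation read backwards, so both arguments are correct; the paper's simply avoids the detour through a pointwise definition.
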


\begin{proof}
If $M$ is an $R^c$--module compatible with given $R$--module structure,
then there is a continuous map
$R^c\wtens_R M\longrightarrow M$,
which makes $M$ an $R^c$--module.  
Then the map $R^c\wtens_R M\to M\to M/N$
factors through $R/{\mathfrak m}^i\otimes_R M/N'$
for some $i$ and some open $R$--submodule $N'$.
This implies that ${\mathfrak m}^iM\subset 
{\mathfrak m}^iM+N'\subset N$.

If for any open $R$--submodule $N$
there is $i$ such that ${\mathfrak m}^iM\subset N$,
then there are compatible maps
$R^c\wtens_R M\to R/{\mathfrak m}^i \otimes_R M/N\to M/N$,
which induce a continuous map
$R^c\wtens_R M\to M$.
This map defines an $R^c$--module structure on $M$
compatible with given $R$--module structure.
\end{proof}

%\begin{corollary}
%If $M\in\fmodc{k}$ is an $R^c$-module,
%then $M\subrel{R^d}{\wtens}R^c$ is isomorphic to
%$M$ as an $R^c$-module.
%\end{corollary}
%
%\proof
%Let $\{F^{\lambda}M\}_{\lambda}$ be the family of all open $R$-submodules
%of $M$.
%By \fullref{cor:description_tensor},
%$M\subrel{R^d}{\wtens}R^c$ is isomorphic to
%$\subrel{\longleftarrow}{\lim}M/(F^{\lambda}M+{\mathfrak m}^iM)$.
%By \fullref{lemma:compatibility_d_c},
%$\{F^{\lambda}M+{\mathfrak m}^iM\}_{\lambda,i}$
%defines the same topology of $\{F^{\lambda}M\}_{\lambda}$.
%This completes the proof.
%\qqq

\begin{lemma}\label{lemma:profinite_action}
Let $M\in \fmodc{k}$.
If a profinite group $G$ acts on $M$ continuously
as $k$--module homomorphisms,
then for any open submodule $M_{\lambda}$
there is an open submodule $N$ such that $G\cdot N\subset M_{\lambda}$.
\end{lemma}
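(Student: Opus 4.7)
The plan is a standard compactness plus continuity argument, exploiting the fact that a profinite $G$ is compact and that every neighborhood of $0$ in $M$ contains an open $k$--submodule.

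First, I would unravel what it means for $G$ to act on $M$ continuously as $k$--module homomorphisms: the evaluation map $\alpha\co G\times M\to M$, $\alpha(g,m)=g\cdot m$, is continuous, and for each fixed $g$ the map $m\mapsto g\cdot m$ is a $k$--module homomorphism. Fix an open $k$--submodule $M_{\lambda}$ of $M$. For every $g\in G$ we have $\alpha(g,0)=g\cdot 0=0\in M_{\lambda}$, so by continuity of $\alpha$ at $(g,0)$ there exist an open neighborhood $U_g\subset G$ of $g$ and an open neighborhood $V_g\subset M$ of $0$ with $\alpha(U_g\times V_g)\subset M_{\lambda}$. Since the topology on $M$ is linear, $V_g$ may be shrunk to an open $k$--submodule $N_g\subset V_g$, and we still have $U_g\cdot N_g\subset M_{\lambda}$.

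Next, because $G$ is profinite it is compact, and the open cover $\{U_g\}_{g\in G}$ admits a finite subcover $U_{g_1},\dots,U_{g_r}$. I would then set
\[
N=\bigcap_{i=1}^{r} N_{g_i}.
\]
As a finite intersection of open $k$--submodules of $M$, $N$ is again an open $k$--submodule. For any $g\in G$ there is some $i$ with $g\in U_{g_i}$, and then
\[
g\cdot N\subset U_{g_i}\cdot N_{g_i}\subset M_{\lambda}.
\]
Since $g\in G$ was arbitrary, this gives $G\cdot N\subset M_{\lambda}$, which is what we wanted.

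There is not really a hard step here; the only point to be careful about is the reduction from an arbitrary open neighborhood $V_g$ of $0$ in $M$ to an open $k$--submodule $N_g$, which is automatic because $M\in\fmodc{k}$ carries a linear topology and hence has a fundamental system of neighborhoods of $0$ consisting of open $k$--submodules.
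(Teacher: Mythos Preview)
Your proof is correct and follows essentially the same compactness argument as the paper: choose for each $g\in G$ an open neighborhood $U_g$ and an open submodule $N_g$ with $U_g\cdot N_g\subset M_{\lambda}$, extract a finite subcover, and intersect the corresponding $N_{g_i}$. The only cosmetic difference is that the paper phrases the last step as ``take an open submodule $N$ contained in $N_{g_1}\cap\cdots\cap N_{g_r}$'' rather than setting $N$ equal to the intersection, but since a finite intersection of open $k$--submodules is again an open $k$--submodule this comes to the same thing.
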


\begin{proof}
For any $g\in G$,
there are an open submodule $N_g$ of $M$
and an open neighborhood $U_g$ of $g$
such that $U_g\cdot N_g\subset M_{\lambda}$.
Since $G$ is compact,
$G=U_{g_1}\cup \cdots \cup U_{g_n}$.
Take an open submodule $N$ 
such that $N \subset N_{g_1}\cap \cdots\cap N_{g_n}$. 
Then for any $g\in G$, $g\in U_{g_i}$ for some $i$, and 
for any $x\in N\subset N_{g_i}$,
$g\cdot x\in U_{g_i}\cdot N_{g_i}\subset M_{\lambda}$.
Hence we obtain that $G\cdot N\subset M_{\lambda}$.
\end{proof}

\begin{corollary}\label{cor:lemma:profinite_action}
Let $M\in\fmodc{k}$.
If a profinite group $G$ acts on $M$ continuously
as $k$--modules homomorphisms,
then for any open submodule $M_{\lambda}$,
there is an open $G$--submodule $N$
such that $N\subset M_{\lambda}$.  
\end{corollary}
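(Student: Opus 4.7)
My plan is to bootstrap directly from \fullref{lemma:profinite_action}. That lemma produces, for the given open submodule $M_{\lambda}$, an open $k$--submodule $N'$ of $M$ with $G\cdot N'\subset M_{\lambda}$. The only thing missing is that $N'$ need not be $G$--stable. I would fix this by enlarging $N'$ to the $k$--submodule it generates together with its $G$--translates.

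Concretely, I would set
\[
 N \;=\; \sum_{g\in G} g\cdot N',
\]
the $k$--submodule of $M$ generated by $\{g\cdot x:g\in G,\ x\in N'\}$. Three verifications finish the proof. First, $N$ is a $k$--submodule, because each $g\cdot N'$ is a $k$--submodule (since $g$ acts as a $k$--linear map) and a sum of $k$--submodules is a $k$--submodule. Second, $N$ is $G$--stable: for $h\in G$ and $x=\sum g_i\cdot x_i\in N$ (finite sum, $x_i\in N'$), we have $h\cdot x=\sum (hg_i)\cdot x_i\in N$. Third, $N$ is open, because it contains the open submodule $N'$ (take $g=e$). Finally, $N\subset M_{\lambda}$: the $k$--submodule $M_{\lambda}$ contains every $g\cdot N'$ by the choice of $N'$, hence contains the $k$--submodule they generate.

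There is no real obstacle here; the statement is essentially the $G$--equivariant refinement of \fullref{lemma:profinite_action}, and the construction $N=\sum_{g}g\cdot N'$ produces a $G$--stable open neighborhood of $0$ inside $M_{\lambda}$ using only the fact that $G$ acts by $k$--linear automorphisms. (No compactness argument is needed beyond what is already absorbed into \fullref{lemma:profinite_action}.)
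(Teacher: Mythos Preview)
Your proof is correct and follows essentially the same route as the paper: invoke \fullref{lemma:profinite_action} to obtain an open $N'$ with $G\cdot N'\subset M_{\lambda}$, then take $N$ to be the $k$--submodule generated by $G\cdot N'$, which is $G$--stable, contained in $M_{\lambda}$, and open because it contains $N'$. You simply spell out the verifications in slightly more detail than the paper does.
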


\begin{proof}
By \fullref{lemma:profinite_action},
there is an open submodule $N'$ such that $G\cdot N'\subset M_{\lambda}$.
Let $N$ be the submodule generated by $G\cdot N'$.
Then $N$ is a $G$--submodule and $N \subset M_{\lambda}$. 
Since $N'\subset N$,
$N$ is an open submodule. 
This completes the proof.
\end{proof}

\begin{corollary}
Let $M\in \fmodc{k}$ and $G$ a profinite group.
Suppose that $G$ acts on $M$ continuously
as $k$--module homomorphisms.  
There is a fundamental (open) neighborhood system 
at $0$ consisting of $G$--submodules.
\end{corollary}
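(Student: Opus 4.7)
The plan is to read the statement off directly from \fullref{cor:lemma:profinite_action}. By definition of the filtered topology on $M \in \fmodc{k}$, the open $k$-submodules $\{F^\lambda M\}_{\lambda \in \Lambda}$ already form a fundamental neighborhood system at $0$. So it suffices to show that every open neighborhood of $0$ contains an open $G$-submodule.

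Concretely, given an open neighborhood $U$ of $0 \in M$, I would first choose an open $k$-submodule $M_\lambda$ with $M_\lambda \subset U$ (using the linearity of the topology). Then \fullref{cor:lemma:profinite_action} applied to $M_\lambda$ produces an open $G$-submodule $N$ with $N \subset M_\lambda \subset U$. Letting $U$ vary over all open neighborhoods of $0$ and collecting the resulting $N$ gives the desired fundamental neighborhood system consisting of $G$-submodules.

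No real obstacle is expected here: the whole work has already been done in \fullref{lemma:profinite_action} (which invokes the compactness of $G$ to refine an open submodule to one stable under the $G$-action up to translation) and in \fullref{cor:lemma:profinite_action} (which closes up under the $G$-action to produce a genuine $G$-submodule). The present corollary is a formal repackaging that simply emphasizes that the $G$-stable open submodules themselves form a basis of neighborhoods of $0$, rather than merely being cofinal within a given filtration.
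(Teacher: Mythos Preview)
Your proposal is correct and matches the paper's intent: the paper states this corollary without proof, leaving it as an immediate consequence of \fullref{cor:lemma:profinite_action}, which is exactly the reduction you carry out.
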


\begin{theorem}
Let $G$ be a profinite group acting on $R^c$ continuously
as $k$--algebra homomorphisms,
and $M$ a complete twisted $R^c$--$G$--module.
For any open $R$--submodule $M_{\lambda}$, 
there is an open $R$--$G$--submodule $N$ such that $N\subset M_{\lambda}$. 
\end{theorem}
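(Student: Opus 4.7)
The plan is to combine the preceding Corollary~\ref{cor:lemma:profinite_action} (which delivers an open $G$-stable submodule inside a given open submodule) with the operation of generating an $R$-submodule, exploiting the twisting axiom $g(r\cdot m)=g(r)\cdot g(m)$ of the $R^c$-$G$-module structure. In short, first pass to an open $G$-submodule lying inside $M_\lambda$, then reinflate it to an $R$-submodule and check that $G$-stability survives.

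More concretely, since any open $R$-submodule is a fortiori an open $k$-submodule, applying Corollary~\ref{cor:lemma:profinite_action} to $M_\lambda$ produces an open $G$-submodule $N'$ of $M$ with $N'\subseteq M_\lambda$. Let $N$ be the $R$-submodule of $M$ generated by $N'$. Then the inclusion $N'\subseteq N$ shows that $N$ is open, and because $M_\lambda$ is itself an $R$-submodule containing $N'$, we have $N\subseteq M_\lambda$. By construction $N$ is an $R$-submodule, so only $G$-stability remains to be verified.

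A typical element of $N$ is a finite sum $\sum_i r_i n_i$ with $r_i\in R$ and $n_i\in N'$. For $g\in G$ the twisting axiom gives
\[ g\Bigl(\sum_i r_i n_i\Bigr)=\sum_i g(r_i)\cdot g(n_i). \]
Since $G$ acts on $R^c$, and hence on the underlying ring $R$, by ring homomorphisms, each $g(r_i)$ lies in $R$; and since $N'$ is $G$-stable, each $g(n_i)$ lies in $N'$. Thus the right-hand side lies in $R\cdot N'=N$, so $g(N)\subseteq N$, and $N$ is the desired open $R$-$G$-submodule.

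The only place where the \emph{twisted} hypothesis is genuinely used is this final $G$-stability check; everything else is packaged by the preceding corollaries. In this sense there is no substantial obstacle — the theorem is essentially the concatenation of Corollary~\ref{cor:lemma:profinite_action} with the $R$-span operation, glued together by the twisting compatibility.
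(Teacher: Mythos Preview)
Your proof is correct and follows essentially the same approach as the paper's: apply Corollary~\ref{cor:lemma:profinite_action} to obtain an open $k$--$G$--submodule $N'\subset M_\lambda$, then take $N$ to be the $R$--submodule generated by $N'$. The paper leaves the verification that $N$ is $G$--stable and contained in $M_\lambda$ implicit, whereas you spell these out explicitly using the twisting axiom and the fact that $M_\lambda$ is already an $R$--submodule; but the argument is the same.
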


\begin{proof}
By \fullref{cor:lemma:profinite_action},
there is an open $k$--$G$--submodule $N'$
such that $N'\subset M_{\lambda}$.
Let $N$ be the $R$--submodule generated by $N'$.
Then $N$ is an open $G$--submodule such that $N\subset M_{\lambda}$.
\end{proof}

\begin{corollary}\label{cor:fund_neigh_twisted_mod}
Let $M$ be a complete twisted $R^c$--$G$--module.
Then there is a fundamental (open) neighborhood system 
at $0$ consisting of $R$--$G$--submodules.
\end{corollary}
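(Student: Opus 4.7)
The plan is to observe that this corollary follows immediately from the preceding theorem together with the earlier corollary (the one applying \fullref{cor:lemma:profinite_action} to $\fmodc{R}$) that states every $M\in\fmodc{R}$ admits a fundamental neighborhood system at $0$ consisting of open $R$--submodules.

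More concretely, given an arbitrary open neighborhood $U$ of $0$ in $M$, I would first invoke that earlier corollary to produce an open $R$--submodule $M_\lambda$ with $M_\lambda\subset U$; since a complete twisted $R^c$--$G$--module is in particular an object of $\fmodc{R}$, this step applies. Then I would apply the preceding theorem to $M_\lambda$ to obtain an open $R$--$G$--submodule $N$ with $N\subset M_\lambda\subset U$. Since $U$ was arbitrary, the $R$--$G$--submodules form a fundamental (open) neighborhood system at $0$.

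There is essentially no obstacle here: all the substantive work has already been done in the theorem just above, whose own proof rests on \fullref{cor:lemma:profinite_action} (applied to construct an open $k$--$G$--submodule inside any given open submodule) and then enlarging by the $R$--action to get an $R$--$G$--submodule, which remains open because it contains the original open $k$--$G$--submodule. The only thing to verify in the corollary itself is that one has a supply of open $R$--submodules to feed into that theorem, and that is guaranteed by the earlier corollary for $\fmodc{R}$. Thus the proof is a one-line chaining of two preceding results.
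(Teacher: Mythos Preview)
Your argument is correct and is exactly the deduction the paper intends: the corollary is stated without proof because it follows immediately from the preceding theorem (refining any open $R$--submodule to an open $R$--$G$--submodule) combined with the existence of a fundamental system of open $R$--submodules. One small slip: the parenthetical describing that earlier corollary as ``the one applying \fullref{cor:lemma:profinite_action} to $\fmodc{R}$'' is a mislabel---the fundamental system of $R$--submodules comes from the $R$--module lemma, not from \fullref{cor:lemma:profinite_action}---but this does not affect the argument.
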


\subsection{Complete Hopf algebroids and complete precomodules}
\label{subsection:Complete_Hopf_algebroids}

Let $A$ and $\Gamma$ be objects in $\falgc{k}$.
We suppose that there are maps in $\falgc{k}$:
\[ \begin{array}{rrcl}
    \eta_R\co & A &\longrightarrow & \Gamma,\\
    \eta_L\co & A &\longrightarrow & \Gamma,\\
    \chi  \co & \Gamma &\longrightarrow & \Gamma,\\
    \varepsilon\co & \Gamma & \longrightarrow & A.\\
   \end{array}\]
If the maps $(\eta_R,\eta_L,\chi,\varepsilon)$
satisfy the usual Hopf algebroid relations \cite[Appendix~1]{Ravenel}, 
then we say that the pair $(A,\Gamma)$ is a complete Hopf algebroid
over $k$.
A Hopf algebroid is a complete Hopf algebroid with discrete topology.
Since $\pn_*(\pn)$ is free over $\pn_*$,
$\pn_*(\pn)$ is a Hopf algebroid,
hence, a complete Hopf algebroid over $\fp{}$.

%Let $(A,\Gamma)$ be a complete Hopf algebroid over $k$ and
Let $A\to B$ be a map in $\falgc{k}$.
We set 
\[ \Gamma_B:= B\wtens_A \Gamma \wtens_A B .\]
Then $(B,\Gamma_B)$ is a complete Hopf algebroid over $k$
as usual.

\begin{example}\rm
Let $h^*(-)$ be an even-periodic Landweber exact theory 
over $\pn$ such that $h_0$ is a complete Noetherian local ring.
Put 
\[ \Gamma(h)=h_*^c\wtens_{\pn_*}\pn_*(\pn)\wtens_{\pn_*} h_*^c. \]
Then
$(h_*^c,\Gamma(h))$ is a complete Hopf algebroid over 
$\fp{}$.
\end{example}

%Let $(A,\Gamma)$ be a complete Hopf algebroid.
An object $M\in \fmodc{A}$ is said to be a complete
$\Gamma$--comodule if there is a continuous map
$\rho\co M\to \Gamma\wtens_{A}M$ such that
obvious co-associativity and co-unity diagrams commute.

\begin{definition}\rm
%Let $(R,{\mathfrak m})$ be a complete Noetherian local 
%$k$-algebra, and

Let $R$ be a complete Noetherian local ring,
and $(R^c,\Gamma)$ a complete Hopf algebroid over $k$.
%A complete Hausdorff filtered $R$-module $M$ 
An object $M\in \fmodc{R}$ 
is said to be a complete $\Gamma$--precomodule if
there is a continuous map
\[ \rho\co  M\longrightarrow \Gamma\wtens_R M \] 
such that the following two conditions are satisfied:
\begin{enumerate}
\item
For any open $R$--submodule $M_{\lambda}$ of $M$,
there is an open $R$--submodule $M_{\mu}$ such that 
the map $\smash{M\stackrel{\rho}{\smash{\to}\vphantom{-}}\Gamma\wtens_R M\to 
\Gamma\wtens_R M/M_{\lambda}}$
factors through $M/M_{\mu}$.
When the above condition is satisfied, 
$\rho$ induces a continuous map 
$\rho^c\co M^c\to \Gamma\wtens_{R^c}M^c$.

\item The continuous map $\rho^c$ makes $M^c$
a complete $\Gamma$--comodule.
\end{enumerate}  

%the base change $M\widehat{\otimes}_R R^{\wedge}$ is a complete
%$(R^{\wedge},\Gamma)$-comodule.
Furthermore,
if $M$ is a complete Hausdorff commutative $R$--algebra and
%the comodule structure map 
%\[ M\widehat{\otimes}_R R^c{\hbox to 15mm{\rightarrowfill}}
%   M\widehat{\otimes}_R R^c\widehat{\otimes}_{R^{\wedge}} \Gamma\cong
%   M\widehat{\otimes}_R\Gamma \]
%is a continuous $R^c$-algebra homomorphism,
$\rho^c$ is a map of complete $R^c$--algebras,
then $M$ is said to be a complete 
$\Gamma$--precomodule algebra.      
%We denote by $\precomod{\Gamma}$ the category
%of complete $(R^c,\Gamma)$-precomodules,
%and by $\precoalg{\Gamma}$ the category of 
%complete $(R^c,\Gamma)$-precomodule algebras.
\end{definition}

For $Z$ finite,
the coaction map 
$\pn^*(Z)\to \pn_*(\pn)\otimes_{\pn_*}\pn^*(Z)$ 
induces a natural continuous map
\[ h^*(Z)\longrightarrow \Gamma(h)\wtens_{h_*}h^*(Z). \]

\begin{proposition}\label{prop:precomdule_cohomology}\rm
Let $h^*(-)$ be an even-periodic Landweber exact theory 
over $\pn$ such that $h_0$ is a complete Noetherian local ring.
Then $h^*(Z)$ has a natural $\Gamma(h)$--precomodule structure
for finite $Z$.
Furthermore, 
if $Z$ is a finite CW-complex,
then $h^*(Z)$ is a $\Gamma(h)$--precomodule algebra. 
\end{proposition}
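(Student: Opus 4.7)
The plan is to build the coaction $\rho$ on $h^*(Z)$ from the ordinary $\pn_*(\pn)$-coaction on $\pn^*(Z)$ by base change along $\pn_*\to h_*$, and then verify the two precomodule axioms. For finite $Z$ the module $\pn^*(Z)$ is finitely generated over $\pn_*$, so the Boardman coaction is already an honest map $\rho_{\pn}\co \pn^*(Z)\to \pn_*(\pn)\otimes_{\pn_*}\pn^*(Z)$. Landweber exactness gives $h^*(Z)\cong h_*\otimes_{\pn_*}\pn^*(Z)$, which is finitely generated over $h_*^c$; in particular the completed tensor product on the target coincides with the ordinary tensor product. Applying $h_*\otimes_{\pn_*}(-)\otimes_{\pn_*}h_*$ to $\rho_{\pn}$ and then composing with the completion map on the $\Gamma$-factor defines
\[ \rho\co h^*(Z)\longrightarrow \Gamma(h)\wtens_{h_*}h^*(Z).\]

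I then verify condition (1). Every open $h_*^c$-submodule of the finitely generated module $h^*(Z)$ contains some $\mathfrak{m}^k h^*(Z)$, so the task reduces to showing that $\rho(\mathfrak{m}^j h^*(Z))$ lies in $\Gamma(h)\cdot\eta_R(\mathfrak{m}^k)\wtens_{h_*}h^*(Z)$ for $j$ sufficiently large relative to $k$. Since $\rho$ is $\eta_L$-linear this reduces to the containment $\eta_L(\mathfrak{m}^j)\Gamma(h)\subset \eta_R(\mathfrak{m}^k)\Gamma(h)$, which follows from continuity of $\eta_L\co h_*^c\to \Gamma(h)$ together with the fact that $\eta_R(\mathfrak{m}^k)\Gamma(h)$ is open in the topology of $\Gamma(h)$; the latter is built into the definition of $\Gamma(h)$ as a completed double tensor product involving both copies of $h_*^c$. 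This comparison of the two $\mathfrak{m}$-adic topologies on $\Gamma(h)$ induced by $\eta_L$ and $\eta_R$ is the one non-formal input, and I expect it to be the main obstacle.

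Condition (2) is then essentially automatic: by \fullref{remark:profinite_topology} the module $h^*(Z)$ already coincides with its completion, so $\rho^c=\rho$, and coassociativity and counitality of $\rho$ descend from those of $\rho_{\pn}$ by functoriality of base change and completion. Finally, when $Z$ is a finite CW-complex the diagonal makes $\pn^*(Z)$ into a $\pn_*(\pn)$-comodule algebra; since both base change and completion are symmetric monoidal, $\rho$ becomes a map of complete $h_*^c$-algebras and $h^*(Z)$ acquires the structure of a $\Gamma(h)$-precomodule algebra, as required.
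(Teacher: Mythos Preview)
Your verification of condition~(1) rests on a misconception about the topology carried by $h^*(Z)$. In the paper's framework $h^*(Z)$ is an object of $\fmodc{h_*}$ via the \emph{profinite} filtration; since $Z$ is finite, the identity $Z\to Z$ lies in $\Lambda(Z)$, so $F^Z h^*(Z)=0$ and this topology is \emph{discrete}. Consequently your assertion that ``every open $h_*^c$--submodule of $h^*(Z)$ contains some ${\mathfrak m}^k h^*(Z)$'' is false: the zero submodule is open but contains no such power unless $h^*(Z)=0$. The upshot is that the obstacle you flag --- the comparison of the $\eta_L$-- and $\eta_R$--induced ${\mathfrak m}$--adic topologies on $\Gamma(h)$ --- never arises. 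With the discrete topology, condition~(1) is immediate: take $M_\mu=0$, and the factorization of $\rho$ through $M/M_\mu=M$ is tautological. This is precisely the paper's one-line argument.

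For condition~(2) the paper is again more direct than your proposal. Rather than trying to identify $\rho^c$ with $\rho$, it observes that $h^*(Z)$ is already a comodule over the \emph{uncompleted} Hopf algebroid $h_*\otimes_{\pn_*}\pn_*(\pn)\otimes_{\pn_*}h_*$, and then applies $h_*^c\wtens_{h_*}(-)$ to obtain the $\Gamma(h)$--comodule structure on $h^*(Z)^c$ by functoriality. Your appeal to \fullref{remark:profinite_topology} to conclude ``$\rho^c=\rho$'' conflates two different completions: that remark concerns the profinite topology (discrete here), whereas $M^c=M\wtens_{h_*}h_*^c$ in the precomodule definition carries the ${\mathfrak m}$--adic topology. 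Your treatment of the comodule-algebra statement for finite CW--complexes is fine and matches the paper.
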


\begin{proof}
%For any spectrum $X$ and $Z\in \Lambda(X)$,
%$\pn^*(X)/F^Z\pn^*(X)$ is a $\pn_*(\pn)$-comodule.
%By \fullref{lemma:base_change_filtration},
%there is a map
%\[ h^*(X)/F^Zh^*(X)\longrightarrow \Gamma(h)\wtens_{h_*}
%   h^*(X)/F^Zh^*(X) \]
%such that the following diagram commutes:
%\[ \begin{array}{ccc}
%    h^*(X) & {\hbox to 15mm{\rightarrowfill}} &
%    \Gamma(h)\wtens_{h_*}h^*(X) \\
%    \bigg\downarrow & & \bigg\downarrow \\
%    h^*(X)/F^Zh^*(X) & {\hbox to 15mm{\rightarrowfill}} &
%    \Gamma(h)\wtens_{h_*}h^*(X)/F^Zh^*(X).\\
%   \end{array}\] 
%Hence $h^*(X)$ is a $\Gamma(h)$-precomodule.
Since $h^*(Z)$ is discrete if $Z$ is finite,
the condition $(1)$ is trivial.
Actually,
$h^*(Z)$ is a $h_*\otimes_{P(n)_*}P(n)_*(P(n))
\otimes_{P(n)_*}h_*$--comodule.
Hence $h^*(Z)^c=h^c_*\widehat\otimes_{h_*}h^*(Z)$
is $\Gamma(h)$--comodule.
If $Z$ is a finite CW-complex.
it is easy to see that $h^*(Z)$ is a $\Gamma(h)$--precomodule algebra.
\end{proof}

%% O.K. May 6, 2004
\section[Complete Hopf algebroid of continuous maps from G to the m-adic topological ring R*]{Complete Hopf algebroid $C(G,R^c_*)$}
\label{section:Complete_Hopf_algebroids}

\subsection[The Hopf algebroid structure of C(G,Rc*)]{The Hopf algebroid
structure of $C(G,R^c_*)$}
\label{subsection:complete_hopf_algebroid_C}

Let $k$ be a commutative ring and 
$R_*$ an even-periodic graded commutative $k$--algebra such that
the degree--$0$ subring $R_0$ is 
a complete local ring with maximal ideal ${\mathfrak m}_0$.
We denote by $R^c_*$ a graded topological ring $R_*$
with ${\mathfrak m}$--adic topology,
where ${\mathfrak m}={\mathfrak m}_0 R$.
Let $G$ be a profinite group,
which continuously acts on $R^c_*$ 
as $k$--algebra automorphisms from the right.
Let $C=C(G,R^c_*)$ be the set of all continuous maps from $G$ to $R^c_*$.
Then $C$ is an even-periodic commutative ring 
from the ring structure on $R^c_*$.
It is known that the pair $(R^c_*,C)$ is 
a graded complete Hopf algebroid over $k$.
In this section we %show that $(R^c_*,C)$ is 
%a graded complete Hopf algebroid,and 
describe the structure of %the graded complete Hopf algebroid 
$(R^c_*,C)$ (cf \cite[Section 6.3]{Hovey}).

First, note that there is an isomorphism of commutative rings 
\[ C=C(G,R^c_*)\cong \inverselimit{\smash[t]{i}}C(G,R_*/{\mathfrak m}^i),\]
where $C(G,R_*/{\mathfrak m}^i)$ is the ring of all continuous
map from $G$ to the discrete ring $R_*/{\mathfrak m}^i$.
We give the inverse limit topology to $C$,
where $C(G,R_*/{\mathfrak m}^i)$ is discrete.   
The projection $R^c_*\times G\to R^c_*$ gives a continuous ring homomorphism
$\eta_R\co R^c_*\to C$.
By the ring homomorphism $k\to R^c_*\smash{\stackrel{\eta_R}{\to}}C$,
we regard $C$ as a commutative $k$--algebra.  
The action $R^c_*\times G\to R^c_*$ gives a continuous ring homomorphism
$\eta_R\co R^c_*\to C$,
which is a $k$--algebra homomorphism.

Let $C(G\times G,R^c_*)$ be the ring of all continuous maps
from $G\times G$ to $R^c_*$.
Then $C(G\times G, R^c_*)$ is a complete commutative $k$--algebra as in $C$.

Let $G$ be a profinite group.
We denote by $C(G,M)$ the set of all continuous maps from $G$ to 
$M\in\fmodc{k}$.
Then it can be given a $k$--module structure on $C(G,M)$ from
the $k$--module structure on $M$.  
There is an isomorphism of $k$--modules
\[ C(G,M)\cong\ \inverselimit{N}\,\directlimit{U}
              F(G/U,M/N),\]
where $F(G/U,M/N)$ is the set of all maps from $G/U$ to $M/N$,
$N$ ranges over all open submodules of $M$,
and $U$ ranges over all open normal subgroup of $G$.
We regard $C(G,M)$ as an object in $\fmodc{k}$
by inverse limit topology.

\begin{lemma}\label{lemma:complete_arguments_change}
For a profinite group $G$ and $M\in\fmodc{k}$,
there is a natural isomorphism in $\fmodc{k}$: 
\[  C(G,k)\wtens_k M \cong C(G,M).\] 
\end{lemma}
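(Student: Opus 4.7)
My plan is to unwind both sides using the double-limit description $C(G,M) \cong \inverselimit{N}\,\directlimit{U} F(G/U, M/N)$ given immediately before the lemma, and then reduce to a trivial finite-set identity at each stage.

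First I would observe that the ring $k$ carries the trivial filtration $\{0\}$, so in the formula for $C(G,k)$ the outer inverse limit has only one nontrivial term and we get $C(G,k) \cong \directlimit{U} F(G/U, k)$ with the discrete topology; in particular its only open $k$--submodule is $0$. By \fullref{cor:description_tensor} applied with $R = k$, this lets me rewrite the left hand side as
\[
  C(G,k) \wtens_k M \ \cong\ \inverselimit{\mu}\bigl(C(G,k) \otimes_k M/F^{\mu}M\bigr),
\]
where $\mu$ ranges over open $k$--submodules of $M$.

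Next, for each open $N \subset M$ the quotient $M/N$ is discrete, so the double-limit description collapses to $C(G, M/N) \cong \directlimit{U} F(G/U, M/N)$. Since $G/U$ is a \emph{finite} set, the canonical map
\[
  F(G/U, k) \otimes_k (M/N) \ {\hbox to 10mm{\rightarrowfill}} \ F(G/U, M/N)
\]
is an isomorphism (both sides are just $(M/N)^{G/U}$). Tensor product commutes with filtered colimits, so passing to the colimit over $U$ gives a natural isomorphism $C(G,k) \otimes_k (M/N) \cong C(G, M/N)$.

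Assembling these steps yields
\[
  C(G,k) \wtens_k M \ \cong\ \inverselimit{N}\bigl(C(G,k) \otimes_k M/N\bigr) \ \cong\ \inverselimit{N} C(G, M/N) \ \cong\ C(G,M),
\]
and the composed isomorphism is $k$--linear, continuous and natural in $M$ by construction. The only real content is the finite-set identity $F(G/U, A) \cong F(G/U, k) \otimes_k A$; the main thing to watch is that the topologies match on both sides, but this is automatic because both sides are defined as inverse limits indexed by the same poset of open $k$--submodules of $M$, and the filtered colimit in $U$ enters only into the module structure of each finite stage, not into the topology.
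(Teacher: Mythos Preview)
Your argument is correct and is essentially the paper's own proof: both unwind the completed tensor product as $\inverselimit{N}\,\directlimit{U}\, F(G/U,k)\otimes_k M/N$, use the finite-set identity $F(G/U,k)\otimes_k M/N\cong F(G/U,M/N)$, and then reassemble to $C(G,M)$. Your version is just slightly more explicit in justifying the first step via \fullref{cor:description_tensor} and in noting that $C(G,k)$ is discrete so only the index $N$ survives in the inverse limit.
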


\begin{proof}
We have an isomorphism
$$C(G,k)\wtens M 
\cong\ \subrel{\longleftarrow}{\lim}\,\subrel{\longrightarrow}{\lim}
F(G/U,k)\otimes M/N.$$
Since $G/U$ is a finite set,
$F(G/U,k)\otimes M/N\cong F(G/U,M/N)$.
Hence we see that $C(G,k)\wtens M\cong\
\subrel{\longleftarrow}{\lim}\subrel{\longrightarrow}{\lim}
F(G/U,M/N)\cong C(G,M)$.
\end{proof}

%There is a continuous ring homomorphism
%$R^c_*\to C(G\times G, R^c_*)$ induced by the projection
%$R^c_*\times G\times G\to R^c_*$.
Let $m\co C\times C\to C(G\times G,R^c_*)$ be 
a map given by
$m(\alpha,\beta)(g_1,g_2)=\alpha(g_1)^{g_2}\beta(g_2)$
for $\alpha,\beta\in C, g_1,g_2\in G$.
%It is easy to see that $m$ is continuous.
%
%\begin{lemma}
The map $m$ induces an isomorphism of complete commutative $k$--algebras:
\[ C\widehat{\subrel{R^c_*}{\otimes}}C\stackrel{\cong}{\longrightarrow}
   C(G\times G,R^c_*). \]
%\end{lemma}
%
%\proof
%\qqq
%
We define a map $\psi$ by
\[ \psi\co C\stackrel{\widetilde{\psi}}{\to}C(G\times G,R^c_*)
         \cong C\widehat{\subrel{R^c_*}{\otimes}}C, \]
where $\widetilde{\psi}$ is the map induced by the multiplication  
$G\times G\to G$.  
Then we can check that $\psi$ is a continuous $k$--algebra homomorphism.

Let $\chi\co C\to C$ be the map given by
$\chi(\alpha)(g)=\alpha(g^{-1})^g$ for $\alpha\in C, g\in G$.
Then it is easy to see that $\chi$ is a continuous $k$--algebra 
automorphism.
Let $\varepsilon\co C\to R^c_*$ be the map given by
$\varepsilon(\alpha)=\alpha(e)$ for $\alpha\in C$,
where $e$ is the identity element of $G$.
Then it is also easy to see that $\varepsilon$ 
is a continuous $k$--algebra homomorphism.

\begin{theorem}{\rm (cf Hovey \cite[Section 6.3]{Hovey})}\qua
\label{theorem:Hopf_algebroid_structure}
The pair $(R^c_*,C)$ with %structure maps
$(\eta_R,\eta_L,\psi,\chi,\varepsilon)$
is a graded complete Hopf algebroid over $k$.
\end{theorem}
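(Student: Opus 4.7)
The plan is to verify that $(R^c_*, C)$ is a complete Hopf algebroid by interpreting it as the functions on the affine groupoid encoding the right $G$--action on $\mathrm{Spec}(R^c_*)$: the source and target maps correspond to $\eta_L(r)(g) = r$ and $\eta_R(r)(g) = r^g$, the unit to $\varepsilon$, composition to $\psi$, and inversion to $\chi$. Under this dictionary the Hopf algebroid axioms translate to the standard groupoid identities, and the content of the proof lies in setting up the correct topological structure on $C$ and on $C \wtens_{R^c_*} C$.

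First I would verify $C \in \falgc{k}$ via its inverse-limit description $C \cong \inverselimit{i} C(G, R_*/{\mathfrak m}^i)$, each level being a discrete commutative $k$--algebra under pointwise operations, with $C$ receiving the inverse-limit topology; the $k$--algebra structure comes from constants $k \to R^c_* \stackrel{\eta_L}{\to} C$. The essential technical step is then to check that $m$ induces an isomorphism $C \wtens_{R^c_*} C \cong C(G \times G, R^c_*)$. Using \fullref{lemma:complete_arguments_change} and writing $C \cong C(G, k) \wtens_k R^c_*$, I would obtain the chain $C \wtens_{R^c_*} C \cong C(G, k) \wtens_k C \cong C(G, C) \cong C(G \times G, R^c_*)$. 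The twist $\alpha(g_1)^{g_2}$ appearing in $m$ is dictated by the convention that the tensor product balances an $\eta_L$--action on one side against an $\eta_R$--action on the other: the direct check
\[ m(\alpha \cdot \eta_L(r),\beta)(g_1,g_2) = \alpha(g_1)^{g_2} r^{g_2}\beta(g_2) = m(\alpha, \eta_R(r)\cdot \beta)(g_1,g_2) \]
shows $m$ descends to the completed tensor product, and after reducing modulo ${\mathfrak m}^i$ and restricting to open normal subgroups $U \subset G$ it becomes the evident bijection $F(G/U,R_*/{\mathfrak m}^i) \otimes_{R_*/{\mathfrak m}^i} F(G/U,R_*/{\mathfrak m}^i) \cong F(G/U \times G/U, R_*/{\mathfrak m}^i)$ at each finite stage.

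Next I would verify that each of $\eta_R, \eta_L, \psi, \chi, \varepsilon$ is a continuous $k$--algebra homomorphism; each is defined pointwise, so continuity follows from continuity of multiplication and inversion in $G$ together with continuity of the $G$--action on $R^c_*$, while multiplicativity is inherited from pointwise multiplication in $R^c_*$. Finally, the Hopf algebroid axioms reduce to pointwise identities in the groupoid: coassociativity $(\psi \wtens 1)\psi = (1 \wtens \psi)\psi$ from associativity of $G$; the counit identities $(\varepsilon \wtens 1)\psi = \mathrm{id} = (1 \wtens \varepsilon)\psi$ from $eg = g = ge$; $\varepsilon \eta_L = \varepsilon \eta_R = \mathrm{id}$ from evaluation at $e$; $\chi \eta_L = \eta_R$, $\chi \eta_R = \eta_L$, and $\chi^2 = \mathrm{id}$ by direct pointwise computation using $(g^{-1})^{-1} = g$ together with the cocycle identity for the action; and the antipode convolution identities from $g \cdot g^{-1} = e = g^{-1} \cdot g$.

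The main obstacle is the tensor-product identification $C \wtens_{R^c_*} C \cong C(G \times G, R^c_*)$: one must correctly track which side uses $\eta_L$ versus $\eta_R$, verify that the twist $\alpha(g_1)^{g_2}$ in $m$ is exactly what makes the defining relations of the tensor product hold, and handle the completed rather than algebraic tensor product carefully via the inverse-limit presentation. Once this identification is established, the Hopf algebroid axioms become routine pointwise bookkeeping via the groupoid dictionary, and the statement follows.
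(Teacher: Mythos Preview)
Your proposal is correct and essentially matches what the paper does, with one caveat: the paper gives no proof of this theorem at all. It simply states the result with a citation to Hovey, having already set up the structure maps $\eta_L,\eta_R,\psi,\chi,\varepsilon$ and asserted the key isomorphism $C\wtens_{R^c_*}C\cong C(G\times G,R^c_*)$ via the map $m$ in the text preceding the statement. Your write-up supplies exactly the routine verification the paper leaves to the reader and to the reference; in particular, your use of \fullref{lemma:complete_arguments_change} to justify the tensor-product identification, and your observation that the twist $\alpha(g_1)^{g_2}$ in $m$ is what balances the $\eta_L$-- and $\eta_R$--module structures, are precisely the points the paper's setup is designed to feed into. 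There is nothing to correct.

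One minor remark: the paper's text contains an apparent typo, labeling both unit maps $\eta_R$; your assignment $\eta_L(r)(g)=r$ (projection) and $\eta_R(r)(g)=r^g$ (action) is the intended reading, consistent with the remark following the theorem identifying $(R^c_*,C)$ with the split Hopf algebroid $(R^c_*,C(G,k)\wtens_k R^c_*)$ in which $\eta_L=\rho$ is the coaction.
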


%\proof
%It is easy to check.
%\qqq

\begin{remark}\rm
Let $C(G,k)$ be the ring of all continuous maps from
$G$ to $k$.
There is an isomorphism 
$C\cong C(G,k)\widehat{\otimes}_k R^c_*$
of complete $k$--algebras
by \fullref{lemma:complete_arguments_change},
and 
$C(G,k)$ is a Hopf algebra over $k$
by \fullref{theorem:Hopf_algebroid_structure}.
The right action of $G$ on $R^c_*$ gives $R^c_*$
a graded (left) $C(G,k)$--comodule algebra structure.
Let $\rho\co R^c_*\to C(G,k)\widehat{\otimes}_k R^c_*$
be the comodule algebra structure map.
In this situation we can construct a split Hopf algebroid 
$(R^c_*, C(G,k)\widehat{\otimes}_k R^c_*)$.
In fact, 
$\rho=\eta_L$ under the above isomorphism,
and the graded complete Hopf algebroid 
$(R^c_*, C)$ is isomorphic to 
$(R^c_*, C(G,k)\widehat{\otimes}_k R^c_*)$.  
\end{remark}

\subsection{Twisted modules}\label{subsection:twisted_modules}

%Let $k$ be a commutative ring and 
%$R$ an even-periodic graded commutative $k$-algebra such that
%the degree--$0$ subring $R_0$ is 
%a complete local ring with maximal ideal ${\mathfrak m}_0$.
%We denote by $R^c_*$ a graded topological ring $R$
%with ${\mathfrak m}$-adic topology,
%where ${\mathfrak m}={\mathfrak m}_0 R$.
%Let $G$ be a profinite group,
%which continuously acts on $R^c_*$ 
%as $k$-algebra automorphisms from the right.
%Let $C=C(G,R^c_*)$ be the ring of all continuous functions 
%from $G$ to $R^c_*$. 
%By \fullref{theorem:Hopf_algebroid_structure},
%the pair $(R^c_*,C)$ is a graded complete Hopf algebroid over $k$.  
%%Let $n$ be a positive integer and $p$ an odd prime. 
%%Fix an algebraic extension $\F$ of 
%%the finite field $\fp{n}$ with $p^n$ elements.
%%Denote by $\Gamma$ the Galois group of $\F/\fp{}$.
In this subsection we show that there is an equivalence of 
symmetric monoidal categories between the category of
complete $C$--comodules and the category of 
complete twisted $R^c_*$--$G$--modules.  

\begin{definition}\rm
A complete Hausdorff filtered $R^c_*$--module $M$ is said to be
a complete twisted (right) $R^c_*$--$G$--module 
if $G$ acts on $M$ continuously (from the right) such that
$(am)g = a^g\cdot (m)g$ for all $m\in M, a\in R_*, g\in G$. 
\end{definition}

\begin{remark}
The category of complete twisted $R^c_*$--$G$--modules is a symmetric
mon\-oid\-al category under complete tensor product 
$\subrel{R^c_*}{\wtens}$ and unit object $R^c_*$.
\end{remark}

\begin{definition}\rm
A complete Hausdorff filtered $R^c_*$--module $M$ is said to be
a complete (left) $C$--comodule 
if there is a continuous left $R^c_*$--module homomorphism
$\rho_M\co M\to C\subrel{R^c_*}\wtens M$,
which makes co-associativity and co-unity diagrams
commute. 
\end{definition}

\begin{remark}
The category of complete $C$--comodules is a symmetric
monoidal category under complete tensor product  
$\subrel{R^c_*}{\wtens}$ and unit object $R^c_*$.
\end{remark}

\begin{lemma}\label{lemma:functor_c-comod2twisted_mod}
For a complete (left) $C$--comodule $M$,
there is a natural complete twisted (right) $R^c_*$--$G$--module
structure on $M$.
\end{lemma}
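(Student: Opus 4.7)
The plan is to define the right $G$--action on $M$ by ``evaluating the coaction at $g$'' and then to verify each axiom by unwinding the complete Hopf algebroid structure of $C$ established in \fullref{theorem:Hopf_algebroid_structure}. For each $g\in G$, evaluation $\mathrm{ev}_g\co C\to R^c_*$, $\alpha\mapsto \alpha(g)$, is a continuous $k$--algebra homomorphism satisfying $\mathrm{ev}_g\circ \eta_R=\mbox{\rm id}_{R^c_*}$. Hence, tensoring over $R^c_*$ (where $C$ is viewed as a right $R^c_*$--module via $\eta_R$) yields a continuous map $\mathrm{ev}_g\wtens 1_M\co C\wtens_{R^c_*}M\to R^c_*\wtens_{R^c_*}M\cong M$. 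I then set $T_g\co M\to M$ to be the composition $(\mathrm{ev}_g\wtens 1_M)\circ \rho_M$, and declare the right action by $(m)g:=T_g(m)$.

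To check the right action axioms, counity of $\rho_M$ together with $\mathrm{ev}_e=\varepsilon$ gives $T_e=1_M$ immediately. For the composition law $T_{g_1g_2}=T_{g_2}\circ T_{g_1}$, the key step is to apply coassociativity $(1_C\wtens \rho_M)\rho_M=(\psi\wtens 1_M)\rho_M$ and then evaluate on the first two tensor factors through the isomorphism $C\wtens_{R^c_*}C\cong C(G\times G,R^c_*)$ of \fullref{subsection:complete_hopf_algebroid_C}, given on elementary tensors by $\alpha\wtens \beta\mapsto \bigl((g_1,g_2)\mapsto \alpha(g_1)^{g_2}\beta(g_2)\bigr)$. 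Evaluating at $(g_1,g_2)$ on the right-hand side of coassociativity and using this formula unwinds to $T_{g_2}\circ T_{g_1}$, while on the left-hand side the identity $\widetilde{\psi}(\alpha)(g_1,g_2)=\alpha(g_1g_2)$ forces the evaluation to collapse to $\mathrm{ev}_{g_1g_2}$, yielding $T_{g_1g_2}$.

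For the twist identity $(am)g=a^g\cdot (m)g$, I invoke the left $R^c_*$--linearity of $\rho_M$, where by the comodule convention the left $R^c_*$--structure on $C\wtens_{R^c_*}M$ is via $\eta_L$. Thus $\rho_M(am)=\eta_L(a)\rho_M(m)$, and since $\mathrm{ev}_g(\eta_L(a)\alpha)=\eta_L(a)(g)\,\alpha(g)=a^g\alpha(g)$, the claim follows. For continuity, each $T_g$ is a composition of continuous maps and so is continuous; for joint continuity of $G\times M\to M$, I identify $C\wtens_{R^c_*}M\cong C(G,M)$ via \fullref{lemma:complete_arguments_change}, under which $\rho_M$ becomes a continuous map $M\to C(G,M)$ whose value at $m$ is the continuous function $g\mapsto T_g(m)$, giving the required continuity.

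The main obstacle is the second step. Because of the twist $\alpha(g_1)^{g_2}$ in the isomorphism $C\wtens_{R^c_*}C\cong C(G\times G,R^c_*)$, a naive ``tensor of evaluations'' $\mathrm{ev}_{g_1}\wtens\mathrm{ev}_{g_2}$ fails to descend to the tensor product over $R^c_*$, and one is forced to evaluate through $C(G\times G,R^c_*)$. Keeping this twist straight is what makes the composition law translate to a \emph{right} action and simultaneously ties the $\eta_L$--twist of the comodule structure to the twist in the relation $(am)g=a^g(m)g$.
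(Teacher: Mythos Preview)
Your approach is correct and matches the paper's: both define the action via $(\mathrm{ev}_g\wtens 1)\circ\rho_M$ and deduce joint continuity by recognizing $C\wtens_{R^c_*}M$ as continuous $M$--valued functions on $G$. The paper omits the algebraic verifications you supply and instead spells out the continuity step more explicitly at the level of the discrete quotients $C(G,M/N)$; note also that the isomorphism $C\wtens_{R^c_*}M\cong C(G,M)$ you invoke is one step beyond \fullref{lemma:complete_arguments_change} and is recorded separately as \fullref{lemma:change_groupaction2comodstr}.
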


\begin{proof}
%Since $\hom{K_*}^{\mbox{\rm\scriptsize alg}}(K_*(K),K_*)\cong 
%G_n$,
%for $g\in G_n$, the evaluation gives a map
%$M\to K_*(K)\widehat{\otimes} M\to M$ by Lemma~\ref{}.
%This gives $M$ a twisted $G_n$-module structure over $K_*$.
%Since the coaction map $M\to K_*(K)\widehat{\otimes}M$ is
%continuous,
%the map $M\to K_*(K)\widehat{\otimes}M\to K_*(K)\otimes M/F^{\lambda}M$
%factors through $M/F^{\mu}M$ for some $\mu$.
%Then the fact that $M/F^{\mu}M$ is a finitely generated $K_*$-module
%implies that the map $M/F^{\mu}M\to K_*(K)\otimes M/F^{\lambda}$
%factors through $M/F^{\lambda}M\otimes \Sigma(i)\otimes \Lambda$
%for some $i$.
%Hence we see that the action map
%$M\times G_n\to M$ is continuous.
We denote by $\mbox{\rm ev}(g)\co C\to R^c_*$
the evaluation map at $g\in G$.
Then the map
\[ M\longrightarrow C\subrel{R^c_*}{\wtens} M
   \stackrel{\mbox{\rm ev}(g)\otimes 1}
   {\hbox to 15mm{\rightarrowfill}}
   R^c_*\subrel{R^c_*}{\wtens} M\cong M \]
 defines a twisted $R_*$--$G$--module structure on $M$.
Hence it is sufficient to show that 
the action map $M\times G\to M$ is continuous. 

Let $N$ be an open $R$--submodule of $M$.
Then ${\mathfrak m}^i M\subset N$ for some $i$,
since $M$ is an $R^c_*$--module.
In this case, 
$C(G,R/{\mathfrak m}^i)\otimes_R M/N\cong C(G,M/N)$. 
Then there is an open $R$--submodule $N'$,
which makes the following diagram commute:
\[\xymatrix{
    M \ar[r]\ar[d] & C\wtens M \ar[d]\\
    M/N' \ar[r] & C(G,M/N).
}
\]
%\[ \begin{array}{ccc}
%    M & \longrightarrow & C\wtens M \\
%    \bigg\downarrow & & \bigg\downarrow \\
%    M/N' & \longrightarrow & C(G,M/N).\\ 
%   \end{array}\] 
We note that for any element of $M/N'$ the image under the bottom
arrow factors through $F(G/U, M/N)$ for some
open normal subgroup $U$ of $G$.
%where $F(G/U,M/N)$ is the set of all maps from $G/U$ to $M/N$.
The above commutative diagram gives us the following commutative diagram:
\[\xymatrix{
     M \times G \ar[r]\ar[d] & M \ar[d]\\
     M/N' \times G \ar[r] & M/N. 
   }
\]
%\[ \begin{array}{ccc}
%     M \times G & \longrightarrow & M\\
%    \bigg\downarrow & & \bigg\downarrow \\
%     M/N' \times G & \longrightarrow & M/N. \\
%   \end{array}\]
By the above remark,
the bottom arrow is continuous.
Hence top arrow is also continuous.
This completes the proof.
\end{proof}

\begin{lemma}\label{lemma:change_groupaction2comodstr}
Let $M$ be an $R^c_*$--module
and $G$ a profinite group.
Then there is an isomorphism of left $R^c_*$--modules
\[ C\subrel{R^c_*}{\wtens}M\cong C(G,M),\]
where the left $R^c_*$--module structure on $C\wtens M$
comes from $\eta_L$ of $C$, 
and the left $R^c_*$--module structure on $C(G,M)$ is given by
$(r\cdot f)(g)=r^g\cdot f(g)$ for $r\in R$, $f\in C(G,M)$, $g\in G$.
\end{lemma}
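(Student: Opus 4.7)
The plan is to construct the isomorphism in two steps, using the decomposition $C \cong C(G,k)\wtens_k R^c_*$ observed in the Remark after \fullref{theorem:Hopf_algebroid_structure}, and then invoke \fullref{lemma:complete_arguments_change}.

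First I would write down the map explicitly. Define
\[ \phi\co C\subrel{R^c_*}{\wtens} M \longrightarrow C(G,M),
   \qquad \phi(\alpha\otimes m)(g)=\alpha(g)\cdot m. \]
To see this is well defined over $\subrel{R^c_*}{\wtens}$, one checks the relation $\alpha\cdot \eta_R(r)\otimes m=\alpha\otimes rm$: both sides send $g$ to $\alpha(g)\,r\,m$ since $\eta_R(r)$ is the constant function $g\mapsto r$. To see $\phi$ is left $R^c_*$--linear for the stated structures, one uses $\eta_L(r)(g)=r^g$:
\[ \phi\bigl(r\cdot(\alpha\otimes m)\bigr)(g)=\bigl(\eta_L(r)\alpha\bigr)(g)\,m=r^g\,\alpha(g)\,m=\bigl(r\cdot \phi(\alpha\otimes m)\bigr)(g). \]
Continuity of $\phi$ is then a routine check from the definition of the topologies on both sides as inverse limits of finite-level data.

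Next I would prove that $\phi$ is an isomorphism by the comparison
\[ C\subrel{R^c_*}{\wtens}M \;\cong\; \bigl(C(G,k)\wtens_k R^c_*\bigr)\subrel{R^c_*}{\wtens}M
   \;\cong\; C(G,k)\wtens_k M \;\cong\; C(G,M). \]
The first isomorphism is the Remark after \fullref{theorem:Hopf_algebroid_structure}, noting that under it the map $\eta_R\co R^c_*\to C$ corresponds to $r\mapsto 1\otimes r$, so that the tensor product over $R^c_*$ genuinely collapses the rightmost factor. The middle isomorphism is associativity of complete tensor products. The last isomorphism is \fullref{lemma:complete_arguments_change}. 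Tracing an element $\alpha=f\otimes s\in C(G,k)\wtens_k R^c_*$ through this chain shows the composite sends $\alpha\otimes m\in C\wtens_{R^c_*}M$ to $g\mapsto f(g)\cdot s\cdot m=\alpha(g)\cdot m$, which is precisely $\phi$.

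The only delicate point will be the bookkeeping of module structures in the final step: the left $R^c_*$--module structure on $C(G,k)\wtens_k M$ that is induced from $\eta_L$ on $C$ is not the obvious one on the second factor, but rather the twisted one coming from the $C(G,k)$--comodule algebra structure $\rho\co R^c_*\to C(G,k)\wtens_k R^c_*$ with $\rho(r)(g)=r^g$. I would check once and for all that if $\rho(r)=\sum f_i\otimes r_i$, then $\eta_L(r)\cdot(f\otimes m)=\sum f_if\otimes r_im\in C(G,k)\wtens_k M$ maps under \fullref{lemma:complete_arguments_change} to $g\mapsto r^g\,f(g)\,m$, which is exactly the pointwise twisted action $(r\cdot F)(g)=r^g F(g)$ on $C(G,M)$. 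This identification of the two $R^c_*$--actions is the main (though minor) obstacle, since everything else is formal.
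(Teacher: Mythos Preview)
Your proposal is correct and follows essentially the same route as the paper: decompose $C\cong C(G,k)\wtens_k R^c_*$, collapse the tensor over $R^c_*$, and then identify $C(G,k)\wtens_k M$ with $C(G,M)$. The only cosmetic difference is that the paper re-expands the last identification as the chain $\inverselimit{N}\directlimit{U} F(G/U,k)\otimes_k M/N\cong\cdots\cong C(G,M)$ rather than simply citing \fullref{lemma:complete_arguments_change} again, and it dismisses the $R^c_*$--module compatibility with ``easy to check'' where you spell it out; your added detail on the twisted action is a welcome improvement.
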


\begin{proof}
There is an isomorphism 
$C\cong C(G,k)\wtens_k R^c_*$
by \fullref{lemma:complete_arguments_change}.
Hence $C\wtens_{R^c_*}M$ is isomorphic to $C(G,k)\wtens_k M$.
Then we have
\[ \begin{array}{rcl}
    C(G,k)\wtens_k M & \cong & \inverselimit{N}\directlimit{U}
                          F(G/U,k)\otimes_k M/N\\
                & \cong & \inverselimit{N}\directlimit{U}
                          F(G/U,M/N)\\
                & \cong & \inverselimit{N} C(G, M/N)\\
                & \cong & C(G,M).\\
   \end{array}\] 
It is easy to check that this isomorphism respects
the left $R^c_*$--module structures.
\end{proof}

\begin{lemma}\label{lemma:functor_twisted_mod2c-comod}
For a complete twisted (right) $R^c_*$--$G$--module $M$,
there is a natural complete (left) $C$--comodule structure
on $M$.
\end{lemma}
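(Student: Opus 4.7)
I will use the isomorphism $C \wtens_{R^c_*} M \cong C(G, M)$ from \fullref{lemma:change_groupaction2comodstr} to reduce the problem to constructing a map $\tilde\rho_M \colon M \to C(G, M)$ with the required properties. The natural candidate, using the right $G$-action on $M$, is
\[ \tilde\rho_M(m)(g) = (m)g. \]
The composite with the inverse of the isomorphism will be the desired comodule structure map $\rho_M \colon M \to C\wtens_{R^c_*} M$.

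The first, routine, step is to check that each $\tilde\rho_M(m)$ lies in $C(G, M)$ and that $\tilde\rho_M$ is $R^c_*$-linear for the twisted left $R^c_*$-module structure on $C(G, M)$. The former is immediate from continuity of the action $M\times G\to M$ restricted to $\{m\}\times G$. The latter follows from the twisted module identity $(rm)g = r^g (m)g$, which matches the formula $(r\cdot f)(g) = r^g f(g)$ described in \fullref{lemma:change_groupaction2comodstr}.

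The key step is continuity of $\tilde\rho_M$. Since $C(G, M) = \inverselimit{N} C(G, M/N)$, it suffices to show that for each open $R$-submodule $N \subset M$, the composite $M \to C(G, M/N)$ factors through $M/N'$ for some open submodule $N'$. By \fullref{cor:fund_neigh_twisted_mod}, I may assume $N$ is itself an open $R$-$G$-submodule. Then \fullref{lemma:profinite_action} furnishes an open submodule $N'$ with $(N')G \subset N$, so for $m' \in N'$ one has $\tilde\rho_M(m')(g) \in N$ for all $g$, which gives the desired factoring. Moreover, for each fixed $m \in M$, the map $g \mapsto (m)g \bmod N$ is a continuous map from the profinite group $G$ to the discrete module $M/N$, and hence factors through a finite quotient $G/U$, so $\tilde\rho_M(m)$ genuinely lies in $C(G, M/N) = \directlimit{U} F(G/U, M/N)$.

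Finally, the comodule axioms follow formally from the axioms of a right group action. Counity reduces to $\tilde\rho_M(m)(e) = (m)e = m$. For coassociativity, under the induced identification $C\wtens_{R^c_*} C \wtens_{R^c_*} M \cong C(G\times G, M)$, and recalling that $\psi$ is induced by the multiplication $G\times G\to G$, the composites $(\psi\otimes 1)\circ \rho_M$ and $(1\otimes \rho_M)\circ \rho_M$ send $m$ to the functions $(g_1, g_2)\mapsto (m)(g_1 g_2)$ and $(g_1, g_2)\mapsto ((m)g_1)g_2$ respectively, and these agree by associativity of the action. The main technical obstacle is the continuity step, where the existence of a fundamental neighborhood system consisting of $R$-$G$-invariant open submodules (\fullref{cor:fund_neigh_twisted_mod}) is essential.
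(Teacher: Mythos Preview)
Your proof is correct and follows the same approach as the paper's: build $\rho_M$ from $\tilde\rho_M(m)(g)=(m)g$ via \fullref{lemma:change_groupaction2comodstr}, then verify continuity (the paper likewise leaves the comodule axioms as ``easy to check,'' and your counit/coassociativity verification is fine). The only difference is packaging---the paper re-does the compactness argument of \fullref{lemma:profinite_action} inline rather than citing \fullref{cor:fund_neigh_twisted_mod}; incidentally, once you have taken $N$ to be $G$-stable via that corollary, your further appeal to \fullref{lemma:profinite_action} is redundant, since $N'=N$ already satisfies $(N')G\subset N$.
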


\begin{proof}
%Since the action map $M\times G_n\to M$ is continuous,
%for any $\lambda$,
%we obtain a map $M/F^{\mu}M\times G_n/U\to M/F^{\lambda}M$
%for some open normal subgroup $U$ and some index $\mu$.
%This gives a map $M/F^{\mu}M\to K_*[G_n/U]^*\otimes M/F^{\lambda}M$,
%where $K_*[G_n/U]^*$ is the dual Hopf algebra of 
%the group ring $K_*[G_n/U]$. 
%Since $K_*(K)=\subrel{\longrightarrow}{\lim}K_*[G_n/U]^*$ 
%by Lemma~\ref{},
%we obtain a map $M/F^{\mu}M\to K_*(K)\otimes M/F^{\lambda}M$.
%Then this map lifts to a continuous map
%$M\to K_*(K)\widehat{\otimes}M$.
%It is easy to check that this map
%gives a complete $K_*(K)$--comodule structure on $M$.
The $G$--module structure map $M\times G\to M$
gives a map $M\to C(G,M)$.
By \fullref{lemma:change_groupaction2comodstr},
we obtain a map $M\to C\wtens M$.
If this map is continuous,
it is easy to check that it defines 
a complete $C$--comodule structure on $M$.  
Hence it is sufficient to show that $M\to C(G,M)$
is continuous.
For any open submodule $N$ of $M$ and $g\in G$,
there are open submodule $N_g$ of $M$
and an open neighborhood $U_g$ of $g$
such that $N_g\cdot U_g\subset N$.
Since $G$ is compact,
$G=U_{g_1}\cup\cdots\cup U_{g_n}$.
Let $N'$ be an open submodule such that
$N'\subset N_{g_1}\cap\cdots\cap N_{g_n}$.
Then $N'\cdot G\subset N$.
Hence the map $M\times G\to M\to M/N$
factors through $M/N'\times G$.
This implies that the map 
$M\to C(G,M)\to C(G,M/N)$ factors through $M/N'$.
Hence $M\to C(G,M)$ is continuous.
This completes the proof.  
\end{proof}

\begin{theorem}\label{theorem:eq_cat_twist_comod_over_C}
There is an equivalence of symmetric monoidal categories between
the category of complete twisted (right) $R^c_*$--$G$--modules and
the category of complete (left) $C$--comodules.
\end{theorem}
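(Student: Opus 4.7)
The plan is to assemble the equivalence from the two functors already constructed in Lemma \ref{lemma:functor_c-comod2twisted_mod} and Lemma \ref{lemma:functor_twisted_mod2c-comod}, then check that these functors are mutually inverse and that both are symmetric monoidal. Denote by $\Phi$ the functor sending a complete $C$--comodule to the associated twisted module (via evaluation maps), and by $\Psi$ the functor in the opposite direction (via $M \to C(G,M) \cong C \wtens_{R^c_*} M$).

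First I would verify $\Psi\circ\Phi = \mathrm{id}$ and $\Phi\circ\Psi = \mathrm{id}$. Both functors leave the underlying object of $\fmodc{R^c_*}$ unchanged, so mutual inversion is a purely structural check. Given a coaction $\rho_M\co M\to C\wtens_{R^c_*}M \cong C(G,M)$, the functor $\Phi$ produces the action $m\cdot g = \mathrm{ev}(g)\bigl(\rho_M(m)\bigr)$, and $\Psi$ then reassembles the continuous map $m \mapsto (g\mapsto m\cdot g)$, which is just $\rho_M$ again by the definition of the isomorphism $C\wtens_{R^c_*}M\cong C(G,M)$ from \fullref{lemma:change_groupaction2comodstr}. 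The reverse composition is immediate from the same identification. For morphisms, a continuous $R^c_*$--linear map intertwines coactions iff it intertwines every evaluation $\mathrm{ev}(g)$, iff it is $G$--equivariant; hence $\Phi$ and $\Psi$ induce inverse bijections on $\mathrm{Hom}$.

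For the symmetric monoidal structure, both categories use $\wtens_{R^c_*}$ with unit $R^c_*$ (carrying the trivial $G$--action on one side and the coaction $\eta_L$ on the other), so it remains to check that the structure map on a tensor product corresponds under $\Phi$ and $\Psi$. On the twisted side the action on $M\wtens_{R^c_*}N$ is diagonal, $(m\otimes n)\cdot g = (m\cdot g)\otimes(n\cdot g)$. On the comodule side, the coaction on $M\wtens_{R^c_*}N$ is built from $\rho_M$, $\rho_N$ and the comultiplication $\psi$ of \fullref{theorem:Hopf_algebroid_structure}. Using the isomorphism $C\wtens_{R^c_*}C \cong C(G\times G,R^c_*)$, together with the fact that $\widetilde\psi$ is induced by the multiplication $G\times G \to G$, evaluation of the tensor coaction at $g$ factors through the diagonal $(g,g)\in G\times G$ and therefore gives exactly the diagonal action. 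The symmetry constraint and the unit isomorphisms are handled in the same way.

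The main obstacle lies in this last step: the definition $m(\alpha,\beta)(g_1,g_2) = \alpha(g_1)^{g_2}\beta(g_2)$ involves a twist by the $G$--action on $R^c_*$, so one must carefully trace how the left and right $R^c_*$--module structures on $C$ interact with the isomorphism $C\wtens_{R^c_*} M \cong C(G,M)$ of \fullref{lemma:change_groupaction2comodstr}. This twist is precisely what makes the diagonal action on $M\wtens_{R^c_*}N$ well-defined as a map of $R^c_*$--modules and simultaneously what makes $\psi$ a map of complete $R^c_*$--algebras; once this bookkeeping is unwound, the compatibility of the two monoidal structures follows directly, completing the equivalence.
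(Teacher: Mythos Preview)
Your proposal is correct and follows essentially the same approach as the paper: invoke \fullref{lemma:functor_c-comod2twisted_mod} and \fullref{lemma:functor_twisted_mod2c-comod} to get mutually inverse functors, then verify compatibility with the symmetric monoidal structure. The paper's proof is far more terse---it simply cites the two lemmas and declares the monoidal compatibility ``easy to check''---whereas you have unpacked that verification (including the role of the twist in the isomorphism $C\wtens_{R^c_*}C\cong C(G\times G,R^c_*)$), which is a reasonable elaboration but not a different argument.
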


\begin{proof}
By \fullref{lemma:functor_c-comod2twisted_mod} 
and \fullref{lemma:functor_twisted_mod2c-comod},
there is an equivalence of categories between the category
of complete $C$--comodules and the category of 
complete twisted $R^c_*$--$G$--modules.
It is easy to check that this equivalence respects  
the symmetric monoidal structures. 
\end{proof}

\subsection{Remark on twisted modules}
\label{subsection:Lemma on twisted modules}

In this subsection we let $G=\gal{\fp{n}/\fp{}}\ltimes S_n$. 
Usually, $G$ is called the $n$--th extended Morava stabilizer group,
and it is important 
to study the category of complete twisted $\fp{n}$--$G$--modules.
In this subsection we compare the category of 
complete twisted $\fp{n}$--$G$--modules
and the category of complete twisted $\F$--$G_n$--modules.

There is an exact sequence:
\[   1\to \gal{\F/\fp{n}}\longrightarrow G_n\longrightarrow G\to 1.\] 
Hence, for a twisted $\F$--$G_n$--module $M$,
the submodule 
$\M$ invariant over $\gal{\F/\fp{n}}$
is a twisted $\fp{n}$--$G$--module.
Conversely, for a twisted $\fp{n}$--$G$--module $N$,
we can give $\F\otimes_{\fp{n}}N$ an obvious twisted $\F$--$G_n$--module
structure.

\begin{lemma}\label{lemma:descent_twisted_gal_mod}
For a finite dimensional twisted $\F$--$G_n$--module $M$,
$\M$ is a finite dimensional 
twisted $\fp{n}$--$G$--module,
and $\F\otimes_{\fp{n}}\M$ is isomorphic
to $M$ as a twisted $\F$--$G_n$--module.  
\end{lemma}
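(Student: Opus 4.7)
The plan is to view the statement as a Galois descent statement for the finite Galois extension $\F/\fp{n}$ applied to a finite--dimensional $\F$--vector space carrying a semi--linear action of $H=\gal{\F/\fp{n}}$, and then to check that the resulting $\fp{n}$--structure carries a compatible twisted $G$--module structure.

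First I would set up the descent. Restricting the $G_n$--action on $M$ to the closed normal subgroup $H\subset G_n$ coming from the exact sequence $1\to H\to G_n\to G\to 1$ yields an action of the finite group $H$ on $M$ which is semi--linear over $\F$ in the sense that $(am)h=a^h\cdot (m)h$ for $a\in\F$, $m\in M$, $h\in H$, where $H$ acts on $\F$ through the Galois action. This is exactly the setup of classical Galois descent (Speiser's lemma / Hilbert's Theorem 90 for $\mathrm{GL}_d$): for a finite Galois extension $\F/\fp{n}$ and a finite--dimensional $\F$--vector space $M$ with a semi--linear $H$--action, the natural evaluation map
\[
\F\otimes_{\fp{n}}M^{H}\ \longrightarrow\ M,\qquad a\otimes m\longmapsto am,
\]
is an isomorphism, and in particular $\dim_{\fp{n}}M^{H}=\dim_{\F}M<\infty$.

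Next I would install the $G$--structure on $\M=M^{H}$. Because $H$ is normal in $G_n$ with quotient $G$, for any $g\in G_n$ and any $m\in M^{H}$, the element $(m)g$ again lies in $M^{H}$ (since $(m)gh=(m)(ghg^{-1})g$ and $ghg^{-1}\in H$ fixes $m$). Hence the $G_n$--action restricts to an action on $M^{H}$ which factors through $G=G_n/H$. The defining identity $(am)g=a^{g}(m)g$, read for $a\in\fp{n}$ (where $\fp{n}=\F^{H}$), together with the fact that $G$ acts on $\fp{n}$ through its natural quotient $\gal{\fp{n}/\fp{}}$, shows that this makes $\M$ a twisted $\fp{n}$--$G$--module; finite dimensionality over $\fp{n}$ comes from the dimension count above.

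Finally I would check that the descent isomorphism respects all the structure. By construction $\F\otimes_{\fp{n}}\M\to M$ is $\F$--linear. It is $G_n$--equivariant because the $G_n$--action on both sides is determined by its actions on $\F$ and $\M\subset M$, which agree by the previous paragraph; equivariance with respect to $H$ is automatic (both sides have $M$ as $H$--module, and $\M$ is $H$--fixed), and equivariance with respect to a lift of any $g\in G$ follows from $(a\otimes m)g=a^{g}\otimes (m)g\mapsto a^{g}(m)g=(am)g$. The main (and only) nontrivial input is the classical Galois descent isomorphism; the remaining verifications are bookkeeping on the compatibilities forced by the exact sequence and the semi--linearity identity.
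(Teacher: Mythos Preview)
Your proposal is correct and follows essentially the same line as the paper's proof: the paper invokes the vanishing $H^1\bigl(\gal{\F/\fp{n}};\mathrm{GL}_m(\F)\bigr)=\{1\}$ (Hilbert's Theorem~90, citing Serre) to obtain the isomorphism $M\cong \F\otimes_{\fp{n}}\M$ of semi-linear $\gal{\F/\fp{n}}$--modules, which is exactly the Galois descent / Speiser lemma you use, and then appeals to the exact sequence $1\to\gal{\F/\fp{n}}\to G_n\to G\to 1$ for the twisted $\fp{n}$--$G$--module structure and the $G_n$--equivariance. Your version simply spells out the equivariance checks more explicitly.
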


\begin{proof}
Let $m=\mbox{\rm dim}_{\F}M$.
We obtain an isomorphism %of twisted $\F$--$\gal{\F/\fp{n}}$-modules:
$\smash{M\cong \F\otimes_{\fp{n}}\M}$
as twisted $\F$--$\gal{\F/\fp{n}}$--modules,
since we have
$H^1(\gal{\F/\fp{n}};\mbox{\rm GL}_m(\F)) = \{1\}$ 
\mbox{(cf Serre \cite[Proposition~X.1.3]{Serre})}. 
By the above exact sequence,
$\M$ is a twisted $\fp{n}$--$G$--module,
and we see that this is an isomorphism of twisted
$\F$--$G_n$--modules. 
\end{proof}

\begin{remark}\label{remark:continuity_twisted_gal_mod}
Since the cardinality of $M$ is finite,
the action of $G_n$ on $M$ and the action of $G$ on 
$\M$ are continuous by 
\fullref{lemma:topology_structure_G_n}.
\end{remark}

Let $M$ be a profinite twisted $\F$--$G_n$--module.
By \fullref{cor:fund_neigh_twisted_mod},
we can take a fundamental neighborhood system $\{F^{\lambda}M\}$ at $0$
consisting of open $\F$--$G_n$--submodules.
%Then $\M$ is isomorphic to the inverse limit
%of $(M/F^{\lambda}M)^{\mbox{\scriptsize $\gal{\F/\fp{n}}$}}$.
Then $$\M\cong \ \subrel{\longleftarrow}{\lim}
(M/F^{\lambda}M)^{\mbox{\scriptsize $\gal{\F/\fp{n}}$}},$$
and $\M$ is a profinite twisted $\fp{n}$--$G$--module
with filtration $F^{\lambda}(M^{\mbox{\scriptsize $\gal{\F/\fp{}}$}})$,
where $F^{\lambda}(M^{\mbox{\scriptsize $\gal{\F/\fp{n}}$}})$ 
is the kernel of the map $\M\to 
(M/F^{\lambda}M)^{\mbox{\scriptsize $\gal{\F/\fp{n}}$}}$.
Conversely,
for a profinite twisted $\fp{n}$--$G$--module $N$,
we can give $\F\wtens_{\fp{n}}N$ an obvious 
profinite twisted $\F$--$G_n$--module structure.
By \fullref{lemma:descent_twisted_gal_mod}
and \fullref{remark:continuity_twisted_gal_mod},
we obtain the following proposition.

\begin{proposition}\label{prop:eq_cats_twisted_modules}
The functor $M\mapsto \M$
gives an equivalence of symmetric monoidal categories
between the category of profinite twisted $\F$--$G$--modules
and the category of profinite twisted $\fp{n}$--$G$--modules.
The quasi-inverse of this functor is given by
$N\mapsto \F\wtens_{\fp{n}}N$.
\end{proposition}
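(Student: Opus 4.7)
The plan is to reduce the statement to the finite-dimensional case already handled in \fullref{lemma:descent_twisted_gal_mod}, by working level-by-level with a fundamental neighborhood system of open submodules and then passing to the inverse limit.

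First, I would verify that the assignments $M\mapsto\M$ and $N\mapsto\F\wtens_{\fp{n}}N$ are well-defined functors between the two categories. For a profinite twisted $\F$--$G_n$--module $M$, \fullref{cor:fund_neigh_twisted_mod} provides a fundamental neighborhood system $\{F^{\lambda}M\}$ at $0$ consisting of open $\F$--$G_n$--submodules with each quotient $M/F^{\lambda}M$ finite-dimensional over $\F$. Applying the fixed-point functor at each level and using \fullref{lemma:descent_twisted_gal_mod}, each $(M/F^{\lambda}M)^{\gal{\F/\fp{n}}}$ is a finite-dimensional twisted $\fp{n}$--$G$--module; the inverse limit is exactly the object $\M$ with the filtration described in the text preceding the proposition, so it is profinite, and \fullref{remark:continuity_twisted_gal_mod} combined with passage to the limit gives the continuity of the $G$-action. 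Conversely, since $\F/\fp{n}$ is a finite extension of finite fields, the completed tensor product $\F\wtens_{\fp{n}}N$ coincides with the ordinary tensor product $\F\otimes_{\fp{n}}N$, and the obvious diagonal action of $G_n$ on this module is continuous because the $\gal{\F/\fp{n}}$--action on $\F$ is (trivially) continuous and the $G$--action on $N$ is continuous.

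Next, I would construct the unit and counit of the adjunction and show that they are isomorphisms. At each finite level $M/F^{\lambda}M$, \fullref{lemma:descent_twisted_gal_mod} gives a natural isomorphism $\F\otimes_{\fp{n}}(M/F^{\lambda}M)^{\gal{\F/\fp{n}}}\stackrel{\cong}{\longrightarrow}M/F^{\lambda}M$. Taking the inverse limit over $\lambda$ and using that $\F$ is finite-dimensional over $\fp{n}$ (so $\F\otimes_{\fp{n}}(-)$ commutes with inverse limits of $\fp{n}$--vector spaces), we obtain the counit isomorphism $\F\wtens_{\fp{n}}\M\cong M$ of profinite twisted $\F$--$G_n$--modules. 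A symmetric argument applied to a fundamental neighborhood system for $N$ yields the unit isomorphism $N\cong(\F\wtens_{\fp{n}}N)^{\gal{\F/\fp{n}}}$.

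Finally, I would verify compatibility with the symmetric monoidal structures. The unit objects $\F$ and $\fp{n}$ correspond to each other under the two functors. For the tensor product, given profinite twisted $\F$--$G_n$--modules $M_1,M_2$, there is a natural map $\M_1 \subrel{\fp{n}}{\wtens}\M_2 \to (M_1\subrel{\F}{\wtens}M_2)^{\gal{\F/\fp{n}}}$; reducing to the finite-dimensional quotients and using \fullref{lemma:descent_twisted_gal_mod} together with the fact that $\F$ is finite \'etale over $\fp{n}$ shows this is an isomorphism. The main obstacle will be bookkeeping in the profinite setting, namely confirming that taking $\gal{\F/\fp{n}}$--fixed points commutes with the relevant inverse limits and completed tensor products, and that the induced filtrations agree so that continuity and the profinite property are preserved on both sides.
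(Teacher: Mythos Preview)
Your proposal is correct and follows essentially the same route as the paper: the paper's justification is simply the paragraph preceding the proposition together with a citation of \fullref{lemma:descent_twisted_gal_mod} and \fullref{remark:continuity_twisted_gal_mod}, i.e.\ pick a fundamental neighborhood system of open $\F$--$G_n$--submodules via \fullref{cor:fund_neigh_twisted_mod}, apply the finite-dimensional lemma levelwise, and pass to the inverse limit. You have spelled out in more detail the unit/counit isomorphisms and the symmetric monoidal compatibility, which the paper leaves implicit, but the underlying argument is the same.
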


By \fullref{prop:eq_cats_twisted_modules},
there is no essential difference between
profinite twisted $\F$--$G_n$--modules
and profinite twisted $\fp{n}$--$G$--modules.

\subsection{Reformulation}\label{subsection:reformulation}

In this section we reformulate the results in \cite{Torii-9}.
Set 
\[ \begin{array}{rcl}
    C_{E_*} & = & C(G_{n+1}, E_*^c),\\[1mm]
    C_{K_*} & = & C(G_n, K_*).\\
   \end{array}\]
Then $(E_*^c,C_{E_*})$ and $(K_*,C_{K_*})$ are graded complete 
Hopf algebroids over $\fp{}$ by 
\fullref{theorem:Hopf_algebroid_structure}.

Let $M$ be a profinite $C_{E_*}$--precomodule.
Then $M^c$ is a complete twisted $E_*^c$--$G_{n+1}$--module 
by \fullref{lemma:functor_c-comod2twisted_mod}. %%% obscure!!
Note that $M^c=M$ as an abstract $E_*$--module.
Since the $G_{n+1}$ action on $L$ is compatible with the 
$G_{n+1}$--action on $M$,  
$G_{n+1}$ acts on $M\wtens_{E_*} L_*$,
where $L_*=L[u^{\pm 1}]$ is regarded as a discrete module.
We define ${\mathcal F}(M)$ to be the $S_{n+1}$--invariant submodule
of $L_*\wtens_{E_*} M$:
\[ {\mathcal F}(M)= H^0(S_{n+1}; L_*\subrel{~E_*}{\wtens}M). \]
%Let $L_*=L[u^{\pm 1}]$ and $K_*=\F[w^{\pm 1}]$.
We regard $K_*=\F[w^{\pm 1}]$ as a subring of $L_*=L[u^{\pm 1}]$
by $w=\Phi_0^{-1} u$.
The following lemma was proved in \cite[Lemma~4.2]{Torii-9}.

\begin{lemma}\label{lemma:first_step_lemma}
$H^0(S_{n+1}; L_*)=K_*$.
\end{lemma}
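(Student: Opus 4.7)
My plan is to verify the two inclusions $K_* \subseteq L_*^{S_{n+1}}$ and $L_*^{S_{n+1}} \subseteq K_*$ separately. For $K_* \subseteq L_*^{S_{n+1}}$: since $G_{n+1} = \Gal \ltimes S_{n+1}$, the Galois factor $\Gal$ sits in the complementary factor, so $S_{n+1}$ acts trivially on $\F$, giving $\F \subseteq L_*^{S_{n+1}}$. To see that $w = \Phi_0^{-1} u$ is $S_{n+1}$-invariant, fix $g \in S_{n+1}$ and set $a_g := t_E(g)'(0)$. Diagram \eqref{eq:commutative_diagram_g_n+1_degeneration}, whose bottom row is the identity of $H_n$, yields $\Phi^g \circ t_E(g) = \Phi$, i.e.\ $\Phi^g = \Phi \circ t_E(g)^{-1}$; comparing linear coefficients gives $\Phi_0^g = \Phi_0 a_g^{-1}$. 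Combined with the standard formula $u^g = u a_g^{-1}$ for the action of $g$ on the degree-$(-2)$ generator of $E_*$, this forces $w^g = (\Phi_0 a_g^{-1})^{-1} \cdot u a_g^{-1} = \Phi_0^{-1} u = w$.

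For the reverse inclusion, since $\Phi_0 \in L^\times$ the substitution $u = \Phi_0 w$ identifies $L_* = L[u^{\pm 1}]$ with $L[w^{\pm 1}]$ as a graded ring, with $L$ in degree $0$ and $w$ in degree $-2$. Invariance of $w$ means the $S_{n+1}$-action preserves this grading and acts on each summand $L \cdot w^k$ through its action on $L$, so $L_*^{S_{n+1}} = L^{S_{n+1}}[w^{\pm 1}]$; matching this with $K_* = \F[w^{\pm 1}]$ then reduces the whole claim to the single equality $L^{S_{n+1}} = \F$.

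This last equality is where I expect the bulk of the technical work to lie. The field $L$ is topologically generated over $\F$ by $u_n$ and the coefficients $\Phi_i$ of $\Phi$; $S_{n+1}$ acts on $u_n$ through the Lubin--Tate action on $E_{n+1,0}/I_n = \F\power{u_n}$ described in \fullref{thm:degeneration_main}, and on the $\Phi_i$ by $\Phi \mapsto \Phi \circ t_E(g)^{-1}$. Conceptually, $\Phi$ is a \emph{universal} isomorphism $F_{n+1} \cong H_n$ over $M^{\mathrm{sep}}$, so the $S_{n+1}$-action on its coefficients ought to be free enough that the only fixed elements come from $\F$. To make this rigorous I would unwind the explicit power-series formulas of \cite{Torii-5} for the $S_{n+1}$-action on $\F\power{u_n}$ and on each $\Phi_i$, then argue by induction on finite approximations $L_N \subset L$ generated over $\F$ by $u_n$ and $\Phi_0, \ldots, \Phi_N$ (on which the $S_{n+1}$-action factors through a finite quotient, reducing each stage to a finite Galois-theoretic computation) to conclude $L_N^{S_{n+1}} = \F$, and finally pass to the limit using the continuity of the $S_{n+1}$-action to obtain $L^{S_{n+1}} = \F$.
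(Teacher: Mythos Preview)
The paper does not prove this lemma here: it simply records the statement and cites \cite[Lemma~4.2]{Torii-9}. There is thus no in-paper argument to compare against, and your proposal has to stand on its own.

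Your reduction is sound. The check that $w=\Phi_0^{-1}u$ is $S_{n+1}$-invariant via diagram~\eqref{eq:commutative_diagram_g_n+1_degeneration} is correct (modulo the sign convention for the action on $u$, which can be read off from \fullref{lemma:action_xA}), and writing $L_*=L[w^{\pm 1}]$ with $w$ fixed immediately gives $L_*^{S_{n+1}}=L^{S_{n+1}}[w^{\pm 1}]$, reducing everything to the degree-zero claim $L^{S_{n+1}}=\F$.

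The sketch for that last claim, however, does not work as written. Your subfields $L_N$, generated over $\F$ by $u_n$ and $\Phi_0,\ldots,\Phi_N$, are not $S_{n+1}$-stable: for $g\in S_{n+1}$ the image $u_n^g$ is a genuine power series in $u_n$ coming from the Lubin--Tate action on $E_{n+1,0}/I_n=\F\power{u_n}$, not a rational function, so already $\F(u_n)$ is not preserved. If you repair this by generating $L_N$ over $M=\F((u_n))$ by $\Phi_0,\ldots,\Phi_N$, then the $L_N$ become $S_{n+1}$-stable, but each now contains $M$, on which the $S_{n+1}$-action has infinite image; hence the action on $L_N$ cannot factor through a finite quotient, and your ``finite Galois-theoretic computation'' is unavailable. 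Even the subsidiary statement $M^{S_{n+1}}=\F$ is a nontrivial fact about the Lubin--Tate action that needs its own argument. A complete proof has to supply this input---presumably what \cite{Torii-9} does---and your outline does not.
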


Note that $E_*$ with discrete topology is a $C_{E_*}$--precomodule.

\begin{corollary}
${\mathcal F}(E_*)=K_*$.
\end{corollary}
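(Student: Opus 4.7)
The plan is to observe that this corollary is essentially a direct unwinding of the definition of $\mathcal{F}$ combined with the preceding lemma. First I would verify that $E_*$ equipped with the discrete topology genuinely is a profinite $C_{E_*}$--precomodule, so that $\mathcal{F}(E_*)$ is defined: the $G_{n+1}$--action on $E_*$ is the standard continuous action on the coefficient ring, the coaction $E_* \to C_{E_*}\wtens_{E_*}E_*$ is the obvious one arising from $\eta_L$, and condition (1) of the precomodule definition is trivial because the filtration on $E_*$ is discrete. The $E_*$--module $E_*^c$ in this case is just $E_*$ with its ${\mathfrak m}$--adic filtration, and the induced coaction $\rho^c$ is the standard one.

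Next I would identify $L_* \wtens_{E_*} E_*$ with $L_*$ as a twisted $L_*$--$G_{n+1}$--module. This is immediate from the unit property of the tensor product: the multiplication $L_* \wtens_{E_*} E_* \to L_*$ is an isomorphism in $\fmodc{L_*}$, and it is $G_{n+1}$--equivariant because the $G_{n+1}$--action on $E_*$ inside $L_*$ agrees with the $G_{n+1}$--action on $L_*$ restricted to the image of $E_*$ (this is the compatibility built into the construction of $L$ in \fullref{subsection:review_degeneration}).

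Having made this identification, the definition
\[ \mathcal{F}(E_*) = H^0\bigl(S_{n+1};\, L_*\wtens_{E_*} E_*\bigr) \]
reduces to $H^0(S_{n+1}; L_*)$, and applying \fullref{lemma:first_step_lemma} yields $K_*$. Since no calculation is really occurring beyond recognizing the unit isomorphism, there is no main obstacle; the statement is simply recorded as a corollary of \fullref{lemma:first_step_lemma} because $K_* = \mathcal{F}(E_*)$ will serve as the base case and unit object for the monoidal extension of $\mathcal{F}$ in \fullref{subsection:symmetric_monoidal_dunctor}.
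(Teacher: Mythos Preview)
Your proposal is correct and matches the paper's approach exactly. The paper states this corollary without proof, since it is immediate from the definition $\mathcal{F}(M)=H^0(S_{n+1};L_*\wtens_{E_*}M)$ together with \fullref{lemma:first_step_lemma}; your write-up simply spells out the obvious identifications (and the remark that $E_*$ with discrete topology is a $C_{E_*}$--precomodule is already noted in the paper just before the corollary).
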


\begin{lemma}\label{lemma:finite_dimensionality_lemma}
Let $M_{L}$ be a finite dimensional twisted $L_*$--$G_{n+1}$--module.
Then the dimension of $H^0(S_{n+1};M_{L})$ over $K_*$
is finite.  
\end{lemma}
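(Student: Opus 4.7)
The plan is to prove the lemma via a graded Galois-descent argument of Artin type: even though $S_{n+1}$ is an infinite profinite group and the extension $L_*/K_*$ is not a finite Galois extension, the standard linear independence argument for invariants under a group of ring automorphisms applies verbatim and yields $\dim_{K_*} M_L^{S_{n+1}} \leq \dim_{L_*} M_L$.

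First I would observe that $L_* = L[u^{\pm 1}]$ is a graded field: since $L$ is a field and $u$ is a homogeneous unit, every nonzero homogeneous element of $L_*$ is invertible. Hence the finite-dimensional graded $L_*$-module $M_L$ is free of some finite rank $d = \dim_{L_*} M_L$. Similarly $K_* = \F[w^{\pm 1}]$ is a graded field, and \fullref{lemma:first_step_lemma} identifies $K_* = L_*^{S_{n+1}}$. Because the $S_{n+1}$-action on $M_L$ is twisted over the $S_{n+1}$-action on $L_*$, i.e.\ $(am)g = a^g \cdot (m)g$, the invariant subgroup $H^0(S_{n+1};M_L) = M_L^{S_{n+1}}$ is naturally a graded $K_*$-submodule of $M_L$.

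The key claim is that any $K_*$-linearly independent family of homogeneous elements $v_1,\ldots,v_m \in M_L^{S_{n+1}}$ is automatically $L_*$-linearly independent in $M_L$, whence $m \leq d$. Suppose for contradiction that there is a nontrivial $L_*$-linear relation among the $v_i$. Decomposing into homogeneous parts, one may take a homogeneous relation $\sum a_i v_i = 0$ with $a_i \in L_*$ homogeneous and the number of nonzero $a_i$ minimal. After reindexing so that $a_1 \neq 0$ and rescaling by the unit $a_1$, we may assume $a_1 = 1$. For any $g \in S_{n+1}$, applying the twisted action and using $v_i \cdot g = v_i$ gives $\sum a_i^g v_i = 0$; subtracting the two relations yields $\sum (a_i - a_i^g) v_i = 0$, with first coefficient $a_1 - a_1^g = 0$, hence strictly shorter support. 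By minimality this subtracted relation is trivial, so $a_i = a_i^g$ for all $g \in S_{n+1}$, i.e.\ $a_i \in L_*^{S_{n+1}} = K_*$. But then $\sum a_i v_i = 0$ is a nontrivial $K_*$-linear relation on the $v_i$, contradiction.

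Hence $\dim_{K_*} H^0(S_{n+1};M_L) \leq d = \dim_{L_*} M_L < \infty$, proving the lemma. The only mildly delicate point to be careful about is the homogeneity reduction in the Artin argument, but since both $L_*$ and $K_*$ are graded fields and the $S_{n+1}$-action preserves degrees, the decomposition into homogeneous parts is automatic, and no continuity hypothesis on the action of $S_{n+1}$ on $M_L$ is needed.
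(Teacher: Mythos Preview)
Your proof is correct. Both your argument and the paper's prove the sharper bound $\dim_{K_*} H^0(S_{n+1};M_L)\le \dim_{L_*} M_L$ (the paper records this as a separate remark), but the routes differ. The paper argues by induction on $\dim_{L_*}M_L$: if there is a nonzero invariant $a$, the $L_*$--line $N_L$ it spans is isomorphic to $L_*$ as a twisted module, so $H^0(S_{n+1};N_L)\cong K_*$ by \fullref{lemma:first_step_lemma}; the left-exact sequence $0\to H^0(N_L)\to H^0(M_L)\to H^0(M_L/N_L)$ together with the inductive hypothesis on $M_L/N_L$ finishes it. Your Artin--Dedekind linear independence argument is a genuine alternative: you show directly that any homogeneous $K_*$--independent family in $M_L^{S_{n+1}}$ is already $L_*$--independent in $M_L$, using only that $L_*^{S_{n+1}}=K_*$. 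This is cleaner for the bare $H^0$ bound and avoids induction entirely; the paper's filtration approach, on the other hand, is the template one would reuse to analyse higher invariants or to build up more structure on $M_L$. Either way the only substantive input is \fullref{lemma:first_step_lemma}, and you are right that no continuity hypothesis is needed.
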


\begin{proof}
We prove the lemma by induction on the dimension of $M_L$.
Suppose that $\mbox{\rm dim}\, M_L=1$. 
If $H^0(S_{n+1}; M_L)=0$, then it is okay.
Suppose that $H^0(S_{n+1}; M_L)\neq 0$.
Take a nonzero $a\in H^0(S_{n+1}; M_L)$.
Then $M_L$ is isomorphic to $L_*$ as a twisted $L_*$--$G_{n+1}$--module.
Hence this case follows from \fullref{lemma:first_step_lemma}.

Suppose that $\mbox{\rm dim}\, M_L=n >1$,
and that the lemma is true for $\smash{M'_L}$ of dimension $<n$.
If $H^0(S_{n+1}; M_L)=0$, then it is okay.
Suppose that $H^0(S_{n+1}; M_L)\neq 0$.
Let $a\in H^0(S_{n+1}; M_L)$ be a nonzero element,
and $N_L$ the $L_*$--submodule generated by $a$.
There is an exact sequence of $K_*$--modules:
\[ 0\to H^0(S_{n+1}; N_L)\longrightarrow 
           H^0(S_{n+1}; M_L)\longrightarrow
           H^0(S_{n+1}; M_L/N_L).\]
By hypothesis of the induction,
the dimension of $H^0(S_{n+1}; M_L/N_L)$ is finite and
$\mbox{\rm dim}\ H^0(S_{n+1}; N_L)=1$.
Hence we obtain that
$\mbox{\rm dim}\, H^0(S_{n+1}; M_L)$ 
is finite.
This completes the proof. 
\end{proof}

\begin{remark}\label{remark:dimension_estimate}\rm
More precisely,
we see that 
$\mbox{\rm dim}_{K_*}H^0(S_{n+1};N_L)\le \mbox{\rm dim}_{L_*} N_L$
by the proof of \fullref{lemma:finite_dimensionality_lemma}.
\end{remark}

\begin{corollary}\label{cor:finite_dimensionality_lemma}
If $M$ is a finitely generated discrete $C_{E_*}$--precomodule,
then the dimension of ${\mathcal F}(M)$ over $K_*$
is finite. 
\end{corollary}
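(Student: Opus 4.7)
The plan is to reduce the claim directly to \fullref{lemma:finite_dimensionality_lemma} by exhibiting $L_*\wtens_{E_*}M$ as a finite--dimensional twisted $L_*$--$G_{n+1}$--module.

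First, I would observe that since $M$ is discrete as a $C_{E_*}$--precomodule, its filtration is trivial and the completed tensor product $L_*\wtens_{E_*}M$ agrees with the ordinary tensor product $L_*\otimes_{E_*}M$. Next, I would check that this is finite--dimensional over $L_*$: the ring $E_*=\F\power{u_n}[u^{\pm 1}]$ is a (graded) discrete valuation ring, and $L_*=L[u^{\pm 1}]$ is a graded field containing the quotient field of $\F\power{u_n}$, so $L_*$ is a localization of $E_*$ (up to an algebraic extension). Since $M$ is finitely generated over $E_*$ by hypothesis, the base change $L_*\otimes_{E_*}M$ is finitely generated over the graded field $L_*$, and hence finite--dimensional.

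Second, I would verify that $L_*\otimes_{E_*}M$ carries a natural twisted $L_*$--$G_{n+1}$--module structure. By \fullref{lemma:functor_c-comod2twisted_mod}, the $C_{E_*}$--precomodule structure on $M$ endows $M$ with a continuous twisted $E_*^c$--$G_{n+1}$--action. Combined with the $G_{n+1}$--action on $L_*$ (coming from the degeneration setup in \fullref{thm:degeneration_main}, where the $G_{n+1}$--action on $L$ extends that on $\F\power{u_n}$), the diagonal $G_{n+1}$--action on $L_*\otimes_{E_*}M$ is well--defined and continuous, and satisfies the twisted module compatibility $(ax)g=a^g\cdot (x)g$.

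Finally, I would invoke \fullref{lemma:finite_dimensionality_lemma} with $M_L=L_*\otimes_{E_*}M$ to conclude that
\[ {\mathcal F}(M)=H^0(S_{n+1};L_*\wtens_{E_*}M) \]
is finite--dimensional over $K_*$. The only point that requires mild care is the identification of the completed tensor product with the ordinary one in the discrete case and the compatibility of the two $G_{n+1}$--actions; neither of these is a genuine obstacle given the preparatory material, so the proof should be short.
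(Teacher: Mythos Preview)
Your proposal is correct and is exactly the reduction the paper intends: the corollary is stated without proof immediately after \fullref{lemma:finite_dimensionality_lemma}, and the only content is that for a finitely generated discrete $M$ the module $M_L=L_*\wtens_{E_*}M=L_*\otimes_{E_*}M$ is a finite-dimensional twisted $L_*$--$G_{n+1}$--module, so the lemma applies. Your verification that the completed tensor product collapses to the ordinary one and that $L_*$ is a graded field makes explicit what the paper leaves implicit.
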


The following lemma is fundamental
on the topology of $G_n$.   

\begin{lemma}{\rm (cf Hovey \cite[Theorem~A.2]{Hovey})}\qua
\label{lemma:topology_structure_G_n}
A subgroup of $G_n$ is open if and only if its index in $G_n$ 
is finite.
\end{lemma}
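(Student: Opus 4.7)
The plan is to reduce the claim to a general fact about compact $p$--adic analytic groups. One direction is automatic: an open subgroup of a compact (profinite) group always has finite index, so I only need to verify that every finite index subgroup of $G_n$ is open. First I would note that since $\F$ is assumed to be a finite field, the Galois group $\gal{\F/\fp{}}$ is finite, so the extension $1 \to S_n \to G_n \to \gal{\F/\fp{}} \to 1$ realizes $S_n$ as an open normal subgroup of $G_n$ of finite index. Thus a subgroup $H \subseteq G_n$ is open (respectively has finite index) if and only if $H \cap S_n$ is open (respectively has finite index) in $S_n$, and the problem reduces to the analogous statement for $S_n$.

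Next I would exploit the arithmetic description of $S_n$ as the group of units $\mathcal{O}_{D_n}^{\times}$ of the maximal order in the central $\mathbf{Q}_p$--division algebra $D_n$ of Hasse invariant $1/n$. This realizes $S_n$ as a compact open subgroup of $D_n^{\times}$, hence as a compact $p$--adic Lie group of dimension $n^2$. The valuation filtration $S_n = F_0 \supset F_1 \supset F_2 \supset \cdots$, with $F_k = 1 + \pi^k \mathcal{O}_{D_n}$ for a fixed uniformizer $\pi$, consists of open normal subgroups; moreover the quotient $S_n/F_1 \cong \fp{n}^{\times}$ is finite, so $F_1$ is an open pro--$p$ subgroup of $S_n$ of finite index. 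Reducing once more, it suffices to show that every subgroup of finite index in $F_1$ is open.

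At this point I would invoke Serre's theorem that every subgroup of finite index in a topologically finitely generated pro--$p$ group is open. The needed input is that $F_1$ is topologically finitely generated, which follows from the general theory of compact $p$--adic Lie groups: the $p$--adic logarithm identifies a suitable open subgroup of $F_1$ with a finitely generated $\mathbf{Z}_p$--Lie algebra sitting inside $D_n$, and the finitely many $\mathbf{Z}_p$--generators of that Lie algebra, together with coset representatives for the finite quotient, topologically generate $F_1$. Granting this, Serre's theorem applies, and pushing the conclusion up the tower $F_1 \subseteq S_n \subseteq G_n$ of finite--index inclusions gives the statement for $G_n$.

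The main obstacle is verifying that $F_1$ is topologically finitely generated; this is the one nontrivial ingredient and is where the deep structural input from $p$--adic Lie theory (equivalently, Lazard's theorem) enters. Everything else is a straightforward reduction, and one could alternatively cite Hovey's \cite[Theorem~A.2]{Hovey} directly once the reduction to $S_n$ has been made.
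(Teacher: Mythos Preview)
The paper does not supply its own proof of this lemma; it merely records the statement with a parenthetical reference to Hovey's Theorem~A.2 and moves on. Your outline is a correct and standard route to the result: one direction is automatic for compact groups, and for the other you reduce from $G_n$ to $S_n$ via the finite Galois quotient, then to the open pro--$p$ subgroup $F_1$, and finally invoke Serre's theorem that finite--index subgroups of a topologically finitely generated pro--$p$ group are open. The one nontrivial input you identify, topological finite generation of $F_1$, is genuine and is exactly where the $p$--adic analytic structure (Lazard) enters; alternatively one can argue directly from the filtration quotients $F_k/F_{k+1}\cong \fp{n}$, which shows that the Frattini quotient $F_1/\overline{[F_1,F_1]F_1^{\,p}}$ is finite, hence $F_1$ is topologically finitely generated. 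Since the paper offers nothing beyond the citation, there is no alternative argument to compare against; you have simply supplied the proof that the citation points to.
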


Let $h\in G_n$ and $\sigma$ the image of the projection
$G_n\to \Gal$.
For $g\in S_{n+1}$,
$g\sigma = \sigma g^{\sigma}$ in $G_{n+1}$,
and $g h = h g ^{\sigma}$ in ${\mathcal G}$.
Hence the following diagram commutes for all $g\in S_{n+1}$:
\[\xymatrix@C=40pt{
    L_*\subrel{E_*}{\wtens}M \ar[r]^{\sigma\otimes h} \ar[d]_-{g\otimes g} & L_*\subrel{E_*}{\wtens}M \ar[d]^-{g^{\sigma}\otimes g^{\sigma}}\\
    L_*\subrel{E_*}{\wtens}M \ar[r]^{\sigma\otimes h} & L_*\subrel{E_*}{\wtens}M.
  }
\]
%\[ \begin{array}{ccc}
%    L_*\subrel{E_*}{\wtens}M & 
%    \stackrel{\sigma\otimes h}{\hbox to 15mm{\rightarrowfill}} &
%    L_*\subrel{E_*}{\wtens}M \\
%    {\scriptstyle g\otimes g}\bigg\downarrow \hspace{5mm}& & 
%    \hspace{8mm} \bigg\downarrow 
%    {\scriptstyle g^{\sigma}\otimes g^{\sigma}}\\
%    L_*\subrel{E_*}{\wtens}M & 
%    \stackrel{\sigma\otimes h}{\hbox to 15mm{\rightarrowfill}} &
%    L_*\subrel{E_*}{\wtens}M. \\    
%   \end{array}\]
This diagram induces an action of $G_n$ on ${\mathcal F}(M)$,
and it is easy to check that ${\mathcal F}(M)$ is 
a twisted $K_*$--$G_n$--module.

\begin{lemma}\label{lemma:twisted_module_str_F}
If $M$ is a finitely generated discrete $C_{E_*}$--precomodule,
then ${\mathcal F}(M)$ has a natural complete twisted 
$K_*$--$G_n$--module structure. 
\end{lemma}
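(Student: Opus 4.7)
The plan is to verify the three conditions defining a complete twisted $K_*$--$G_n$--module for $\mathcal{F}(M)$: that the underlying $K_*$--module structure is complete Hausdorff, that the $G_n$--action is continuous, and that the twisted compatibility $(am)h = a^h\cdot(m)h$ holds for $a\in K_*$, $m\in\mathcal{F}(M)$, $h\in G_n$.

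The first and third conditions should be essentially formal. By \fullref{cor:finite_dimensionality_lemma}, $\mathcal{F}(M)$ is a finite-dimensional module over the graded field $K_* = \F[w^{\pm 1}]$. Since the maximal ideal of $K_0 = \F$ is zero, one has $K_*^c = K_*$ with the discrete topology, and I would give $\mathcal{F}(M)$ the discrete topology as well, which is trivially complete Hausdorff. For the twisted compatibility, the $G_n$--action on $L_* \wtens_{E_*} M$ described just before the lemma is diagonal: $h\in G_n$ acts on $L_*$ via the Galois identification $G_n\cong \gal{L/\fp{}((u_n))}$ and on $M$ via the composition $G_n\to\Gal\hookrightarrow G_{n+1}$ with its projection to the Galois factor. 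This diagonal action is evidently $E_*$--semilinear; restricting to $S_{n+1}$--invariants and using $K_* = H^0(S_{n+1};L_*)$ from \fullref{lemma:first_step_lemma} promotes it to $K_*$--semilinearity, which is precisely the twisted compatibility.

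The substantive content is continuity of the $G_n$--action. Since $\mathcal{F}(M)$ is discrete, continuity reduces to showing that each element has open stabilizer in $G_n$. I would argue as follows: choose a finite $K_*$--generating set $e_1,\ldots,e_r$ of $\mathcal{F}(M)$ and write each $e_i$ as a finite sum $\sum_j \ell_{ij}\otimes m_{ij}\in L_*\otimes_{E_*}M$. Because $L$ is the filtered union of its finite Galois subextensions over $\fp{}((u_n))$, all the $\ell_{ij}$ lie in some finite Galois subextension $L'$, so the action of $G_n$ on the $L_*$--factors of the $e_i$ factors through the finite quotient $\gal{L'/\fp{}((u_n))}$. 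Meanwhile, the action of $G_n$ on the $M$--factors factors through $\Gal = \gal{\F/\fp{}}$, which is finite because $\F$ is a finite field. Consequently the $G_n$--action on each $e_i$ factors through a common finite quotient of $G_n$; by \fullref{lemma:topology_structure_G_n}, the kernel of such a quotient is open in $G_n$ and is contained in the stabilizer of every $e_i$.

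The main obstacle I anticipate is bookkeeping: one must keep straight the two distinct origins of the $G_n$--action on $L_*\wtens_{E_*}M$ (the Galois action on $L_*$ and the Frobenius action on $M$ factored through $\Gal\subset G_{n+1}$) and verify they combine to a well-defined action descending to $\mathcal{F}(M)$. The commutativity of the diagram preceding the lemma is precisely what ensures descent to the $S_{n+1}$--invariants, and once this is in hand the continuity reduces crucially to the finiteness of $\F$, which renders $\Gal$ finite, together with the elementary fact that elements of $L$ lie in finite Galois subextensions.
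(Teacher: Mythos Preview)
Your argument is essentially correct, but it is considerably more elaborate than what the paper does, and it differs in approach on the key point of continuity.

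The paper treats the twisted $K_*$--$G_n$--module structure on $\mathcal{F}(M)$ (without continuity) as already established by the commutative diagram immediately preceding the lemma; so the first and third conditions you verify are handled there, and the proof of the lemma itself addresses only continuity. For that, the paper simply invokes \fullref{cor:finite_dimensionality_lemma} to get that $\mathcal{F}(M)$ is finite-dimensional over $K_*$, and then appeals directly to \fullref{lemma:topology_structure_G_n}. The implicit step is that since $K_0=\F$ is a \emph{finite} field and the grading is preserved, each homogeneous piece $\mathcal{F}(M)_k$ is a finite set; hence every $G_n$--orbit is finite, every stabilizer has finite index, and is therefore open. That is the whole proof.

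Your route to continuity---writing the generators $e_i$ as finite sums in $L_*\otimes_{E_*}M$, trapping the $L$--coefficients in a finite Galois subextension $L'$, and noting the $M$--side factors through $\Gal$---is valid but longer, and it bypasses the finite-cardinality observation that makes the paper's argument a one-liner. Note also a small gap in your write-up: you show an open subgroup stabilizes each generator $e_i$, but continuity requires that \emph{every} element have open stabilizer. This is easily repaired (enlarge $L'$ to contain $\Phi_0$ so that the same open subgroup fixes $w$ and hence all of $K_*$, whence it fixes every $K_*$--linear combination of the $e_i$), but you should say so explicitly. The paper's approach avoids this issue entirely because finiteness of each graded piece gives open stabilizers for all elements at once.
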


\begin{proof}
By \fullref{cor:finite_dimensionality_lemma},
${\mathcal F}(M)$ is a twisted $K_*$--$G_n$--module 
of finite dimension.
Then the action of $G_n$ is continuous by 
\fullref{lemma:topology_structure_G_n}.
\end{proof}

If $M$ is a complete $\smash{C_{E_*}}$--precomodule,
then there is a fundamental system $\{F^{\lambda}M\}$
of (open) neighborhoods at $0$ consisting of $E_*$--$G_{n+1}$--submodules
by \fullref{cor:fund_neigh_twisted_mod}.
Hence there is an isomorphism
\[ {\mathcal F}(M)\cong\
   \inverselimit{\lambda} {\mathcal F}(M/F^{\lambda}M).\]
We give ${\mathcal F}(M)$ the inverse limit topology.
Note that this topology is independent of a choice
of fundamental system of neighborhood at $0$.
Furthermore, if $M$ is profinite,
then ${\mathcal F}(M)$ is also profinite by 
\fullref{cor:finite_dimensionality_lemma},
and complete twisted $K_*$--$G_n$--module by 
\fullref{lemma:twisted_module_str_F}.
Hence we obtain the following proposition.

\begin{proposition}\label{prop:proftoprof}
${\mathcal F}$ defines a symmetric monoidal functor 
from the category of profinite $C_{E_*}$--precomodules
to the category of profinite $C_{K_*}$--comodules.
\end{proposition}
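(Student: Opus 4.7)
My plan is to assemble the pieces already set up, after which only the monoidal compatibility requires genuine work. Given a profinite $C_{E_*}$--precomodule $M$, by \fullref{cor:fund_neigh_twisted_mod} I can choose a fundamental system $\{F^{\lambda}M\}$ of open neighborhoods at $0$ consisting of $E_*$--$G_{n+1}$--submodules, so that each quotient $M/F^{\lambda}M$ is a finitely generated discrete twisted $E_*$--$G_{n+1}$--module, hence a finitely generated discrete $C_{E_*}$--precomodule. Combining \fullref{cor:finite_dimensionality_lemma} with \fullref{lemma:twisted_module_str_F}, each $\mathcal{F}(M/F^{\lambda}M)$ is a finite-dimensional twisted $K_*$--$G_n$--module, and the prescribed formula
\[ {\mathcal F}(M)=\ \inverselimit{\lambda}{\mathcal F}(M/F^{\lambda}M) \]
equips $\mathcal{F}(M)$ with the structure of a profinite twisted $K_*$--$G_n$--module; by \fullref{theorem:eq_cat_twist_comod_over_C} this is precisely the data of a profinite $C_{K_*}$--comodule. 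Independence of the choice of neighborhood basis follows because any two such bases admit a common refinement, so the resulting inverse limits are canonically isomorphic.

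Functoriality is formal. A continuous morphism $f\co M\to N$ of profinite $C_{E_*}$--precomodules carries some $F^{\mu}M$ into any prescribed $F^{\lambda}N$, and hence induces a compatible family $M/F^{\mu}M\to N/F^{\lambda}N$ of maps between finitely generated discrete precomodules. Applying $\mathcal{F}$ and then taking the inverse limit yields the required continuous $C_{K_*}$--comodule map $\mathcal{F}(M)\to\mathcal{F}(N)$, and the assignment evidently respects composition and identities.

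The main obstacle is the symmetric monoidal structure. I would construct the natural comparison map
\[ \mathcal{F}(M)\wtens_{K_*}\mathcal{F}(N)\longrightarrow \mathcal{F}(M\wtens_{E_*}N) \]
from the multiplication $L_*\wtens_{K_*}L_*\to L_*$ composed with the associativity isomorphism $L_*\wtens_{E_*}(M\wtens_{E_*}N)\cong(L_*\wtens_{E_*}M)\wtens_{L_*}(L_*\wtens_{E_*}N)$ and the tautological inclusions $\mathcal{F}(-)\hookrightarrow L_*\wtens_{E_*}(-)$. Passing to inverse limits over the two filtrations reduces the verification of bijectivity to the case of finitely generated discrete precomodules. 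In that case I would argue by induction on $\dim_{L_*}(L_*\wtens_{E_*}M)$, exactly as in the proof of \fullref{lemma:finite_dimensionality_lemma}, starting from the base case $M=N=E_*$ where both sides equal $K_*$ by \fullref{lemma:first_step_lemma} and the accompanying identification $\mathcal{F}(E_*)=K_*$. The induction step relies on a form of Galois descent for the $S_{n+1}$--action on $L_*$ over $K_*$: the invariants functor $H^0(S_{n+1};-)$ is exact on finite-dimensional twisted $L_*$--$S_{n+1}$--modules and, in the situation at hand, preserves dimension (cf.\ \fullref{remark:dimension_estimate}), which is what upgrades the comparison from a bijection on base-case pieces to an isomorphism in general via a five-lemma argument. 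The coherence of associativity, symmetry, and unit constraints then transports routinely along the base change $(-)\wtens_{E_*}L_*$ and the invariants functor.
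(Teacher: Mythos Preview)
Your first two paragraphs are correct and track the paper's argument closely: the paper simply notes that the construction of the twisted $K_*$--$G_n$--module structure on ${\mathcal F}(M)$ is natural, invokes \fullref{theorem:eq_cat_twist_comod_over_C}, and is done with functoriality.

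The trouble is your third paragraph. You read ``symmetric monoidal functor'' as \emph{strong} monoidal and set out to prove that the comparison map
\[
  {\mathcal F}(M)\wtens_{K_*}{\mathcal F}(N)\longrightarrow {\mathcal F}(M\wtens_{E_*}N)
\]
is a bijection. The paper neither claims nor needs this in general; its proof says only that ``${\mathcal F}$ respects the monoidal structures'', i.e.\ that the evident comparison map exists and satisfies the coherence axioms, which is indeed routine. That the paper does \emph{not} regard the comparison as an isomorphism in general is confirmed by the fact that \fullref{lemma:def_comodule_str_map} is stated and proved separately for the single case $N=\Lambda_{E_*}$, and its proof uses the explicit identification $\Lambda_{E_*}\wtens_{E_*}L_*\cong\Lambda_{K_*}\wtens_{K_*}L_*$ coming from \fullref{lemma:iso_f_lambda2lambda}, not a general descent principle.

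Your proposed descent argument has a genuine gap. You assert that $H^0(S_{n+1};-)$ is exact and dimension-preserving on finite-dimensional twisted $L_*$--$S_{n+1}$--modules, citing \fullref{remark:dimension_estimate}. But that remark gives only the inequality $\dim_{K_*}H^0(S_{n+1};N_L)\le \dim_{L_*}N_L$, not equality; and nothing in the paper establishes that $H^1(S_{n+1};-)$ vanishes on such modules. The action of $S_{n+1}$ on $L$ is \emph{not} the Galois action (that is the $S_n$--action, via $\gal{L/M}\cong S_n$), so classical Hilbert~90--type descent does not apply directly. Without these two inputs your induction and five-lemma step do not go through. The fix is simply to scale back the claim: construct the lax monoidal structure and verify coherence, as the paper does, and leave strong monoidality for the special cases where it is actually needed.
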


\begin{proof}
%By \fullref{lemma:twisted_module_str_F}
%and the definition of filtration,
%${\mathcal F}(M)$ is a profinite twisted $K_*$--$G_n$--module
%for a profinite $C_{E_*}$--precomodule $M$.
Since the construction of twisted $K_*$--$G_n$--module
structure on ${\mathcal F}(M)$ is natural,
we see that ${\mathcal F}$ defines a functor from 
the category of profinite $C_{E_*}$--precomodules
to the category of profinite twisted $K_*$--$G_n$--modules,
which is equivalent to the category of profinite $C_{K_*}$--comodules
by \fullref{theorem:eq_cat_twist_comod_over_C}.
It is easy to check that the functor ${\mathcal F}$
respects the monoidal structures.
\end{proof}

\begin{definition}\rm
Let $\smash{\mathcal{C}_{\mathbb{E}}^f}$ (resp.\ $\smash{\mathcal{C}_{\mathbb{K}}^f}$)
be the category of finitely generated discrete
$\smash{C_{E_*}}$--precomodules (resp.\ $\smash{C_{K_*}}$-(pre)comodules).
Define $\mathcal{C}_{\mathbb{E}}$ 
(resp.\ $\mathcal{C}_{\mathbb{K}})$ to be the procategory
of $\smash{\mathcal{C}_{\mathbb{E}}^f}$ (resp.\ $\smash{\mathcal{C}_{\mathbb{K}}^f}$),
that is, the category of (small) cofiltered system 
of objects in $\smash{\mathcal{C}_{\mathbb{E}}^f}$ 
(resp.\ $\smash{\mathcal{C}_{\mathbb{K}}^f}$).
These are symmetric monoidal categories.
\end{definition}

For a finite $Z$, 
$E^*(Z)$ is a natural finitely generated discrete 
$C_{E_*}$--precomodule 
by \fullref{prop:precomdule_cohomology}.
Furthermore, if $Z$ is a finite CW-complex,
then $E^*(Z)$ is a $C_{E_*}$--precomodule algebra.

\begin{definition}\rm
We define $\mathbb{E}^*(X)\in \mathcal{C}_{\mathbb{E}}$
to be the system 
\[ \{E^*(Z)\}_{Z\in\Lambda(X)} \]
indexed by $\Lambda(X)$. 
We also define $\mathbb{K}^*(X)\in \mathcal{C}_{\mathbb{K}}$
by the same manner.
\end{definition}

\[\eqaligntop{
    \inverselimit{} \mathbb{E}^*(X)&\cong  E^*(X),\tag*{\hbox{Then we have}}\cr
    \inverselimit{} \mathbb{K}^*(X)&\cong  K^*(X),
}\]
as profinite $C_{\mathbb{E}_*}$--precomodules
and profinite $C_{\mathbb{K}_*}$-(pre)comodules, respectively.
By \fullref{cor:finite_dimensionality_lemma}
and \fullref{lemma:twisted_module_str_F},
we can extend the functor $\mathcal{F}$
from $\mathcal{C}_{\mathbb{E}}$ to $\mathcal{C}_{\mathbb{K}}$
by obvious way:
\[ \mathcal{F}\co \mathcal{C}_{\mathbb{E}}\longrightarrow
                \mathcal{C}_{\mathbb{K}}.\]
Note that $\mathcal{F}$ is a monoidal functor.

By \cite[Theorem~4.1]{Torii-9},
the generalized Chern character \eqref{eq:generalized_Chern_character} 
\[ \Theta\co E^*(X)\longrightarrow L^*(X) \]
induces a natural isomorphism
of twisted $L_*$--${\mathcal G}$--modules:
\[  L_* \otimes_{E_*}E^*(Z)\stackrel{\cong}{\longrightarrow}
    L_*\otimes_{K_*}K^*(Z)\]
for finite $Z$.
The following theorem is a reformulation of
\cite[Corollary~4.3]{Torii-9}.

\begin{theorem}\label{thm:reformulation_stab}
For any spectrum $X$,
the generalized Chern character $\Theta$
induces a natural isomorphism in $\mathcal{C}_{\mathbb{K}}$:
\[ {\mathcal F}(\mathbb{E}^*(X))\cong \mathbb{K}^*(X). \]
If $X$ is a space, then this is an isomorphism 
of cofiltered systems of finite $C_{K_*}$--comodule algebras.
\end{theorem}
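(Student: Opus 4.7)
The plan is to descend to the level of finite subspectra $Z\in\Lambda(X)$ and invoke the comparison isomorphism already produced by $\Theta$ in \cite[Theorem~4.1]{Torii-9}. For finite $Z$, the cohomology $E^*(Z)$ is a finitely generated discrete $C_{E_*}$--precomodule (\fullref{prop:precomdule_cohomology}), so ${\mathcal F}(E^*(Z))=H^0(S_{n+1};L_*\otimes_{E_*}E^*(Z))$ with no completion issues. The cited theorem supplies, naturally in $Z$, an isomorphism of twisted $L_*$--${\mathcal G}$--modules
\[ L_*\otimes_{E_*}E^*(Z)\stackrel{\cong}{\longrightarrow}L_*\otimes_{K_*}K^*(Z) \]
induced by $\Theta$. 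The sought natural map ${\mathcal F}(E^*(Z))\to K^*(Z)$ will be obtained by applying $H^0(S_{n+1};-)$ to this isomorphism and identifying the right-hand side.

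\textbf{Key identification.}
To identify $H^0(S_{n+1};L_*\otimes_{K_*}K^*(Z))$ with $K^*(Z)$, I need two ingredients. First, in the structure ${\mathcal G}=\Gamma\ltimes(S_n\times S_{n+1})$ the factor $S_{n+1}$ commutes with $G_n=\Gamma\ltimes S_n$, and by \fullref{thm:degeneration_main} (in particular the second commutative diagram describing the $G_n$--action) the $S_{n+1}$--action on $L^*(Z)=L_*\otimes_{K_*}K^*(Z)$ hits only the $L_*$--factor, while $K^*(Z)$ carries only its $G_n$--twisted structure. Second, since $Z$ is finite and $K_*=\F[w^{\pm 1}]$ is a graded field, $K^*(Z)$ is free of finite rank over $K_*$, so forming $S_{n+1}$--invariants commutes with $-\otimes_{K_*}K^*(Z)$. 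Combined with \fullref{lemma:first_step_lemma}, this reads
\[ H^0(S_{n+1};L_*\otimes_{K_*}K^*(Z))\cong H^0(S_{n+1};L_*)\otimes_{K_*}K^*(Z)=K_*\otimes_{K_*}K^*(Z)=K^*(Z). \]

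\textbf{Module structures and the cofiltered isomorphism.}
The composite ${\mathcal F}(E^*(Z))\to K^*(Z)$ is a map of twisted $K_*$--$G_n$--modules, because $G_n$ commutes with $S_{n+1}$ inside ${\mathcal G}$ and the isomorphism above is ${\mathcal G}$--equivariant; under \fullref{theorem:eq_cat_twist_comod_over_C} this translates to a $C_{K_*}$--comodule isomorphism. Naturality in $Z\in\Lambda(X)$ is inherited from the naturality of $\Theta$, so we obtain the required isomorphism ${\mathcal F}(\mathbb{E}^*(X))\cong\mathbb{K}^*(X)$ in ${\mathcal C}_{\mathbb{K}}$. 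When $X$ is a space, each $E^*(Z)$ is even a $C_{E_*}$--precomodule algebra (\fullref{prop:precomdule_cohomology}), $\Theta$ is multiplicative, and ${\mathcal F}$ is symmetric monoidal (\fullref{prop:proftoprof}), so the isomorphism further respects the $C_{K_*}$--comodule algebra structures.

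\textbf{Main obstacle.}
The substantive step is the key identification: one must verify that, after transporting the ${\mathcal G}$--action through the Chern character isomorphism, the $S_{n+1}$--part really is confined to the $L_*$--factor of $L_*\otimes_{K_*}K^*(Z)$, and that $S_{n+1}$--invariants commute with $-\otimes_{K_*}K^*(Z)$. Both reduce to unpacking \fullref{thm:degeneration_main} and using finite-rank freeness of $K^*(Z)$ over the graded field $K_*$, but they are the essential content without which the passage from the previously known isomorphism to the present cofiltered, comodule-algebra version is not formal.
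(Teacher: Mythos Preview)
Your approach is exactly the one the paper takes: the theorem is presented there as a reformulation of \cite[Corollary~4.3]{Torii-9}, and the ingredients you assemble---the $\mathcal{G}$--equivariant isomorphism $L_*\otimes_{E_*}E^*(Z)\cong L_*\otimes_{K_*}K^*(Z)$ from \cite[Theorem~4.1]{Torii-9}, followed by $H^0(S_{n+1};-)$ and \fullref{lemma:first_step_lemma}, together with freeness of $K^*(Z)$ over the graded field $K_*$---are precisely those recorded in the paragraph preceding the theorem and in the introduction.

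One small correction is needed: you twice assert that $G_n$ \emph{commutes} with $S_{n+1}$ inside $\mathcal{G}=\Gamma\ltimes(S_n\times S_{n+1})$, but this is false. The paper computes $gh=hg^{\sigma}$ for $g\in S_{n+1}$ and $h\in G_n$ with image $\sigma\in\Gal$, so the Galois part of $G_n$ genuinely conjugates $S_{n+1}$ nontrivially. What is true, and what suffices for your argument, is that $G_n$ \emph{normalizes} $S_{n+1}$ in $\mathcal{G}$; hence the $S_{n+1}$--invariant submodule is $G_n$--stable, and this is exactly the content of the commutative square the paper displays just before \fullref{lemma:twisted_module_str_F}. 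With ``commutes'' replaced by ``normalizes'' your argument goes through unchanged.
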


%% O. K. May 15, 2004
\section{Milnor operations}
\label{section:Milnor_operations}

\subsection{Complete co-operation ring}
\label{subsection:Co-operation ring}

In this section we let $A=E_{n+k}/I_n$ for some $k\ge 0$.
Hence $A=E$ if $k=1$ and $A=K$ if $k=0$.
The coefficient ring $A_*=\F\power{u_n,\ldots,u_{n+k-1}}[u_A], 
\ |u_A|=-2$
is a graded complete Noetherian local ring with 
maximal ideal ${\mathfrak m}_A=(u_n,\ldots,u_{n+k-1})$.
%If $A=E$, then ${\mathfrak m}=(u_n)$ and
%if $A=K$, then ${\mathfrak m}=0$.
Put $G_A=G_{n+k}$ and $C_{A_*}=C(G_A,A_*)$.
We denote by $A\widehat{\wedge}A$
the $K(n+k)$--localization of $A\wedge A$.
Since $A$ is a commutative ring spectrum,
So is $A\widehat{\wedge}A$.
We define a graded commutative ring $\acha$ to be
$\pi_*(A\widehat{\wedge}A)$.
%There are obvious two ring homomorphisms
%$\eta_L$ and $\eta_R$ from $A_*$ to $\acha$.
%The two filtrations on $\acha$ 
%induced through $\eta_L$ and $\eta_R$
%from the ${\mathfrak m}_A$-adic filtration on $A_*$ is the same and 
%$\acha$ is complete with respect to this filtration.
%Hence $\acha$ is a graded complete commutative $\fp{}$-algebra.

Since $A$ is Landweber exact over $\pn$, 
there is an isomorphism of commutative $\fp{}$--algebras:
\begin{equation}\label{eq:isomorphism_landweber_exact_ring}
   \pi_*(A\wedge A)\cong A_*\otimes_{\pn_*}\pn_*(\pn)
   \otimes_{\pn_*}A_*.
\end{equation}

\begin{lemma}\label{lemma:acha_as_rings}
There is an isomorphism of graded commutative $\fp{}$--algebras   
\[ \acha\cong A_*^c\wtens_{\pn_*}
   {\pn_*(\pn)}\wtens_{\pn_*}A_*^c,\]
where $A_*^c$ is a graded topological ring $A_*$ 
with ${\mathfrak m}_A$--adic topology.
\end{lemma}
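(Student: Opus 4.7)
My approach is to identify both sides of the claimed isomorphism with the ${\mathfrak m}_A$-adic completion, on both the $\eta_L$- and $\eta_R$-sides, of the uncompleted co-operation ring $\pi_*(A\wedge A)$ described by \eqref{eq:isomorphism_landweber_exact_ring}.

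First I would unpack the right-hand side using the machinery of Section 3. Since $\pn_*(\pn)$ is free (hence flat) over $\pn_*$, the tensor product $A_* \otimes_{\pn_*} \pn_*(\pn) \otimes_{\pn_*} A_*$ carries two commuting $A_*$-module structures, one from each outer factor. Giving $A_*^c$ its ${\mathfrak m}_A$-adic filtration and $\pn_*(\pn)$ the discrete filtration $\{0\}$, Corollary 3.10 identifies
\[ A_*^c \wtens_{\pn_*} \pn_*(\pn) \wtens_{\pn_*} A_*^c \;\cong\; \inverselimit{i,j}\, A_*/{\mathfrak m}_A^i \otimes_{\pn_*} \pn_*(\pn) \otimes_{\pn_*} A_*/{\mathfrak m}_A^j, \]
which is, via \eqref{eq:isomorphism_landweber_exact_ring}, exactly the bi-sided ${\mathfrak m}_A$-adic completion of $\pi_*(A\wedge A)$.

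Second I would identify $\acha = \pi_*(L_{K(n+k)}(A\wedge A))$ with the same completion. Since $A = E_{n+k}/I_n$ is obtained from the $K(n+k)$-local spectrum $E_{n+k}$ by killing the finite regular sequence $p, v_1, \ldots, v_{n-1}$, the spectrum $A$ is itself $K(n+k)$-local, its coefficient ring $A_*$ is a complete Noetherian local ring with maximal ideal ${\mathfrak m}_A = (u_n,\ldots,u_{n+k-1})$, and $A\widehat{\wedge}A = L_{K(n+k)}(A\wedge A)$. The standard computation of $K(n+k)$-localization for module spectra over Morava $E$-theory (in the style of Hopkins--Ravenel and Hovey--Strickland) shows that on homotopy groups $L_{K(n+k)}$ applied to $A\wedge A$ realizes precisely the ${\mathfrak m}_A$-adic completion of $\pi_*(A\wedge A)$ with respect to both the left and right $A_*$-algebra structures. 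Comparing with the previous step yields the desired isomorphism, and the fact that it is a map of graded commutative $\fp{}$-algebras is immediate, since both sides arise from the same natural ring map on finite quotients.

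The main obstacle is making the second step precise, namely, verifying that the homotopy of $L_{K(n+k)}(A\wedge A)$ really equals the bi-sided ${\mathfrak m}_A$-adic completion of $\pi_*(A\wedge A)$ with no surviving $\lim^1$ terms. This requires pro-freeness of $A\wedge A$ as a (left or right) $A$-module, which follows from the Landweber exactness of $\pn_*\to A_*$ together with the Hopkins--Ravenel smash product theorem ensuring that $K(n+k)$-localization commutes with the relevant homotopy limits. Once this vanishing is in place, the remainder of the argument is purely formal manipulation with the filtered tensor products developed in Section 3.
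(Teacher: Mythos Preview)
Your proposal is correct and follows essentially the same route as the paper: identify the right-hand side with the $\mathfrak{m}_A$-adic (equivalently $I_{n+k}$-adic, since $I_{n+k}$ maps to $\mathfrak{m}_A$ in $A_*$) completion of $\pi_*(A\wedge A)$ via \eqref{eq:isomorphism_landweber_exact_ring}, and then show that $\pi_*\bigl(L_{K(n+k)}(A\wedge A)\bigr)$ is exactly this completion. The paper dispatches your ``main obstacle'' in one line by citing Hovey--Strickland \cite[Proposition~7.10(e)]{Hovey-Strickland}, which gives the required identification of $\acha$ with the $I_{n+k}$-adic completion of $\pi_*(A\wedge A)$ (and in particular handles the $\lim^1$ vanishing); note also that because $I_{n+k}$ is an invariant ideal, the left, right, and bi-sided completions coincide, so your bi-sided formulation agrees with the paper's one-sided statement.
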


\begin{proof}
By \cite[Proposition~7.10(e)]{Hovey-Strickland},
we see that $\acha$ is the $I_{n+k}$--adic completion 
of $\pi_*(A\wedge A)$.
Hence the lemma follows from 
the isomorphism~\eqref{eq:isomorphism_landweber_exact_ring}.
\end{proof}

By \fullref{lemma:acha_as_rings},
we see that $\acha$ has a graded complete  Hopf algebroid structure
induced from $\pn_*(\pn)$.
We say that $\acha$ is the complete co-operation ring of $A$.

Let $\Lambda_{\mathbf Z}$ be the graded commutative
algebra over ${\mathbf Z}$ generated by $a_{(i)}$ for $0\le i< n$,
where the degree of $a_{(i)}$ is $2p^i-1$.
Hence $\Lambda_{\mathbf Z}$ is an exterior algebra. 
For an evenly graded commutative ring $R_*$,
we set 
$\Lambda_{R_*} = R_*\otimes \Lambda_{\mathbf Z}$.
There is an isomorphism of commutative $\fp{}$--algebras
\[ \pn_*(\pn)\cong \pn_*[t_1,t_2,\ldots]\otimes\Lambda_{\mathbf Z},\]
where $|t_i|=2(p^i-1)$.
Let $C_{\pn_*}$ be the $\pn_*$--subalgebra of $\pn_*(\pn)$
generated by $t_1,t_2,\ldots$:
\[ C_{\pn_*}=\pn_*[t_1,t_2,\ldots] .\]
Then it is known that $C_{\pn_*}$ is a sub-Hopf algebroid of 
$\pn_*(\pn)$ \mbox{(cf Wurgler \cite{Wurgler})}.
Hence we can give $A^c_*\wtens_{\pn_*}C_{\pn_*}\wtens_{\pn_*}A^c_*$ 
the induced graded complete Hopf algebroid structure. 

\begin{lemma}
There is an isomorphism of graded complete Hopf algebroids over $\fp{}$:
\[ (A^c_*,C_{A_*})\cong 
   (A^c_*,A^c_*\wtens_{\pn_*}C_{\pn_*}\wtens_{\pn_*}A^c_*).\]
\end{lemma}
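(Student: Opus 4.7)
\emph{Plan.} I plan to construct an explicit comparison map
\[ \Psi \colon A^c_* \wtens_{\pn_*} C_{\pn_*} \wtens_{\pn_*} A^c_* \longrightarrow C_{A_*}, \]
verify that $\Psi$ preserves the complete Hopf algebroid structure, and show it is bijective. The construction uses the universal property of $C_{\pn_*}$ as classifying strict isomorphisms of $p$--typical formal group laws of strict height $\geq n$. Each $g \in G_A = G_{n+k}$ acts on $A^c_*$ and determines a strict isomorphism $\phi_g \colon F_A \to F_A^g$ over $A^c_*$, inherited from the $G_{n+k}$--equivariance of the universal deformation of the Honda formal group law $H_{n+k}$ (the reduction modulo $I_n$ is compatible because $I_n$ is $G_{n+k}$--stable). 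The triple $(F_A, \phi_g, F_A^g)$ then yields an $\fp{}$--algebra map $A^c_* \wtens_{\pn_*} C_{\pn_*} \wtens_{\pn_*} A^c_* \to A^c_*$; letting $g$ range continuously over $G_A$ assembles these evaluations into a continuous map $\Psi$ with values in $C(G_A, A^c_*) = C_{A_*}$.

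The Hopf algebroid structure is preserved by construction. The composition $\Psi \circ \eta_L$ is the constant-function embedding $a \mapsto (g \mapsto a)$ and $\Psi \circ \eta_R$ is $a \mapsto (g \mapsto a^g)$, matching the structure maps $\eta_L, \eta_R$ of $C_{A_*}$. The comultiplication on $C_{\pn_*}$ corresponds to composition of strict isomorphisms, which under $\Psi$ translates to multiplication in $G_A$ and hence to the comultiplication $\psi$ on $C_{A_*}$. The counit of $C_{\pn_*}$ (picking out the identity strict isomorphism) matches evaluation at $e \in G_A$, and the antipode matches inversion in $G_A$ with the appropriate $g$--twist on coefficients. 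Each verification reduces to a routine check with the functor-of-points descriptions of the two sides.

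The heart of the argument is showing $\Psi$ is bijective. Reducing modulo $(\eta_L(\mathfrak m), \eta_R(\mathfrak m))$ on both sides (where $\mathfrak m = (u_n, \ldots, u_{n+k-1})$, noting $\mathfrak m$ is $G_A$--stable), the left collapses to $\F \otimes_{\pn_*} C_{\pn_*} \otimes_{\pn_*} \F$ and the right to $C(G_A, \F)$. The reduced map is an isomorphism by the standard identification of the extended Morava stabilizer group $G_{n+k}$ with the functor of extended strict automorphisms of the Honda formal group $H_{n+k}$ over $\F$, due to Strickland~\cite{Strickland} (see also Hovey~\cite{Hovey}). The main obstacle is propagating this reduction back to the completed rings: this follows by an inductive argument on the $\mathfrak m$--adic filtration, exploiting flatness of $\pn_*(\pn)$ over $\pn_*$ and the compatibility of the inverse-limit filtration $C_{A_*} = \inverselimit{i} C(G_A, A^c_*/\mathfrak m^i)$ on the right with the $\mathfrak m$--adic filtration on the left.
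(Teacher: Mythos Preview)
Your approach is correct in outline but takes a different route from the paper. The paper does not build the comparison map directly at the level of $A=E_{n+k}/I_n$. Instead it invokes the known isomorphism
\[
(E_{n+k,*}^c,\; E_{n+k,*}^c\wtens_{BP_*}BP_*(BP)\wtens_{BP_*}E_{n+k,*}^c)\;\cong\;
(E_{n+k,*}^c,\; C(G_{n+k},E_{n+k,*}^c))
\]
of Devinatz and Hovey at the integral $E_{n+k}$ level, and then reduces both sides modulo the invariant ideal $I_n$. On the left one obtains $A^c_*\wtens_{\pn_*}C_{\pn_*}\wtens_{\pn_*}A^c_*$ (since $BP_*(BP)/I_n\cong C_{\pn_*}$, using that $I_n$ is invariant so the left and right reductions agree); on the right one obtains $C(G_{n+k},A^c_*)=C_{A_*}$ via the identification $C(G,E_{n+k,*}^c)/I_n\cong C(G,A^c_*)$ coming from \fullref{lemma:complete_arguments_change}. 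So the paper's proof is a one-step descent from the $E_{n+k}$ case, whereas you reduce all the way to the residue field and then lift by a Nakayama-type completeness argument. Both routes rest on the same external input---the identification of Morava cooperations with continuous functions on the stabilizer group---just applied at different levels of the $I$-adic tower. The paper's route is shorter; yours makes the moduli interpretation and the comparison map more explicit.

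One terminological slip worth fixing: the isomorphisms $\phi_g$ for $g\in G_A$ are not in general \emph{strict} (for instance, elements of $\F^\times\subset S_{n+k}$ act by $x\mapsto\zeta x+\cdots$). In the even-periodic setting $A^c_*\wtens_{\pn_*}C_{\pn_*}\wtens_{\pn_*}A^c_*$ co-represents arbitrary isomorphisms of the degree-$0$ formal group laws---the ratio $\eta_R(u)/\eta_L(u)$ supplies the leading coefficient---so your construction of $\Psi$ goes through once you drop the word ``strict''.
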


\begin{proof}
We let $D_*  %E_{n+k,*}^{\vee}(E_{n+k})
=\pi_*(E_{n+k}\widehat{\wedge}E_{n+k})$,
where $E_{n+k}\widehat{\wedge}E_{n+k}$ is 
the $K(n+k)$--localization of $E_{n+k}\wedge E_{n+k}$.
Then %it is known that
$D_* %E_{n+k,*}^{\vee}(E_{n+k})
\cong E_{n+k,*}^c\wtens_{BP_*}BP_*(BP)\wtens_{BP_*}E_{n+k,*}^c$.
Hence $(E_{n+k,*}^c,D_*)$ is 
a graded complete Hopf algebroid over ${\mathbf Z}_{(p)}$.
Furthermore, %the Hopf algebroid 
$$(E_{n+k,*}^c, D_*) %is isomorphic to
\cong (E_{n+k,*}^c, C(G_{n+k},E_{n+k,*}^c))$$ \cite{Devinatz,Hovey}.
%Since there is an isomorphism of Hopf algebroid:
%\[ E_{n+k,*}^{\vee}(E_{n+k})\cong
%   E_{n+k,*}^c\wtens_{BP_*}BP_*(BP)\wtens_{BP_*}E_{n+k,*}^c,\]
%we see that
Hence $$D_*/I_n\cong A^c_*\wtens_{\pn_*}C_{\pn_*}\wtens_{\pn_*}A^c_*$$
by \fullref{lemma:acha_as_rings}.
By \fullref{lemma:complete_arguments_change},
$C(G_{n+k},E_{n+k,*})\cong C(G_{n+k},{\mathbf Z})\wtens E_{n+k,*}$.
This implies that
$C(G_{n+k},E_{n+k,*})/I_n\cong C(G_{n+k},A_*)$.
Hence we have the isomorphism
$C_{A_*}\cong A^c_*\wtens_{\pn_*}C_{\pn_*}\wtens_{\pn_*}A^c_*$.
We can check that this isomorphism induces
the desired isomorphism of Hopf algebroids. 
\end{proof}

\begin{corollary}\label{cor:structureonacha}
There is an isomorphism of graded complete commutative $\fp{}$--algebras:   
\[ \acha \cong 
   C_{A_*}\wtens\Lambda_{\mathbf Z}.\]
\end{corollary}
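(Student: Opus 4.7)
The plan is to combine Lemma~\ref{lemma:acha_as_rings} with the preceding lemma and the splitting $\pn_*(\pn)\cong \pn_*[t_1,t_2,\ldots]\otimes\Lambda_{\mathbf Z} = C_{\pn_*}\otimes\Lambda_{\mathbf Z}$ quoted just before that lemma. The key observation that makes everything go is that $\Lambda_{\mathbf Z}$ is a \emph{finitely generated free} $\mathbf Z$-module (of rank $2^n$), so tensoring with it passes transparently through any completed tensor product: for any $M\in\fmodc{k}$ we have $M\wtens \Lambda_{\mathbf Z}\cong M\otimes \Lambda_{\mathbf Z}$ as a finite direct sum of copies of $M$.

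First I would write, using Lemma~\ref{lemma:acha_as_rings},
\[ \acha\cong A^c_*\wtens_{\pn_*}\pn_*(\pn)\wtens_{\pn_*}A^c_*. \]
Next, substituting the isomorphism $\pn_*(\pn)\cong C_{\pn_*}\otimes\Lambda_{\mathbf Z}$ of $\pn_*$-bimodules (the $\Lambda_{\mathbf Z}$-factor is an exterior $\pn_*$-algebra sitting on top of the polynomial part and does not interact with the left/right $\pn_*$-structures beyond scalar extension), and using the free-finite-rank observation above to pull $\Lambda_{\mathbf Z}$ out past both completed tensor products, I get
\[ \acha\cong \bigl(A^c_*\wtens_{\pn_*}C_{\pn_*}\wtens_{\pn_*}A^c_*\bigr)\otimes\Lambda_{\mathbf Z}. \]
Finally, applying the preceding lemma $(A^c_*,C_{A_*})\cong(A^c_*,A^c_*\wtens_{\pn_*}C_{\pn_*}\wtens_{\pn_*}A^c_*)$ on the parenthesized factor and rewriting the outer $\otimes$ as $\wtens$ (harmless since $\Lambda_{\mathbf Z}$ is finite free over $\mathbf Z$) produces the desired isomorphism $\acha\cong C_{A_*}\wtens\Lambda_{\mathbf Z}$ of graded complete commutative $\fp{}$-algebras.

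The only non-bookkeeping step, and thus the main thing to check carefully, is that the interchange of completed tensor products with $\Lambda_{\mathbf Z}$ is compatible with the algebra structures on both sides. This is automatic from the fact that $\Lambda_{\mathbf Z}\otimes_{\mathbf Z}(-)$ is an exact monoidal functor on filtered $\fp{}$-modules whose source $\Lambda_{\mathbf Z}$ carries the discrete topology and whose underlying module is a finite direct sum; so completing before or after smashing by $\Lambda_{\mathbf Z}$ gives the same answer both as filtered module and as algebra.
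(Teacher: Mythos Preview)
Your proposal is correct and is precisely the argument the paper intends: the corollary is stated without proof because it follows immediately from Lemma~\ref{lemma:acha_as_rings}, the splitting $\pn_*(\pn)\cong C_{\pn_*}\otimes\Lambda_{\mathbf Z}$, and the preceding lemma identifying $A^c_*\wtens_{\pn_*}C_{\pn_*}\wtens_{\pn_*}A^c_*$ with $C_{A_*}$. Your care in justifying why $\Lambda_{\mathbf Z}$ (being free of finite rank) commutes past the completed tensor products, and in noting that both units of $\pn_*(\pn)$ land in the sub-Hopf algebroid $C_{\pn_*}$ so that the $\Lambda_{\mathbf Z}$ factor carries no bimodule structure, makes explicit what the paper leaves to the reader.
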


Recall that $A^*(X)^c=A^c_*\wtens_{A_*}A^*(X)$.
The natural $\pn_*(\pn)$--comodule structure on $\pn^*(Z)$
gives a natural $\acha$--comodule structure on $A^*(Z)^c$, for any finite spectrum $Z$.
By \fullref{lemma:complete_arguments_change} and 
\fullref{cor:structureonacha}, 
this induces an $\acha$--comodule structure on $A^*(X)^c$:
\[ \rho\co A^*(X)^c\longrightarrow \acha\wtens_{A_*^c} A^*(X)^c.\]  
If $X$ is a space, 
then $\rho$ defines an $\acha$--comodule algebra structure
on $A^*(X)^c$.

In the following of this subsection we describe 
the comultiplication $\psi $ on $a^A_{(i)}$.  
For $0\le i<n$, we set
\[ %\begin{array}{rcl} 
   %  b^E_{(i)} & = & u^{p^i} \otimes a_{(i)},\\[1mm]
   %  b^K_{(i)} & = & w^{p^i} \otimes a_{(i)},\\
      b^A_{(i)}   =   u_A^{p^i} \otimes a_{(i)}.
   %\end{array}
\]
Then $|b^A_{(i)}|=-1$ for all $i$.
In particular, $u_A=u$ if $A=E$ and $u_A=w$ if $A=K$.
We put 
\begin{align*}t_A(X)&=\sum_{i\ge 0}{}^{\eta_{L*}F_{A}}~~t^A_i X^{p^i} \in C_{A_*}\power{X}&\\
b_A(X)&=\sum_{i=0}^{n-1}b^A_{(i)}X^{p^i} \qquad\ \, \in \acha[X],&\tag*{\hbox{and}}
\end{align*} 
where $F_A$ is the base change of the universal deformation
$F_{n+k}$ on $E_{n+k,0}$ to $E_{n+k,0}/I_n$.

The comodule algebra structure map
$\rho \co A^*(\mathcal{X})^c\to\acha\wtens_{A_*^c}A^*(\mathcal{X})^c$ 
on $x_A$ 
is given by the following lemma (cf \cite[Section 14]{Boardman}).

\begin{lemma}\label{lemma:action_xA}
$\psi_A(x_A)=t_A(x_A)$.  
\end{lemma}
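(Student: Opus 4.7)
The strategy is to reduce the claim to the standard formula for the coaction on the complex orientation in $\pn$-cohomology, and then transport it along the Landweber exact base change from $\pn$ to $A$.

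First I would record the starting point in $\pn$-cohomology. By the construction of the generators $t_i\in\pn_*(\pn)$ via comparison of the two complex orientations on $\pn\wedge\pn$ (cf.\ Boardman \cite[Section 14]{Boardman}), the $\pn_*(\pn)$-coaction
$$\rho_{\pn}\co \pn^*(\mathcal{X})\longrightarrow \pn_*(\pn)\otimes_{\pn_*}\pn^*(\mathcal{X})$$
satisfies the universal formula
$$\rho_{\pn}(x)=\sum_{i\ge 0}{}^{\eta_{L*}F_{\pn}}\ t_i\otimes x^{p^i},$$
where $F_{\pn}$ is the $p$-typical formal group law on $\pn_*$ carried by the orientation. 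This is the only input from \cite{Boardman} that is really being used.

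Second, I would transfer this formula across the Landweber exact base change. By construction, the $\acha$-comodule structure on $A^*(\mathcal{X})^c$ is induced from $\rho_{\pn}$ via the map $\pn_*\to A_*^c$ together with the identification
$$\acha\cong A^c_*\wtens_{\pn_*}\pn_*(\pn)\wtens_{\pn_*}A^c_*$$
of \fullref{lemma:acha_as_rings}, and the orientation class is $x_A=1\otimes x\in A^*(\mathcal{X})^c=A_*^c\wtens_{\pn_*}\pn^*(\mathcal{X})$. Under these identifications, the element $t_i\in\pn_*(\pn)$ maps to $t_i^A\in C_{A_*}\subset\acha$, and the formal group law $\eta_{L*}F_{\pn}$ pushes forward to $\eta_{L*}F_A$ because $F_A$ is precisely the base change of $F_{\pn}$ along $\pn_*\to A_*^c$. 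Applying base change to $\rho_{\pn}(x)$ and matching coefficient-by-coefficient in the formal group sum therefore yields
$$\psi_A(x_A)=\sum_{i\ge 0}{}^{\eta_{L*}F_A}\ t_i^A\, x_A^{p^i}=t_A(x_A),$$
where the sum is interpreted in the completed tensor product $\acha\wtens_{A_*^c}A^*(\mathcal{X})^c\cong\acha\wtens_{A_*^c}A_*^c\llbracket x_A\rrbracket$, in which the series converges by the $x_A$-adic topology.

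The only real point to check is the bookkeeping of which formal group law and which unit appears: namely, that the formal sum in $\rho_{\pn}(x)$ is taken with respect to $\eta_{L*}F_{\pn}$ (equivalently, that the $t_i$ are the coefficients of the strict isomorphism $\eta_{L*}F_{\pn}\to\eta_{R*}F_{\pn}$ in the $p$-typical normalization), and that this convention is preserved by the base change. Beyond that verification the proof is entirely formal, so I expect no serious obstacle beyond being careful with the completed-tensor-product identifications made available by \fullref{lemma:complete_arguments_change} and \fullref{cor:structureonacha}.
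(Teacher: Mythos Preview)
Your proposal is correct, and in fact the paper does not supply a proof of this lemma at all: it is stated with only the parenthetical reference ``(cf \cite[Section 14]{Boardman})'' and no proof environment. Your plan---quote the standard coaction formula $\rho_{\pn}(x)=\sum^{F}t_i\otimes x^{p^i}$ from Boardman and then base change along the Landweber exact map $\pn_*\to A_*^c$ using the identification of \fullref{lemma:acha_as_rings}---is exactly the argument that citation is pointing to, spelled out carefully.
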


%\proof
%This is easy (see \cite[\S14]{Boardman}).
%\qqq

%
%\begin{corollary}
%In $C_{A_*}\power{X}$, we have
%\[ \psi(t_A(X))=t_A\circ t_A(X) .\]
%\end{corollary}

%For $\mathcal{Y}=S^{2p^n-1}/C_p$, 
The comodule algebra structure map
$\rho \co A^*(\mathcal{Y})^c\to\acha\wtens_{A_*^c}A^*(\mathcal{Y})^c$ 
on $y_A$ 
is given by the following lemma (cf \cite[Section 14]{Boardman}).

\begin{lemma}\label{lemma:action_on_yA}
$\rho (y_A)= 1\otimes y_A + b_A(x_A) $.
%\[ \rho (y_A)= 1\otimes y_A +\sum_{i=0}^n b^A_{(i)}\otimes x_A^{p^i}.\]
\end{lemma}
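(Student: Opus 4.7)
The plan is to deduce this formula from the standard $\pn_*(\pn)$--coaction on $y \in \pn^1(\mathcal{Y})$ and then push it forward through the complete base change that realizes $\acha$ in \fullref{lemma:acha_as_rings}. The starting point, drawn from Boardman~\cite[Section~14]{Boardman}, is the formula
\[ \rho_{\pn}(y) = 1 \otimes y + \sum_{i=0}^{n-1} a_{(i)} \otimes x^{p^i} \]
in $\pn_*(\pn) \otimes_{\pn_*} \pn^*(\mathcal{Y})$; this records the fact that the Milnor primitives act by $Q_i(y) = x^{p^i}$ for $0 \le i < n$, and no generators $a_{(j)}$ with $j \ge n$ can contribute since $\pn^*(\mathcal{Y})$ is truncated at $x^{p^n}$, so a term $a_{(j)} \otimes x^{p^j}$ would live outside the ring.

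Applying $A_*^c \wtens_{\pn_*}(-)\wtens_{\pn_*} A_*^c$ to both sides, and using naturality of the coaction along the Landweber exact ring map $\pn \to A$ together with the identification $\acha \cong A_*^c \wtens_{\pn_*} \pn_*(\pn) \wtens_{\pn_*} A_*^c$, one obtains, inside $\acha \wtens_{A_*^c} A^*(\mathcal{Y})^c$,
\[ \rho(y_A) \;=\; 1 \otimes y_A \;+\; \sum_{i=0}^{n-1}(1 \otimes a_{(i)} \otimes 1)\cdot(1 \otimes 1 \otimes x^{p^i}). \]
It remains to re-express $1 \otimes 1 \otimes x^{p^i}$ in terms of $x_A^{p^i}$. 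Under the grading convention that places $x_A \in A^0(\mathcal{X})$ against $x \in \pn^2(\mathcal{X})$, the class $x^{p^i}$ equals $u_A^{-p^i} x_A^{p^i}$, and transporting the compensating factor $u_A^{p^i}$ across the middle tensor slot (permissible because $u_A \in A_*^c$ is central and the tensor is over $\pn_*$) converts $1 \otimes a_{(i)} \otimes 1$ into $u_A^{p^i} \otimes a_{(i)} = b^A_{(i)}$. Summing over $i$ yields $\rho(y_A) = 1\otimes y_A + b_A(x_A)$, in a form compatible with the description of $\rho(x_A)$ in \fullref{lemma:action_xA}.

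The principal obstacle is the careful bookkeeping of the unit $u_A$ and the grading shifts in passing from $\pn^*(\mathcal{Y})$ to $A^*(\mathcal{Y})^c$; in particular, one must verify that the polynomial $t_i$--part of $\pn_*(\pn)$ contributes nothing to $\rho(y_A)$ beyond what is already absorbed into $\rho(x_A)$, which comes down to observing that $y$ is primitive modulo the exterior generators $a_{(i)}$ in $\pn_*(\pn)$. Once these bookkeeping checks are in place, the rest is a routine algebraic manipulation inside the complete tensor product $\acha \wtens_{A_*^c} A^*(\mathcal{Y})^c$.
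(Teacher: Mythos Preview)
The paper does not prove this lemma; it simply records the formula with a reference to Boardman~\cite[Section~14]{Boardman} and moves on. Your derivation --- start from the standard $\pn_*(\pn)$--coaction $\rho_{\pn}(y)=1\otimes y+\sum_{i=0}^{n-1}a_{(i)}\otimes x^{p^i}$ and push it through the complete base change of \fullref{lemma:acha_as_rings} --- is exactly how one unpacks that citation, so your approach and the paper's are the same.

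One small correction in the bookkeeping step. Your stated reason for transporting $u_A^{p^i}$ ``across the middle tensor slot'' --- that the tensor is over $\pn_*$ --- is backwards: $u_A\notin\pn_*$, so it cannot cross a $\pn_*$--linear tensor, and ``$u_A$ is central'' does not help in a Hopf algebroid where $\eta_L(u_A)\neq\eta_R(u_A)$. What actually happens is that the scalar $u_A^{\pm p^i}$ passes across the \emph{outer} $A_*^c$--tensor between $\acha$ and $A^*(\mathcal{Y})^c$, landing in $\acha$ via the right unit $\eta_R$. Once one reads the paper's shorthand $b^A_{(i)}=u_A^{p^i}\otimes a_{(i)}$ consistently with this (so that $b_A(x_A)$ sits in the same degree as $1\otimes y_A$), the identification with $b^A_{(i)}\,x_A^{p^i}$ follows and your computation goes through. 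The remainder of your argument --- in particular the observation that no $t_j$--terms appear because $y$ is primitive modulo the exterior generators --- is correct and is precisely the content of the Boardman reference.
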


%\proof
%This is easy (see \cite[\S14]{Boardman}).
%\qqq

Let $i_l$ and $i_r$ be the left and right inclusion
of $\acha$ into $\acha\wtens_{A^c_*}\acha$.
The comultiplication map $\psi$ on $b^A_{(i)}$ is encoded in
the following lemma.

\begin{lemma}\label{lemma:encoded_psi_bi}
%For $A=E$ or $K$, %For $0\le i< n$, 
%\[ \psi(b^A_{(i)})=1\otimes b^A_{(i)}+
%   \sum_{j=0}^i b^A_{(j)}\otimes t_{i-j}^A{}^{p^j}.\]
%In other words,
%The comultiplication map $\psi$ on $b^A_{(i)}$ is given by
$\psi(b_A(X))\equiv i_r(b_A)(X)+ i_l(b_A)(i_r(t_A)(X)) \mod (X^{p^n})$.
\end{lemma}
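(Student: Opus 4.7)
The plan is to obtain the identity by applying the coassociativity $(1\otimes \rho)\rho = (\psi \otimes 1)\rho$ of the coaction $\rho\co A^*(\mathcal{Y})^c\to \acha\wtens_{A^c_*}A^*(\mathcal{Y})^c$ to the class $y_A$, and then to read off the claimed congruence from the $x_A$-expansion, using the relation $x_A^{p^n}=0$ in $A^*(\mathcal{Y})$. The two inputs I need are \fullref{lemma:action_on_yA}, giving $\rho(y_A)=1\otimes y_A+\sum_{i=0}^{n-1}b^A_{(i)}\otimes x_A^{p^i}$, and \fullref{lemma:action_xA}, giving $\rho(x_A)=t_A(x_A)$.

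Applying $(\psi\otimes 1)$ coefficient-wise to $\rho(y_A)$ is immediate: the result is $1\otimes 1\otimes y_A+\sum_i \psi(b^A_{(i)})\otimes x_A^{p^i}$. For $(1\otimes \rho)\rho(y_A)$, the summand $(1\otimes \rho)(1\otimes y_A)=1\otimes \rho(y_A)$ contributes $1\otimes 1\otimes y_A+i_r(b_A)(x_A)$. The remaining piece, $(1\otimes\rho)(b^A_{(i)}\otimes x_A^{p^i})=b^A_{(i)}\otimes\rho(x_A)^{p^i}$ by multiplicativity of $\rho$, is rewritten using $\rho(x_A)=t_A(x_A)$ together with the identifications $b^A_{(i)}\otimes 1=i_l(b^A_{(i)})$ in the outer $\acha$-slot and $1\otimes t^A_j=i_r(t^A_j)$ in the inner slot; thus this summand becomes $i_l(b^A_{(i)})\cdot (i_r(t_A)(x_A))^{p^i}$, and summing over $i$ gives $i_l(b_A)(i_r(t_A)(x_A))$.

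Equating these two expressions via coassociativity yields
\[ \sum_{i=0}^{n-1}\psi(b^A_{(i)})\otimes x_A^{p^i}=i_r(b_A)(x_A)+i_l(b_A)(i_r(t_A)(x_A)) \]
in $(\acha\wtens_{A^c_*}\acha)\wtens_{A^c_*}A^*(\mathcal{Y})^c$. Since both sides are free of $y_A$ and $A^*(\mathcal{Y})^c=\Lambda(y_A)\otimes A^c_*[x_A]/(x_A^{p^n})$ is freely generated in its $y_A$-free part by $\{1,x_A,x_A^2,\ldots,x_A^{p^n-1}\}$ over $A^c_*$, the truncated polynomial ring $(\acha\wtens_{A^c_*}\acha)[X]/(X^{p^n})$ embeds injectively into $(\acha\wtens_{A^c_*}\acha)\wtens_{A^c_*}A^*(\mathcal{Y})^c$ via $X\mapsto x_A$. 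Renaming $x_A$ back to the formal indeterminate $X$ on both sides of the displayed equality then produces exactly $\psi(b_A(X))\equiv i_r(b_A)(X)+i_l(b_A)(i_r(t_A)(X))\pmod{X^{p^n}}$.

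The main technical care needed is in tracking the $i_l/i_r$ insertions when $(1\otimes \rho)$ passes through the outer $\acha$-factor, and in noting that the ordinary $p^i$-th powers $(i_r(t_A)(X))^{p^i}$ are well-behaved modulo $X^{p^n}$ since $\acha\wtens_{A^c_*}\acha$ has characteristic $p$; the congruence modulo $X^{p^n}$ then appears for free from the relation $x_A^{p^n}=0$ in $A^*(\mathcal{Y})$.
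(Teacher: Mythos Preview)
Your proof is correct and follows exactly the paper's approach: the paper's proof is a one-liner stating that the lemma follows from coassociativity $(\psi\otimes 1)\rho(y_A)=(1\otimes\rho)\rho(y_A)$ together with \fullref{lemma:action_xA} and \fullref{lemma:action_on_yA}, and you have simply spelled out that computation in full, including the passage from the identity in $A^*(\mathcal{Y})^c$ to a congruence modulo $X^{p^n}$.
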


\begin{proof}
This follows from the fact that
$(\psi\otimes 1)\rho(y_A)=(1\otimes\rho)\rho(y_A)$
and \fullref{lemma:action_xA} and \fullref{lemma:action_on_yA}.
\end{proof}

\begin{lemma}\label{lemma:mod_x^p^n_calculation}
Let $F$ be a $p$--typical formal group law of strict height at least $n$
over an $\fp{}$--algebra $R$.
Then for $a_i\in R\ (0\le i<n)$,
\[ \sum_{i=0}^{n-1}{}^F~a_i X^{p^i}\equiv 
  a_0 X+a_1X^p+\cdots +a_{n-1}X^{p^{n-1}} \mod (X^{p^n}).\]
\end{lemma}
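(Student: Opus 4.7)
The plan is to reduce the lemma to a single elementary fact about the formal group law: under the hypotheses, $F(X,Y) - X - Y \in (X,Y)^{p^n}$ in $R\power{X,Y}$. Granting this, the claim follows by induction on $k$ applied to the partial formal sums $S_k(X) := \sum_{i=0}^{k-1}{}^F a_i X^{p^i}$, which satisfy $S_0 = 0$ and $S_{k+1} = F(S_k, a_k X^{p^k})$. Assuming $S_k \equiv \sum_{i=0}^{k-1} a_i X^{p^i} \pmod{X^{p^n}}$, both $S_k$ and $a_k X^{p^k}$ lie in the principal ideal $(X) \subset R\power{X}$. The key fact then gives $F(S_k, a_k X^{p^k}) - S_k - a_k X^{p^k} \in (X)^{p^n} = (X^{p^n})$, so $S_{k+1} \equiv S_k + a_k X^{p^k} \pmod{X^{p^n}}$. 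Taking $k = n$ completes the induction.

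To prove the key fact, I would invoke universality. A $p$--typical formal group law of strict height at least $n$ over an \fp{}--algebra $R$ is classified by a ring homomorphism $\pn_* \to R$, and $F$ is therefore the pullback of the universal $p$--typical formal group law $F_{BP}$ reduced modulo $I_n$. Writing $F_{BP}(X,Y) = X + Y + \sum_{i,j \ge 1} c_{ij} X^i Y^j$, the grading conventions of $BP_*$ force $c_{ij} \in BP_{2(i+j-1)}$. Since $\pn_* = BP_*/I_n$ is a polynomial \fp{}--algebra on $v_n, v_{n+1}, \ldots$ with $|v_k| = 2(p^k - 1)$, every monomial of positive degree has degree at least $2(p^n - 1)$, so the graded piece in any positive degree strictly less than $2(p^n - 1)$ vanishes. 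Therefore $c_{ij}$ maps to zero in $\pn_*$ whenever $i + j < p^n$, and consequently to zero in $R$, proving the key fact.

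The substantive point is the degree calculation for $\pn_*$, which is elementary. Once the key fact is in hand, the induction is routine bookkeeping with ideals of power series rings. This universality argument neatly avoids the logarithm (which is awkward in positive characteristic), relying only on the classification of $p$--typical formal group laws via $BP_*$ and the structure of $\pn_*$ as a graded ring concentrated in degrees $\ge 2(p^n - 1)$.
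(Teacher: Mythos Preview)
Your proposal is correct and follows the same route as the paper: both reduce the lemma to the single fact that $F(X,Y)\equiv X+Y\pmod{(X,Y)^{p^n}}$, which the paper simply asserts as its one-line proof. You have additionally supplied a clean justification of that fact via the grading of $\pn_*$ and spelled out the induction on partial formal sums, but the underlying idea is identical.
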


\begin{proof}
This follows from the fact that 
$F(X,Y)\equiv X+Y$ mod $(X,Y)^{p^n}$.
\end{proof}

The following theorem describes the structure of  
graded complete Hopf algebroid $(A^c_*,\acha)$.

\begin{theorem}\label{theorem:structure_acha}
The pair $(A^c_*,\acha)$ is a graded complete Hopf algebroid over $\fp{}$.
There is an extension of graded complete Hopf algebroids
\[ C_{A_*}\longrightarrow \acha \longrightarrow \Lambda_{A^c_*},\]  
where the algebra $\smash{\Lambda_{A^c_*}=A_*^c\otimes \Lambda(b^A_{(0)},\ldots,b^A_{(n-1)})}$ 
is an exterior Hopf algebra over $A_*^c$ generated by primitive elements
$\smash{b^A_{(i)}}$ for $0\le i<n$.
%The graded complete Hopf algebroids 
%$(A^c_*,C_{A_*})$ is a sub-Hopf algebroid of $(A^c_*,\acha)$.
The comultiplication $\psi$ and the counit $\varepsilon$
on $\smash{b^A_{(i)}}$ for $0\le i<n$ are given as follows:
\[  \begin{array}{rcl}
       \psi(b^A_{(i)}) & = & 1\otimes b^A_{(i)} 
     + {\displaystyle\sum_{j=0}^{i}}~~b^A_{(j)}\otimes 
       (t^A_{i-j})^{p^j},\\
      \varepsilon(b^A_{(i)}) &= & 0.\\
    \end{array}\]
\end{theorem}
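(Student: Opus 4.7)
The plan is to transport the complete Hopf algebroid structure from $(\pn_*,\pn_*(\pn))$ to $(A^c_*,\acha)$ using \fullref{lemma:acha_as_rings}, base-changing each structure map along $\pn_* \to A^c_*$. Because $A^c_*$ is profinite over $\pn_*$, the classical structure maps $\eta_R$, $\eta_L$, $\chi$, $\psi$, $\varepsilon$ extend continuously without topological trouble, which yields the first assertion. For the extension, I would invoke \fullref{cor:structureonacha} and observe that $C_{A_*}$ is a sub-Hopf algebroid of $\acha$ (inherited from the analogous sub-Hopf algebroid $C_{\pn_*}\subset \pn_*(\pn)$ cited just before that corollary), while $\Lambda_{A^c_*}$ is the quotient Hopf algebra obtained by killing the augmentation ideal of $C_{A_*}$. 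The counit formula $\varepsilon(b^A_{(i)})=u_A^{p^i}\varepsilon(a_{(i)})=0$ then drops out at once, since $a_{(i)}$ lies in odd degree while $A^c_*$ is evenly graded.

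The core of the proof is the formula for $\psi(b^A_{(i)})$, which I would obtain by extracting coefficients of $X^{p^k}$ from both sides of the congruence
\[ \psi(b_A(X)) \equiv i_r(b_A)(X) + i_l(b_A)(i_r(t_A)(X)) \pmod{X^{p^n}} \]
of \fullref{lemma:encoded_psi_bi}. The first right-hand summand contributes $1\otimes b^A_{(k)}$ to the coefficient of $X^{p^k}$. For the second, since $i_r(t_A)(X)=\sum^{F}(1\otimes t^A_i)\,X^{p^i}$ where $F$ is a $p$--typical formal group law of strict height at least $n$ over an $\fp{}$--algebra, \fullref{lemma:mod_x^p^n_calculation} lets me replace the formal group sum by the ordinary sum $\sum_i(1\otimes t^A_i)\,X^{p^i}$ modulo $X^{p^n}$. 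The Frobenius then distributes the $p^j$-th power termwise, so $(i_r(t_A)(X))^{p^j}\equiv \sum_i(1\otimes (t^A_i)^{p^j})\,X^{p^{i+j}}$ modulo $X^{p^n}$. Multiplying by $b^A_{(j)}\otimes 1$, summing over $j$, and reading off the coefficient of $X^{p^k}$ for $0\le k<n$ produces $\sum_{j=0}^{k}b^A_{(j)}\otimes (t^A_{k-j})^{p^j}$, which combined with the first contribution is exactly the asserted $\psi(b^A_{(k)})$.

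Primitivity of $b^A_{(i)}$ in the quotient $\Lambda_{A^c_*}$ is then automatic: modding out the augmentation ideal of $C_{A_*}$ annihilates every mixed term $b^A_{(j)}\otimes (t^A_{i-j})^{p^j}$ with $i-j>0$, while the $j=i$ summand survives as $b^A_{(i)}\otimes 1$ under $t^A_0=1$, leaving $\psi(b^A_{(i)})=1\otimes b^A_{(i)}+b^A_{(i)}\otimes 1$. The main point requiring care is the coefficient bookkeeping in the central computation: keeping $1\otimes b^A_{(i)}$ and $b^A_{(i)}\otimes 1$ distinct inside $\acha\wtens_{A^c_*}\acha$, and carefully separating the formal-group and ordinary-sum roles of the $t^A_i$ at the moment \fullref{lemma:mod_x^p^n_calculation} is invoked. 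None of this is deep, but the indices $(j,i-j)$ demand attention to avoid double-counting or missing boundary contributions near $k=n-1$.
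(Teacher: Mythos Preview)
Your proposal is correct and follows essentially the same route as the paper: the paper's proof is a one-line reference to \fullref{lemma:action_on_yA} and \fullref{lemma:mod_x^p^n_calculation}, and your argument simply unpacks this via the intermediate \fullref{lemma:encoded_psi_bi} (which the paper derives from \fullref{lemma:action_on_yA}) together with the same coefficient extraction you describe. The only cosmetic difference is that you invoke the packaged congruence for $\psi(b_A(X))$ rather than rederiving it from the coaction on $y_A$, but this is the same computation.
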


\begin{proof}
The comultiplication $\psi$ on $b^A_{(i)}$
is obtained by \fullref{lemma:action_on_yA} and 
\fullref{lemma:mod_x^p^n_calculation}. 
\end{proof}

%\begin{corollary}
%There is an extension of graded complete Hopf algebroids:
%\[ C_{A_*}\longrightarrow \acha \longrightarrow \Lambda_{\Lambda},\]  
%where $\Lambda_{A_*}=A_*\otimes \Lambda(b^A_{(0)},\ldots,b^A_{(n-1)})$ 
%is an exterior Hopf algebra over $A_*$ generated by primitive
%$b^A_{(i)}$ for $0\le i<n$.
%\end{corollary}

%Recall that the completion $P(n)^*(X)^{\wedge}$ 
%has a natural $P(n)_*(P(n))$-comodule structure:
%\[ \rho: P(n)^*(X)^{\wedge}\longrightarrow P(n)_*(P(n))
%         \wtens_{P(n)_*}P(n)^*(X)^{\wedge}.\]
%This extends to a natural continuous left $A_*$-module homomorphism:
%\[ \begin{array}{rcl}
%    A_*\wtens_{P(n)_*}P(n)^*(X)
%   &\longrightarrow& A_*\wtens_{P(n)_*}P(n)_*(P(n))
%    \wtens_{P(n)_*} P(n)^*(X)^{\wedge}\\[2mm]
%   &\longrightarrow& A_*\wtens P(n)_*(P(n))\wtens_{P(n)_*}A_*\wtens_{A_*}
%       A_*\wtens_{P(n)_*}P(n)^*(X)^{\wedge},\\
%   \end{array}   \]
%where we regard $A_*$ as a topological ring with 
%${\mathfrak m}$-adic topology.
%Since $A_*\wtens_{P(n)_*}P(n)^*(X)^{\wedge}\cong A^*(X)^c$,
%This gives $A^*(X)$ an $\acha$-precomodule structure.   
%
%\begin{theorem}
%For any spectrum $X$,
%there is a natural $\acha$-precomodule structure on $A^*(X)$.
%\end{theorem}
%\input{lambda} 
\subsection[Exterior algebras]{Exterior algebras $\Lambda_{E_*}$ and $\Lambda_{K_*}$}
\label{subsection:Lambda_E_Lambda_K}

%In this section we let $A^*(-)=E^*(-)$ or $K^*(-)$.
%Put $C_{A_*}=C(G_A,A_*)$,
%where $G_A=G_{n+1}$ if $A=E$ and $G_A=G_n$ if $A=K$.
%
%Let $\Lambda_{\mathbf Z}$ be the graded commutative
%algebra over ${\mathbf Z}$ generated by $a_{(i)}$ for $0\le i< n$,
%where the degree of $a_{(i)}$ is $2p^i-1$.
%Hence $\Lambda_{\mathbf Z}$ is an exterior algebra. 
%For an evenly graded commutative ring $R_*$,
%we set 
%$\Lambda_{R_*} = \Lambda_{\mathbf Z}\otimes R_*$.
%
%Let $A^*(-)=E^*(-)$ or $K^*(-)$.
%It is known that the co-operation ring 
%$\acha$ is isomorphic to $C_{A_*}\wtens_{A_*}\Lambda_{A_*}$
%as a graded commutative $\fp{}$-algebra.
%where $G_A=G_{n+1}$ if $A=E$ and $G_A=G_n$ if $A=K$.
%For $0\le i<n$, we set
%\[ %\begin{array}{rcl} 
%   %  b^E_{(i)} & = & u^{p^i} \otimes a_{(i)},\\[1mm]
%   %  b^K_{(i)} & = & w^{p^i} \otimes a_{(i)},\\
%      b^A_{(i)}   =   u_A^{p^i} \otimes a_{(i)},
%   %\end{array}
%\]
%where $u_A=u$ if $A=E$ and $u_A=w$ if $A=K$.
%Then $|b^A_{(i)}|=-1$ for all $i$.
%We put 
%\[ b_A(X)=\sum_{i=0}^{n-1}b^A_{(i)}X^{p^i}.\] 
%Recall that
%\[ t_A(X)=\sum_{i\ge 0}{}^{\eta_{L*}F_A}~~t^A_i X^{p^i}\]
%in $C_A[X]$, where $F_A=F_{n+1}$ if $A=E$
%and $F_A=H_n$ if $A=K$.

%Put $C_A=C(G_A,A_*)$.
We can give $\Lambda_{A^c_*}$ a structure of right 
$C_{A_*}$--comodule algebra by
%\[ \rho_{C,\Lambda}: \Lambda_{A_*}\stackrel{i_{\Lambda}}{\longrightarrow}
\[ \rho_{C,\Lambda}^{\rm op}\co \Lambda_{A^c_*}
   \stackrel{i_{\Lambda}}{\longrightarrow}
   %C\wtens \Lambda\stackrel{\psi}{\longrightarrow}
   \acha\stackrel{\psi}{\longrightarrow}
   %(C\wtens \Lambda)\wtens 
   %(C\wtens \Lambda)
   \acha\wtens_{A^c_*}\acha
   \stackrel{\pi_{\Lambda}\otimes\pi_C}
   {\hbox to 10mm{\rightarrowfill}}
   \Lambda_{A^c_*}\wtens_{A^c_*} C_{A_*},\]
%   \stackrel{\tau}{\longrightarrow}C_{A_*}\wtens \Lambda_{A_*},\]   
where $i_{\Lambda}$ is the canonical inclusion,
$\pi_C=1_C\otimes \varepsilon_{\Lambda}$ and
$\pi_{\Lambda}=\varepsilon_C\otimes 1_{\Lambda}$.
%and $\tau$ is given by $\lambda\otimes c\mapsto \chi(c)\otimes\lambda$.
%The map 
%\[ \rho_{\Lambda, C}: \Lambda_{A_*}
%   \longrightarrow \La
%C\otimes_{A_*}\Lambda_{A_*}$
%given by
%$b^A_{(i)}=\sum_{j=0}^i b^A_{(j)}\otimes t^A_{i-j}{}^{p^j}$
%gives $\Lambda_{E_*}$ a right $C$-comodule algebra structure. 
%We set $b_{(i)}=a_{(i)}\otimes u^{p^i}$ and
%\[ b(X)=\sum_{i=0}^{n-1}b_{(i)}X^{p^i} ,\]
%in $\Lambda_{E_*}[X]/(X^{p^n})$.
%Then we have
Hence $\Lambda_{A_*}$ is a profinite right $C_{A_*}$--precomodule algebra.

%\begin{equation}\label{equation:coaction_lambda_c}
\begin{lemma}\label{lemma:coaction_lambda_c}
$\rho_{\Lambda,C}^{\rm op}(b_A(X))\equiv b_A(t_A(X))\mod (X^{p^n})$.
\end{lemma}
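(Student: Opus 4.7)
The plan is to derive the coaction of $\rho_{\Lambda,C}^{\rm op}$ on $b_A(X)$ directly from \fullref{lemma:encoded_psi_bi}, which describes the full comultiplication $\psi(b_A(X))$ in $\acha\wtens_{A^c_*}\acha$, and then to project via $\pi_{\Lambda}\otimes\pi_C$ into $\Lambda_{A^c_*}\wtens_{A^c_*}C_{A_*}$.

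First I would record how the two projections behave on the standard generators. By the extension $C_{A_*}\to\acha\to\Lambda_{A^c_*}$ of \fullref{theorem:structure_acha}, the map $\pi_C=1_C\otimes\varepsilon_{\Lambda}$ sends $b^A_{(i)}\mapsto 0$ and is the identity on $t^A_j$, while $\pi_{\Lambda}=\varepsilon_C\otimes 1_{\Lambda}$ sends $t^A_j\mapsto 0$ for $j\ge 1$ and is the identity on $b^A_{(i)}$. Consequently $(\pi_\Lambda\otimes\pi_C)(b^A_{(i)}\otimes 1)=b^A_{(i)}\otimes 1$ and $(\pi_\Lambda\otimes\pi_C)(1\otimes b^A_{(i)})=0$, while $(\pi_\Lambda\otimes\pi_C)(1\otimes t^A_j)=1\otimes t^A_j$.

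Next, I would apply $\pi_\Lambda\otimes\pi_C$ to the congruence of \fullref{lemma:encoded_psi_bi}, using that $\pi_\Lambda\otimes\pi_C$ is an algebra homomorphism (so it commutes with $p^i$-th powers and with the formal-group sum underlying $t_A(X)$) and that the relation $\bmod\,(X^{p^n})$ is preserved. The first summand $i_r(b_A)(X)$ has coefficients $1\otimes b^A_{(i)}$, each killed by $\pi_C$, so it vanishes. For the second summand
\[
i_l(b_A)\bigl(i_r(t_A)(X)\bigr)=\sum_{i=0}^{n-1}(b^A_{(i)}\otimes 1)\bigl(i_r(t_A)(X)\bigr)^{p^i},
\]
the factor $b^A_{(i)}\otimes 1$ is preserved by $\pi_\Lambda\otimes\pi_C$, and $i_r(t_A)(X)$ is mapped to the image of $t_A(X)$ under $C_{A_*}\hookrightarrow\Lambda_{A^c_*}\wtens_{A^c_*}C_{A_*}$, $c\mapsto 1\otimes c$. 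Collecting the terms gives
\[
\sum_{i=0}^{n-1}b^A_{(i)}\otimes t_A(X)^{p^i}=b_A(t_A(X))
\]
in $\Lambda_{A^c_*}\wtens_{A^c_*}C_{A_*}$, which is the desired identity modulo $(X^{p^n})$.

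The only slightly delicate step is justifying that the formal-group sum encoded in $t_A(X)$ is transported correctly by $\pi_\Lambda\otimes\pi_C$; but since this is a continuous $A^c_*$-algebra homomorphism, the formal-group sum is sent to the formal-group sum over $F_A$ in the target, and no further care is needed. Everything else is routine bookkeeping between the Hopf algebroid structures.
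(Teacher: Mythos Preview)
Your proof is correct and follows exactly the same approach as the paper: the paper's proof is the single sentence ``This follows from \fullref{lemma:encoded_psi_bi},'' and you have simply written out the computation of applying $\pi_{\Lambda}\otimes\pi_C$ to the formula of that lemma. Your treatment of the projections on generators and the transport of the formal-group sum are accurate details of what the paper leaves implicit.
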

%\end{equation}
%where
%\[ t_A(X)=\sum_{i\ge 0}{}^{\eta_{L*}F_{n+1}} t_i X^{p^i} .\]
%Note that
%\[ t(X)\equiv \sum_{i\ge 0} t_i X^{p^i}\mod X^{p^n}.\]

\begin{proof}
This follows from \fullref{lemma:encoded_psi_bi}.
\end{proof}

The left $A^c_*$--module homomorphism
$\mbox{\rm ev}(g)\circ\chi\co C_{A_*}\to C_{A_*}\to A^c_*$
defines a right action of $G_A$ on $\Lambda_{A^c_*}$
by 
\[ \Lambda_{A^c_*}\stackrel{\rho_{\Lambda,C}^{\rm op}}
   {\hbox to 10mm{\rightarrowfill}}
   \Lambda_{A^c_*}\wtens_{A^c_*} C_{A_*}\stackrel
   {1\otimes(\mbox{\scriptsize\rm ev}(g)\circ\chi)}
   {\hbox to 20mm{\rightarrowfill}} \Lambda_{A^c_*} .\]
Then $\Lambda_{A^c_*}$ is a twisted $A^c_*$--$G_A$--module.

%By \fullref{lemma:coaction_lambda_c},
%we obtain that

\begin{corollary}\label{cor:action_g_b} 
For $g\in G_A$,
$b_A{}^g(X) %= b_A\circ t_A(g^{-1})^g(X) 
\equiv b_A(t_A(g)^{-1}(X))$ mod $(X^{p^n})$.
\end{corollary}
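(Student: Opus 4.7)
The plan is to combine \fullref{lemma:coaction_lambda_c} with the definition of the right $G_A$-action on $\Lambda_{A^c_*}$ and then identify the effect of $\mathrm{ev}(g)\circ\chi$ on the universal power series $t_A(X)$ as compositional inversion.

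By the definition of the right $G_A$-action immediately preceding the statement, $b_A(X)^g$ is obtained by applying $1 \otimes (\mathrm{ev}(g) \circ \chi)$ to $\rho_{\Lambda,C}^{\rm op}(b_A(X))$, coefficient-by-coefficient in $X$. Using \fullref{lemma:coaction_lambda_c}, we therefore have
\[ b_A(X)^g \equiv (1 \otimes (\mathrm{ev}(g) \circ \chi))\bigl(b_A(t_A(X))\bigr) \pmod{X^{p^n}}.\]
Now $b_A(t_A(X))=\sum_{i=0}^{n-1} b^A_{(i)}\cdot(t_A(X))^{p^i}$, with each $b^A_{(i)}$ sitting in the $\Lambda$-factor and the powers of $t_A(X)$ in the $C_{A_*}$-factor. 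Since $\mathrm{ev}(g)\circ\chi$ is a ring homomorphism commuting with taking $p^i$-th powers, this simplifies to
\[ b_A(X)^g \equiv b_A\bigl((\mathrm{ev}(g)\circ\chi)(t_A)(X)\bigr) \pmod{X^{p^n}}. \]

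It remains to identify $(\mathrm{ev}(g)\circ\chi)(t_A)(X)$ with the compositional inverse $t_A(g)^{-1}(X)$. The pair $(A^c_*, C_{A_*})$ is a graded complete Hopf algebroid whose associated groupoid has $G_A$ as its automorphism group, with $t_A(X)$ recording the tautological isomorphism and $\chi$ implementing its inversion. Concretely, the antipode identity applied to the coordinate $t_A$ unwinds to $\chi(t_A)(t_A(X))=X$ in $C_{A_*}\power{X}$, so $\chi(t_A)$ is the compositional inverse of $t_A$. Evaluating at $g\in G_A$ then gives $(\mathrm{ev}(g)\circ\chi)(t_A)(X)=t_A(g)^{-1}(X)$, which proves the corollary.

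The main obstacle is the last identification. A purely calculational alternative is to expand the coaction using the formula for $\psi(b^A_{(i)})$ from \fullref{theorem:structure_acha}, apply $1\otimes(\mathrm{ev}(g)\circ\chi)$ term-by-term, and invoke \fullref{lemma:mod_x^p^n_calculation} to collapse formal group sums to ordinary sums modulo $X^{p^n}$; this reduces the claim to direct power-series manipulation using the antipode axiom for the generators $t^A_j$.
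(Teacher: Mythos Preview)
Your proof is correct and follows essentially the same approach as the paper's. The paper's one-line argument reads ``This follows from \fullref{lemma:coaction_lambda_c} and $t_A(g^{-1})^g(X)=t_A(g)^{-1}(X)$''; your proof unpacks exactly this, since by the explicit formula $\chi(\alpha)(g)=\alpha(g^{-1})^g$ from \fullref{subsection:complete_hopf_algebroid_C} the map $\mathrm{ev}(g)\circ\chi$ sends $t_A$ to $t_A(g^{-1})^g$, and your antipode argument (or equivalently the cocycle property of $g\mapsto t_A(g)$) identifies this with $t_A(g)^{-1}$.
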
  

\begin{proof}
This follows from \fullref{lemma:coaction_lambda_c}
and $t_A(g^{-1})^g(X)=t_A(g)^{-1}(X)$.
\end{proof}

%Let $\Phi$ be an isomorphism from
%$F_{n+1}$ to $H_n$ over the separable closure of the 
%quotient field of $\F\power{u_n}$:
%\[ \Phi: F_{n+1}\longrightarrow H_n.\]
%Then $\Phi$ has the following form:
%\[ \Phi(X)=\sum_{i\ge 0}{}^{H_n}\Phi_i X^{p^i}.\]
%By \cite{Torii},
%\[ \Phi^g\circ t(g)= \Phi : F_{n+1}\longrightarrow H_n,
%   \qquad \mbox{\rm for any $g\in G_{n+1}$} \]
%and
%\[ h\circ \Phi = \Phi^h: F_{n+1}\longrightarrow H_n,
%   \qquad \mbox{\rm for any $h\in G_n$}.\]
%We put
%\[ \Phi^{-1}(X)=\sum_{i\ge 0}{}^{F_{n+1}}\widetilde{\Phi}_i X^{p^i}.\]
%Note that
%\[ \Phi^{-1}(X)\equiv \sum_{i\ge 0} \widetilde{\Phi}_i X^{p^i}
%   \mod X^{p^n}.\]

%The actions of $G_{n+1}$ on $\Lambda_{E_*}$ and on $L$
%are compatible. 
%Hence we obtain an action of $G_{n+1}$ on
%$\Lambda\otimes_{E_0}L$.
Since $\Lambda_{E_*}$ is a twisted $E_*$--$G_{n+1}$--module,
$\Lambda_{L_*}=L_*\otimes_{E_*}\Lambda_{E_*}$
is a twisted $L_*$--${\mathcal G}$--module.
We define $\smash{\widehat{b}(X)=\sum_{i=0}^{n-1}\widehat{b}_{(i)}X^{p^i}
\in \Lambda_{L_*}[X]}$ by
\[ \widehat{b}(X)\equiv b_E(\Phi^{-1}(X)) \mod (X^{p^n}).\]
%in $\Lambda\otimes_{E_0} L[X]/(X^{p^n})$.

\begin{lemma}\label{lemma:invariance_widehat_b}
For any $g\in G_{n+1}$,
$\widehat{b}\,{}^g(X)=\widehat{b}(X)$.  
\end{lemma}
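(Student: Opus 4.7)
The plan is to unwind the definition of $\widehat{b}(X)$ in terms of $b_E$ and $\Phi$, then use the two inputs available to us: Corollary \ref{cor:action_g_b} (which describes how $g \in G_A$ acts on $b_A$) applied to $A = E$, and the commutative diagram \eqref{eq:commutative_diagram_g_n+1_degeneration} from Theorem \ref{thm:degeneration_main} (which relates $t_E(g)$ and $\Phi$). Since $\deg \widehat{b}(X) \le p^{n-1} < p^n$, it will suffice throughout to work modulo $X^{p^n}$.

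First I would compute $\widehat{b}^g(X)$ directly. Because $g$ acts on $L_*\otimes_{E_*}\Lambda_{E_*}$ diagonally and the defining congruence $\widehat{b}(X) \equiv b_E(\Phi^{-1}(X))$ holds in $\Lambda_{L_*}[X]$, applying $g$ gives
\[ \widehat{b}^g(X) \equiv b_E^{\,g}\bigl((\Phi^{-1})^g(X)\bigr) = b_E^{\,g}\bigl((\Phi^g)^{-1}(X)\bigr) \pmod{X^{p^n}}. \]
Next, by Corollary \ref{cor:action_g_b} applied to $A = E$,
\[ b_E^{\,g}(Y) \equiv b_E\bigl(t_E(g)^{-1}(Y)\bigr) \pmod{Y^{p^n}}, \]
so substituting $Y = (\Phi^g)^{-1}(X)$ yields
\[ \widehat{b}^g(X) \equiv b_E\bigl(t_E(g)^{-1}\circ (\Phi^g)^{-1}(X)\bigr) \pmod{X^{p^n}}. \]

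The key step is then to recognize that the inner composition collapses to $\Phi^{-1}$. Diagram \eqref{eq:commutative_diagram_g_n+1_degeneration} says $\Phi^g \circ t_E(g) = \Phi$ (the bottom arrow is the identity on $H_n$), which rearranges to $t_E(g)^{-1}\circ (\Phi^g)^{-1} = \Phi^{-1}$ as power series over $L$. Substituting gives
\[ \widehat{b}^g(X) \equiv b_E(\Phi^{-1}(X)) \equiv \widehat{b}(X) \pmod{X^{p^n}}, \]
and since both sides are polynomials in $X$ of degree at most $p^{n-1}$, the congruence is an equality.

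I do not anticipate a substantive obstacle here; the only mild subtlety is bookkeeping with the left/right actions (ensuring that $(\Phi^{-1})^g$ really equals $(\Phi^g)^{-1}$, which follows because $g$ acts on the coefficients of power series and composition is compatible with this action on coefficients) and confirming that the congruence mod $X^{p^n}$ in the definition of $\widehat{b}$ propagates through the substitution $X \mapsto (\Phi^g)^{-1}(X)$, which is valid because $(\Phi^g)^{-1}(X)$ has zero constant term. Everything else is a direct appeal to Theorem \ref{thm:degeneration_main} and Corollary \ref{cor:action_g_b}.
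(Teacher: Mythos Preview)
Your proof is correct and follows essentially the same route as the paper's: apply \fullref{cor:action_g_b} to rewrite $b_E^{\,g}$, then use diagram~\eqref{eq:commutative_diagram_g_n+1_degeneration} from \fullref{thm:degeneration_main} to collapse $t_E(g)^{-1}\circ(\Phi^g)^{-1}$ to $\Phi^{-1}$. Your version is slightly more explicit about the degree bookkeeping that upgrades the congruence modulo $X^{p^n}$ to an equality, but the argument is the same.
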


\begin{proof}
By definition and \fullref{cor:action_g_b}, 
$\widehat{b}\,{}^g(X) \equiv  
b_E\circ t(g)^{-1}\circ (\Phi^{-1})^g(X)$
mod $(X^{p^n})$.
By the diagram~\eqref{eq:commutative_diagram_g_n+1_degeneration} 
in \fullref{thm:degeneration_main},
$\Phi^g\circ t(g)(X)= \Phi(X)$.
This implies that $t(g)^{-1}\circ (\Phi^g)^{-1}(X)=\Phi^{-1}(X)$.
Hence $\widehat{b}\,{}^g(X)\equiv b\circ \Phi^{-1}(X)$ mod $(X^{p^n})$.
\end{proof}

By \fullref{lemma:invariance_widehat_b},
we see that the coefficients of $\widehat{b}(X)$
are invariant under the action of $G_{n+1}$.
%In particular,
%\[ \widehat{b}_{(i)}=\sum_{j=0}^i b_{(j)}\widehat{\Phi}_{i-j}^{p^j}
%   \in H^0(G_{n+1}; \Lambda\otimes_{E_0}L).\]

\begin{lemma}\label{lemma:iso_f_lambda2lambda}
%There is an isomorphism of graded commutative algebras:
%\[ {\mathcal F}(M)=H^0(S_{n+1};\Lambda_{E_*}\otimes_{E_*}L_*)
${\mathcal F}(\Lambda_{E_*})= 
K_*\otimes\Lambda(\widehat{b}_{(0)},\ldots,\widehat{b}_{(n-1)})$
as a graded commutative ring.
\end{lemma}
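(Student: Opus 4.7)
The plan is to unpack $\mathcal{F}(\Lambda_{E_*}) = H^0(S_{n+1}; L_*\wtens_{E_*}\Lambda_{E_*}) = H^0(S_{n+1};\Lambda_{L_*})$, where $\Lambda_{L_*}$ is free of rank $2^n$ over $L_*$, exhibit $2^n$ obvious invariants inside, and then close with the dimension estimate of \fullref{remark:dimension_estimate}.

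First, by \fullref{lemma:invariance_widehat_b} the coefficients $\widehat{b}_{(0)},\ldots,\widehat{b}_{(n-1)}$ lie in $H^0(S_{n+1};\Lambda_{L_*})$. Each has odd degree $-1$ and $p$ is odd, so these elements anticommute and square to zero in the graded commutative ring $\Lambda_{L_*}$, hence generate an exterior subalgebra. Combined with $H^0(S_{n+1};L_*)=K_*$ from \fullref{lemma:first_step_lemma}, this yields a natural inclusion of graded $K_*$-algebras
\[ K_*\otimes \Lambda(\widehat{b}_{(0)},\ldots,\widehat{b}_{(n-1)}) \hookrightarrow \mathcal{F}(\Lambda_{E_*}). \]

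Next I would promote this to an equality by showing that $\{\widehat{b}_{(0)},\ldots,\widehat{b}_{(n-1)}\}$ is an alternative $L_*$-basis for the $L_*$-span of $\{b^E_{(0)},\ldots,b^E_{(n-1)}\}$ inside $\Lambda_{L_*}$. Since $\Phi \co F_{n+1}\to H_n$ is a $p$-typical isomorphism over $L$, its inverse has the form $\Phi^{-1}(X)=\sum_{k\ge 0}\phi_k X^{p^k}$ with $\phi_0\in L^\times$. In characteristic $p$, Frobenius gives $\Phi^{-1}(X)^{p^j}=\sum_k \phi_k^{p^j}X^{p^{j+k}}$; substituting into $b_E(Y)=\sum_{j<n}b^E_{(j)}Y^{p^j}$ and reading off the coefficient of $X^{p^m}$ modulo $X^{p^n}$ produces
\[ \widehat{b}_{(m)} = \sum_{j=0}^{m} \phi_{m-j}^{p^j}\, b^E_{(j)}, \]
a triangular $L_*$-linear change of basis with unit diagonal entries $\phi_0^{p^j}$. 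Consequently the ordered monomials $\widehat{b}_{(i_1)}\cdots\widehat{b}_{(i_r)}$ (for $i_1<\cdots<i_r$) still form an $L_*$-basis of $\Lambda_{L_*}$, so in particular the $2^n$ such monomials are $K_*$-linearly independent inside $\mathcal{F}(\Lambda_{E_*})$.

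Finally, applying \fullref{remark:dimension_estimate} to the finite-dimensional twisted $L_*$-$G_{n+1}$-module $\Lambda_{L_*}$ gives $\dim_{K_*}H^0(S_{n+1};\Lambda_{L_*})\le 2^n$, which forces the inclusion above to be an isomorphism of graded rings. The main obstacle is the basis-change step, which relies on the $p$-typicality of $\Phi$ (without which the defining congruence for $\widehat{b}(X)$ would not even land in $\sum_i \Lambda_{L_*}\cdot X^{p^i}$); this input is supplied by the construction of $\Phi$ in \fullref{thm:degeneration_main}, after which the triangular structure with unit diagonal is immediate and the dimension bound closes the argument at once.
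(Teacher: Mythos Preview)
Your proof is correct and follows essentially the same strategy as the paper's: invoke \fullref{lemma:invariance_widehat_b} to get the $\widehat{b}_{(i)}$ inside $\mathcal{F}(\Lambda_{E_*})$, observe that they arise from the $b^E_{(j)}$ by a triangular $L_*$-linear change of basis (so the exterior monomials remain independent), and close with the dimension bound of \fullref{remark:dimension_estimate}. The only difference is that you make the triangularity explicit via the $p$-typical expansion of $\Phi^{-1}$, whereas the paper simply asserts that each $\widehat{b}_{(i)}$ is a linear combination of the $b^E_{(j)}$; your version is a faithful elaboration of the same argument.
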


\begin{proof}
%We have $K_*\otimes\Lambda(\widehat{b}_{(0)},\ldots,\widehat{b}_{(n-1)})  
%\subset  %H^0(S_{n+1};\Lambda\otimes_{E_0}L)
%{\mathcal F}(\Lambda_{E_*})$ and
We have $\smash{\widehat{b}_{(0)},\ldots,\widehat{b}_{n-1}
\in {\mathcal F}(\Lambda_{E_*})}$.
Since $\widehat{b}_{(i)}$ is a linear combination of
$\smash{b_{(i)}^E,\ldots,b^E_{(n-1)}}$,
we see that 
$\smash{K_*\otimes\Lambda(\widehat{b}_{(0)},\ldots,\widehat{b}_{(n-1)})  
\subset {\mathcal F}(\Lambda_{E_*})}$.
Then the lemma follows from the fact that
$\mbox{\rm dim}_{K_*}{\mathcal F}(\Lambda_{E_*})\le 2^n$
by \fullref{remark:dimension_estimate}.
%%%dimension estimate of F(M) 
\end{proof}

%Recall that $\mathcal{G}=\Gamma\ltimes (S_n\times S_{n+1})$
%acts on $L$.
%If we suppose that $\mathcal{G}$ acts on $\Lambda_{E_*}$ through
%the projection $\mathcal{G}\to G_{n+1}$,
%then the action of $\mathcal{G}$ on $L$ is compatible 
%with the action on $\Lambda_{E_*}$.
%Hence we obtain an action of $\mathcal{G}$ on $\lambda\otimes_{E_0} L$.
%This induces an action of $G_n$
%on $H^0(S_{n+1};\Lambda_{E_0}L)$.

Recall that 
$t_K(h)(X)$ is the automorphism $t_K(h)\co H_n\longrightarrow H_n^h=H_n$ corresponding to $h\in G_n$.
%Then $t_K(hh')(X)= t_K(h)^{h'}\circ t_K(h')(X)$ for any $h, h'\in G_n$.  

\begin{lemma}\label{lemma:action_G_n_lambda}
For any $h\in G_n$, we have 
$\widehat{b}\,{}^h(X) = \widehat{b}\circ t_K(h)^{-1}(X)$. 
\end{lemma}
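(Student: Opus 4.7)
The plan is to mimic the proof of \fullref{lemma:invariance_widehat_b}: apply $h$ to the defining congruence $\widehat{b}(X)\equiv b_E(\Phi^{-1}(X))\pmod{(X^{p^n})}$ and then use the second degeneration diagram~\eqref{eq:commutative_diagram_g_n_degeneration} to rewrite $(\Phi^{-1})^h$.

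First, I would observe that for $h\in G_n$, the coefficients of $b_E(X)=\sum_{i=0}^{n-1} b^E_{(i)} X^{p^i}$ are $h$--invariant. Indeed, $b^E_{(i)}=u^{p^i}\otimes a_{(i)}\in \Lambda_{E_*}$, and by \fullref{thm:degeneration_main} the subgroup $G_n\subset \mathcal{G}$ acts on $(F_{n+1},L)$ only through the Galois action on $L$, while acting trivially on $F_{n+1}$ and hence on $E_*=\F\power{u_n}[u^{\pm1}]$. Consequently the whole twisted $L_*$--$\mathcal{G}$--module structure on $\Lambda_{L_*}=L_*\otimes_{E_*}\Lambda_{E_*}$ restricts on the factor $\Lambda_{E_*}$ to the trivial $G_n$--action. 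Applying $h$ to the defining congruence therefore gives
\[
 \widehat{b}^{\,h}(X)\ \equiv\ b_E^{\,h}\bigl((\Phi^{-1})^h(X)\bigr)\ =\ b_E\bigl((\Phi^{h})^{-1}(X)\bigr)\pmod{(X^{p^n})}.
\]

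Next, diagram~\eqref{eq:commutative_diagram_g_n_degeneration} in \fullref{thm:degeneration_main} says precisely that $\Phi^h\circ\mathrm{id}_{F_{n+1}}=t_K(h)\circ\Phi$, i.e.\ $\Phi^h=t_K(h)\circ\Phi$ and hence $(\Phi^h)^{-1}=\Phi^{-1}\circ t_K(h)^{-1}$. Substituting this into the previous congruence yields
\[
 \widehat{b}^{\,h}(X)\ \equiv\ b_E\bigl(\Phi^{-1}(t_K(h)^{-1}(X))\bigr)\ \equiv\ \widehat{b}\bigl(t_K(h)^{-1}(X)\bigr)\pmod{(X^{p^n})},
\]
which is the asserted identity, since $\widehat{b}(X)$ is by construction only defined modulo $(X^{p^n})$. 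The only real step beyond formal substitution is the invariance of $b_E$ under $G_n$, which is the whole point of the splitting of the action of $\mathcal{G}$ on the two factors furnished by \fullref{thm:degeneration_main}; once that is in hand the rest is bookkeeping.
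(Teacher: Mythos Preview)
Your proof is correct and follows essentially the same route as the paper's own argument: both apply $h$ to the defining congruence for $\widehat{b}$, use that $G_n$ acts only through the Galois action on $L$ (so $b_E$ is fixed), and then invoke diagram~\eqref{eq:commutative_diagram_g_n_degeneration} to rewrite $(\Phi^{-1})^h=\Phi^{-1}\circ t_K(h)^{-1}$. Your version merely spells out the invariance of $b_E$ a little more explicitly than the paper does.
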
   

\begin{proof}
By definition and the fact that
$G_n$ acts on $L$ as Galois group, we have
%Since $\widehat{b}(X)=b\circ \Phi^{-1}(X)$, 
$\widehat{b}\,{}^h(X)\equiv b\circ(\Phi^{-1})^h(X)$ mod $(X^{p^n})$.
By the diagram~\eqref{eq:commutative_diagram_g_n_degeneration}
in \fullref{thm:degeneration_main},
$t_K(h)\circ\Phi(X) =\Phi^h(X)$. 
This implies that
$(\Phi^h)^{-1}(X)=\Phi^{-1}\circ t_K(h)^{-1}(X)$.
Hence the congruence $\widehat{b}\,{}^h(X)\equiv b\circ 
\Phi^{-1}\circ t_K(h)^{-1}(X)$ mod $(X^{p^n})$ holds.
\end{proof}

\begin{theorem}\label{thm:iso_bet_lambdas}
As a $C_{K_*}$--comodule,
${\mathcal F}(\Lambda_{E_*})$ is isomorphic to $\Lambda_{K_*}$.
\end{theorem}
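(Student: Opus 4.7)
The strategy is to exploit the equivalence of symmetric monoidal categories between complete $C_{K_*}$-comodules and complete twisted $K_*$--$G_n$-modules established in \fullref{theorem:eq_cat_twist_comod_over_C}. Under this equivalence, the $C_{K_*}$-comodule $\Lambda_{K_*}$ corresponds to the twisted $K_*$--$G_n$-module whose $G_n$-action is determined by the formula in \fullref{cor:action_g_b}, namely $b_K{}^h(X)\equiv b_K(t_K(h)^{-1}(X))\bmod (X^{p^n})$ for $h\in G_n$. Meanwhile, ${\mathcal F}(\Lambda_{E_*})$ is by definition already equipped with its twisted $K_*$--$G_n$-module structure, and \fullref{lemma:action_G_n_lambda} tells us that the induced action of $h\in G_n$ is given by $\widehat{b}\,{}^h(X)=\widehat{b}\circ t_K(h)^{-1}(X)$. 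So the two $G_n$-actions obey formally identical rules.

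The plan is therefore to define a map of $K_*$-algebras
\[ \phi\co \Lambda_{K_*}\ {\hbox to 10mm{\rightarrowfill}}\ {\mathcal F}(\Lambda_{E_*}),\qquad b^K_{(i)}\longmapsto \widehat{b}_{(i)}, \]
and to check three things. First, $\phi$ is a well-defined ring map, since the target is the exterior algebra on $\widehat{b}_{(0)},\ldots,\widehat{b}_{(n-1)}$ over $K_*$ by \fullref{lemma:iso_f_lambda2lambda}, and the $\widehat{b}_{(i)}$ are odd-degree elements. Second, $\phi$ is an isomorphism of $K_*$-modules: this is immediate from \fullref{lemma:iso_f_lambda2lambda}, since both sides have the same $K_*$-basis indexed by subsets of $\{0,\ldots,n-1\}$. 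Third, $\phi$ is $G_n$-equivariant, which by multiplicativity reduces to verifying
\[ \phi(b^K_{(i)}{}^h)=\widehat{b}_{(i)}{}^h \qquad (0\le i<n,\ h\in G_n); \]
packaged as generating series, this is the identity $\phi\bigl(b_K(t_K(h)^{-1}(X))\bigr)=\widehat{b}(t_K(h)^{-1}(X))$ modulo $(X^{p^n})$, which is exactly the comparison of \fullref{cor:action_g_b} (applied with $A=K$) and \fullref{lemma:action_G_n_lambda}. Transporting back across the equivalence of \fullref{theorem:eq_cat_twist_comod_over_C}, $\phi$ becomes an isomorphism of $C_{K_*}$-comodules.

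The only step requiring any real care is the $G_n$-equivariance, and this is where the degeneration geometry of \fullref{thm:degeneration_main} does the work for us: the very point of introducing $\widehat{b}(X)=b_E\circ\Phi^{-1}(X)\bmod (X^{p^n})$ was to trade the $G_{n+1}$-action on $b^E_{(i)}$ for a $G_n$-action that matches the intrinsic $G_n$-action on the height-$n$ coordinate. Once the two action formulas are written side by side, equivariance is essentially a tautology. I expect no obstacles beyond bookkeeping the mod $(X^{p^n})$ truncation, which is legitimate because both $b_K(X)$ and $\widehat{b}(X)$ are polynomials of degree $p^{n-1}$ in $X$.
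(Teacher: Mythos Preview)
Your proposal is correct and follows essentially the same route as the paper: define the map on the generators $b^K_{(i)}\leftrightarrow\widehat{b}_{(i)}$, use \fullref{lemma:iso_f_lambda2lambda} for the underlying $K_*$--module isomorphism, and compare \fullref{cor:action_g_b} with \fullref{lemma:action_G_n_lambda} to verify $G_n$--equivariance, then invoke \fullref{theorem:eq_cat_twist_comod_over_C}. The only cosmetic difference is that the paper writes the map in the direction $\widehat{b}_{(i)}\mapsto b^K_{(i)}$ rather than your $\phi$.
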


\begin{proof}
The map $\widehat{b}_{(i)}\mapsto b^K_{(i)}$ 
gives an isomorphism of twisted $K_*$--$G_n$--modules
by \fullref{cor:action_g_b},
\fullref{lemma:iso_f_lambda2lambda} and
\fullref{lemma:action_G_n_lambda}.
\end{proof}

\subsection{Milnor operations}\label{subsection:milnot_op}

Let $A=E_{n+k}/I_n$ for some $k\ge 0$.
In this section we study Milnor operations in $A$.
We abbreviate $C_{A_*}$ to $C$ and
$\Lambda_{A_*^c}$ to $\Lambda$.
In this section we discuss in the category of 
complete Hausdorff filtered $A_*^c$--modules.
We recall that $\Lambda$ is a Hopf algebra 
such that $\smash{b^A_{(i)}}$ is primitive for all $i$.
We take monomials of $\smash{b^A_{(i)}}$ as a basis of $\Lambda$,
and denote the dual of $\smash{b^A_{(i)}}$ by $Q_i^A$
in the dual basis.
Then the monomials of $Q^A_{i}$ form the dual basis. 
We call $Q^A_i$ the Milnor operations.

Let $M$ be a left $\Lambda$--comodule 
with comodule structure map $\rho$.
Then the Milnor operation $Q^A_i$ defines a 
$A_*^c$--module homomorphism as follows:
\[ M\stackrel{\rho}{\longrightarrow} \Lambda\wtens M
    \stackrel{Q^A_{i}\otimes 1_M}{\hbox to 15mm{\rightarrowfill}} M .\]
We abbreviate this homomorphism also to $Q^A_i$.
Note that we write the action of $Q^A_i$ from the right: 
if $\smash{\rho(x)=1\otimes x + \sum_i a^A_{(i)}\otimes x_i +\cdots}$,
then $\smash{(x)Q^A_i=(-1)^{|x|+1}x_i}$. 
There is a relation in the endomorphism ring
of $M$ for any $i$ and $j$:
\begin{equation}\label{eq:exterior_relation_Q}
Q^A_i Q^A_j + Q^A_j Q^A_i = 0.
\end{equation}
In particular, $Q^A_iQ^A_i=0$.
Conversely, if there are $A^c_*$--module homomorphisms 
$Q^A_i$ for $0\le i<n$ such that \eqref{eq:exterior_relation_Q} holds,
then we can construct a $\Lambda$--comodule structure on $M$,
and this construction gives an equivalence of categories.

The category of complete $\Lambda$--comodules
is symmetric monoidal under complete tensor product 
$\wtens_{A^c_*}$ and unit object $A^c_*$.

\begin{lemma}
Let $M$ and $N$ be complete $\Lambda$--comodules.
For any $x\in M$ and $y\in N$,
$(x\otimes y)Q^A_i= x\otimes (y)Q^A_i+ (-1)^{|y|} (x)Q^A_i\otimes y$
in $M\wtens N$.
\end{lemma}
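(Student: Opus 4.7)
The plan is a direct Koszul-sign computation, using only that $\Lambda$ is a Hopf algebra whose generators $b^A_{(i)}$ are primitive of odd degree $-1$ (from \fullref{theorem:structure_acha}) and that the tensor product of left $\Lambda$--comodules carries the standard coaction $(\mu_\Lambda\otimes \mathrm{id})\circ(\mathrm{id}\otimes\tau\otimes\mathrm{id})(\rho_M\otimes\rho_N)$, where $\tau$ is the graded-symmetric twist in the symmetric monoidal category $\fmodc{A^c_*}$.

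First I would write, modulo $b^A_{(\cdot)}$--monomials of length $\ge 2$,
\[
\rho_M(x) \equiv 1\otimes x + \sum_j b^A_{(j)}\otimes x_j,\qquad \rho_N(y) \equiv 1\otimes y + \sum_j b^A_{(j)}\otimes y_j,
\]
so that by the convention defining $Q^A_i$ one has $x_i=(-1)^{|x|+1}(x)Q^A_i$ and $y_i=(-1)^{|y|+1}(y)Q^A_i$. Monomials of length $\ge 2$ do not contribute to the coefficient of $b^A_{(i)}$ in $\rho_{M\wtens N}(x\otimes y)$ and may be ignored.

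Next I would expand the tensor-product coaction. Exactly two terms contribute a single factor $b^A_{(i)}$: the term $b^A_{(i)}\otimes x_i\otimes y$ coming from $\rho_M$ enters with no sign, while the term from $\rho_N$ requires commuting $b^A_{(i)}$ past $x$, which by the Koszul rule produces $(-1)^{|b^A_{(i)}|\,|x|}=(-1)^{|x|}$ because $|b^A_{(i)}|$ is odd. Hence
\[
(x\otimes y)_i = x_i\otimes y + (-1)^{|x|}\,x\otimes y_i.
\]

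Finally, I would apply $(x\otimes y)Q^A_i=(-1)^{|x|+|y|+1}(x\otimes y)_i$, substitute the expressions for $x_i$ and $y_i$, and cancel using $(-1)^{2|x|}=(-1)^{2|y|}=1$: the coefficient of $(x)Q^A_i\otimes y$ collapses to $(-1)^{|y|}$ and the coefficient of $x\otimes(y)Q^A_i$ to $+1$, which is the stated identity. The only delicate point in the whole argument is keeping the three competing sign conventions aligned --- the Koszul rule from $\tau$, the odd degree of $b^A_{(i)}$, and the $(-1)^{|x|+1}$ in the definition of $Q^A_i$ --- and once these are in place the computation is mechanical.
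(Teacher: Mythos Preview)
Your proof is correct and follows essentially the same direct Koszul-sign computation as the paper's own proof: expand $\rho_M(x)$ and $\rho_N(y)$ to first order in the odd primitives, read off the coefficient of the generator in $\rho_{M\wtens N}(x\otimes y)$ via the graded twist, and unwind the $(-1)^{|x|+1}$ convention defining $Q^A_i$. The only cosmetic difference is that the paper writes its generators as $a^A_{(i)}$ in the proof rather than $b^A_{(i)}$, but the argument is otherwise identical.
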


\begin{proof}
Let $\smash{\rho_M(x)=1\otimes x+\sum_i a_{(i)}^A\otimes x_i+\cdots}$
with $x_i=\smash{(-1)^{|x|+1}(x)Q^A_i}$, 
and let $\smash{\rho_N(y)=1\otimes y+\sum_i a_{(i)}^A\otimes y_i+\cdots}$
with $y_i=\smash{(-1)^{|y|+1}(y)Q^A_i}$.
Then
$$\rho_{M\wtens N}(x\otimes y)=
1\otimes x\otimes y+ (-1)^{|x|}\sum_i a^A_{(i)}\otimes x\otimes y_i
+\sum_i a^A_{(i)}\otimes x_i\otimes y+\cdots.$$
Hence we have
$(x\otimes y)Q^A_i=(-1)^{|x|+|y|+1} ((-1)^{|x|}x\otimes y_i+x_i\otimes y)$,
which equals $x\otimes (y)Q^A_i+ (-1)^{|y|}(x)Q^A_i\otimes y$.
%This completes the proof.
\end{proof}

We say that a natural endomorphism $Q$ of complete $A^c_*$--modules
is a derivation of odd degree  with respect to exterior products if  
$(x\otimes y)Q=x\otimes (y)Q+ (-1)^{|y|}(x)Q\otimes y$
for any $x\in M$ and $y\in N$.
Hence the Milnor operations $Q^A_i$ is a derivation of 
odd degree with respect to exterior products.

Let $\Lambda^*$ be the dual module of $\Lambda$:
$\smash{\Lambda^*=\hom{A^c_*}(\Lambda,A^c_*)}$.
Then $\Lambda^*$ is also a Hopf algebra over $A^c_*$,
and $\smash{\Lambda^*\cong A^c_*\otimes\Lambda(Q^A_0,\ldots,Q^A_{n-1})}$
such that $Q^A_i$ are primitive for all $i$.
Recall that 
$\Lambda$ is a twisted $A^c_*$--$G_A$--module.
We can also define a twisted $A^c_*$--$G_A$--module structure on 
$\Lambda^*$ by
\[ (\lambda)(\theta^g) = ((\lambda\cdot g^{-1})\theta)g,\]
for $\theta\in\Lambda^*, g\in G_A, \lambda\in\Lambda$.

\begin{lemma}\label{lemma:action_g_qai}
For $g\in G_A$, 
\[ (Q^A_i)^g= \sum_{j=i}^{n-1} t^A_{j-i}(g)^{p^i} Q^A_j. \]
%\[ Q^A_i = \sum_{j=i}^{n-1} \chi(t^A_{j-i})(g)^{p^i}\cdot (Q^A_j)g .\]
\end{lemma}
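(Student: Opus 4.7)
The plan is to compute $(Q^A_i)^g$ in the dual basis by evaluating it against each $b^A_{(j)}$, using the defining identity $(\lambda)(\theta^g) = ((\lambda)^{g^{-1}}\theta)^g$ for the twisted $A^c_*$--$G_A$--module structure on $\Lambda^*$. This reduces the problem to an explicit expansion of the $g^{-1}$-action on each $b^A_{(j)}$, followed by an antipode computation.

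To obtain such an expansion, I would start from \fullref{cor:action_g_b}, $b_A{}^h(X) \equiv b_A(t_A(h)^{-1}(X)) \pmod{X^{p^n}}$, and write $t_A(h)^{-1}(X) \equiv \sum_{r\ge 0} s_r(h)\,X^{p^r} \pmod{X^{p^n}}$ with $s_0(h) = 1$, invoking \fullref{lemma:mod_x^p^n_calculation} to drop the formal-group-law corrections in characteristic $p$ at strict height at least $n$. Using Frobenius to push the $p^i$-th powers into the exponents then yields
\[
(b^A_{(k)})^h \;=\; \sum_{i=0}^{k}\, s_{k-i}(h)^{p^i}\, b^A_{(i)}.
\]
Comparing with the direct coaction formula obtained by applying $\pi_\Lambda \otimes \pi_C$ to $\psi(b^A_{(k)})$ from \fullref{theorem:structure_acha} identifies $s_r = \chi(t^A_r) \in C_{A_*}$.

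Next, taking $h = g^{-1}$ and pairing with $Q^A_i$, which selects the coefficient of $b^A_{(i)}$ with no extra sign since every $b^A_{(j)}$ sits in degree $-1$ (so $(-1)^{|b^A_{(j)}|+1} = 1$), and then applying $g$ in the target gives
\[
(b^A_{(j)})(Q^A_i{}^g) \;=\; \bigl(s_{j-i}(g^{-1})^{p^i}\bigr)^g \;=\; \bigl(s_{j-i}(g^{-1})^g\bigr)^{p^i}
\]
for $j \ge i$, and $0$ otherwise. The explicit antipode formula $\chi(f)(g) = f(g^{-1})^g$ on $C_{A_*}$ gives $s_r(g^{-1})^g = (t^A_r(g)^{g^{-1}})^g = t^A_r(g)$, so the pairing equals $t^A_{j-i}(g)^{p^i}$. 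Reading off the dual basis produces the claimed formula $(Q^A_i)^g = \sum_{j=i}^{n-1} t^A_{j-i}(g)^{p^i} Q^A_j$.

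The principal obstacle is the bookkeeping among the four closely related power series $t_A(g)$, $t_A(g)^{-1}$, $t_A(g^{-1})$, and $t_A(g^{-1})^{-1}$: the argument succeeds precisely because applying the antipode $\chi$ and the group inversion $g \mapsto g^{-1}$ cancel on $C_{A_*}$, collapsing the final expression back to $t^A_r(g)$. A secondary check is that the sign conventions in the definition of the Milnor operations do not contribute here, which is immediate from the degree calculation above.
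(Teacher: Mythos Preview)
Your proposal is correct and follows essentially the same route as the paper: the paper's proof is the single sentence ``This follows from \fullref{cor:action_g_b}'', and what you have written is precisely the explicit dualization that this sentence is pointing to. Your identification $s_r=\chi(t^A_r)$ and the antipode cancellation $s_r(g^{-1})^g=t^A_r(g)$ are the right way to unwind it; this is equivalent to invoking the identity $t_A(g^{-1})^g(X)=t_A(g)^{-1}(X)$ used in the proof of \fullref{cor:action_g_b} with $g$ replaced by $g^{-1}$.
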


\begin{proof}
This follows from \fullref{cor:action_g_b}.
\end{proof}

Let $M$ be a profinite $\acha$--comodule.
Then $M$ is a twisted $A^c_*$--$G_A$--module and $\Lambda$--comodule.
The following proposition gives us an interaction 
of the actions of $G_A$ and $Q^A_i$ on $M$.

\begin{lemma}
Let $M$ be a profinite $\acha$--comodule.
For $x\in M$ and $g\in G_A$, 
\[ ((x)Q^A_i)g= ((x)g)(Q^A_i)^g .\]
\end{lemma}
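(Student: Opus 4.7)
The strategy is to reduce the lemma to a single key intermediate fact: that the $\Lambda_{A^c_*}$-coaction $\rho_\Lambda := (\pi_\Lambda \otimes 1) \circ \rho\co M \to \Lambda_{A^c_*} \wtens_{A^c_*} M$ (induced from the $\acha$-coaction $\rho$ via the projection $\pi_\Lambda$) is $G_A$-equivariant with respect to the diagonal action, where $G_A$ acts on $\Lambda_{A^c_*}$ as in \fullref{cor:action_g_b}. Granting this equivariance, the lemma becomes a formal consequence of the duality between $\Lambda_{A^c_*}$ and $\Lambda^*$, together with the defining formula $(\lambda)(\theta^g) = ((\lambda g^{-1})\theta)g$ for the twisted $G_A$-module structure on $\Lambda^*$, which makes the evaluation pairing $\Lambda_{A^c_*} \times \Lambda^* \to A^c_*$ equivariant: $\langle \theta^g, \lambda^g \rangle = \langle \theta, \lambda\rangle^g$.

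Concretely, writing $\rho_\Lambda(x) = \sum_j \lambda_j \otimes x_j$, one has $(x)Q^A_i = \sum_j \pm \langle Q^A_i, \lambda_j\rangle x_j$, so $((x)Q^A_i)g = \sum_j \pm \langle Q^A_i, \lambda_j\rangle^g (x_j)g$. By equivariance, $\rho_\Lambda((x)g) = \sum_j \lambda_j^g \otimes (x_j)g$, so $((x)g)(Q^A_i)^g = \sum_j \pm \langle (Q^A_i)^g, \lambda_j^g\rangle (x_j)g$, and the two expressions agree by the equivariance of the pairing. To prove the equivariance of $\rho_\Lambda$ itself, I would apply coassociativity $(\psi \otimes 1)\rho = (1 \otimes \rho)\rho$ and expand using the mixed comultiplication $\psi(b^A_{(i)}) = 1 \otimes b^A_{(i)} + \sum_{j=0}^i b^A_{(j)} \otimes (t^A_{i-j})^{p^j}$ from \fullref{theorem:structure_acha}. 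Evaluating the $C$-component of the resulting identity at $g$ via $\mbox{\rm ev}(g) \circ \chi$ transfers the $G_A$-action onto both the $\Lambda$-factor (producing the twist of \fullref{cor:action_g_b}) and the $M$-factor; the cross-terms $b^A_{(j)} \otimes (t^A_{i-j})^{p^j}$ are precisely what implement the twist on the $\Lambda$-factor.

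The main obstacle will be the equivariance of $\rho_\Lambda$: neither $\pi_C$ nor $\pi_\Lambda$ is a coalgebra map, so the naive $\Lambda$-coaction acquires corrections from the $C$-part under the $G_A$-action, and these corrections must be matched against the formula for $(b^A_{(i)})^g$ from \fullref{cor:action_g_b}. A secondary nuisance is sign bookkeeping, since the $b^A_{(i)}$ and dually the $Q^A_i$ are odd-degree and the Milnor operations act as derivations of odd degree.
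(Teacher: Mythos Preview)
Your approach is correct and uses the same underlying ingredients as the paper (coassociativity of the $\acha$--coaction together with the mixed comultiplication formula of \fullref{theorem:structure_acha}), but the organization is different. The paper argues directly at the level of functionals on $\acha$: it writes down
\[
\theta_1=(\mbox{ev}(g)\circ\pi_C)\circ\bigl((Q^A_i\circ\pi_\Lambda)\otimes 1\bigr)\circ\psi,
\qquad
\theta_2=\bigl((Q^A_i)^g\circ\pi_\Lambda\bigr)\circ\bigl((\mbox{ev}(g)\circ\pi_C)\otimes 1\bigr)\circ\psi,
\]
checks $\theta_1=\theta_2$ straight from \fullref{lemma:action_g_qai}, and then coassociativity of $\rho$ gives $((x)Q^A_i)g=(\theta_1\otimes 1)\rho(x)=(\theta_2\otimes 1)\rho(x)=((x)g)(Q^A_i)^g$. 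Your route instead factors through the $G_A$--equivariance of $\rho_\Lambda$; that intermediate statement is exactly the content of \fullref{lemma:C-comodule_map}, which the paper proves later in the comodule language. So you are in effect anticipating that lemma and then appending the (formal) duality step via the equivariant pairing. Both arguments unwind to the same computation with the cross-terms $b^A_{(j)}\otimes (t^A_{i-j})^{p^j}$; the paper's version is shorter, yours is more structural and makes the link to \fullref{thm:eq_cat_rchr_stab_lambda} transparent.

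One small correction to your write-up: both $\pi_C$ and $\pi_\Lambda$ \emph{are} maps of Hopf algebroids---$\pi_\Lambda$ is the quotient in the extension of \fullref{theorem:structure_acha}, and $\pi_C$ is the splitting noted just after \eqref{eq:exact_sequence_rchr}. The genuine obstacle you correctly identify is not that these projections fail to be coalgebra maps, but that the induced $C$-- and $\Lambda$--coactions on $M$ do not commute on the nose; their interaction is precisely governed by the cross-terms above, and matching them against \fullref{cor:action_g_b} is the whole computation.
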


\begin{proof}
%Note that  
%$((x)Q^A_i)g$ is the image of $x$ 
%under the map
%\[ M\stackrel{(\psi\otimes 1)\circ\rho}{\hbox to 15mm{\rightarrowfill}} 
%   \acha\wtens\acha\wtens M
%   \stackrel{Q^A_i\otimes \mbox{\rm ev}(g)\otimes 1}
%    {\hbox to 18mm{\rightarrowfill}}M.\]
%Then we see that the map
%\[ \acha\stackrel{\psi}{\longrightarrow }\acha\wtens\acha
%   \stackrel{\pi_{\Lambda}\otimes \mbox{\rm ev}(g)}
%   {\hbox to 10mm{\rightarrowfill}}    \Lambda \]
%is the coproduct of 
%$\mbox{\rm ev}(g): C\to A^c_*$ and
%$(Q^A_i)^g:\Lambda \to A^c_*$
%in the category of complete $\fp{}$-algebras. 
%This completes the proof.
By \fullref{lemma:action_g_qai},
we see that the map
\[ \theta_1\co \acha\stackrel{\psi}{\longrightarrow}
   \acha\wtens\acha\stackrel{(Q_i^A\circ\pi_{\Lambda})\otimes 1}
                            {\hbox to 15mm{\rightarrowfill}}     
   \acha\stackrel{\mbox{\scriptsize\rm ev}(g)\circ\pi_C}
                            {\hbox to 15mm{\rightarrowfill}} A^c_* \]
is equal to the map
\[ \theta_2\co \acha\stackrel{\psi}{\longrightarrow}
   \acha\wtens\acha\stackrel{(\mbox{\scriptsize\rm ev}(g)\circ\pi_C)\otimes 1}
                            {\hbox to 15mm{\rightarrowfill}} 
   \acha\stackrel{(Q^A_i)^g\circ\pi_{\Lambda}}
                            {\hbox to 15mm{\rightarrowfill}} A^c_*.\] 
Hence $((x)Q^A_i)g=(\theta_1\circ\rho)(x)=
       (\theta_2\circ\rho)(x)=((x)g)(Q^A_i)^g$.
\end{proof}

These are all relations on the $\acha$--comodule $M$
between the $G_A$--action and the $\Lambda^*$--action.
We give interpretation of these relations 
in terms of comodule structures in \fullref{subsection:compatibility}.

%%%%%%%%%%%%%%%%%%%%%
In the following lemma we show that 
a derivation of odd degree with respect to exterior products
in the category of stable cohomology operations of $K^*(-)$
is characterized by the action on $y_K\in K^*(\mathcal {Y})$.

\begin{lemma}
Let $Q$ be an odd degree stable cohomology operation 
of $K^*(-)$.
Suppose that $Q$ is a derivation with respect to exterior product.
Then $Q$ is characterized by the action on $y_K\in K^1({\mathcal Y})$. 
\end{lemma}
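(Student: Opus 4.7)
The plan is to show that any odd--degree stable cohomology operation $Q$ on $K^*(-)$ which is a derivation with respect to the exterior product is necessarily a $K_*$--linear combination $Q=\sum_{i=0}^{n-1}c_iQ_i^K$ of the Milnor operations, and then to observe that the coefficients $c_i$ are recovered from the value $(y_K)Q$.

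First, I would identify the space of such operations. A stable operation $Q$ corresponds under duality to a continuous $K_*$--linear functional on the complete co-operation ring $\kk$, and the derivation condition with respect to the exterior (external) product translates into primitivity of this functional with respect to the Hopf algebroid comultiplication of $\kk$. By \fullref{theorem:structure_acha} we have an extension of complete Hopf algebroids $C_{K_*}\to\kk\to\Lambda_{K_*}$ in which $\Lambda_{K_*}=K_*\otimes\Lambda(b_{(0)}^K,\ldots,b_{(n-1)}^K)$ is an exterior Hopf algebra with each $b_{(i)}^K$ primitive, while $C_{K_*}$ is concentrated in even degrees (since $K_*=\F[w^{\pm 1}]$ is). An odd--degree primitive dual functional therefore vanishes on $C_{K_*}$ and is determined by a $K_*$--linear derivation on the exterior factor, which in turn is the $K_*$--span of the duals of the $b_{(i)}^K$, i.e.\ the Milnor operations $Q_0^K,\ldots,Q_{n-1}^K$. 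Hence $Q=\sum_{i=0}^{n-1}c_iQ_i^K$ for uniquely determined $c_i\in K_*$.

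Next, I would compute $(y_K)Q$ directly from \fullref{lemma:action_on_yA}, which gives $\rho(y_K)=1\otimes y_K+\sum_{i=0}^{n-1}b_{(i)}^K\otimes x_K^{p^i}$. Since the unit $1\in\kk$ has even degree while the target is $K_*^{|Q|}=0$, applying $Q$ kills the $1\otimes y_K$ term and yields
\[ (y_K)Q=\sum_{i=0}^{n-1}c_i\,x_K^{p^i}\in K^*(\mathcal{Y}). \]
Since $K^*(\mathcal{Y})\cong\Lambda(y_K)\otimes K_*[x_K]/(x_K^{p^n})$ and the exponents $1,p,p^2,\ldots,p^{n-1}$ are distinct and all strictly less than $p^n$, the monomials $x_K^{p^0},\ldots,x_K^{p^{n-1}}$ are $K_*$--linearly independent in $K^*(\mathcal{Y})$. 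Thus the displayed equation determines the coefficients $c_i$ uniquely, so $Q$ itself is determined by $(y_K)Q$.

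The main obstacle is the first step: pinning down the odd--degree primitive part of the complete Hopf algebroid $\kk$ and identifying it with the $K_*$--span of the Milnor operations. This is a structural computation relying on the extension in \fullref{theorem:structure_acha} and the evenness of $K_*$; once it is in place, the remaining steps are a direct application of \fullref{lemma:action_on_yA} and an elementary linear--independence check in $K^*(\mathcal{Y})$.
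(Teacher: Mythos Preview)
Your proposal is correct and follows essentially the same route as the paper: identify an odd-degree derivation operation with a primitive element of $K^*(K)$, show it lies in the $K_*$--span of the Milnor operations $Q^K_i$, and then recover the coefficients from $(y_K)Q=\sum_i c_i\,x_K^{p^i}$ using linear independence in $K^*(\mathcal{Y})$. The only cosmetic difference is in the middle step: the paper phrases it as $P(K^*(K))\cong\hom{K_*}(Q(\kk),K_*)$ and computes $Q(\kk)\cong Q(\Lambda_{K_*})$, whereas you use the parity argument (odd-degree functional, $C_{K_*}$ even) to factor the derivation through $\Lambda_{K_*}$ directly; both yield the same conclusion.
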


\begin{proof}
A stable cohomology operation $Q\in K^*(K)$ is a derivation 
if and only if $Q$ is primitive in $K^*(K)$.
Since $\kk$ is free over $K_*$,
the primitive submodule $P(K^*(K))$ 
is the dual of the indecomposable quotient $Q(\kk)$
of the co-operation ring $\kk$.
Recall the isomorphism $\kk\cong C_{K_*}\otimes_{K_*}\Lambda_{K_*}$.
Then we have $Q(\kk)\cong Q(C_{K_*})\oplus Q(\Lambda_{K_*})\cong Q(\Lambda_{K_*})$,
%and the odd degree part $Q^{\mbox{\rm odd}}(\kk)$
and $Q(\Lambda_{K_*})$ 
is isomorphic to $K_*\{a^K_{(0)},\ldots,a^K_{(n-1)}\}$.
%generated by $Q^K_0,\ldots, Q^K_{n-1}$.
Hence $Q$ is a linear combination 
$\smash{\sum_{i=0}^{n-1} q_i Q^K_i}$ with $q_i\in K_*$.
Since we know that $\smash{Q^K_i(y_K)=x_K{}^{p^i}}$,
we have $\smash{Q(y_K)=\sum_{i=0}^{n-1}q_i x_K{}^{p^{\smash[b]{i}}}}$
in $K^*({\mathcal Y})$.
Since $\smash{x_K{}^{p^i}}$ for $0\le i<n-1$ are linearly independent,
this uniquely determines $q_i$.
Hence $Q$ is characterized by the action of $y_K$.
\end{proof}

\subsection[Complete comodules over the complete co-operation ring of A]{Complete $\acha$--comodules}
\label{subsection:compatibility}

Let $A=E_{n+k}/I_n$ for some $k\ge 0$.
In this section we give a description of 
complete $\acha$--comodules in terms of 
$C_{A_*}$--comodule structure and $\Lambda_{A_*^c}$--comodule structure.
In this section we discuss in the category of 
complete Hausdorff filtered $A_*^c$--modules,
and abbreviate $C_{A_*}$ to $C$ and
$\Lambda_{A_*^c}$ to $\Lambda$.

Let $M$ be a complete $\acha$--comodule with
%structure map 
$\rho_M\co M\to \acha\wtens M$.
By \mbox{\fullref{theorem:structure_acha}},
$\acha\cong C\wtens \Lambda$ as an $\fp{}$--algebra,
%where $C$ is the sub-Hopf algebroid $C(G_n,R_*)$,
and there is an extension of complete Hopf algebroids:
\begin{equation}\label{eq:exact_sequence_rchr}
 C\longrightarrow \acha
      \stackrel{\pi_{\Lambda}}{\longrightarrow} \Lambda.
\end{equation}
Hence $M$ is a $\Lambda$--comodule by
\[ \rho_{\Lambda,M}\stackrel{\rho_M}{\longrightarrow}
   \acha\wtens M\stackrel{\pi_{\Lambda}}{\longrightarrow} 
   \Lambda\wtens M.\]
The counit of $\Lambda$ induces a morphism of Hopf algebroid
$\pi_C\co \acha\to C$, which is a splitting of 
the above extension~\eqref{eq:exact_sequence_rchr}. 
Then $M$ is also a $C$--comodule by
\[ \rho_{C,M}\co M\stackrel{\rho_M}{\longrightarrow}
   \acha\wtens M\stackrel{\pi_C}{\longrightarrow}
   C\wtens M.\]
%Hence 
%$M$ has a twisted $G_{n+k}$-module structure
%by Theorem~\ref{}.
%We can give $\Lambda$ a structure of $C$-comodule algebra by
We recall that $\Lambda$ is a (left) $C$--comodule algebra by
the structure map 
\[ \rho_{C,\Lambda}\co \Lambda\stackrel{i_{\Lambda}}{\longrightarrow}
   C\wtens \Lambda\stackrel{\psi}{\longrightarrow}
   (C\wtens \Lambda)\wtens 
   (C\wtens \Lambda)
   \stackrel{\pi_{\Lambda}\otimes\pi_C}
   {\hbox to 10mm{\rightarrowfill}}
   \Lambda\wtens C
   \stackrel{\tau}{\longrightarrow}C\wtens \Lambda,\]   
where $i_{\Lambda}$ is the canonical inclusion and
$\tau$ is given by $\lambda\otimes c\mapsto \chi(c)\otimes\lambda$.
For a complete $C$--comodule $M$, 
we denote by $\smash{\rho_{C,\Lambda\wtens M}}$
the $C$--comodule structure map of the tensor product 
of $\Lambda$ and $M$.

\begin{lemma}\label{lemma:C-comodule_map}
Let $M$ be a complete $\acha$--comodule.
Then $\rho_{\Lambda,M}$ is a morphism
of $C$--comodules.
In other words, the following diagram commutes:
\[\xymatrix@C=50pt{
    M \ar[r]^{\rho_{\Lambda,M}} \ar[d]_{\rho_{C,M}} & \Lambda\wtens M \ar[d]^{\rho_{C,\Lambda\wtens M}}\\
    C\wtens M \ar[r]^-{1_C\otimes\rho_{\Lambda,M}} & C\wtens \Lambda\wtens M.
   }
\]    
  %\[ \begin{array}{ccc}
%    M & \stackrel{\rho_{\Lambda,M}}{\hbox to 15mm{\rightarrowfill}} &
%    \Lambda\wtens M \\
%    {\scriptstyle \rho_{C,M}}\bigg\downarrow\hspace{7mm} & &
%    \hspace{9mm}\bigg\downarrow {\scriptstyle 
%    \rho_{C,\Lambda\wtens M}}\\
%    C\wtens M &
%    \stackrel{1_C\otimes\rho_{\Lambda,M}}{\hbox to 15mm{\rightarrowfill}}&
%    C\wtens \Lambda\wtens M.\\
%   \end{array}\]
Furthermore,
$\rho_{C,\Lambda\wtens  M}\circ \rho_{\Lambda,M}=
(1_C\otimes \rho_{\Lambda,M})\circ \rho_{C,M}$
is the $\acha$--comodule structure map $\rho_M$.
\end{lemma}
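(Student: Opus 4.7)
The plan is to prove the stronger ``Furthermore'' clause directly: both composites around the square equal $\rho_M$, and the commutativity is then immediate. The essential structural input is the identity
\[
(\pi_C \otimes \pi_\Lambda) \circ \psi_{\acha} = \mathrm{id}_{\acha},
\]
read through the algebra isomorphism $\acha \cong C \wtens \Lambda$ of \fullref{cor:structureonacha}, where $\pi_C = 1_C \otimes \varepsilon_\Lambda$ and $\pi_\Lambda = \varepsilon_C \otimes 1_\Lambda$. This identity splits into two routine checks: on the sub-Hopf-algebroid $C \hookrightarrow \acha$ it follows from $\psi_C$ taking values in $C \wtens C$ together with the counit axiom for $\varepsilon_C$, while on the exterior generators $b^A_{(i)}$ it follows from the explicit formula for $\psi(b^A_{(i)})$ in \fullref{theorem:structure_acha} combined with $\varepsilon_\Lambda(b^A_{(i)}) = 0$ and $\varepsilon_C(t^A_j) = 0$ for $j > 0$.

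Granted this, the composite $(1_C \otimes \rho_{\Lambda, M}) \circ \rho_{C, M}$ collapses to $\rho_M$ by a short calculation: substituting the definitions of the projected coactions and invoking coassociativity of $\rho_M$,
\[
(1_C \otimes \rho_{\Lambda, M}) \circ \rho_{C, M}
= (\pi_C \otimes \pi_\Lambda \otimes 1_M) \circ (1_{\acha} \otimes \rho_M) \circ \rho_M
= \bigl((\pi_C \otimes \pi_\Lambda) \circ \psi_{\acha} \otimes 1_M\bigr)\circ \rho_M
= \rho_M.
\]

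For the other composite $\rho_{C, \Lambda \wtens M}\circ \rho_{\Lambda, M}$, I expect the main technical obstacle. One must unfold the tensor product left $C$-coaction on $\Lambda \wtens M$, which combines the coaction $\rho_{C, \Lambda}$---given as $\tau \circ (\pi_\Lambda \otimes \pi_C) \circ \psi_{\acha}\circ i_\Lambda$, with $\tau$ involving the antipode $\chi$ of $C$---with $\rho_{C, M}$ through the multiplication of $C$. After substituting everything in terms of $\psi_{\acha}$ and the projections $\pi_C,\pi_\Lambda$ and applying coassociativity of $\rho_M$ a second time, the identity reduces to a Hopf-algebroid statement on $\acha$ expressing $\psi_{\acha}$ as reconstructed from $\psi_C$, $\psi_\Lambda$, and $\rho_{C, \Lambda}^{\mathrm{op}}$; this is the Hopf-algebroid analogue of the cocycle relation for a split extension, and its verification from the explicit structure of $(A^c_*, \acha)$ supplied by \fullref{theorem:structure_acha} is the crux of the argument.
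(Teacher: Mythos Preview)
Your treatment of the identity $(1_C\otimes\rho_{\Lambda,M})\circ\rho_{C,M}=\rho_M$ is exactly the paper's: it sets $h=(\pi_C\otimes\pi_\Lambda)\circ\psi$, checks $h=\mathrm{id}_{\acha}$ directly, and invokes coassociativity of $\rho_M$.

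For the other composite $\rho_{C,\Lambda\wtens M}\circ\rho_{\Lambda,M}=\rho_M$ you and the paper diverge, and you leave your version unfinished. Your plan---unfold $\rho_{C,\Lambda\wtens M}$ through the antipode-twisted coaction $\rho_{C,\Lambda}$ and reduce to a cocycle-type identity in $\acha$---is sound in principle, but you explicitly defer the ``crux'' to an unperformed computation, and the presence of $\chi$ makes that bookkeeping genuinely unpleasant. The paper bypasses the antipode entirely. It sets $f=(\pi_\Lambda\otimes\pi_C)\circ\psi\colon\acha\to\Lambda\wtens C$, so that coassociativity of $\rho_M$ yields
\[
(1_\Lambda\otimes\rho_{C,M})\circ\rho_{\Lambda,M}=(f\otimes 1_M)\circ\rho_M.
\]
It then produces a one-sided inverse $g=(1_C\otimes 1_\Lambda\otimes\varepsilon_C)\circ\rho_{C,\Lambda\wtens C}\colon\Lambda\wtens C\to C\wtens\Lambda$, checks $g\circ f=\mathrm{id}_{C\wtens\Lambda}$, and observes that $(g\otimes 1_M)\circ(1_\Lambda\otimes\rho_{C,M})=\rho_{C,\Lambda\wtens M}$. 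Applying $g\otimes 1_M$ to the displayed equation then gives $\rho_{C,\Lambda\wtens M}\circ\rho_{\Lambda,M}=\rho_M$ immediately. Both auxiliary checks are short and antipode-free, so the paper's route is both cleaner and actually complete where yours is not.
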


\begin{proof}
%{\it First Proof}.
%Since $M$ and $\Lambda\wtens $ are
%profinite,
%it is sufficient to show that $\rho_{\Lambda,M}$
%is a morphism of twisted $G_n$-modules.
%\qqq
%
%{\it Second Proof}.
Let $f=(\pi_{\Lambda}\otimes \pi_C)\circ \psi\co \acha\to 
\Lambda\wtens  C$.
By the co-associativity of $\acha$--comodule $M$,
the following diagram commutes:
\[\xymatrix@C=50pt{
    M \ar[r]^{\rho_{\Lambda,M}} \ar[d]_{\rho_M} & \Lambda\wtens M \ar[d]^{1_{\Lambda}\otimes \rho_{C,M}}\\
    C\wtens \Lambda\wtens M \ar[r]^{f\otimes 1_M} & \Lambda\wtens C\wtens M.
   }
\]    
%\[ \begin{array}{ccc}
%    M & \stackrel{\rho_{\Lambda,M}}{\hbox to 15mm{\rightarrowfill}} &
%    \Lambda\wtens M \\
%    {\scriptstyle \rho_M}\bigg\downarrow\hspace{5mm} & &
%    \hspace{11mm}\bigg\downarrow {\scriptstyle 
%    1_{\Lambda}\otimes \rho_{C,M}}\\
%    C\wtens \Lambda\wtens M &
%    \stackrel{f\otimes 1_M}{\hbox to 15mm{\rightarrowfill}}&
%    \Lambda\wtens C\wtens M.\\
%   \end{array}\]
Let $\smash{g= (1_C\otimes 1_{\Lambda}\otimes 
\varepsilon_C)\circ\rho_{\smash{C,\Lambda\wtens C}}}\co
%$\mu\circ (\rho_{C,\Lambda}\otimes 1_C):
\Lambda\wtens C\to C\wtens \Lambda$.
%where $\mu: (C\wtens \Lambda)\wtens 
%C\to C\wtens \Lambda$ is given by
%$c_1\otimes\lambda\otimes c_2\mapsto c_1c_2\otimes\lambda$.
Then we can check that $g\circ f$ is the identity map of
$C\wtens \Lambda$.
Since $(g\otimes 1_M)\circ (1_{\Lambda}\otimes \rho_{C,M})=
\smash{\rho_{C,\Lambda\wtens M}}$,
we obtain that $\rho_M=\rho_{C,\Lambda\wtens M}\circ 
\rho_{\Lambda,M}$.

Let $h=(\pi_C\otimes\pi_{\Lambda})\circ\psi\co
\acha\to \acha$.
By the coassociativity of $\acha$--comodule $M$,
the following diagram commutes:
\[\xymatrix@C=50pt{
    M \ar[r]^{\rho_{C,M}} \ar[d]_{\rho_M} & C\wtens M \ar[d]^{1_{C}\otimes \rho_{\Lambda,M}}\\
    C\wtens \Lambda\wtens M \ar[r]^{h\otimes 1_M} & C\wtens \Lambda\wtens M.
   }
\]    
%\[ \begin{array}{ccc}
%    M & \stackrel{\rho_{C,M}}{\hbox to 15mm{\rightarrowfill}} &
%    C\wtens M \\
%    {\scriptstyle \rho_M}\bigg\downarrow\hspace{5mm} & &
%    \hspace{11mm}\bigg\downarrow {\scriptstyle 
%    1_{C}\otimes \rho_{\Lambda,M}}\\
%    C\wtens \Lambda\wtens M &
%    \stackrel{h\otimes 1_M}{\hbox to 15mm{\rightarrowfill}}&
%    C\wtens \Lambda\wtens M.\\
%   \end{array}\]
But it is easy to check that $h$ is the identity
map of $\acha$.
Hence we obtain that
$\rho_M= (1_C\otimes\rho_{\Lambda,M})\circ \rho_{C,M}$.
This completes the proof.
\end{proof}

\begin{definition}
We say that a complete module $M$ is a
$C$--$\Lambda$--comodule if
$M$ is a  $C$--comodule and also a $\Lambda$--comodule
such that the structure map of $\Lambda$--comodule 
$\rho_{\Lambda,M}$ is a map of $C$--modules.  
\end{definition}

\begin{corollary}\label{cor:rchr2glambda}
A complete $\acha$--comodule has a natural $C$--$\Lambda$--comodule 
structure.
\end{corollary}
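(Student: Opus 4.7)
The proof should be almost immediate from \fullref{lemma:C-comodule_map}, so my plan is to simply assemble the pieces already established. Starting with a complete $\acha$--comodule $M$ equipped with structure map $\rho_M\co M\to \acha\wtens M$, the splitting $\pi_C\co \acha\to C$ of the extension~\eqref{eq:exact_sequence_rchr} together with the projection $\pi_\Lambda\co \acha\to \Lambda$ yield two induced structure maps $\rho_{C,M}=(\pi_C\otimes 1_M)\circ\rho_M$ and $\rho_{\Lambda,M}=(\pi_\Lambda\otimes 1_M)\circ\rho_M$. Since $\pi_C$ and $\pi_\Lambda$ are morphisms of complete Hopf algebroids, the coassociativity and counitality for $\rho_M$ transport to coassociativity and counitality for $\rho_{C,M}$ and $\rho_{\Lambda,M}$, exhibiting $M$ as both a complete $C$--comodule and a complete $\Lambda$--comodule.

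The remaining point to verify is that $\rho_{\Lambda,M}$ is a morphism of $C$--comodules, where $\Lambda\wtens M$ carries the tensor product $C$--coaction built from $\rho_{C,\Lambda}$ and $\rho_{C,M}$. But this compatibility is exactly the content of the commutative diagram in \fullref{lemma:C-comodule_map}, so nothing further needs to be checked. Thus the $C$--comodule structure and the $\Lambda$--comodule structure on $M$ combine to give the structure of a $C$--$\Lambda$--comodule in the sense just defined.

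Finally, I would remark on naturality: given any morphism $f\co M\to N$ of complete $\acha$--comodules, post-composition of $(\pi_C\otimes 1)\circ\rho_M$ and $(\pi_\Lambda\otimes 1)\circ\rho_M$ with $f$ shows that $f$ intertwines both induced coactions, hence is a morphism of $C$--$\Lambda$--comodules. This exhibits the assignment $M\mapsto M$ as a functor from complete $\acha$--comodules to $C$--$\Lambda$--comodules.

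The only potential subtlety, and what I would treat as the main obstacle, is making sure the continuity and completeness hypotheses survive: the maps $\pi_C$ and $\pi_\Lambda$ are continuous and $C\wtens M$, $\Lambda\wtens M$ sit inside $\acha\wtens M$ as complete Hausdorff filtered $A_*^c$--modules, so the induced coactions are continuous and land in the correct completed tensor products. Since we are working throughout within $\fmodc{A^c_*}$ and the complete tensor product is functorial there, this is routine rather than deep, and no new estimates are required beyond those already contained in \fullref{lemma:C-comodule_map}.
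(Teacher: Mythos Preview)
Your proposal is correct and follows exactly the approach the paper intends: the corollary is stated without separate proof because it is immediate from \fullref{lemma:C-comodule_map} together with the definitions of $\rho_{C,M}$ and $\rho_{\Lambda,M}$ given just before it. Your explicit unpacking of naturality and continuity is more detailed than the paper, but the underlying argument is the same.
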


Let $\rho_{\Lambda,C\wtens \Lambda}=(\pi_{\Lambda}\otimes 1\otimes 1)
\circ\psi\co
C\wtens \Lambda\to \Lambda\wtens C\wtens \Lambda$.

\begin{lemma}\label{lemma:comutative_diagram_C_Lambda}
The following diagram commutes:
\[\xymatrix@C=50pt{
 \Lambda \ar[r]^{\psi_{\Lambda}} \ar[d]_{\rho_{C,\Lambda}} &  \Lambda\wtens \Lambda \ar[d]^{1_{\Lambda}\otimes\rho_{C,\Lambda}}\\
    C\wtens \Lambda \ar[r]^-{\rho_{\Lambda,C\wtens \Lambda}}
    &
    \Lambda\wtens C\wtens \Lambda.
 }
\]
%\begin{equation}\label{eq:supplementary_diagram}
%\[ \begin{array}{ccc}
%    \Lambda & \stackrel{\psi_{\Lambda}}{\hbox to 15mm{\rightarrowfill}}&
%    \Lambda\wtens \Lambda \\
%    {\scriptstyle \rho_{C,\Lambda}}\bigg\downarrow\hspace{5mm}&&
%    \hspace{11mm}\bigg\downarrow
%    {\scriptstyle 1_{\Lambda}\otimes\rho_{C,\Lambda}}\\
%    C\wtens \Lambda &
%    \stackrel{\rho_{\Lambda,C\wtens \Lambda}}
%    {\hbox to 15mm{\rightarrowfill}}&
%    \Lambda\wtens C\wtens \Lambda.\\
%   \end{array}\]
%\end{equation}
\end{lemma}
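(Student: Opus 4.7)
Plan: Both composites $(1_\Lambda\otimes\rho_{C,\Lambda})\circ\psi_\Lambda$ and $\rho_{\Lambda,C\wtens\Lambda}\circ\rho_{C,\Lambda}$ in the diagram are $A^c_*$--algebra homomorphisms $\Lambda\to\Lambda\wtens C\wtens\Lambda$: the top--right composite since $\psi_\Lambda$ is the comultiplication of the Hopf algebra $\Lambda$ and $\rho_{C,\Lambda}$ makes $\Lambda$ a $C$--comodule algebra; the left--bottom composite since $\rho_{C,\Lambda}$ is multiplicative and $\rho_{\Lambda,C\wtens\Lambda}=(\pi_\Lambda\otimes 1_\acha)\circ\psi$ is a composition of ring homomorphisms. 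Since $\Lambda$ is generated as an $A^c_*$--algebra by $b^A_{(0)},\ldots,b^A_{(n-1)}$, it suffices to verify agreement on these generators.

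Fix $0\le i<n$. Primitivity of $b^A_{(i)}$ and the formula
$$\rho_{C,\Lambda}(b^A_{(i)})=\sum_{j=0}^i\chi\bigl((t^A_{i-j})^{p^j}\bigr)\otimes b^A_{(j)},$$
obtained by applying $\tau\circ(\pi_\Lambda\otimes\pi_C)$ to the expression for $\psi(b^A_{(i)})$ in \fullref{theorem:structure_acha}, together reduce the top--right composite on $b^A_{(i)}$ to
$$\sum_{j=0}^i 1\otimes\chi\bigl((t^A_{i-j})^{p^j}\bigr)\otimes b^A_{(j)}+b^A_{(i)}\otimes 1\otimes 1.$$
For the left--bottom composite, multiplicativity of $\rho_{\Lambda,C\wtens\Lambda}$ together with \fullref{theorem:structure_acha} give
$$\rho_{\Lambda,C\wtens\Lambda}(c\otimes b^A_{(j)})=1\otimes c\otimes b^A_{(j)}+\sum_{k=0}^j b^A_{(k)}\otimes c(t^A_{j-k})^{p^k}\otimes 1$$
for $c\in C$. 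Substituting $c=\chi((t^A_{i-j})^{p^j})$ and summing over $j$, the leading terms match those of the top--right composite, and the residual terms collect as
$$\sum_{k=0}^i b^A_{(k)}\otimes\biggl(\sum_{j=k}^i\chi\bigl((t^A_{i-j})^{p^j}\bigr)(t^A_{j-k})^{p^k}\biggr)\otimes 1.$$

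In characteristic $p$, Frobenius rewrites the inner sum as $\bigl(\sum_{a+b=i-k}\chi(t^A_a)^{p^b}\cdot t^A_b\bigr)^{p^k}$. The antipode axiom in $C_{A_*}$, expressed as the power series identity $t^A\bigl(\chi t^A(X)\bigr)=X$ in $C_{A_*}\power{X}$ and valid modulo $X^{p^n}$ (since strict height at least $n$ replaces $\sum^{F_A}$ by $\sum$), gives $\sum_{a+b=m}\chi(t^A_a)^{p^b}\cdot t^A_b=\delta_{m,0}$ for $0\le m<n$. Since $0\le i-k<n$, only $k=i$ contributes, leaving $b^A_{(i)}\otimes 1\otimes 1$ and matching the top--right composite. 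The principal technical hurdle is the index bookkeeping and recognizing the residue as an instance of the antipode convolution; conceptually, the antipode inverts the power series $t^A(X)$ and precisely compensates for the failure of $\Lambda$ to be a sub-Hopf-algebra (as opposed to a quotient) of $\acha$.
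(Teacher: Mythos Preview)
Your proof is correct and follows essentially the same route as the paper: reduce to the algebra generators $b^A_{(i)}$, compute both composites explicitly using \fullref{theorem:structure_acha}, and collapse the residual sum via the antipode identity $\sum_{a+b=m} t^A_a\,\chi(t^A_b)^{p^a}=\delta_{m,0}$ for $0\le m<n$. The paper's own computation is the same up to reindexing (and a few notational slips there); your write-up is in fact somewhat cleaner.
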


\begin{proof}
Note that %\eqref{eq:supplementary_diagram} is a
this is a 
diagram of $\fp{}$--algebras.
So it is sufficient to show the equality
$f(b_{(i)})=g(b_{(i)})$ holds for all $0\le i<n$,
where $f=(1_{\Lambda}\otimes\rho_{C,\Lambda})\circ \psi_{\Lambda}$
and $g=\rho_{\Lambda,C\otimes\Lambda}\circ\rho_{C,\Lambda}$.
We easily obtain that 
\[ f(b_{(i)})=b_{(i)}\otimes 1\otimes 1+
   1\otimes \sum_{j=0}^i \chi(s_{i-j})^{p^j}\otimes b_{(j)}.\]
On the other hand,
\[ \begin{array}{rcl}
    g(b_{(i)})&=&
    {\displaystyle \sum_{j=0}^i(1\otimes\chi(s_{i-j})^{p^j}\otimes 1)
     \cdot (1\otimes 1\otimes b_{(j)}
     +\sum_{k=0}^j b_{(k)}\otimes s_{j-k}^{p^k}\otimes 1)}\\
    &=&{\displaystyle \sum_{j=0}^i 1\otimes \chi(s_{i-j})^{p^j}
        \otimes b_{(j)}
        + \sum_{k=0}^i\sum_{j=k}^i b_{(k)}\otimes
          \left(s_{j-k}\chi(s_{i-j})^{p^{j-k}}\right)^{p^k}\otimes 1}\\
    &=&{\displaystyle \sum_{j=0}^i 1\otimes \chi(s_{i-j})^{p^j}
        \otimes b_{(j)}
        + b_{(i)}\otimes 1\otimes 1}.\\
   \end{array}\]
This completes the proof.
\end{proof}

\begin{corollary}\label{corollary:commutativity_left_bottom_corner}
If $M$ is a complete $C$--comodule,
then the following diagram commutes:
\begin{equation}\label{eq:commutativity_left_bottom_corner} 
\begin{aligned}
\xymatrix@C=60pt@R=40pt{
   \Lambda\wtens M \ar[r]^{\psi_{\Lambda}\otimes 1_M} \ar[d]_{\rho_{C,\Lambda\wtens M}} &   \Lambda\wtens \Lambda\wtens  M \ar[d]^{1_{\Lambda}\otimes \rho_{C,\Lambda\wtens M}}\\
    C\wtens \Lambda\wtens M \ar[r]^-{\rho_{\Lambda,C\wtens \Lambda}\otimes 1_M} &
    \Lambda\wtens C\wtens \Lambda\wtens M.
   }
\end{aligned}
%\begin{array}{rcl}
%    \Lambda\wtens M & 
%    \stackrel{\psi_{\Lambda}\otimes 1_M}
%    {\hbox to 15mm{\rightarrowfill}}&
%    \Lambda\wtens \Lambda\wtens  M \\
%    {\scriptstyle \rho_{C,\Lambda\wtens M}}\bigg\downarrow
%    \hspace{4mm}&&\hspace{5mm}\bigg\downarrow
%    {\scriptstyle 1_{\Lambda}\otimes \rho_{C,\Lambda\wtens M}}\\
%    C\wtens \Lambda\wtens M &
%    \stackrel{\rho_{\Lambda,C\wtens \Lambda}\otimes 1_M}
%    {\hbox to 15mm{\rightarrowfill}}&
%    \Lambda\wtens C\wtens \Lambda\wtens M.\\
%   \end{array}
\end{equation}
\end{corollary}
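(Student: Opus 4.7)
The plan is to deduce the corollary from \fullref{lemma:comutative_diagram_C_Lambda} by unpacking the tensor-product $C$--coaction on $\Lambda\wtens M$ and exploiting the multiplicativity of $\rho_{\Lambda,C\wtens\Lambda}$. In Sweedler notation, write $\psi_\Lambda(a) = \sum a_{[1]} \otimes a_{[2]}$, $\rho_{C,\Lambda}(a) = \sum a_{(-1)} \otimes a_{(0)}$, and $\rho_{C,M}(m) = \sum m_{(-1)} \otimes m_{(0)}$. Since $\rho_{C,\Lambda\wtens M}$ is the tensor-product $C$--coaction, it sends $a\otimes m$ to $\sum a_{(-1)} m_{(-1)} \otimes a_{(0)} \otimes m_{(0)}$, with multiplication taken in $C$. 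The top-right composite of \eqref{eq:commutativity_left_bottom_corner} then sends $a\otimes m$ to $\sum a_{[1]} \otimes a_{[2](-1)} m_{(-1)} \otimes a_{[2](0)} \otimes m_{(0)}$, while the left-bottom composite sends it to $\sum \rho_{\Lambda,C\wtens\Lambda}(a_{(-1)} m_{(-1)} \otimes a_{(0)}) \otimes m_{(0)}$.

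To equate these, I would first establish two structural properties of $\rho_{\Lambda,C\wtens\Lambda}$. It is a homomorphism of $\fp{}$--algebras, being the composite of the comultiplication $\psi\co \acha \to \acha\wtens\acha$ with $\pi_\Lambda \otimes 1\otimes 1$, both of which are algebra maps (the latter because $\pi_\Lambda = \varepsilon_C\otimes 1_\Lambda$). Moreover, for any $c \in C \subset \acha$, the counit axiom on the sub-Hopf-algebroid $(A^c_*, C)$ gives $\rho_{\Lambda,C\wtens\Lambda}(c\otimes 1) = 1 \otimes c \otimes 1$; that is, elements of $C$ are $\Lambda$--cotrivial under $\rho_{\Lambda,C\wtens\Lambda}$. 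Combining these yields
\[ \rho_{\Lambda,C\wtens\Lambda}(a_{(-1)} m_{(-1)} \otimes a_{(0)}) = (1\otimes m_{(-1)}\otimes 1)\cdot\rho_{\Lambda,C\wtens\Lambda}(a_{(-1)}\otimes a_{(0)}). \]

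Finally, \fullref{lemma:comutative_diagram_C_Lambda} identifies $\sum \rho_{\Lambda,C\wtens\Lambda}(a_{(-1)}\otimes a_{(0)})$ with $\sum a_{[1]} \otimes a_{[2](-1)} \otimes a_{[2](0)}$, and the commutativity of $C$ permits us to swap $m_{(-1)}$ and $a_{[2](-1)}$, matching the two composites. The main obstacle I anticipate is purely notational bookkeeping with Sweedler sums; once the multiplicativity of $\rho_{\Lambda,C\wtens\Lambda}$ and the cotriviality of $C$ are in place, the equality reduces immediately to \fullref{lemma:comutative_diagram_C_Lambda} and the commutativity of the ring $C$.
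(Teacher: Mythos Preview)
Your argument is correct and is exactly the computation the paper leaves implicit: the paper states this as an immediate corollary of \fullref{lemma:comutative_diagram_C_Lambda} with no written proof, and your Sweedler-notation unwinding of the diagonal $C$--coaction on $\Lambda\wtens M$, together with the multiplicativity of $\rho_{\Lambda,C\wtens\Lambda}$ and the fact that elements of $C$ map to $1\otimes c\otimes 1$, is the natural way to supply the details. The only minor point worth recording is that the sign bookkeeping is harmless because $C$ is concentrated in even degrees, so commuting $m_{(-1)}$ past odd-degree elements of $\Lambda$ introduces no sign; you implicitly use this but do not say so.
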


\begin{lemma}\label{lemma:big_9_diagram}
Let $M$ be a complete $C$--$\Lambda$--comodule
with $C$--comodule structure map $\rho_{C,M}\co M\to C\wtens M$. 
%If $\rho_{\Lambda, M}:M\to\Lambda\wtens M$ is 
%a map of $C$-comodules,
Then the following diagram commutes:
\[\xymatrix@C=65pt@R=38pt{
    M \ar[r]^-{\rho_{\Lambda, M}} \ar[d]^-{\rho_{\Lambda,M}} 
    & \Lambda\wtens M \ar[r]^-{\rho_{C,\Lambda\wtens M}} \ar[d]_-{1\otimes\rho_{\Lambda,M}} 
    & C\wtens \Lambda\wtens M \ar[d]^-{1\otimes 1 \otimes \rho_{\Lambda, M}}
    \\
    \Lambda\wtens M \ar[r]^-{\psi_{\Lambda}\otimes 1} \ar[d]^-{\rho_{C,\Lambda\otimes M}} 
    & \Lambda\wtens \Lambda\wtens M \ar[r]^-{\rho_{C,\Lambda\wtens \Lambda\wtens M}}  \ar[d]_-{1\otimes \rho_{C,\Lambda\wtens M}}
    & C\wtens \Lambda\wtens \Lambda\wtens M \ar[d]^-{1\otimes 1\otimes \rho_{C,\Lambda\wtens M}} 
    \\
    C\wtens \Lambda\wtens M \ar[r]^-{\rho_{\lambda,C\wtens \Lambda}\otimes 1} 
    & \Lambda\wtens C\wtens \Lambda \wtens M \ar[r]^-{\rho_{C,\Lambda\wtens C}\otimes 1\otimes 1}
    & C\wtens \Lambda\wtens C\wtens \Lambda\wtens  M
  }
\]
\end{lemma}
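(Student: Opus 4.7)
The plan is to decompose the $3\times 3$ grid along its middle horizontal and vertical arrows into four $2\times 2$ sub-squares and verify each separately. Three of them are essentially immediate. The top-left sub-square (with corners $M$, $\Lambda\wtens M$, $\Lambda\wtens M$, $\Lambda\wtens\Lambda\wtens M$) is simply the coassociativity of the $\Lambda$-comodule structure on $M$. The top-right sub-square is an instance of naturality of the coaction $\rho_{C,-}$: it is the naturality square for the morphism $1_\Lambda\otimes\rho_{\Lambda,M}\co \Lambda\wtens M\to \Lambda\wtens\Lambda\wtens M$, which is $C$-colinear because $\rho_{\Lambda,M}$ is (by the defining property of a $C$-$\Lambda$-comodule) and $1_\Lambda$ trivially is. The bottom-left sub-square is exactly \fullref{corollary:commutativity_left_bottom_corner} applied to the $C$-comodule $M$.

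The remaining piece, the bottom-right sub-square, will be the main obstacle. My plan is to translate it through the equivalence of \fullref{theorem:eq_cat_twist_comod_over_C} between complete $C$-comodules and complete twisted $A_*^c$-$G_A$-modules. Under this equivalence, each coaction $\rho_{C,N}\co N\to C\wtens N$ corresponds to the $G_A$-action packaged as the map $n\mapsto [g\mapsto n\cdot g]$; the tensor-product $C$-coactions on $\Lambda\wtens M$, on $\Lambda\wtens\Lambda\wtens M$, and on $\Lambda\wtens C$ correspond to the diagonal $G_A$-action on their tensor factors; and the comultiplication $\psi_C$ corresponds to group multiplication in $G_A$. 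Tracing an element $\lambda_1\otimes\lambda_2\otimes m$ around either side of the square should yield the same element $(g_1,g_2)\mapsto (\lambda_1\cdot g_1)\otimes(\lambda_2\cdot g_1 g_2)\otimes(m\cdot g_1 g_2)$ in the function module $C(G_A\times G_A,\Lambda\wtens\Lambda\wtens M)$; the equality is then pure associativity of the $G_A$-action.

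Once all four sub-squares are verified, the full $3\times 3$ diagram commutes in the category of complete Hausdorff filtered $A_*^c$-modules, which completes the proof.
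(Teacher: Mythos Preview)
Your decomposition into four sub-squares and your arguments for the top-left, top-right, and bottom-left squares match the paper's proof exactly. The difference lies in the bottom-right square. Rather than passing through the equivalence of \fullref{theorem:eq_cat_twist_comod_over_C} and chasing elements, the paper observes that this square is again a $C$--colinearity square: coassociativity of the $C$--coaction on $\Lambda\wtens M$ says precisely that $\rho_{C,\Lambda\wtens M}\co \Lambda\wtens M\to C\wtens\Lambda\wtens M$ is a map of $C$--comodules (the target carrying the cofree coaction $\psi_C\otimes 1$), and tensoring with $1_\Lambda$ preserves $C$--colinearity; the bottom row $\rho_{C,\Lambda\wtens C}\otimes 1\otimes 1$ is then identified with the target coaction. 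This is a one-line argument parallel to the one you gave for the top-right square, and it avoids unpacking the twisted-module dictionary. Your route through $G_A$--actions is legitimate, but be aware that the isomorphism $C\wtens_{A_*^c} C\cong C(G_A\times G_A,A_*^c)$ involves the twist $m(\alpha,\beta)(g_1,g_2)=\alpha(g_1)^{g_2}\beta(g_2)$, so the element chase needs slightly more care than your sketch indicates; the categorical argument sidesteps this entirely.
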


\begin{proof}
The top left square commutes since $M$ is a $\Lambda$--comodule.
{From} the assumption that $\rho_{\Lambda,M}$ is 
a morphism of $C$--comodules,
so is $1_{\Lambda}\otimes \rho_{\Lambda,M}$.
Hence we see that the top right square commutes.
The bottom left square commutes by 
\fullref{corollary:commutativity_left_bottom_corner}.
Since $\smash{\rho_{C,\Lambda\wtens M}}$ is 
a morphism of $C$--comodules,
so is $1_{\Lambda}\otimes \smash{\rho_{C,\Lambda\wtens M}}$.
Hence the bottom right square commutes.
This completes the proof.  
\end{proof}

\begin{lemma}\label{lemma:identification_comultiplication}
The map $(\rho_{C,\Lambda\wtens C}\otimes 1_{\Lambda})\circ
   (\rho_{\Lambda,C\wtens\Lambda})$
is the comultiplication $\psi$.
\end{lemma}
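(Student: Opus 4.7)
The plan is to verify the claimed identity by checking both sides on topological generators. Both $\psi$ and $(\rho_{C,\Lambda\wtens C}\otimes 1_\Lambda)\circ\rho_{\Lambda,C\wtens\Lambda}$ are continuous $\fp{}$--algebra maps $\acha\to\acha\wtens\acha$: $\psi$ by the Hopf algebroid axioms, and the composition because $\pi_\Lambda=\varepsilon_C\otimes 1_\Lambda$ is a ring map and the coactions $\rho_{C,\Lambda\wtens C}$, $\rho_{\Lambda,C\wtens\Lambda}$ are ring maps (their sources being comodule algebras over the respective Hopf algebroids). By \fullref{cor:structureonacha}, $\acha\cong C\wtens\Lambda$ is topologically generated over $\fp{}$ by $\eta_L(A^c_*)$, the polynomial generators $t^A_i\in C$, and the exterior generators $b^A_{(i)}\in\Lambda$, so it suffices to check equality on these three families. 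Agreement on $\eta_L(A^c_*)$ is formal; for $c=t^A_i\in C$, since $C$ is a sub-Hopf-algebroid one has $\psi(c)\in C\wtens C$, so $\rho_{\Lambda,C\wtens\Lambda}(c)=1_\Lambda\otimes\psi_C(c)$ by the counit identity, after which $\rho_{C,\Lambda\wtens C}\otimes 1_\Lambda$ restores $\psi_C(c)$ in the appropriate outer slots, yielding $\psi(c)$.

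\textbf{The main case $b^A_{(i)}$.} Starting from
\[
\psi(b^A_{(i)})=1\otimes b^A_{(i)}+\sum_{j=0}^{i}b^A_{(j)}\otimes(t^A_{i-j})^{p^j}
\]
from \fullref{theorem:structure_acha}, an application of $\pi_\Lambda\otimes 1$ gives
\[
\rho_{\Lambda,C\wtens\Lambda}(b^A_{(i)})=1_\Lambda\otimes 1_C\otimes b^A_{(i)}+\sum_{j=0}^{i}b^A_{(j)}\otimes(t^A_{i-j})^{p^j}\otimes 1_\Lambda
\]
in $\Lambda\wtens C\wtens\Lambda$. Next I plan to apply $\rho_{C,\Lambda\wtens C}\otimes 1_\Lambda$, using the $C$--comodule structure on $\Lambda\wtens C$ built from $\rho_{C,\Lambda}(b^A_{(j)})=\sum_{k=0}^{j}\chi(t^A_{j-k})^{p^k}\otimes b^A_{(k)}$ (derived inside the proof of \fullref{lemma:comutative_diagram_C_Lambda}) together with the regular $C$--coaction $\psi_C$ on $C$. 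After expanding and collecting the coefficient of each $b^A_{(k)}$ in the output, the resulting sum in $C\wtens C$ is, up to an overall Frobenius twist $(-)^{p^k}$, a convolution of the form $\sum_{J\ge 0}\chi(t^A_J)\cdot(t^A_{N-J})^{p^J}$ for various $N$; the antipode axiom for the sub-Hopf-algebroid $C$ forces this to vanish when $N>0$ and to equal $1$ when $N=0$. Hence only the terms $1_C\otimes b^A_{(k)}\otimes(t^A_{i-k})^{p^k}\otimes 1_\Lambda$ survive, and together with the already-identified boundary contribution $1_C\otimes 1_\Lambda\otimes 1_C\otimes b^A_{(i)}$ they reassemble into $\psi(b^A_{(i)})$ exactly.

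\textbf{Main obstacle.} The hard part will be the bookkeeping in the last step: one has to track several Frobenius twists and the placement of $\chi(t^A_\bullet)$ and $(t^A_\bullet)^{p^\bullet}$ across the four tensor factors, and then recognize the resulting inner sum as a Frobenius power of the antipode convolution so that the Hopf-algebroid axiom on $C$ applies. Once the indices are organized correctly, the cancellation is immediate and no further structure is needed.
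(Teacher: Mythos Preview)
Your proposal is correct and follows essentially the same route as the paper: both arguments observe that the two maps are continuous $\fp{}$--algebra homomorphisms, reduce to checking on $c\in C$ and on the exterior generators $b^A_{(i)}$, and for the latter carry out the explicit expansion that collapses via the antipode identity $\sum_m \chi(t_m)\,t_{N-m}^{p^m}=\delta_{N,0}$ in $C$. The paper's computation is exactly the triple-sum expansion you outline, regrouped so that the inner sum becomes a Frobenius power of this convolution.
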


\begin{proof}
Let $f=(\smash{\rho_{\smash{C,\Lambda\wtens C}}}\otimes 1_{\Lambda})\circ
(\smash{\rho_{\smash{\Lambda,C\wtens\Lambda}}})$.
Since $f$ is a map of $\fp{}$--algebras,
it is sufficient to show that
$f(c)=\psi(c)$ for all $c\in C$
and $f(b_{(i)})=\psi(b_{(i)})$ for all
$0\le i<n$.
It is easy to check that $f(c)=\psi(c)$.
On the other hand,
\[ \begin{array}{rcl}
    f(b_{(i)})&=&{\displaystyle\sum_{j=0}^i\sum_{k=0}^j\sum_{l=0}^{i-j}
      \chi(s_{j-k})^{p^k}s_{i-j-l}^{p^j}\otimes b_{(k)}
      \otimes s_l^{p^{i-l}}\otimes 1} 
    + 1\otimes 1\otimes 1\otimes b_{(i)} \\
   &=&{\displaystyle\sum_{
      \stackrel{\mbox{$\scriptstyle k,l\ge 0$}}
               {\mbox{$\scriptstyle k+l\le i$}}}
    \left( \sum_{j=k}^{i-l}\chi(s_{j-k})s_{i-j-l}^{p^{j-k}}\right)^{p^k}
    \otimes b_{(k)}\otimes s_l^{p^{i-l}}}
   + 1\otimes 1\otimes 1\otimes b_{(i)} \\
   &=&1\otimes 1\otimes 1\otimes b_{(i)}
   + {\displaystyle\sum_{k=0}^i 1\otimes b_{(k)}\otimes 
          s_{i-k}^{p^k}\otimes 1}.\\ 
   \end{array}\]
Hence $f(b_{(i)})=\psi(b_{(i)})$.
This completes the proof.
\end{proof}

Let $M$ be a complete $C$--$\Lambda$--comodule with
$C$--comodule structure map $\rho_{C,M}\co M\to C\wtens M$. 
We define a map $\rho_M\co M\to C\wtens \Lambda\wtens M$ by
\[ \rho_M\co M\stackrel{\rho_{\Lambda,M}}{\hbox to 10mm{\rightarrowfill}}
   \Lambda\wtens M\stackrel{\rho_{C,\Lambda\wtens M}}
   {\hbox to 10mm{\rightarrowfill}} C\wtens \Lambda\wtens M .\]
By \fullref{lemma:big_9_diagram} and %\ref{lemma:C_Lambda_comodule_diagram},
\fullref{lemma:identification_comultiplication},
we see that $\rho_M$ gives $M$ a complete $\acha$--comodule structure.

\begin{proposition}
Let $M$ be a complete $C$--$\Lambda$--comodule.
Then $M$ has a natural $\acha$--comodule structure $\rho_M$ such that
the induced $C$--$\Lambda$--comodule structure coincides with
the given one.
\end{proposition}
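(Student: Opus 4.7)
The plan is to verify that the map $\rho_M$ defined just before the statement makes $M$ into a complete $\acha$--comodule and that restricting this structure along the projections $\pi_\Lambda\co \acha\to\Lambda$ and $\pi_C\co \acha\to C$ reproduces the given $\Lambda$-- and $C$--comodule structures. Under the algebra isomorphism $\acha\cong C\wtens\Lambda$ of \fullref{cor:structureonacha}, these projections correspond respectively to $\varepsilon_C\otimes 1_\Lambda$ and $1_C\otimes\varepsilon_\Lambda$, so every verification reduces to a manipulation inside $C\wtens\Lambda\wtens M$.

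First I would verify coassociativity. Under the identification $\acha\wtens\acha\cong C\wtens\Lambda\wtens C\wtens\Lambda$ this becomes the equality $(1_{C\wtens\Lambda}\otimes\rho_M)\circ\rho_M=(\psi\otimes 1_M)\circ\rho_M$, which is precisely the comparison of the two extremal paths around the $3\times 3$ diagram of \fullref{lemma:big_9_diagram}. The top row followed by the rightmost column assembles into $\rho_M$ followed by $1_C\otimes 1_\Lambda\otimes\rho_M$, while the leftmost column followed by the bottom row assembles into $\rho_M$ followed by $\psi\otimes 1_M$, the latter identification being exactly \fullref{lemma:identification_comultiplication}.

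Next, for counitality and the recovery of the $\Lambda$--structure, I would apply $(\varepsilon_C\otimes 1_\Lambda\otimes 1_M)$ to $\rho_M=\rho_{C,\Lambda\wtens M}\circ\rho_{\Lambda,M}$: the counit axiom for the $C$--comodule $\Lambda\wtens M$ collapses this to $\rho_{\Lambda,M}$, which simultaneously gives $(\pi_\Lambda\otimes 1_M)\circ\rho_M=\rho_{\Lambda,M}$; a further application of $(\varepsilon_\Lambda\otimes 1_M)$ together with the counit axiom for the $\Lambda$--comodule $M$ yields $1_M$, establishing counitality of $\rho_M$.

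Finally, to recover the $C$--structure I would invoke the hypothesis that $M$ is a $C$--$\Lambda$--comodule, so that $\rho_{\Lambda,M}$ is a morphism of $C$--comodules; this rewrites $\rho_M$ as $(1_C\otimes\rho_{\Lambda,M})\circ\rho_{C,M}$, and pushing this forward by $(1_C\otimes\varepsilon_\Lambda\otimes 1_M)$ together with counitality of the $\Lambda$--comodule $M$ returns $\rho_{C,M}$. All the genuine content sits in \fullref{lemma:big_9_diagram} and \fullref{lemma:identification_comultiplication}; what remains is formal bookkeeping, and the only point requiring a little care is keeping straight which copy of $C$ or $\Lambda$ each counit acts on when the two a priori different formulas for $\rho_M$ are reconciled via the $C$--$\Lambda$--comodule hypothesis.
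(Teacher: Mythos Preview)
Your proposal is correct and follows the same approach as the paper: the paper also defines $\rho_M=\rho_{C,\Lambda\wtens M}\circ\rho_{\Lambda,M}$ and invokes \fullref{lemma:big_9_diagram} together with \fullref{lemma:identification_comultiplication} for coassociativity, leaving counitality and the recovery of the $C$-- and $\Lambda$--structures as implicit bookkeeping. You have simply spelled out those implicit verifications explicitly, and your identification of the two extremal paths around the $3\times 3$ diagram (including the use of the $C$--$\Lambda$--comodule hypothesis to rewrite $\rho_M$ as $(1_C\otimes\rho_{\Lambda,M})\circ\rho_{C,M}$) matches the intended argument exactly.
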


Note that if $M$ is a complete $C$--$\Lambda$--comodule
obtained from a complete $\acha$--comodule,
then the induced $\acha$--comodule structure coincides 
with the given one by \fullref{lemma:C-comodule_map}.  

By summarizing the results in this section,
we obtain the following theorem.

\begin{theorem}\label{thm:eq_cat_rchr_stab_lambda}
There is an equivalence of symmetric monoidal categories
between the category of complete $\acha$--comodules
and the category of complete $C$--$\Lambda$--comodules.
\end{theorem}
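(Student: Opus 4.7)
The plan is to assemble the equivalence directly from the two constructions already produced in this subsection, and then verify that they are mutually inverse and monoidal.

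First, I would define the two functors. In one direction, the functor $\Phi_1$ sends a complete $\acha$-comodule $M$ with structure map $\rho_M \co M \to \acha \wtens M$ to the same underlying module equipped with the induced $C$-comodule structure $\rho_{C,M} = (\pi_C \otimes 1)\circ \rho_M$ and the induced $\Lambda$-comodule structure $\rho_{\Lambda,M} = (\pi_\Lambda \otimes 1)\circ \rho_M$. By \fullref{cor:rchr2glambda} the resulting data is a $C$-$\Lambda$-comodule, and naturality in $M$ is automatic because $\pi_C$ and $\pi_\Lambda$ are fixed Hopf algebroid maps. In the other direction, the functor $\Phi_2$ sends a complete $C$-$\Lambda$-comodule $M$ to the same module equipped with
\[ \rho_M = \rho_{C,\Lambda\wtens M}\circ \rho_{\Lambda,M} \co M\longrightarrow C\wtens \Lambda \wtens M \cong \acha \wtens M. \]
The fact that $\rho_M$ satisfies the coassociativity and counit axioms of an $\acha$-comodule is exactly the content of \fullref{lemma:big_9_diagram} combined with \fullref{lemma:identification_comultiplication}, while the counit axiom follows from the counit axioms of the $C$- and $\Lambda$-comodule structures together with $\varepsilon_{\acha} = \varepsilon_C \circ \pi_C = \varepsilon_\Lambda \circ \pi_\Lambda$.

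Next, I would show $\Phi_1$ and $\Phi_2$ are mutually inverse. For $\Phi_1 \circ \Phi_2 = \mathrm{id}$: start with a $C$-$\Lambda$-comodule $M$, form $\rho_M = \rho_{C,\Lambda\wtens M}\circ \rho_{\Lambda,M}$, and then extract the two induced structures. The $\Lambda$-part is $(\pi_\Lambda \otimes 1 \otimes 1) \circ \rho_{C,\Lambda\wtens M} \circ \rho_{\Lambda,M} = (\varepsilon_C \otimes 1 \otimes 1)\circ\rho_{C,\Lambda\wtens M} \circ \rho_{\Lambda,M} = \rho_{\Lambda,M}$ by counitality of the $C$-coaction on $\Lambda \wtens M$. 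The $C$-part is $(1 \otimes \pi_\Lambda \otimes 1)\circ \rho_{C,\Lambda\wtens M} \circ \rho_{\Lambda,M}$; using that $\rho_{C,\Lambda\wtens M}$ intertwines the $\Lambda$-coaction and that $(\pi_\Lambda \otimes 1)\circ \rho_{\Lambda,M} = (\varepsilon_\Lambda \otimes 1)\circ \rho_{\Lambda,M}= \mathrm{id}$ by counitality of the $\Lambda$-coaction, this collapses to $\rho_{C,M}$. Conversely, $\Phi_2 \circ \Phi_1 = \mathrm{id}$ is precisely the remark made just after the Proposition preceding the theorem: the splitting $\acha \cong C \wtens \Lambda$ together with \fullref{lemma:C-comodule_map} identifies $\rho_M$ with the composite $\rho_{C,\Lambda\wtens M}\circ \rho_{\Lambda,M}$.

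Finally, I would verify the symmetric monoidal structure. Both categories carry the $\wtens_{A^c_*}$ tensor product with unit $A^c_*$, and both $\Phi_1$ and $\Phi_2$ preserve the underlying module, the unit, and tensor products on the nose: the diagonal $\acha \to \acha \wtens \acha$ restricts to the products of the two projections, so the tensor $\acha$-coaction on $M \wtens N$ agrees with the tensor of $C$- and $\Lambda$-coactions. The symmetry isomorphism is the ordinary twist in $\fmodc{A^c_*}$ and is automatically respected.

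The part I expect to require the most care is checking the first composite $\Phi_1\circ \Phi_2 = \mathrm{id}$, since the induced $C$-coaction on $M$ out of $\rho_M = \rho_{C,\Lambda\wtens M}\circ \rho_{\Lambda,M}$ is not obviously $\rho_{C,M}$; it requires using that $\rho_{\Lambda,M}$ is a $C$-comodule map (the defining axiom of a $C$-$\Lambda$-comodule) together with the counit axiom of the $\Lambda$-coaction. Everything else is a bookkeeping exercise in the two constructions already carried out.
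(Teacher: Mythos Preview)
Your proposal is correct and follows essentially the same route as the paper: the paper presents the theorem as a summary of the subsection, and you are spelling out exactly that summary, using \fullref{cor:rchr2glambda} for one direction, \fullref{lemma:big_9_diagram} and \fullref{lemma:identification_comultiplication} for the other, and \fullref{lemma:C-comodule_map} (plus the remark after the Proposition) for the two round-trip identities. One small notational slip: in your computation of the induced $C$-part you write $(1\otimes \pi_\Lambda\otimes 1)$ where you mean $(1_C\otimes \varepsilon_\Lambda\otimes 1_M)$, since $\pi_C = 1_C\otimes \varepsilon_\Lambda$; the argument you then give (use that $\rho_{\Lambda,M}$ is a $C$-comodule map to rewrite $\rho_{C,\Lambda\wtens M}\circ\rho_{\Lambda,M}$ as $(1_C\otimes\rho_{\Lambda,M})\circ\rho_{C,M}$, then apply counitality of $\Lambda$) is exactly right.
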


%% O. K. May 8, 2004
\section{Main theorem}
\label{section:Main_Theorem}

\subsection[Symmetric monoidal functor F]{Symmetric monoidal functor ${\mathcal F}$}
\label{subsection:symmetric_monoidal_dunctor}

In \fullref{prop:proftoprof} 
we showed that ${\mathcal F}$ is a monoidal functor 
from the category of profinite $C_{E_*}$--precomodules
to the category of profinite $C_{K_*}$--comodules.
%in \fullref{subsection:reformulation}.
In this section we show that the functor 
${\mathcal F}$ extends to a monoidal functor 
from the category of profinite $\ee$--precomodules
to the category of profinite $\kk$--comodules.

We let $M$ be a profinite $\ee$--precomodule.
Then $M$ is a profinite $C_{E_*}$--preco\-mod\-ule and
also a $\Lambda_{E_*}$--comodule such that
the $\Lambda_{E_*}$--comodule structure map
$\rho_M\co M\to \Lambda_E\wtens M$ is a map
of profinite $C_{E_*}$--precomodules by \fullref{cor:rchr2glambda}.
We note that 
$\Lambda_{E_*}$ is a $C_{E_*}$--precomodule 
and 
there is an isomorphism of $C_{K_*}$--comodules:
${\mathcal F}(\Lambda_{E_*})\cong \Lambda_{K_*}$
by \fullref{thm:iso_bet_lambdas}.

\begin{lemma}\label{lemma:def_comodule_str_map}
If $M$ is a profinite $C_{E_*}$--precomodule,
then the natural map
\[ \Lambda_{K_*}\wtens_{K_*}{\mathcal F}(M)
   \stackrel{\cong}{\longrightarrow} 
   {\mathcal F}(\Lambda_{E_*})\wtens_{K_*}{\mathcal F}(M)
   \longrightarrow 
   {\mathcal F}(\Lambda_{E_*}\wtens_{E_*}M)\]
is an isomorphism of $C_{K_*}$--comodules.
\end{lemma}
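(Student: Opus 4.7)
The plan is to reduce to the finite-dimensional case and exploit the existence of an $S_{n+1}$--invariant basis for $\Lambda_{L_*}$. Since $M$ is profinite we can write $M\cong\inverselimit{\lambda} M/F^{\lambda}M$ with each quotient a finitely generated discrete $C_{E_*}$--precomodule. Both of the functors $\Lambda_{E_*}\wtens_{E_*}(-)$ followed by $\mathcal{F}$, and $\Lambda_{K_*}\wtens_{K_*}\mathcal{F}(-)$, commute with such cofiltered limits, essentially because $\Lambda_{E_*}$ and $\Lambda_{K_*}$ are free of finite rank over $E_*$ and $K_*$ respectively, and because $\mathcal{F}$ lands in profinite $C_{K_*}$--comodules. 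So it suffices to treat the case where $M$ is a finitely generated discrete $C_{E_*}$--precomodule, and then $\Lambda_{E_*}\wtens_{E_*}M=\Lambda_{E_*}\otimes_{E_*}M$.

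For such $M$ the argument runs as follows. By \fullref{lemma:invariance_widehat_b}, the elements $\widehat b_{(0)},\ldots,\widehat b_{(n-1)}\in \Lambda_{L_*}=L_*\otimes_{E_*}\Lambda_{E_*}$ are fixed by $S_{n+1}$. Because $\Phi^{-1}(X)$ has invertible linear coefficient $\Phi_0^{-1}\in L$, the change-of-basis from $\{b^E_{(i)}\}$ to $\{\widehat b_{(i)}\}$ is given by an invertible upper-triangular matrix over $L_*$, so the monomials $\widehat b_I$ for $I\subset\{0,\ldots,n-1\}$ form a free $L_*$--basis of $\Lambda_{L_*}$ consisting entirely of $S_{n+1}$--invariants. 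Consequently
\[
 L_*\otimes_{E_*}(\Lambda_{E_*}\otimes_{E_*}M)\;\cong\; \bigoplus_I \widehat b_I\cdot(L_*\otimes_{E_*}M)
\]
as $L_*$--modules with $S_{n+1}$--action, and since $S_{n+1}$--invariants commute with this finite direct sum,
\[
 \mathcal{F}(\Lambda_{E_*}\otimes_{E_*}M)\;\cong\;\bigoplus_I \widehat b_I\cdot\mathcal{F}(M)\;\cong\;\mathcal{F}(\Lambda_{E_*})\otimes_{K_*}\mathcal{F}(M),
\]
where the last identification is the one implicit in \fullref{lemma:iso_f_lambda2lambda}. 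Unwinding shows that this is precisely the natural lax-monoidal map of the statement, composed with the inverse of the isomorphism $\mathcal{F}(\Lambda_{E_*})\cong\Lambda_{K_*}$ of \fullref{thm:iso_bet_lambdas}.

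Finally, to see that the isomorphism is one of $C_{K_*}$--comodules it suffices, by \fullref{theorem:eq_cat_twist_comod_over_C}, to check $G_n$--equivariance. For this I would invoke \fullref{lemma:action_G_n_lambda}: the $G_n$--action on $\widehat b_{(i)}$ inside $\Lambda_{L_*}$ matches, under the isomorphism $\mathcal{F}(\Lambda_{E_*})\cong\Lambda_{K_*}$, the $G_n$--action on $b^K_{(i)}$ coming from the tensor factor $\Lambda_{K_*}$ in $\Lambda_{K_*}\wtens_{K_*}\mathcal{F}(M)$, so the $G_n$--structures on the two sides agree summand by summand. The main obstacle is the bookkeeping in the reduction step: one has to equip $\Lambda_{E_*}\wtens_{E_*}M$ with the profinite filtration $\{\Lambda_{E_*}\otimes_{E_*}F^{\lambda}M\}$ and verify that $\mathcal{F}$ is continuous for it, so that the isomorphisms at finite level assemble correctly in the limit. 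Once this is settled the finite-level argument via the invariant basis $\{\widehat b_I\}$ is essentially formal.
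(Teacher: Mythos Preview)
Your argument is correct and rests on the same key observation as the paper: the monomials $\widehat b_I$ give an $L_*$--basis of $\Lambda_{L_*}$ consisting of $S_{n+1}$--invariants, so $S_{n+1}$--invariants pull past $\Lambda$ and the natural map is an isomorphism.  The paper packages this a bit more efficiently: it phrases the statement $\Lambda_{E_*}\wtens_{E_*}L_*\cong\Lambda_{K_*}\wtens_{K_*}L_*$ as an isomorphism of twisted $L_*$--${\mathcal G}$--modules (this is exactly \fullref{thm:iso_bet_lambdas}), then rewrites $(\Lambda_{E_*}\wtens_{E_*}M)\wtens_{E_*}L_*\cong \Lambda_{K_*}\wtens_{K_*}(M\wtens_{E_*}L_*)$ as $\mathcal G$--modules and takes $S_{n+1}$--invariants; since everything is done ${\mathcal G}$--equivariantly, the $G_n$--equivariance (hence the $C_{K_*}$--comodule structure) comes for free and no separate check via \fullref{lemma:action_G_n_lambda} is needed.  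The paper also does not reduce to the finitely generated discrete case---the completed tensor manipulations go through directly---so your first paragraph, while harmless, is unnecessary.
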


\begin{proof}
Since
${\mathcal F}(\Lambda_{E_*})$ is isomorphic to $\Lambda_{K_*}$
as a $C_{K_*}$--comodule,
$\Lambda_{E_*}\wtens_{E_*}L_*\cong\Lambda_{K_*}\wtens_{K_*}L_*$ 
as twisted $L_*$--${\mathcal G}$--modules.
%where ${\mathcal G}=\Gamma\ltimes (S_n\times S_{n+1})$.
Hence there are isomorphisms of twisted $L_*$--${\mathcal G}$--modules:
\[ \begin{array}{rcl}
     \Lambda_{E_*}\wtens_{E_*} M\wtens_{E_*} L_* & \cong &
     \left(\Lambda_{E_*}\wtens_{E_*}L_*\right)
     \wtens_{L_*} \left(M\wtens_{E_*}L_*\right)\\[1mm]
     &\cong& \left(\Lambda_{K_*}\wtens_{K_*}L_*\right)
     \wtens_{L_*}\left(M\wtens_{E_*}L_*\right)\\[1mm]
     &\cong& \Lambda_{K_*}\wtens_{K_*}M\wtens_{E_*}L_*.\\ 
   \end{array}\]   
By taking $S_{n+1}$--invariant submodules,
we obtain an isomorphism of twisted $K_*$--$G_n$--modules:
${\mathcal F}(\Lambda_{E_*}\wtens_{E_*}M)\cong
   \Lambda_{K_*}\wtens_{K_*}{\mathcal F}(M)$.
This implies that the above map is an isomorphism of 
profinite $C_{K_*}$--comodules.
\end{proof}

By \fullref{lemma:def_comodule_str_map},
we obtain a map 
\[ {\mathcal F}(\psi_{\Lambda_{E_*}})\co
   \Lambda_{K_*}\cong {\mathcal F}(\Lambda_{E_*})\longrightarrow 
   {\mathcal F}(\Lambda_{E_*}\wtens_{E_*}\Lambda_{E_*})
   \cong \Lambda_{K_*}\wtens_{K_*}\Lambda_{K_*}.\] 

\begin{lemma}
The map ${\mathcal F}(\psi_{\Lambda_{E_*}})$ coincides
with the comultiplication map $\psi_{\Lambda_{K_*}}$
on $\Lambda_{K_*}$.
\end{lemma}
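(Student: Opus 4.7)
The plan is to check agreement on the generators $\widehat{b}_{(i)} \in \Lambda_{K_*} \cong \mathcal{F}(\Lambda_{E_*})$ for $0 \le i < n$, using that both maps are algebra homomorphisms.

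First I would observe that each $b^E_{(j)}$ is primitive in $\Lambda_{E_*}$ by \fullref{theorem:structure_acha}, so $\psi_{\Lambda_{E_*}}(b^E_{(j)}) = 1 \otimes b^E_{(j)} + b^E_{(j)} \otimes 1$. Since $\widehat{b}_{(i)}$ is defined as an $L_*$--linear combination of the $b^E_{(j)}$ (the coefficients being polynomials in the coefficients of $\Phi^{-1}$), and the map $\psi_{\Lambda_{E_*}} \wtens_{E_*} L_*$ is $L_*$--linear, it follows that $\widehat{b}_{(i)}$ is primitive in $\Lambda_{E_*} \wtens_{E_*} \Lambda_{E_*} \wtens_{E_*} L_* \cong (\Lambda_{E_*}\wtens_{E_*}L_*)\wtens_{L_*}(\Lambda_{E_*}\wtens_{E_*}L_*)$, that is,
\[ (\psi_{\Lambda_{E_*}}\wtens 1_{L_*})(\widehat{b}_{(i)}) = 1 \otimes \widehat{b}_{(i)} + \widehat{b}_{(i)} \otimes 1. \]

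Next I would trace this element through the isomorphism of \fullref{lemma:def_comodule_str_map}. Taking $S_{n+1}$--invariants (which is well-defined by \fullref{lemma:invariance_widehat_b}), the image of $\widehat{b}_{(i)}$ under $\mathcal{F}(\psi_{\Lambda_{E_*}})$ lies in $\mathcal{F}(\Lambda_{E_*}) \wtens_{K_*} \mathcal{F}(\Lambda_{E_*}) \cong \Lambda_{K_*} \wtens_{K_*} \Lambda_{K_*}$, and under the identification $\widehat{b}_{(i)} \leftrightarrow b^K_{(i)}$ from \fullref{lemma:iso_f_lambda2lambda} and \fullref{thm:iso_bet_lambdas}, it becomes $1 \otimes b^K_{(i)} + b^K_{(i)} \otimes 1$. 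By \fullref{theorem:structure_acha} applied to $A = K$, this is exactly $\psi_{\Lambda_{K_*}}(b^K_{(i)})$.

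Finally, to conclude that the two maps agree on all of $\Lambda_{K_*}$, I would appeal to the symmetric monoidal structure of $\mathcal{F}$: since $\psi_{\Lambda_{E_*}}\co \Lambda_{E_*} \to \Lambda_{E_*}\wtens_{E_*}\Lambda_{E_*}$ is a map of commutative $E_*$--algebras (as $\Lambda_{E_*}$ is a commutative Hopf algebra), applying the symmetric monoidal functor $\mathcal{F}$ yields an algebra map $\mathcal{F}(\psi_{\Lambda_{E_*}})$ under the monoidal isomorphism of \fullref{lemma:def_comodule_str_map}. Since $\psi_{\Lambda_{K_*}}$ is also an algebra map and both agree on the generating set $\{b^K_{(i)}\}_{0 \le i < n}$ of $\Lambda_{K_*}$, they coincide. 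The only potential obstacle is ensuring that the isomorphism in \fullref{lemma:def_comodule_str_map} is monoidal, but this is immediate from the construction since it arises from the symmetric monoidal structure on complete tensor products together with the fact that $\mathcal{F}(-) = H^0(S_{n+1}; - \wtens_{E_*} L_*)$ is built from the symmetric monoidal base-change along $E_* \to L_*$ followed by the (lax) monoidal invariants functor.
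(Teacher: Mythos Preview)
Your proposal is correct and follows the same approach as the paper: the paper's proof is the single sentence that the generators $\widehat{b}_{(i)}$ of $\mathcal{F}(\Lambda_{E_*})$ are $L_*$--linear combinations of the primitive generators $b^E_{(j)}$ of $\Lambda_{E_*}$, and you have simply spelled out the details of why this suffices. Your argument, including the check that $\mathcal{F}(\psi_{\Lambda_{E_*}})$ is an algebra map via the monoidal structure already established in \fullref{prop:proftoprof}, is exactly what the paper's terse proof is implicitly invoking.
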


\begin{proof}
This follows from the fact that
the algebra generators $\smash{\widehat{b}^E_{(i)}}$
of $\smash{{\mathcal F}(\Lambda_{E_*})}$ 
are given by linear combinations of 
the algebra generators $\smash{b^E_{(i)}}$ of $\Lambda_{E_*}$
with coefficients in $L_*$
(see \fullref{lemma:iso_f_lambda2lambda}). 
\end{proof}

\begin{corollary}
Let $M$ be a profinite $\ee$--precomodule with corresponding $\Lambda_{E_*}$--comodule structure map
$\rho_M\co M\to \Lambda_{E_*}\wtens_{E_*} M$.
Then the map ${\mathcal F}(\rho_M)\co {\mathcal F}(M)\to 
{\mathcal F}(\Lambda_{E_*}\wtens_{E_*} M)\cong \Lambda_{K_*}\wtens_{K_*} 
{\mathcal F}(M)$ defines a natural $\Lambda_{K_*}$--comodule structure 
on ${\mathcal F}(M)$.
\end{corollary}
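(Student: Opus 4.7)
The plan is to verify the two defining axioms of a $\Lambda_{K_*}$--comodule structure (coassociativity and counit) on ${\mathcal F}(M)$ by applying the functor ${\mathcal F}$ to the corresponding axioms for the $\Lambda_{E_*}$--comodule structure on $M$, then transporting the results along the natural isomorphism ${\mathcal F}(\Lambda_{E_*}\wtens_{E_*}(-))\cong \Lambda_{K_*}\wtens_{K_*}{\mathcal F}(-)$ established in \fullref{lemma:def_comodule_str_map}. Naturality in $M$ will then be immediate from the functoriality of ${\mathcal F}$ and the naturality of the isomorphism in that lemma.

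For coassociativity, I would start from the commutative diagram
\[
\xymatrix@C=30pt{
M \ar[r]^-{\rho_M}\ar[d]_-{\rho_M} & \Lambda_{E_*}\wtens_{E_*}M \ar[d]^-{1\otimes \rho_M}\\
\Lambda_{E_*}\wtens_{E_*}M \ar[r]^-{\psi_{\Lambda_{E_*}}\otimes 1} & \Lambda_{E_*}\wtens_{E_*}\Lambda_{E_*}\wtens_{E_*}M
}
\]
expressing that $\rho_M$ is a $\Lambda_{E_*}$--comodule structure map. Applying ${\mathcal F}$ and invoking \fullref{lemma:def_comodule_str_map} twice (once for $\Lambda_{E_*}\wtens_{E_*}M$ and once, in iterated form, for $\Lambda_{E_*}\wtens_{E_*}\Lambda_{E_*}\wtens_{E_*}M$) converts this into the diagram
\[
\xymatrix@C=18pt{
{\mathcal F}(M) \ar[r]^-{{\mathcal F}(\rho_M)}\ar[d]_-{{\mathcal F}(\rho_M)} & \Lambda_{K_*}\wtens_{K_*}{\mathcal F}(M) \ar[d]^-{1\otimes {\mathcal F}(\rho_M)}\\
\Lambda_{K_*}\wtens_{K_*}{\mathcal F}(M) \ar[r]^-{{\mathcal F}(\psi_{\Lambda_{E_*}})\otimes 1} & \Lambda_{K_*}\wtens_{K_*}\Lambda_{K_*}\wtens_{K_*}{\mathcal F}(M).
}
\]
The preceding lemma identifies ${\mathcal F}(\psi_{\Lambda_{E_*}})$ with $\psi_{\Lambda_{K_*}}$, which yields exactly the coassociativity identity for ${\mathcal F}(\rho_M)$.

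The counit axiom is handled in the same way: apply ${\mathcal F}$ to the triangle expressing $(\varepsilon_{\Lambda_{E_*}}\otimes 1)\circ\rho_M = 1_M$. Under the identification $\Lambda_{K_*}\cong {\mathcal F}(\Lambda_{E_*})$ sending $b^K_{(i)}\mapsto \widehat{b}_{(i)}$ from \fullref{lemma:iso_f_lambda2lambda} and \fullref{thm:iso_bet_lambdas}, the counit ${\mathcal F}(\varepsilon_{\Lambda_{E_*}})$ corresponds to $\varepsilon_{\Lambda_{K_*}}$, which gives $(\varepsilon_{\Lambda_{K_*}}\otimes 1)\circ {\mathcal F}(\rho_M)=1_{{\mathcal F}(M)}$. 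Naturality in $M$ is then automatic: a morphism $f\co M\to M'$ of profinite $\ee$--precomodules intertwines $\rho_M$ and $\rho_{M'}$, and ${\mathcal F}(f)$ intertwines ${\mathcal F}(\rho_M)$ and ${\mathcal F}(\rho_{M'})$ by functoriality.

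The main technical point to check carefully is the identification of ${\mathcal F}$ applied to the \emph{triple} tensor product $\Lambda_{E_*}\wtens_{E_*}\Lambda_{E_*}\wtens_{E_*}M$ with $\Lambda_{K_*}\wtens_{K_*}\Lambda_{K_*}\wtens_{K_*}{\mathcal F}(M)$ in a way compatible with both of the maps $\psi_{\Lambda_{E_*}}\otimes 1$ and $1\otimes \rho_M$. This is essentially a coherence issue for the monoidal functor ${\mathcal F}$, and it reduces, via iterated application of \fullref{lemma:def_comodule_str_map}, to the fact proved in \fullref{prop:proftoprof} that ${\mathcal F}$ is symmetric monoidal on profinite $C_{E_*}$--precomodules, together with the observation that $\Lambda_{E_*}\wtens_{E_*}M\wtens_{E_*}L_*\cong \Lambda_{K_*}\wtens_{K_*}L_*\wtens_{L_*}M\wtens_{E_*}L_*$ as twisted $L_*$--${\mathcal G}$--modules and the vanishing behavior of higher $S_{n+1}$--invariants used in the proof of \fullref{lemma:def_comodule_str_map}.
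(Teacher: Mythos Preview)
Your proof is correct and is exactly the expansion the paper has in mind: in the paper this corollary is stated without proof, as an immediate consequence of the preceding lemma identifying ${\mathcal F}(\psi_{\Lambda_{E_*}})$ with $\psi_{\Lambda_{K_*}}$ together with \fullref{lemma:def_comodule_str_map}. One small remark: in your final paragraph the phrase ``vanishing behavior of higher $S_{n+1}$--invariants'' is not quite right and is not needed; the proof of \fullref{lemma:def_comodule_str_map} uses only that $S_{n+1}$ acts trivially on $\Lambda_{K_*}$ (so $H^0$ pulls $\Lambda_{K_*}\wtens_{K_*}(-)$ outside), and the coherence for the triple tensor product follows directly from the naturality of that isomorphism and the monoidality of ${\mathcal F}$ established in \fullref{prop:proftoprof}.
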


\begin{proposition}\label{prop:natural_str_kk_comod}
If $M$ is a profinite $\ee$--precomodule,
then ${\mathcal F}(M)$ has a natural $\kk$--comodule structure.
\end{proposition}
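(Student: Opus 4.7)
The plan is to transport the $C_{E_*}$--$\Lambda_{E_*}$--comodule structure on $M$ to a $C_{K_*}$--$\Lambda_{K_*}$--comodule structure on $\mathcal{F}(M)$, and then invoke \fullref{thm:eq_cat_rchr_stab_lambda} to repackage the latter as a $\kk$--comodule structure. First, by \fullref{cor:rchr2glambda} applied with $A = E$, the profinite $\ee$--precomodule structure on $M$ is equivalent to a pair consisting of a profinite $C_{E_*}$--precomodule structure with coaction $\rho_{C,M}\co M \to C_{E_*}\wtens_{E_*} M$, and a $\Lambda_{E_*}$--comodule structure with coaction $\rho_{\Lambda,M}\co M \to \Lambda_{E_*}\wtens_{E_*} M$ which is a morphism of $C_{E_*}$--precomodules.

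Next, I would apply the symmetric monoidal functor $\mathcal{F}$ of \fullref{prop:proftoprof} to $\rho_{\Lambda,M}$: functoriality yields a morphism of profinite $C_{K_*}$--comodules
\[ \mathcal{F}(\rho_{\Lambda,M})\co \mathcal{F}(M) \longrightarrow \mathcal{F}(\Lambda_{E_*}\wtens_{E_*}M), \]
and by \fullref{lemma:def_comodule_str_map} the target is naturally identified with $\Lambda_{K_*}\wtens_{K_*}\mathcal{F}(M)$. This gives a candidate $\Lambda_{K_*}$--comodule structure map on $\mathcal{F}(M)$ which, by construction, is simultaneously a morphism of $C_{K_*}$--comodules.

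The key verification is coassociativity and counitality of this map as a $\Lambda_{K_*}$--coaction. For counitality, one uses that the counit $\varepsilon_{\Lambda_{E_*}}$ is sent by $\mathcal{F}$ to the counit $\varepsilon_{\Lambda_{K_*}}$. For coassociativity, one applies $\mathcal{F}$ to the coassociativity diagram of $\rho_{\Lambda,M}$ and uses the monoidality isomorphism $\mathcal{F}(\Lambda_{E_*}\wtens_{E_*}\Lambda_{E_*})\cong \Lambda_{K_*}\wtens_{K_*}\Lambda_{K_*}$ together with the preceding lemma, namely $\mathcal{F}(\psi_{\Lambda_{E_*}}) = \psi_{\Lambda_{K_*}}$; this is the step that really uses that the isomorphism of \fullref{thm:iso_bet_lambdas} is one of $C_{K_*}$--\emph{coalgebras}, not merely comodules.

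Putting these together, $\mathcal{F}(M)$ carries a complete $C_{K_*}$--comodule structure (from \fullref{prop:proftoprof}) and a compatible complete $\Lambda_{K_*}$--comodule structure, that is, a $C_{K_*}$--$\Lambda_{K_*}$--comodule structure in the sense of \fullref{subsection:compatibility}. Invoking the equivalence of \fullref{thm:eq_cat_rchr_stab_lambda} with $A = K$ (so that $\acha = \kk$), this data is the same as a natural $\kk$--comodule structure on $\mathcal{F}(M)$, which is what was to be shown. The main obstacle is the coassociativity step: everything else is formal functoriality, but coassociativity requires that the structural isomorphism $\mathcal{F}(\Lambda_{E_*})\cong \Lambda_{K_*}$ be compatible with comultiplication, which is exactly the content of the lemma immediately preceding the proposition.
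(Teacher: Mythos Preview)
Your proposal is correct and follows essentially the same route as the paper: decompose the $\ee$--precomodule structure into a $C_{E_*}$--precomodule structure and a compatible $\Lambda_{E_*}$--coaction via \fullref{cor:rchr2glambda}, apply ${\mathcal F}$ to obtain a $C_{K_*}$--comodule structure together with a $\Lambda_{K_*}$--coaction (using \fullref{lemma:def_comodule_str_map} and ${\mathcal F}(\psi_{\Lambda_{E_*}})=\psi_{\Lambda_{K_*}}$ for coassociativity), and then invoke \fullref{thm:eq_cat_rchr_stab_lambda}. The paper's proof is shorter only because it has already packaged the $\Lambda_{K_*}$--comodule verification into the corollary immediately preceding the proposition, so its one-line argument just observes that ${\mathcal F}(\rho_{\Lambda,M})$ is a $C_{K_*}$--comodule map and cites \fullref{thm:eq_cat_rchr_stab_lambda}; you have unwound that corollary inline.
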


\begin{proof}
Since ${\mathcal F}$ is a functor
and the $\Lambda_{E_*}$--comodule structure map
$\rho_M\co M\to\Lambda_E\wtens M$ is 
a map of $C_{E_*}$--precomodule,
${\mathcal F}(\rho_M)$ is a map of $C_{K_*}$--comodules.
Hence the proposition follows from 
\fullref{thm:eq_cat_rchr_stab_lambda}.
\end{proof}

%\begin{proposition}\label{prop:comodule2comodule}
\begin{corollary}\label{cor:comodule2comodule}
%The functor 
${\mathcal F}$ extends
to a symmetric monoidal functor from
the category of profinite $\ee$--precomodules
to the category of profinite $\kk$--comodules.  
\end{corollary}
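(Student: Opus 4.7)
The plan is to assemble the corollary from the pieces already in place rather than to do any new coaction computation. The isomorphism $\acha \cong C_{A_*}\wtens \Lambda_{A_*^c}$ from \fullref{cor:structureonacha}, combined with the equivalence in \fullref{thm:eq_cat_rchr_stab_lambda}, reduces the problem to producing, for each profinite $\ee$-precomodule $M$, a natural compatible pair $(\rho_{C,M}^K, \rho_{\Lambda,M}^K)$ on $\mathcal{F}(M)$ and checking that the resulting assignment is symmetric monoidal. So the strategy splits cleanly into a construction step and a monoidality step.

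For the construction, first decompose the given $\ee$-precomodule structure on $M$ via \fullref{cor:rchr2glambda} (applied in its pre-comodule analogue) into a profinite $C_{E_*}$-precomodule structure together with a $\Lambda_{E_*}$-comodule structure $\rho_{\Lambda,M}\co M\to \Lambda_{E_*}\wtens_{E_*}M$, the latter being a morphism of $C_{E_*}$-precomodules. Applying \fullref{prop:proftoprof} to $M$ gives $\mathcal{F}(M)$ as a profinite $C_{K_*}$-comodule. Applying $\mathcal{F}$ to $\rho_{\Lambda,M}$ and composing with the canonical isomorphism of \fullref{lemma:def_comodule_str_map} supplies a map
\[ \mathcal{F}(M)\longrightarrow \mathcal{F}(\Lambda_{E_*}\wtens_{E_*}M)\cong \Lambda_{K_*}\wtens_{K_*}\mathcal{F}(M), \]
and \fullref{prop:natural_str_kk_comod} guarantees that this is a genuine $\Lambda_{K_*}$-comodule structure on $\mathcal{F}(M)$. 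Because $\mathcal{F}$ is a functor to $C_{K_*}$-comodules and $\rho_{\Lambda,M}$ is a $C_{E_*}$-comodule map, $\mathcal{F}(\rho_{\Lambda,M})$ is a $C_{K_*}$-comodule map, so $(\mathcal{F}(M),\rho_{C,\mathcal{F}(M)},\mathcal{F}(\rho_{\Lambda,M}))$ is a $C_{K_*}$-$\Lambda_{K_*}$-comodule. Invoking \fullref{thm:eq_cat_rchr_stab_lambda} in the reverse direction equips $\mathcal{F}(M)$ with the promised profinite $\kk$-comodule structure, and this promotion is natural in $M$.

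For the symmetric monoidal structure, I would check that the monoidal constraint $\mathcal{F}(M)\wtens_{K_*}\mathcal{F}(N)\cong \mathcal{F}(M\wtens_{E_*}N)$ from \fullref{prop:proftoprof} is a morphism of $\Lambda_{K_*}$-comodules, from which its compatibility as $\kk$-comodules follows via \fullref{thm:eq_cat_rchr_stab_lambda}. Unwinding the definition of the $\Lambda_{K_*}$-coaction, this reduces to the commutativity of the square built from the three natural maps $\mathcal{F}(\rho_{\Lambda,M\wtens N})$, $\mathcal{F}(\rho_{\Lambda,M})\wtens 1$ and $1\wtens \mathcal{F}(\rho_{\Lambda,N})$, which is the image under $\mathcal{F}$ of the corresponding square for $\Lambda_{E_*}$-comodules together with the canonical isomorphisms of \fullref{lemma:def_comodule_str_map}. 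The symmetry constraint is treated in the same way. Compatibility with the unit $E_*\mapsto K_*$ is immediate from the unit coaction on $\Lambda_{E_*}$ and the fact that $\mathcal{F}(E_*)=K_*$.

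The only potentially delicate step is checking that the coherence constraint $\mathcal{F}(\Lambda_{E_*})\wtens_{K_*}\mathcal{F}(M)\to \mathcal{F}(\Lambda_{E_*}\wtens_{E_*}M)$ used to pull back the $\Lambda_{K_*}$-coaction really is the monoidal constraint of $\mathcal{F}$; once this is seen, all naturality squares for symmetric monoidality of $\mathcal{F}$ on the $\Lambda$-side reduce formally to the corresponding squares on the $E_*$-side, which hold because the monoidal structure on profinite $\ee$-precomodules is, via \fullref{thm:eq_cat_rchr_stab_lambda}, compatible by construction. I expect the verification of this coherence identification to be the main bookkeeping obstacle, but no additional algebra beyond \fullref{thm:iso_bet_lambdas} and \fullref{lemma:def_comodule_str_map} should be required.
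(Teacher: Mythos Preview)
Your proposal is correct and follows essentially the same route as the paper: the paper's proof simply invokes \fullref{prop:natural_str_kk_comod} for the functor extension and declares that the symmetric monoidal compatibility ``is easy to check.'' You have unpacked both steps explicitly---in particular your monoidality argument (reducing to showing the constraint is a $\Lambda_{K_*}$-comodule map via \fullref{lemma:def_comodule_str_map} and \fullref{thm:eq_cat_rchr_stab_lambda}) is exactly the content the paper suppresses behind ``easy to check,'' and your identification of the coherence constraint with the monoidal constraint is the right point to flag, though it is indeed routine once stated.
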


\begin{proof}
By \fullref{prop:natural_str_kk_comod},
we see that ${\mathcal F}$ extends to a functor
from the category of profinite $\ee$--precomodules
to the category of profinite $\kk$--comodules.   
It is easy to check that ${\mathcal F}$ 
respects the symmetric monoidal structures.
\end{proof}

\subsection{Main theorem}
\label{subsection:Main_Theorem}

In this section we prove the main theorem 
(\fullref{thm:Main_Theorem}).
The theorem states that for any spectrum $X$,
${\mathcal F}(\mathbb{E}^*(X))$ is naturally isomorphic to 
$\mathbb{K}^*(X)$ as a cofiltered system of finitely generated
discrete $\kk$--comodules.
Furthermore, if $X$ is a space,
then this equivalence respects the graded commutative ring
structures.

\begin{definition}\rm
Let $\smash{\mathcal{M}_{\mathbb{E}}^f}$ 
(resp.\ $\smash{\mathcal{M}_{\mathbb{K}}^f}$) 
be the category of finitely generated discrete 
$\ee$--precomodules (resp.\ $\kk$-(pre)comodules).
We define $\mathcal{M}_{\mathbb{E}}$
(resp.\ $\mathcal{M}_{\mathbb{K}}$)
to be the procategory of $\smash{\mathcal{M}_{\mathbb{E}}^f}$
(resp.\ $\smash{\mathcal{M}_{\mathbb{K}}^f}$).
\end{definition}

By \fullref{cor:comodule2comodule},
we can extend the functor $\mathcal{F}$
from $\mathcal{M}_{\mathbb{E}}$ to $\mathcal{M}_{\mathbb{K}}$
by obvious way:
\[ \mathcal{F}\co \mathcal{M}_{\mathbb{E}}\longrightarrow
                \mathcal{M}_{\mathbb{K}}.\]
Note that $\mathcal{F}$ is a monoidal functor.
As in the cases of $\mathcal{C}_{\mathbb{E}}$
and $\mathcal{C}_{\mathbb{K}}$,
we have the following lemma.

\begin{lemma}\label{lemma:precomodule_str_e_coh}
For any spectrum $X$,
$\mathbb{E}^*(X)\in\mathcal{M}_{\mathbb{E}}$ and
$\mathbb{K}^*(X)\in \mathcal{M}_{\mathbb{K}}$. 
\end{lemma}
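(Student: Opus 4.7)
The plan is to verify that for each finite $Z\in \Lambda(X)$, the cohomology $E^*(Z)$ is a finitely generated discrete $\ee$--precomodule, and likewise $K^*(Z)$ is a finitely generated discrete $\kk$--comodule; then functoriality in $Z$ assembles these into objects of $\mathcal{M}_{\mathbb{E}}$ and $\mathcal{M}_{\mathbb{K}}$.

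First I would observe that for finite $Z$, the module $E^*(Z)\cong E_*\otimes_{\pn_*}\pn^*(Z)$ is finitely generated over $E_*$, since $\pn^*(Z)$ is a finitely generated $\pn_*$--module. In particular the profinite filtration on $E^*(Z)$ is trivial, so $E^*(Z)$ is discrete and $E^*(Z)^c=E^*(Z)$. The analogous statement holds for $K^*(Z)$.

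Next I would construct the $\ee$--precomodule structure by base change from the standard $\pn_*(\pn)$--comodule structure on $\pn^*(Z)$. By \fullref{lemma:acha_as_rings},
\[ \ee \;\cong\; E_*^c\wtens_{\pn_*}\pn_*(\pn)\wtens_{\pn_*}E_*^c, \]
and so tensoring the Boardman coaction $\pn^*(Z)\to \pn_*(\pn)\otimes_{\pn_*}\pn^*(Z)$ with $E_*^c$ on the left yields a continuous map
\[ E^*(Z)\longrightarrow \ee\wtens_{E_*^c} E^*(Z) \]
that satisfies the co-associativity and co-unity axioms of a complete $\ee$--comodule. Since $E^*(Z)$ is discrete, the precomodule condition (1) is automatic and condition (2) is just this coaction. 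This is exactly the combination, under \fullref{thm:eq_cat_rchr_stab_lambda}, of the $C_{E_*}$--precomodule structure of \fullref{prop:precomdule_cohomology} with the $\Lambda_{E_*}$--comodule structure coming from the Milnor summands of $\pn_*(\pn)$; compatibility is automatic because both arise from the single underlying $\pn_*(\pn)$--coaction.

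Finally, given a morphism $Z\to Z'$ in $\Lambda(X)$, the induced map $E^*(Z')\to E^*(Z)$ is a morphism of $\ee$--precomodules by naturality of the $\pn_*(\pn)$--coaction, so $\{E^*(Z)\}_{Z\in\Lambda(X)}$ defines an object of $\mathcal{M}_{\mathbb{E}}$; the same argument with $E$ replaced by $K$ gives $\mathbb{K}^*(X)\in\mathcal{M}_{\mathbb{K}}$. I do not anticipate a substantive obstacle: the lemma is essentially a bookkeeping statement combining the Landweber exactness of $E$ and $K$ over $\pn$, the finiteness of $\pn^*(Z)$ for finite $Z$, and the structural description of $\ee$ and $\kk$ already obtained in \fullref{theorem:structure_acha}.
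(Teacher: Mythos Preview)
Your proposal is correct and matches the paper's approach. The paper does not give an explicit proof of this lemma; it simply prefaces it with ``As in the cases of $\mathcal{C}_{\mathbb{E}}$ and $\mathcal{C}_{\mathbb{K}}$, we have the following lemma,'' pointing back to the construction of $\mathbb{E}^*(X)\in\mathcal{C}_{\mathbb{E}}$ via \fullref{prop:precomdule_cohomology} and the identification $\Gamma(E)=\ee$ from \fullref{lemma:acha_as_rings}. Your write-up is exactly the unwinding of that reference, so there is nothing to add.
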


Hence 
${\mathcal F}(\mathbb{E}^*(X))\in \mathcal{M}_{\mathbb{K}}$.
%is a profinite
%$\kk$-comodules for any spectrum $X$.
The natural $\Lambda_{K_*}$--comodule structure on 
${\mathcal F}(\mathbb{E}^*(X))$ gives natural $K_*$--module homomorphisms
$\widehat{Q}_i$ on $K^*(X)=\smash{\inverselimit{}}
\mathcal{F}(\mathbb{E}^*(X))$ for $0\le i<n$ 
with respect to the algebra generators $b_{(i)}^K$ of $\Lambda_{K_*}$.

\begin{lemma}\label{lemma:wideq_stable_op}
For $0\le i<n$, 
$\widehat{Q}_i$ is a stable cohomology operation on $K^*(X)$.
\end{lemma}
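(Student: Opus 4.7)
The plan is to verify the two constituent properties of a stable cohomology operation: naturality with respect to morphisms of spectra, and compatibility with the suspension isomorphism. Both will follow essentially from the functoriality of the construction, together with the extended equivalence furnished by \fullref{cor:comodule2comodule} and the main reformulation \fullref{thm:reformulation_stab}.

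First I would establish naturality. Given a map $f\co X\to Y$ of spectra, post-composition yields a functor $\Lambda(X)\to \Lambda(Y)$, hence a morphism of cofiltered systems $\mathbb{E}^*(Y)\to \mathbb{E}^*(X)$ in $\mathcal{M}_{\mathbb{E}}$. Applying the symmetric monoidal functor $\mathcal{F}\co\mathcal{M}_{\mathbb{E}}\to \mathcal{M}_{\mathbb{K}}$ of \fullref{cor:comodule2comodule} produces a morphism $\mathcal{F}(\mathbb{E}^*(Y))\to \mathcal{F}(\mathbb{E}^*(X))$ of cofiltered systems of $\kk$--comodules. Under the identification $\mathcal{F}(\mathbb{E}^*(X))\cong \mathbb{K}^*(X)$ from \fullref{thm:reformulation_stab} (with the $\Lambda_{K_*}$--comodule structure of \fullref{prop:natural_str_kk_comod}), passage to inverse limits recovers the ordinary pullback $f^*\co K^*(Y)\to K^*(X)$, and this map is automatically a $\Lambda_{K_*}$--comodule homomorphism. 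Since $\widehat{Q}_i$ is the natural $K_*$--module operation dual to the generator $b^K_{(i)}\in \Lambda_{K_*}$, it commutes with every $\Lambda_{K_*}$--comodule morphism, and in particular with $f^*$.

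Second I would verify compatibility with suspension. The suspension functor on spectra yields an equivalence $\Lambda(\Sigma X)\simeq \Lambda(X)$ together with a grading shift, and the $\ee$--precomodule structure on each $E^*(Z)$ used to construct $\widehat{Q}_i$ is intrinsic to the cohomology theory, hence respected by this equivalence. Applying the monoidal functor $\mathcal{F}$ and invoking \fullref{thm:reformulation_stab} again, the induced suspension isomorphism $K^*(\Sigma X)\cong K^{*-1}(X)$ is a $\Lambda_{K_*}$--comodule map (of appropriately degree-shifted modules), so the same argument as above shows that $\widehat{Q}_i$ commutes with it. The main obstacle is ensuring that the identification of \fullref{thm:reformulation_stab} upgrades compatibly from a natural isomorphism of $C_{K_*}$--comodule systems to one of $\kk$--comodule systems preserved by both pullbacks and suspension; this is essentially routine bookkeeping, since the generalized Chern character $\Theta$ is a grading-preserving multiplicative natural transformation and the whole extension of $\mathcal{F}$ through $\Lambda_{E_*}$--comodule structure (\fullref{lemma:def_comodule_str_map} and \fullref{prop:natural_str_kk_comod}) is expressed in entirely functorial terms, but this is the substantive content that must be checked before invoking $\Lambda_{K_*}$--comodule linearity.
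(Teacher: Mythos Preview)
Your argument is essentially correct, but it takes a different route from the paper's.  The paper dispenses with naturality in one line (it is evident from the functorial construction, as you note) and focuses on suspension compatibility.  There the paper argues concretely: the suspension isomorphism is exterior product with the canonical generator $s\in\widetilde{K}^1(S^1)$; since $\widehat{Q}_i$ has odd degree it must annihilate $s$ for degree reasons, and since $\mathcal F$ is symmetric monoidal (\fullref{cor:comodule2comodule}) the operation $\widehat{Q}_i$ is a derivation with respect to exterior products, so it commutes with multiplication by~$s$.

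Your approach instead pushes the stability question back to $E$--theory: the $\Lambda_{E_*}$--coaction on $E^*(Z)$ is part of the $\ee$--precomodule structure and hence compatible with the $E$--suspension (the $E$--theory Milnor operations are stable), applying $\mathcal F$ preserves this, and then one must check that under the identification of \fullref{thm:reformulation_stab} the map $\mathcal F(\sigma_E)$ really is the $K$--suspension $\sigma_K$.  This last step is what your ``main obstacle'' paragraph addresses, and it follows not from the $\kk$--comodule upgrade you mention (that is the Main Theorem, not yet available) but simply from the fact that the Chern character $\Theta$ is a \emph{stable} natural transformation between cohomology theories, being built from the $P(n)_*(P(n))$--coaction.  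Once that is said your argument goes through.  The paper's derivation argument is shorter and avoids having to unpack the stability of~$\Theta$; your argument is more conceptual in that it locates the source of stability upstream in $E$--theory, at the cost of one extra compatibility to verify.
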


\begin{proof}
It is sufficient to show that $\widehat{Q}_i$ commutes with
the suspension isomorphism $\Sigma$.
Let $s\in \smash{\widetilde{K}^1(S^1)}$ the canonical generator $\Sigma (1)$.
Then the suspension isomorphism is given by
the (exterior) product with $s$.
Since $\smash{\widehat{Q}_i}$ is an odd degree operation,
$\smash{\widehat{Q}_i}$ acts on $s$ trivially.
Hence we see that $\smash{\widehat{Q}_i}$ commutes with the product with $s$
since $\smash{\widehat{Q}_i}$ is a derivation.
This completes the proof.
\end{proof}

Recall that ${\mathcal Y}$ is the lens space $S^{2p^n-1}/C_p$,
and $K^*({\mathcal Y})=\Lambda(u_k)\otimes K_*[x_K]/(x_K^{p^n}).$

\begin{lemma}\label{lemma:effect_qi}
For $0\le i<n$,
$\widehat{Q}_i(u_K)=x_K{}^{p^i}$.
\end{lemma}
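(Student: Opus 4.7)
The strategy is to compute the $\Lambda_{K_*}$-coaction on $y_K \in K^*(\mathcal{Y}) \cong \mathcal{F}(E^*(\mathcal{Y}))$ by pushing the known $\Lambda_{E_*}$-coaction on $y_E \in E^*(\mathcal{Y})$ through the functor $\mathcal{F}$, and then reading off the operation $\widehat{Q}_i$ by its defining formula.

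First, by \fullref{lemma:action_on_yA} applied with $A=E$, the $\Lambda_{E_*}$-part of the coaction on $y_E$ in $\Lambda_{E_*}\wtens_{E_*}E^*(\mathcal{Y})$ is
\[
\rho_{\Lambda_{E_*}}(y_E) \;=\; 1\otimes y_E \;+\; b_E(x_E) \;=\; 1\otimes y_E \;+\; \sum_{i=0}^{n-1} b_{(i)}^E \otimes x_E^{\,p^i}.
\]
Tensoring with $L_*$ over $E_*$ carries this into $\Lambda_{L_*}\wtens_{L_*}L^*(\mathcal{Y})$, where by \fullref{lemma:relations_E_K_fundamental_elements} the class $y_E$ is identified with $y_K$ and the orientation $x_E$ is related to $x_K$ through the formal-group isomorphism $\Phi$.

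The crucial step is to rewrite $b_E(x_E)$ inside $\Lambda_{L_*}\wtens_{L_*}L^*(\mathcal{Y})$ in terms of the generators $\widehat{b}_{(i)}$ and powers of $x_K$. Combining the defining relation $\widehat{b}(X)\equiv b_E(\Phi^{-1}(X))\pmod{X^{p^n}}$ with the Chern-character relation between $x_E$ and $x_K$ (using that $\Phi$ is an isomorphism of formal group laws and that $x_K^{p^n}=0$ in $K^*(\mathcal{Y})$ kills all truncation error) gives the identity
\[
b_E(x_E) \;=\; \widehat{b}(x_K) \;=\; \sum_{i=0}^{n-1}\widehat{b}_{(i)}\otimes x_K^{\,p^i}
\]
in $\Lambda_{L_*}\wtens_{L_*}L^*(\mathcal{Y})$. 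This lies in the $S_{n+1}$-invariants, so under the isomorphism $\mathcal{F}(\Lambda_{E_*})\cong \Lambda_{K_*}$ of \fullref{thm:iso_bet_lambdas} sending $\widehat{b}_{(i)}\mapsto b_{(i)}^K$, the induced $\Lambda_{K_*}$-coaction on $y_K$ takes the form
\[
\rho(y_K) \;=\; 1\otimes y_K \;+\; \sum_{i=0}^{n-1} b_{(i)}^K \otimes x_K^{\,p^i}.
\]

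Finally, applying the sign convention $(x)Q^A_i = (-1)^{|x|+1}x_i$ for the definition of the Milnor operation associated to $b_{(i)}^K$ (recalling $|y_K|=1$) reads off $\widehat{Q}_i(y_K)=x_K^{\,p^i}$.

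The main obstacle is the bookkeeping in the crucial identity $b_E(x_E)=\widehat{b}(x_K)$: one must carefully track the direction of the formal-group isomorphism $\Phi$ and reconcile the definition of $\widehat{b}$ with the explicit form of the Chern character on the orientation class in order to conclude that the substitution $X=x_K$ in the defining congruence exactly cancels the appearance of $\Phi$ coming from $\Theta(x_E)$.
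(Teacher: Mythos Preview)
Your proposal is correct and follows essentially the same route as the paper's own proof. Both arguments start from \fullref{lemma:action_on_yA} to write $\rho_{\Lambda_{E_*}}(y_E)=1\otimes y_E+b_E(x_E)$, pass to $L_*$, and then use the defining congruence $\widehat{b}(X)\equiv b_E(\Phi^{-1}(X))\pmod{X^{p^n}}$ together with the Chern-character relation between $x_E$ and $x_K$ to rewrite $b_E(x_E)$ as $\widehat{b}(x_K)$; the operations $\widehat{Q}_i$ are then read off from the coefficients. The paper phrases the key step as ``$b_E(X)=\widehat{b}(\Phi(X))$ and $\Phi(x_E)=x_K$,'' which is exactly your cancellation of $\Phi$ against $\Theta(x_E)$ stated from the other side; your version is a bit more explicit about invoking \fullref{thm:iso_bet_lambdas} for the identification $\widehat{b}_{(i)}\mapsto b^K_{(i)}$ and about why $x_K^{p^n}=0$ disposes of the truncation, but the underlying argument is the same.
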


\begin{proof}
%Recall that  
%\[ \widehat{b}^E_{(i)}=\sum_{j=0}^i b_{(j)}^E \widehat{\Phi}^{p^j}_{i-j}\]
%Recall that 
%\[ \begin{array}{rcl}
%    \widehat{b}^E(X)&=&\sum_{i\ge 0} \widehat{b}^E_{(i)}X^{p^i}
%    \quad\mod (X^{p^n})\\[2mm]
%    b^E(X)&=&\sum_{i\ge 0} b^E_{(i)}X^{p^i}\quad\mod (X^{p^n}).\\ 
%   \end{array}\]
%Then
%\[ \widehat{b}^E(X)=b^E(\Phi^{-1}(X)).\]
%Then
%\[ b^E(X)=\widehat{b}^E(\Phi(X)).\]
By \fullref{lemma:action_on_yA},
$\rho(y_E)=1\otimes y_E + b^E(x_E)$.  
%where $b^E(X)=\sum b^E_{(i)}X^{p^i}$.
Since $b_E(X)=\widehat{b}(\Phi(X))$ mod $(X^{p^n})$ by definition,
we have
$\rho(1\otimes u_E) =1\otimes1 \otimes u_E + \smash{\widehat{b}}^E(\Phi(x_E))$.  
{From} that fact that 
$u_K=1\otimes u_E$ and $\Phi(x_E)=x_K$, 
we obtain that 
$\smash{\widehat{Q}_i(u_K)= x_K{}^{p^i}}$.
\end{proof}

\begin{corollary}\label{cor:equality_q}
For $0\le i<n$,
$\widehat{Q}_i=Q_i^K$.
\end{corollary}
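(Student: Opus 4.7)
The plan is to apply the uniqueness result from \fullref{subsection:milnot_op}: any odd-degree stable cohomology operation on $K^*(-)$ which is a derivation with respect to exterior products is determined by its value on $y_K \in K^1(\mathcal{Y})$. I will show that both $\widehat{Q}_i$ and $Q^K_i$ satisfy those hypotheses and agree on $y_K$.

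First, I would verify the hypotheses for $\widehat{Q}_i$. Stability is exactly \fullref{lemma:wideq_stable_op}, and the degree $2p^i-1$ is odd. To see that $\widehat{Q}_i$ is a derivation with respect to exterior products, I would note that by construction $\widehat{Q}_i$ acts on $K^*(X) \cong \inverselimit{}\mathcal{F}(\mathbb{E}^*(X))$ through the natural $\Lambda_{K_*}$-comodule structure from \fullref{prop:natural_str_kk_comod}, corresponding to the primitive generator $b^K_{(i)} \in \Lambda_{K_*}$. The general calculation in \fullref{subsection:milnot_op} (the lemma computing $(x\otimes y)Q^A_i$) then shows that any Milnor operation attached to a primitive in an exterior Hopf algebra is an odd-degree derivation of tensor products; since the external product on $K^*(-)$ is reflected in the tensor product of comodules, $\widehat{Q}_i$ satisfies the derivation rule.

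Next, I would verify that $Q^K_i$ itself meets the hypotheses and agrees with $\widehat{Q}_i$ on $y_K$. By definition $Q^K_i$ is the Milnor operation dual to $b^K_{(i)}$ in $\Lambda_{K_*}$, hence a stable cohomology operation in $\kk$ of odd degree $2p^i-1$ and a derivation for the same reason as above. By \fullref{lemma:action_on_yA} the coaction on $y_K$ is $\rho(y_K) = 1\otimes y_K + \sum_j b^K_{(j)}\otimes x_K^{p^j}$; dualizing against $b^K_{(i)}$ gives $Q^K_i(y_K) = x_K^{p^i}$. On the other hand, \fullref{lemma:effect_qi} provides $\widehat{Q}_i(y_K) = x_K^{p^i}$. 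Applying the characterization lemma then yields $\widehat{Q}_i = Q^K_i$.

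The only point requiring slight care is that $\widehat{Q}_i$ was produced via the functor $\mathcal{F}$ and the identification $\mathcal{F}(\mathbb{E}^*(X)) \cong \mathbb{K}^*(X)$ from \fullref{thm:reformulation_stab}, so one must confirm that the resulting operation is genuinely natural in $X$ and compatible with external products; but both properties are inherited from the fact that $\mathcal{F}$ is a natural symmetric monoidal functor (\fullref{cor:comodule2comodule}), so no real obstacle arises. Everything else is a direct invocation of results already established.
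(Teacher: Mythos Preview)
Your argument is correct and follows essentially the same route as the paper: invoke \fullref{lemma:wideq_stable_op} for stability, note the derivation property, apply the characterization lemma from \fullref{subsection:milnot_op}, and compare values on $y_K$ via \fullref{lemma:effect_qi}. You have spelled out in more detail why $\widehat{Q}_i$ is a derivation (through the monoidal structure of ${\mathcal F}$) and why $Q^K_i(y_K)=x_K^{p^i}$, but these elaborations are exactly what the paper's terse proof takes for granted.
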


\begin{proof}
By \fullref{lemma:wideq_stable_op},
$\widehat{Q}_i$ is an odd degree stable cohomology operation,
which is a derivation with respect to the (exterior) product.
Hence $\smash{\widehat{Q}_i}$ is characterized by the action
on $u_K\in K^1({\mathcal Y})$.
Then the corollary follows from \fullref{lemma:effect_qi}.   
\end{proof}

Recall that 
the generalized Chern character 
\eqref{eq:generalized_Chern_character}
\[ \Theta\co E^*(X)\longrightarrow L^*(X) \]
induces a natural isomorphism in $\mathcal{C}_{\mathbb{K}}$
\[ {\mathcal F}(\mathbb{E}^*(X))\cong \mathbb{K}^*(X) \]
by \fullref{thm:reformulation_stab}.
The following is our main theorem of this note.

\begin{theorem}\label{thm:Main_Theorem}
The generalized Chern character $\Theta$ 
induces a natural isomorphism in $\mathcal{M}_{\mathbb{K}}$:
\[ {\mathcal F}(\mathbb{E}^*(X))\cong \mathbb{K}^*(X). \]
If $X$ is a space, then this is an isomorphism 
of cofiltered systems of finite $\kk$--comodule algebras. 
\end{theorem}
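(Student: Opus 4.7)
The plan is to bootstrap from Theorem \ref{thm:reformulation_stab}, which already gives a natural isomorphism $\mathcal{F}(\mathbb{E}^*(X)) \cong \mathbb{K}^*(X)$ in $\mathcal{C}_{\mathbb{K}}$ (as cofiltered systems of $C_{K_*}$--comodules), and upgrade it to an isomorphism in $\mathcal{M}_{\mathbb{K}}$. By \fullref{thm:eq_cat_rchr_stab_lambda}, a $\kk$--comodule structure is equivalent to a $C_{K_*}$--comodule structure together with a $\Lambda_{K_*}$--comodule structure satisfying a compatibility condition. Since the $C_{K_*}$--piece is already handled, the work reduces to checking that the Chern-character-induced isomorphism respects the $\Lambda_{K_*}$--comodule structure, equivalently the action of the Milnor operations $Q_i^K$.

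First I would unpack both sides. The right-hand side $\mathbb{K}^*(X)$ carries the $\Lambda_{K_*}$--coaction coming from its natural $\kk$--comodule structure (\fullref{prop:precomdule_cohomology}), whose dual is the standard Milnor operations $Q_i^K$. The left-hand side $\mathcal{F}(\mathbb{E}^*(X))$ carries the $\Lambda_{K_*}$--coaction constructed in \fullref{prop:natural_str_kk_comod}: one applies $\mathcal{F}$ to the $\Lambda_{E_*}$--coaction on $\mathbb{E}^*(X)$ and then uses the isomorphism $\mathcal{F}(\Lambda_{E_*}\wtens_{E_*}\mathbb{E}^*(X)) \cong \Lambda_{K_*}\wtens_{K_*}\mathcal{F}(\mathbb{E}^*(X))$ of \fullref{lemma:def_comodule_str_map}, which rests on $\mathcal{F}(\Lambda_{E_*}) \cong \Lambda_{K_*}$ (\fullref{thm:iso_bet_lambdas}). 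Dualizing the resulting coaction on $K^*(X) = \inverselimit{}\mathcal{F}(\mathbb{E}^*(X))$ produces operations $\widehat{Q}_i$ which, by \fullref{lemma:wideq_stable_op}, are stable odd-degree derivations on $K^*(-)$.

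The decisive step is then to invoke \fullref{cor:equality_q}: since $\widehat{Q}_i$ and $Q_i^K$ are both stable odd-degree derivations and agree on the test class $u_K \in K^1(\mathcal{Y})$ (by \fullref{lemma:effect_qi}, via the identity $\widehat{b}(\Phi(x_E)) \equiv b_E(x_E) \bmod (x_E^{p^n})$ together with $\Phi(x_E) = x_K$), they coincide as cohomology operations. Consequently, the isomorphism of \fullref{thm:reformulation_stab}, which is already an isomorphism of $C_{K_*}$--comodules, intertwines the two $\Lambda_{K_*}$--coactions as well; by \fullref{thm:eq_cat_rchr_stab_lambda} it is therefore an isomorphism of $\kk$--comodules, objectwise in $\Lambda(X)$ and hence in $\mathcal{M}_{\mathbb{K}}$.

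The main obstacle is really the identification $\widehat{Q}_i = Q_i^K$; once that is in hand, everything else is a formal assembly of earlier results. For $X$ a space, the multiplicative refinement is automatic: $\mathcal{F}$ is symmetric monoidal (\fullref{cor:comodule2comodule}), the Chern character $\Theta$ is multiplicative, and the $C_{K_*}$--comodule algebra statement in \fullref{thm:reformulation_stab} together with compatibility of $Q_i^K$ with products (already built into the $\kk$--comodule algebra structure on $K^*(Z)$ via \fullref{prop:precomdule_cohomology}) promote the isomorphism to one of cofiltered systems of finite $\kk$--comodule algebras.
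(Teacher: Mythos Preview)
Your proposal is correct and follows essentially the same route as the paper's proof: start from the $C_{K_*}$--level isomorphism of \fullref{thm:reformulation_stab}, use \fullref{cor:equality_q} (via \fullref{lemma:wideq_stable_op} and \fullref{lemma:effect_qi}) to identify the induced Milnor operations $\widehat{Q}_i$ with $Q_i^K$ so that the $\Lambda_{K_*}$--coactions match, and then invoke \fullref{thm:eq_cat_rchr_stab_lambda} to conclude. Your treatment of the multiplicative statement for spaces is slightly more explicit than the paper's, but the argument is the same.
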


\begin{proof}
By \fullref{thm:reformulation_stab},
there is a natural isomorphism 
${\mathcal F}(\mathbb{E}^*(X))\cong \mathbb{K}^*(X)$ 
in $\mathcal{C}_{\mathbb{K}}$.
%which is compatible with the exterior products.
\fullref{cor:equality_q}
implies that the isomorphism 
$\mathcal{F}(E^*(Z))\cong K^*(Z)$
respects the $\Lambda_{K_*}$--comodule structures
for all $Z\in\Lambda(X)$.
Hence 
the theorem follows from 
\mbox{\fullref{thm:eq_cat_rchr_stab_lambda}}.
\end{proof}

\bibliographystyle{gtart}
\bibliography{link}

\end{document}